\Crefname{equation}{}{}
\newcommand{\Q}{\mathbb{Q}}
\newcommand{\C}{\mathbb{C}}
\newcommand{\F}{\mathbb{F}}
\newcommand{\R}{\mathbb{R}}
\newcommand{\Z}{\mathbb{Z}}
\newcommand{\Qbar}{\overline{\Q}}
\DeclareMathOperator{\End}{\operatorname{End}}
\DeclareMathOperator{\GL}{GL}
\DeclareMathOperator{\SL}{SL}
\DeclareMathOperator{\Hom}{Hom}
\DeclareMathOperator{\Gal}{Gal}
\DeclareMathOperator{\GSp}{GSp}
\DeclareMathOperator{\Aut}{Aut}
\DeclareMathOperator{\Hg}{Hg}
\DeclareMathOperator{\MT}{MT}
\DeclareMathOperator{\ST}{ST}
\DeclareMathOperator{\Frob}{Frob}
\DeclareMathOperator{\Jac}{Jac}
\DeclareMathOperator{\Spec}{Spec}
\DeclareMathOperator{\HH}{H}
\newcommand{\KconnJ}{K(\varepsilon_{J_m})}
\newcommand{\KconnA}{K(\varepsilon_A)}
\newcommand{\KendA}{K(\End(A))}
\numberwithin{equation}{subsection}
\newtheorem{theorem}[equation]{Theorem}
\newtheorem{conjecture}[equation]{Conjecture}
\newtheorem{corollary}[equation]{Corollary}
\newtheorem{lemma}[equation]{Lemma}
\newtheorem{proposition}[equation]{Proposition}
\theoremstyle{definition}
\newtheorem{definition}[equation]{Definition}
\newtheorem{notation}[equation]{Notation}
\theoremstyle{remark}
\newtheorem*{remark*}{Remark}
\newtheorem{remark}[equation]{Remark}
\newtheorem{example}[equation]{Example}
\newcommand{\Gl}{\mathcal{G}_\ell}
\DeclareMathOperator{\Res}{\operatorname{Res}}
\DeclareMathOperator{\HHpn}{\HH_{\operatorname{prim}}^n} %
\DeclareMathOperator{\prim}{\operatorname{prim}}
\DeclareMathOperator{\Fp}{\Frob_\mathfrak{p}}
\newcommand{\Gam}[2]{\Gamma\left(\frac{#1}{#2}\right)}
\newcommand{\muu}{\mu} %
\newcommand{\mmZ}{\frac{1}{m}\mathbb{\Z}/\mathbb{Z}}
\title[Monodromy groups and exceptional Hodge classes]{Monodromy groups and exceptional Hodge classes, I: Fermat Jacobians}
\author{Andrea Gallese, Heidi Goodson, and Davide Lombardo}
\begin{document}

\begin{abstract}
Denote by $J_m$ the Jacobian variety of the hyperelliptic curve defined by the affine equation $y^2=x^m+1$ over $\mathbb{Q}$, where $m \geq 3$ is a fixed positive integer.
We compute several interesting arithmetic invariants of $J_m$: its decomposition up to isogeny into simple abelian varieties, the minimal field $\mathbb{Q}(\operatorname{End}(J_m))$ over which its endomorphisms are defined, and its connected monodromy field $\mathbb{Q}(\varepsilon_{J_m})$.
Currently, there is no general algorithm that computes the last invariant. 
For large enough values of $m$, the abelian varieties $J_m$ provide non-trivial examples of high-dimensional phenomena, such as degeneracy and the non-triviality of the extension $\mathbb{Q}(\varepsilon_{J_m})/\mathbb{Q}(\operatorname{End}(J_m))$.
\end{abstract}

\maketitle
\let\thefootnote\relax\footnotetext{\emph{2020 Mathematics Subject Classification}: Primary 11F80; Secondary 11G10, 14C25, 11G15, 14K15}
\let\thefootnote\relax\footnotetext{Keywords: Fermat varieties, Sato-Tate group, Hodge classes, Tate classes, $\ell$-adic monodromy group}

\setcounter{tocdepth}{2}

\tableofcontents

\section{Introduction}
This paper is the first in a series of two (see \cite{part2}), in which we investigate how the existence of exceptional algebraic cycles (or, more precisely, exceptional Hodge classes) on an abelian variety $A$ affects some of its associated invariants. We consider both general abelian varieties and a specific family of interesting examples.

In complex algebraic geometry, an \emph{exceptional} Hodge class is an element of the Hodge ring not generated by divisor classes. 
Abelian varieties %
that support such exceptional classes are called \emph{degenerate}.
While the definition of degeneracy is a statement about the Hodge ring, we can see the effects of degeneracy in {several} %
invariants of the abelian variety $A$: the Mumford-Tate group and, assuming $A$ is defined over a number field, the connected monodromy field and the Sato-Tate group.
In the rest of this introduction we introduce the abelian varieties considered in this work, we describe the invariants that we wish to investigate, and we state our main results.

\subsubsection*{Fermat Jacobians}
We focus in particular on a family of abelian varieties that contains many degenerate ones. 
Specifically, we study the Jacobians $J_m$ of the hyperelliptic curves
$$C_m: y^2=x^m+1,$$
where $m\geq 3$ is any integer. Such abelian varieties are known to be degenerate if, for example, $m$ is an odd composite number (see \cite[Theorem 1.1]{Heidi}) and are nondegenerate if $m$ is a power of 2 (\cite[Theorem 1.2]{EmoryGoodson2024}), prime (\cite{Shimura}), or twice an odd prime (\cite{EmoryGoodson2022}). 
Moreover, the Mumford-Tate conjecture is known to hold for these varieties because they have complex multiplication. In particular, we know unconditionally that there is a close relationship between exceptional Hodge classes in the cohomology of $J_m$ and its Galois representations. 
We also note that the curves $C_m$ arise as quotients of the Fermat curves $x^m + y^m + z^m = 0 \subset \mathbb{P}_{2, \mathbb{Q}}$, which is why we call $J_m$ a \textit{Fermat Jacobian}.

Working with these concrete examples allows us to demonstrate different phenomena that can occur for degenerate abelian varieties and develop methods for understanding their structure.

The Jacobian varieties $J_m$ %
were studied by Shioda in \cite{Shioda3}, where he proved the Hodge conjecture for $J_m$ for infinite families of values of $m$, including some for which $J_m$ is degenerate. It can be shown that in most cases the Hodge ring of $J_m$ contains exceptional Hodge cycles (see \cite[Section 6]{Shioda3} and \cite{Heidi}).

\subsubsection*{The connected monodromy field} One of the main results of this paper is the explicit description of the connected monodromy field of the abelian varieties $J_m$, which we briefly define here (see \Cref{section: intro connected monodromy field} for more details). Let $A$ be an abelian variety defined over a number field $K$ and denote its dimension by $g= \dim A$. Fix an algebraic closure $\overline{K}/K$ and denote by $\Gamma_K = \Gal(\overline{K}/K)$ the corresponding absolute Galois group. Let $\ell$ be a prime number. Consider the Tate module $T_\ell A = \varprojlim_n A[\ell^n] $ and set $V_\ell A = T_\ell A \otimes_{\Z_\ell} \Q_\ell$. We denote by $\rho_{A, \ell}$ the Galois representation
\[ \rho_{A, \ell}\colon \Gamma_K \to \Aut(V_\ell A) \simeq \GL_{2g}(\Q_\ell) \]
induced by the natural $\Gamma_K$-action on the $\ell$-adic Tate module of $A$. The $\ell$-adic monodromy group $\Gl$ is defined as the Zariski closure of the image of $\rho_{A, \ell}$. Note that $\Gl$ need not be connected. The connected monodromy field $K(\varepsilon_A)$ is then the minimal extension $L/K$ such that the Zariski closure of $\rho_{A,\ell}(\Gamma_L)$ in $\Gl$ is connected \cite[no.~133]{serre-IV}, \cite[Propositions (6.12) and (6.14)]{MR1150604}. 
There is at present no general algorithm to compute $K(\varepsilon_A)$, but see work of Zywina \cite[Remark 1.18]{MR4496693} for a heuristic technique to guess this field, at least in certain cases.

\subsubsection*{The endomorphism field} Denote by $K(\End (A))/K$ the field of definition of all endomorphisms of $A$. We always have the containment $K(\End (A)) \subseteq \KconnA$, as proven, for example, in \cite[Proposition 2.10]{MR1355128}, and the equality $\KconnA = K(\End (A))$ is known to hold in many cases (for example, whenever $g \leq 3$ \cite[Theorem 6.10]{MR3320526}). However, there are known examples of abelian varieties of higher dimension for which the extension $\KconnA/K(\End(A))$ is non-trivial \cite[Section 4]{MR1630512}, \cite[Theorem 1.2.1]{cantoralfarfan2023monodromy}. This phenomenon is related to the existence of exceptional algebraic classes \cite[Corollary 2.4.4]{cantoralfarfan2023monodromy}. In the present paper, we expand this pool of examples and give a complete characterization of the field $\Q(\varepsilon_{J_m})$ for the Jacobians $J_m$, showing that it is often a non-trivial extension of $\Q(\operatorname{End}(J_m))$.

In order to prove the non-triviality of the extension $\Q(\varepsilon_{J_m})/\Q(\End(J_m))$ we first of all need to know the field $\Q(\End(J_m))$.
In \Cref{th: endomorphism field of Jm complete version}, we determine the endomorphism field $\Q(\End(J_m))$ of $J_m$ for every $m$; in particular, we show that when $m$ is odd the endomorphism field $\Q(\End(J_m))$ coincides with the $m$-th cyclotomic field $\Q(\zeta_m)$. We also describe completely the endomorphism algebra itself, something that does not seem to be known for general values of $m$ (see \Cref{th:Jm-factorization} and \Cref{lemma: ring of endomorphism of Xd}).
We mention in passing that this answers the question at the end of \cite{auffarth2024jacobian}: the largest integer for which $J_m$ is isogenous to a product of elliptic curves is $m=24$, and the largest prime with the property considered in \cite{auffarth2024jacobian} is $p=7$.

\subsubsection*{The Mumford-Tate group.}
Let $A$ be an abelian variety defined over a number field $k$. Fix an embedding $k \hookrightarrow \C$ and consider the rational Betti cohomology group $V = \HH^1(A_\C(\C), \Q)$. The Mumford-Tate group $\MT(A) \subseteq \GL_V$ is a connected, reductive linear algebraic group over $\Q$, constructed from the Hodge structure on $A_\C$ (see \Cref{sub:MTdef} for a definition).

We use the Mumford-Tate group to analyze Hodge classes supported on $A$ and its powers.
As a first approximation, notice that its dimension reflects the complexity of the Hodge ring: the larger the number of generators for the Hodge ring $A$, the smaller the dimension of the Mumford–Tate group tends to be.
For instance, a generic elliptic curve $E/\Q$ without complex multiplication has $\MT(E) = \GL_V$.
By contrast, if $E/\Q$ admits complex multiplication, then the graph of a non-trivial endomorphism in $E \times E$ defines an algebraic cycle whose class is an additional generator of the Hodge ring. In this case, $\MT(E)$ is a two-dimensional torus.

Our interest in exceptional algebraic classes motivates the need for explicit equations describing $\MT(A)$.
In \Cref{section:MT} we explain the connection between Hodge classes and the defining equations of $\MT(A)$, in the case of Fermat Jacobians, and provide explicit computations. A broader treatment in the general setting is given in the second part of this work \cite[Theorem 2.2.2]{part2}. 

The Mumford-Tate conjecture predicts that Hodge classes and Tate classes (the analogue in étale cohomology of Hodge classes) should correspond to one another, see \Cref{conj:MTconj}. In particular, the Hodge ring of $A_\C$ imposes constraints on the $\ell$-adic Galois representation attached to $A/\Q$.
This highlights one of the central themes of our work: when $A$ (or a power $A^r$) is degenerate, the Mumford-Tate group $\MT(A)$ is defined by additional algebraic equations and is accordingly “smaller” -- for example, in terms of dimension -- than in the generic case. In turn, this is reflected on the arithmetic side: the $\ell$-adic monodromy group $\Gl$ is smaller and often disconnected.

\subsubsection*{The Sato-Tate group}
We describe one further invariant, the Sato-Tate group, which, while not crucial for the present work, forms the main object of study of the companion paper \cite{part2}.
Roughly speaking, the algebraic Sato-Tate conjecture predicts that the (possibly disconnected) monodromy groups of the various $\ell$-adic Galois representations of an abelian variety are all interpolated by a single algebraic group defined over $\Q$, called the \textit{algebraic Sato-Tate group} $\operatorname{AST}(A)$. Cantoral-Farf\'{a}n and Commelin \cite{MR4530050} have shown that the algebraic Sato-Tate conjecture is true for all abelian varieties that satisfy the Mumford-Tate conjecture, hence in particular for all CM abelian varieties, since the Mumford-Tate conjecture is known in the CM case \cite{Pohlmann}.
When the algebraic Sato-Tate conjecture holds, one can define the Sato-Tate group $\operatorname{ST}(A)$ as a maximal compact subgroup of the complex points of $\operatorname{AST}(A)$.

The interest in the Sato-Tate group comes from the (generalized) Sato-Tate conjecture, which predicts the asymptotic distribution of characteristic polynomials of Frobenius of an abelian variety $A$ in terms of $\ST(A)$. This conjecture has been proven for elliptic curves with complex multiplication (CM) and for those defined over totally real number fields (see \cite{Barnet2011, Clozel2008,Harris2010,Taylor2008}), as well as for abelian varieties of arbitrary dimension with potential complex multiplication \cite[Proposition 16]{Joh17}.

At the moment, there is no general strategy to compute the Sato-Tate group of a given abelian variety though there has recently been progress for small dimension and for nondegenerate abelian varieties. Classification results for the Sato-Tate groups of dimension 2 and 3 abelian varieties are given in \cite{Fite2012} and \cite{fitekedlayasuth2023}. 
For computations of the Sato-Tate groups of nondegenerate abelian varieties, see, for example, \cite{Arora2016, EmoryGoodson2022,EmoryGoodson2024, FGL2016, FiteLorenzo2018, GoodsonCatalan,GoodsonHoque2024,KedlayaSutherland2009, LarioSomoza2018}.

Many of these results were made possible by work of Banaszak and Kedlaya, who proved \cite[Theorem 6.1]{MR3502937} that the algebraic Sato-Tate group of a `stably non-degenerate' abelian variety coincides with its twisted (decomposable) Lefschetz group $\operatorname{TL}_A$ \cite[Definition 3.4]{MR3502937}. This is the subgroup of $\GL_V$, where $V$ is the first Betti cohomology of $A$, 
given by those automorphisms that preserve the polarization up to scalars and commute with the endomorphisms of $A$ up to the Galois action. %
However, this method %
does not extend to degenerate abelian varieties. This is one of the limitations we aim to overcome in the second part of this work \cite{part2}: we describe a generalization of the twisted Lefschetz group that takes into account all Hodge classes, not just the endomorphisms and the polarization.

\smallskip

Regarding our main family of examples, the Sato-Tate groups of certain varieties $J_m$ (and twists thereof) were described in work of Fité and Sutherland, see \cite{MR3218802} for $m=6$ and \cite{MR3502940} for $m=8$. These results, however, don't provide a uniform framework for computing %
$\operatorname{ST}(J_m)$
for arbitrary $m$, and all fall within the context of nondegenerate abelian varieties (all abelian varieties of dimension up to 3 are nondegenerate).
In \cite{part2}, we expand on the results developed in this paper to solve completely the problem of describing the Sato-Tate group of $J_m$ for every $m$. The crucial ingredient is the determination of the Tate classes on the varieties $J_m$ and their powers, together with the action of $\Gal\left(\overline{\Q}/\Q\right)$ on them. This builds crucially on the results of the present paper, in which we find generators for the space of Tate classes. In \cite{part2}, our main tasks will be to describe the Galois action on these Tate classes and to explain how to derive a computable description of the Sato-Tate group from this information. We emphasize that this paper already provides a description of the action of the Galois group $\operatorname{Gal}(\overline{\Q}/\Q(\zeta_m))$ on the space of Tate classes. The main difficulty that we address in \cite{part2} is extending this description to the full Galois group $\operatorname{Gal}(\overline{\Q}/\Q)$.

\subsubsection*{Statement of results}
In this paper, we focus primarily on the invariants that we described above -- the Mumford-Tate group, the field of definition of the endomorphisms, and the field of connected monodromy -- for the case of the Fermat Jacobians $J_m$.

In \Cref{subsec:MTcomp} we explain how to obtain explicit equations for $\operatorname{MT}(J_m)$. Due to the presence of complex multiplication, $\operatorname{MT}(J_m)$ is an algebraic torus, hence its base-change to an algebraically closed field can be realized as a group of diagonal matrices. More precisely, it turns out that the base-change of $\operatorname{MT}(J_m)$ to $\mathbb{Q}(\zeta_m)$ is naturally a subgroup of the diagonal torus $\mathbb{G}_{m, \mathbb{Q}(\zeta_m)}^{2g}$ of $\operatorname{GL}_{2g, \mathbb{Q}(\zeta_m)}$. Any such subgroup is the intersection of the kernels of finitely many homomorphisms $\mathbb{G}_{m}^{2g} \to \mathbb{G}_m$, each of which is of the form
\[
\operatorname{diag}(x_1, \ldots, x_{2g}) \mapsto x_1^{d_1} \cdots x_{2g}^{d_{2g}}
\]
for certain integers $d_1, \ldots, d_{2g}$. We show how to compute a generating set for the group of homomorphisms whose kernels cut out $\operatorname{MT}(J_m)_{\mathbb{Q}(\zeta_m)}$ and call it a \textit{set of equations} for the Mumford-Tate group (see \Cref{def: equation for MT} for more details).

The connected monodromy field is a much subtler invariant. We describe it completely in the following statement, where -- for each element $f_i =x_1^{d_1} \cdots x_{2g}^{d_{2g}}$ of a set of equations for the Mumford-Tate group -- the notation $\Gamma(f_i)$ denotes an explicit algebraic number which can be computed from the exponents $d_i$ using Euler's $\Gamma$ function (see \Cref{def: gamma value associated to equation for MT} for the precise definition).

\begin{restatable*}{theorem}{maincorollary}
{\label{cor: connected monodromy field for odd m in terms of equations}}
    Let $m \geq 3$ be an odd positive integer and $J_m / {\Q}$ be the Jacobian of the smooth projective curve {over $\Q$} with affine equation $y^2=x^m+1$.
    Let $f_1,\, f_2, \,  \dots,\, f_r$ be a finite set of equations for $\MT(J_m)$. The field $\Q(\varepsilon_{J_m})$ is generated over $\Q$ by the (algebraic) complex numbers
    $$\zeta_m,\, \Gamma(f_1),\, \Gamma(f_2), \,  \dots, \,\Gamma(f_r).$$
\end{restatable*}
We also establish similar results in the case of even $m$, but they are slightly more cumbersome to state: see \Cref{th:Kconn-even}. We remark that the numbers $\Gamma(f_i)$ are defined in terms of Euler's $\Gamma$ function, so they are easy to determine numerically, but not as exact algebraic numbers. We solve this problem in \Cref{th:exactly-gamma}, whose proof gives an algorithm to compute them exactly. As already mentioned, in \Cref{section:MT} we also show how to obtain algorithmically a finite set of equations for $\MT(J_m)$, so that \Cref{cor: connected monodromy field for odd m in terms of equations} gives a complete algorithmic recipe to determine $\Q(\varepsilon_{J_m})$ for any odd $m$.

We note that \Cref{cor: connected monodromy field for odd m in terms of equations} is in essence a statement about (exceptional) algebraic classes and their fields of definition: to prove it, for each equation $f_i$ we construct a Tate class whose field of definition over $\Q(\zeta_m)$ is generated by $\Gamma(f_i)$. The equations $f_i$ to consider, in turn, essentially correspond to generators of the Hodge ring of $J_m$. Since the Mumford-Tate conjecture is known to hold for the CM abelian variety $J_m$, the spaces of Tate classes and Hodge classes determine each other, so that, as already pointed out, the knowledge of Hodge classes on $J_m(\C)$ gives us direct information on the Galois representations attached to $J_m/\Q$.

\Cref{cor: connected monodromy field for odd m in terms of equations} allows us to compute the connected monodromy field of $J_m$ for all odd $m \leq 105$, see \Cref{example: odd numbers up to 105}. This yields many natural examples of degenerate abelian varieties $J_m$ such that $\Q(\varepsilon_{J_m})$ strictly contains $\Q(\End(J_m))$, the most exotic one being $J_{105}$, for which $\Q(\varepsilon_{J_{105}})/\Q(\End(J_{105}))$ has degree $8$.

\subsubsection*{Results in families}

Some of the challenges of working with the abelian varieties $J_m$ are easier to overcome when $m=p^k$, for $p$ an odd prime and $k \geq 1$. Our general results allow us to prove properties of $J_m$ for all $m=p^k$.
The first one is a generalization of a theorem of Kubota \cite[Theorem 2]{Kubota1965}, who proved that $J_p$ is a simple, nondegenerate CM abelian variety. %

\begin{restatable*}{theorem}{nondeg}
\label{th:nondegenarcy}
Let $p$ be an odd prime number, $k$ a positive integer, and $X$ be any simple factor of $J_{p^k}$. %
Then $X$ is a nondegenerate abelian variety.
\end{restatable*}

We note that this confirms and extends Conjecture 5.9 of \cite{Heidi}. Our next result goes in the opposite direction, showing that, while each simple factor of $J_{p^k}$ is non-degenerate, the full Jacobian $J_{p^k}$ itself is degenerate as soon as $k>1$. This degeneracy manifests as an exceptionally small Mumford-Tate group:

\begin{restatable*}{proposition}{MTprojection}
\label{th: MT projection prime power case}
    Let $p$ be an odd prime number, $k$ be a positive integer, and let $X_{p^k}$ be the largest simple factor of $J_{p^k}$. The projection $J_{p^k} \to X_{p^k}$ induces an isomorphism $\MT(J_{p^k})\to \MT(X_{p^k})$ between the corresponding Mumford-Tate groups.
\end{restatable*}

These two results do not necessarily hold for more general $m$. For example, Shioda shows in \cite{Shioda3} that the largest simple factor of the Jacobian $J_{21}$ is degenerate. %
Moreover, for $m=15$, the simple factors of $J_{15}$ are all nondegenerate but the projection of the Mumford-Tate group of $J_{15}$ to the Mumford-Tate group of its largest simple factor is an isogeny, not an isomorphism (see \cite{Heidi}). We study the specific case of $m=15$ further in Examples \ref{ex:m15-equations}, \ref{ex: using Kedlaya}, and \ref{ex: connected monodromy field for m=15}.

Another general property that we can prove for all $m$ concerns the relationship between the field of definition of the endomorphisms $\Q(\End(J_m))$ and the connected monodromy field $\Q(\varepsilon_{J_m})$:
\begin{restatable*}{theorem}{TheoremMultiquadratic}\label{thm: Kconn is multiquadratic}
    Let $m \geq 3$ be an integer and $J_m/\mathbb{Q}$ be the Jacobian of the smooth projective curve with affine equation $y^2=x^m+1$. The connected monodromy field $\Q(\varepsilon_{J_m})$ is a multiquadratic extension of $K=\mathbb{Q}(\zeta_m)$ (and therefore also of $\Q(\End(J_m))$, by \Cref{prop: endomorphism field easy containment}).
\end{restatable*}

\subsection*{Organization of the paper}
In \Cref{sect: preliminaries} we review some of the basic tools used throughout the paper: Mumford-Tate groups, the theory of complex multiplication, and the de Rham cohomology of the Jacobians $J_m$. In \Cref{section:decomposition} we completely describe the decomposition up to isogeny of $J_m$, the field of definition of the simple factors and of the isogenies between them, and their CM types. In \Cref{section:MT} we describe how to compute explicit equations for the Mumford-Tate group of $J_m$ (seen as an algebraic subgroup of a matrix group) and show how to construct a Tate class on a power of $J_m$ starting from any equation of $\MT(J_m)$ of a specific form. In \Cref{section:FermatVarieties} we apply the theory of Fermat varieties -- namely, the projective hypersurfaces $X_m^n : x_0^m + \cdots + x_{n+1}^m = 0$ -- to embed in a Galois-equivariant way the Tate classes constructed in \Cref{section:MT} in the cohomology of $X_m^n$. From this, we deduce a description of the action of $\Gal\left(\overline{\Q}/\Q(\zeta_m) \right)$ on these classes, which suffices to determine the monodromy field $\Q(\varepsilon_{J_m})$. We also describe the effect of twisting the abelian variety $J_m$ on its connected monodromy field. In \Cref{sec:deligne} we combine the previous results to determine the field $\Q(\varepsilon_{J_m})$ as the field generated by algebraic numbers that can be expressed in terms of the $\Gamma$ function. We also explain how to use properties of $\Gamma$ to reduce the computation of these special values to a purely algebraic procedure.
In \Cref{section:further-properties}, we prove the results mentioned above concerning the case where $m$ is a prime power.


\subsection*{Acknowledgements} We thank Gregory Pearlstein for a useful discussion on Hodge theory. We are grateful to Johan Commelin, Nir Elber, Francesc Fité, and Drew Sutherland for their comments on the first version of this manuscript. We thank the anonymous referee for constructive comments that helped improve the paper.

H.G. was supported by NSF grant DMS-2201085. D.L.~was supported by the University of Pisa through grant PRA-2022-10 and by MUR grant PRIN-2022HPSNCR (funded by the European Union project Next Generation EU). D.L. and A.G.~are members of the INdAM group GNSAGA.

\section{Preliminaries}\label{sect: preliminaries}
In this section we introduce the notion of the connected monodromy field of an abelian variety and review the two main tools we will need in the rest of the paper.
The first is the Mumford-Tate group of an abelian variety $A$, which we use (exploiting the fact that the Mumford-Tate conjecture holds for $J_m$) to handle Tate classes in étale cohomology in \Cref{section:MT} and \Cref{section:FermatVarieties}.
The second is the theory of complex multiplication (CM). The abelian varieties $J_m$ have a non-trivial endomorphism ring and decompose into simple abelian varieties of CM type. In \Cref{subsec:MTcomp} we use the explicit description of the Galois representations provided by CM theory to compute the Mumford-Tate group of $J_m$; the results of Deligne in \Cref{sec:deligne} are also based on CM theory. Apart from the basic definitions, here we content ourselves with stating a criterion for the simplicity of an abelian variety of CM type (\Cref{prop:LangSimple}), which will be used in \Cref{section:decomposition} to obtain the isogeny decomposition of $J_m$ into simple factors.

Finally, we describe a basis -- denoted by $x^i \, dx/y$ -- of the De Rham cohomology of $J_m$.

\subsection{The connected monodromy field}
{\label{section: intro connected monodromy field}}
Let $A$ be an abelian variety defined over a number field $K$ and denote its dimension by $g= \dim A$. Fix an algebraic closure $\overline{K}$ of $K$ and denote by $\Gamma_K = \Gal(\overline{K}/K)$ the corresponding absolute Galois group. Let $\ell$ be a prime number. Consider the Tate module $T_\ell A = \varprojlim_n A[\ell^n] $ and set $V_\ell A = T_\ell A \otimes_{\Z_\ell} \Q_\ell$. We denote by $\rho_{A, \ell}$ the Galois representation
\[ \rho_{A, \ell}\colon \Gamma_K \to \Aut(V_\ell A) \simeq \GL_{2g}(\Q_\ell) \]
induced by the natural $\Gamma_K$-action on the Tate module.
Define the $\ell$-adic monodromy group $\Gl$ as the Zariski closure inside $\GL_{V_\ell A} \cong \GL_{2g, \Q_\ell}$ of the image of $\rho_{A, \ell}$:
\[ \mathcal{G}_\ell = \overline{\rho_{A,\ell}\left( \Gamma_K \right)}\subseteq \GL_{2g, \Q_\ell}. \]
This defines $\Gl$ as a group scheme over $\Q_\ell$.

The monodromy group $\Gl$ might not be connected. In this case, consider the composition of the Galois representation $\rho_{A,\ell}$ with the natural projection on the group of components:
\[ \epsilon_\ell \colon \Gamma_K \to \Gl(\Q_\ell) \to \Gl(\Q_\ell)/\Gl(\Q_\ell)^0. \]
Notice that $\Gl(\Q_\ell)$ is Zariski-dense in $\Gl$ by construction, hence every geometric connected component of $\Gl$ contains a $\Q_\ell$-point. This shows that $\Gl(\Q_\ell)/\Gl(\Q_\ell)^0$ agrees with $\left(\Gl/\Gl^0\right)(\Q_\ell)$, the $\Q_\ell$-points of the group of components of $\Gl$.
The kernel of $\epsilon_\ell$ is an open subgroup of $\Gamma_K$, which Serre proved to be independent of the prime $\ell$ \cite[no.~133]{serre-IV} (see also \cite[Proposition 6.14]{MR1150604}).

\begin{definition}
    The \emph{connected monodromy field} $\KconnA$ is the field corresponding to the open subgroup $\ker\epsilon_\ell \subseteq \Gamma_K$ under Galois correspondence, that is, the subfield of $\overline{K}$ fixed by the open subgroup $\ker\epsilon_\ell$.
\end{definition}
The connected monodromy field $\KconnA$ is the minimal extension $L/K$ such that the Zariski closure of $\rho_{A,\ell}(\Gamma_L)$ in $\Gl$ is connected \cite[Proposition 2.3]{zywina2019effective}. 
Notice that $\KconnA/K$ is a finite Galois extension independent of the prime $\ell$.

We denote by $\KendA$ the minimal extension of $K$ over which all the geometric endomorphisms of $A$ are defined. We always have the containment $\KendA\subseteq \KconnA$, as proven, for example, in \cite[Proposition 2.10]{MR1355128} (see also \cite[Remark 2.4.6]{cantoralfarfan2023monodromy}).

\subsection{The Mumford-Tate group} {\label{sub:MTdef}}
We follow Moonen's expository paper \cite{moonen} and identify Hodge structures with representations of the Deligne torus $\mathbb{S} \colonequals \operatorname{Res}_{\C/\mathbb{R}}(\mathbb{G}_{m,\C})$. We describe in these terms the Hodge decomposition of the singular homology of an abelian variety $A$ and use it to define the Mumford-Tate and Hodge groups of $A$. The aim of this section is to highlight the connection between Hodge classes (in singular cohomology) and Tate classes (in étale cohomology), as made precise by the Mumford-Tate conjecture.

\medskip

Let $A$ be a complex abelian variety and consider the $\Q$-vector space $V = \HH^1(A(\C),\Q)$. Its complexification is naturally endowed with a (pure) Hodge structure of weight $1$, namely a decomposition
\[ V \otimes \C = \HH^1(A(\C),\C) = \HH^{1,0} \oplus \HH^{0,1}, \]
where $\HH^{1,0}$ and $\HH^{0,1}$ are $\C$-vector subspaces that are complex-conjugates of each other \cite[Chapter 1.4]{BirkenhakeLange2004}.
Let $\GL_V$ be the general linear group of $V$, considered as a group scheme over $\Q$, and $\mathbb{S}= \Res_{\C/\R} \mathbb{G}_{m,\C}$ be the Deligne torus. Define the weight map $ w\colon \mathbb{G}_{m,\R} \to \mathbb{S}$ as the algebraic map corresponding to the natural inclusion $\mathbb{G}_{m,\R}(\R)=\R^\times \hookrightarrow \C^\times = \mathbb{S}(\R) $, and the norm map Nm$\colon \mathbb{S} \to \mathbb{G}_{m, \mathbb{R}}$ as the one corresponding to the map $z \mapsto z\overline{z}$ on real points.
We denote by $U_1$ the kernel of the norm map, also called the {norm one subtorus} of $\mathbb{S}$. We have a decomposition $\mathbb{S}(\R)=\C^\times \simeq U_1(\R) \times w(\R^\times)$.
The Hodge structure on $V_\R$ corresponds to a representation $h \colon \mathbb{S} \to \GL_{V,\R}$ such that $h \circ w\colon \mathbb{G}_{m,\R} \to \mathbb{S} \to \GL_{V,\R}$ is defined over $\Q$ \cite[3.3]{moonen}.

\begin{definition}
    The Mumford-Tate group $\MT(A)$ of the abelian variety $A$ is the smallest algebraic subgroup of $\GL_V$, defined over $\Q$, such that the representation $h$ factors through $\MT(A)_\R$.
    The Hodge group $\Hg(A)$ is the smallest algebraic subgroup of $\GL_V$, defined over $\Q$, such that $h_{\mid U_1}\colon U_1 \to \GL_{V,\R}$ factors through $\Hg(A)_\R$.
\end{definition}

One can show that the Mumford-Tate group $\MT(A)$ is a connected reductive algebraic group containing the subgroup of homotheties $\mathbb{G}_m \subseteq \GL_V$ \cite[Proposition 3.6]{Deligne}, \cite[4.9]{moonen}. The Hodge group is the connected component of the identity of the intersection $\MT(A)\cap\SL_V$ \cite[Remark 17.3.1]{BirkenhakeLange2004}.  

If $A$ is an abelian variety defined over a number field $K$, the above constructions apply to the complex abelian variety $A \times_{\sigma} \Spec \C$, where $\sigma$ is any fixed embedding $\sigma : K\hookrightarrow \C$.
The choice of embedding $K \hookrightarrow \C$ does not affect any of the above constructions by work of Deligne \cite[Theorem 2.11]{Deligne}.

We now recall the notion of Hodge classes and their relation to the Mumford-Tate group. Fix non-negative integers $r,\, s$. The tensor product $T^{r,s}V \colonequals V^{\otimes r} \otimes {V^\vee}^{\otimes s}$ inherits a Hodge structure from $V$, of weight $n=r-s$.
An element $v \in T^{r,s}V$ is fixed by the action of the Hodge group $\Hg(A)$ if and only if it is of type $(p,p)$ for some integer $p$ in the Hodge decomposition (that is, $v$ is an element of the rational vector space $T^{r,s}V$ that lies in the $(p,p)$-component of the Hodge decomposition of the complexification $T^{r,s}V \otimes \C$). Likewise, $v$ is fixed by $\MT(A)$ if and only if it is of type $(0,0)$ \cite[4.4]{moonen}. When $s=0$, we write simply $T^rV$ for $T^{r,0}V$.

Fix a positive integer $p$. The Tate structure $\Q(p)$ is the unique one-dimensional rational Hodge structure of weight $-2p$ and type $(-p,-p)$. Equivalently, $\Q(p)$ is the Hodge structure corresponding to the representation $h \colon \mathbb{S} \to \GL(\Q(p))_\R = \mathbb{G}_{m,\R}$ given by the $p$-th power of the norm map \cite[3.4]{moonen}.
The Tate twist $V(p)$ of a Hodge structure $V$ is the tensor representation $V\otimes \Q(p)$.

\begin{definition}{\label{definition:Tate-class}}
    Fix a positive integer $p$ and let $A$ be a complex abelian variety. Write $V = \HH^1(A(\C), \Q) $. An element $v \in \HH^{2p}(A(\C), \Q)(p) \simeq \bigwedge^{2p} V \otimes \Q(p) $ is called a \textit{Hodge class} if it is of type $(0,0)$ in the Hodge decomposition of $\HH^{2p}(A(\C), \C)(p)$. From the argument above \cite[4.4]{moonen}, we can also define Hodge classes as the rational elements of $\HH^{2p}(A(\C), \C)(p)$ that are fixed by the action of the Mumford-Tate group $\MT(A)$.
\end{definition}

Let $g = \dim A$. Since $V$ is a pure Hodge structure of weight $1$ and dimension $2g$, the top exterior power $\bigwedge^{2g}V \simeq \HH^{2g}(A(\C),\Q)$ is a $1$-dimensional Hodge structure of pure weight $2g$ and type $(g, g)$, so it is isomorphic to the Tate structure $\Q(-g)$.
The vector space $\bigwedge^{2g}V \simeq \HH^{2g}(A(\C),\Q)$ has a natural action of $\MT(A)$, given by the composition of the natural action of $\MT(A)$ on $V$ with the determinant. In particular, $\MT(V)$ acts as $\det^{-1}$ on $\Q(g)$.

\medskip

Note that the canonical map $g \mapsto (g^*)^{-1}$ induces an isomorphism between the Mumford-Tate groups of $V$ and $V^\vee$ \cite[Remark 1.8]{moonen2}:
we shall then freely think of $\MT(A)$ as being a subgroup of $\GL_V$ or $\GL_{V^\vee}$.

Suppose that the abelian variety $A$ is defined over a number field $K$ and let $\ell$ be a rational prime number. Through the singular-to-étale isomorphism \cite[Chapter III, Theorem 3.12]{MilneEtaleCo}
\begin{equation}{\label{eq: H1et is dual to Tate module}}
    V {\otimes \Q_\ell} = \HH^1(A(\C), \Q) {\otimes \Q_\ell} \simeq \HH^1_{\text{ét}}(A_{\overline{K}}, \Q_\ell) = (V_\ell A)^\vee,
\end{equation}
the base change $\MT(A)_{\Q_\ell}$ can be identified  with an algebraic subgroup of $\GL\left(V_\ell A\right)$. This allows for a comparison with the image of the $\ell$-adic Galois representation and leads to the following famous conjecture.

\begin{conjecture}[Mumford-Tate]{\label{conj:MTconj}}
    The identity component of the $\ell$-adic monodromy group is the base change to $\Q_\ell$ of the Mumford-Tate group: $\mathcal{G}_{\ell}^0(A) = \MT(A)_{\Q_\ell}$.
\end{conjecture}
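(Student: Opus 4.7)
The plan is to address the two containments $\mathcal{G}_\ell^0(A) \subseteq \MT(A)_{\Q_\ell}$ and $\MT(A)_{\Q_\ell} \subseteq \mathcal{G}_\ell^0(A)$ separately, since they are of very different depth. The first containment is due to Deligne and I would prove it along the following lines. Any Hodge tensor $v \in T^{r,s}V$ corresponds via the comparison isomorphism \eqref{eq: H1et is dual to Tate module} to a class $v_\ell$ in the corresponding tensor power of $V_\ell^\vee$, and Deligne's theorem that \emph{every Hodge class on an abelian variety is absolutely Hodge} shows that $v_\ell$ is a Tate class, hence fixed by an open subgroup of $\Gamma_K$. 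Since $\MT(A)$ is cut out inside $\GL_V$ precisely by the rational lines it fixes in its tensor representations (Chevalley's theorem), this forces the image of $\rho_{A,\ell}$ to land in $\MT(A)(\Q_\ell)$ up to finite index, which upon taking Zariski closures yields $\mathcal{G}_\ell^0 \subseteq \MT(A)_{\Q_\ell}$.

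The reverse containment is the deep, essentially open part of the conjecture. For the present paper only the CM case is needed, and I would prove it following Pohlmann. If $A$ is of CM type, then $\MT(A)$ is an algebraic torus whose cocharacter group is explicitly determined by the CM type $\Phi$ of $A$; in particular, after base change to a sufficiently large field, it is a subtorus of the diagonal torus in $\GL_V$. On the Galois side, the Main Theorem of Complex Multiplication identifies $\rho_{A,\ell}$ (up to finite index) with a continuous character valued in the adelic points of a Serre group attached to $\Phi$. A direct comparison of the character lattices then shows that both $\mathcal{G}_\ell^0$ and $\MT(A)_{\Q_\ell}$ are the same subtorus of $\GL_{V_\ell A}$, giving the desired equality of identity components.

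The main obstacle, should one attempt the general case, is precisely this reverse containment: one must exhibit, for every tensor not fixed by $\MT(A)$, some Frobenius element not fixing it, and this requires controlling the Frobenius conjugacy classes inside $\mathcal{G}_\ell^0$ with a precision currently out of reach outside a few special classes (elliptic curves by Serre's open image theorem, Hodge-generic abelian varieties by Pink, and the CM case discussed above). Since the Jacobians $J_m$ studied in this paper are all of CM type, it suffices for our purposes to invoke Pohlmann's theorem; the genuine work in the rest of the paper will not be to reprove the conjecture but to compute $\MT(J_m)$ explicitly and translate this information, via the conjecture, into statements about the $\ell$-adic Galois representations and ultimately about the Sato--Tate group.
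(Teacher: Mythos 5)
This is stated as a \emph{conjecture} in the paper, not a theorem, so there is no proof in the text to compare against: the authors simply record the statement and note (immediately after) that it is known for CM abelian varieties by Pohlmann, building on Shimura--Taniyama, which is what they invoke for the Fermat Jacobians $J_m$. Your sketch is therefore not reproducing an argument of the paper so much as filling in the content of the citation, and for that purpose it is essentially accurate and matches the standard route. The containment $\mathcal{G}_\ell^0 \subseteq \MT(A)_{\Q_\ell}$ does indeed follow from Deligne's theorem that Hodge classes on abelian varieties are absolutely Hodge, combined with the Chevalley-type characterisation of $\MT(A)$ by fixed tensors; and the reverse containment in the CM case goes via the Main Theorem of Complex Multiplication and a comparison of character/cocharacter lattices of the two tori, as in Pohlmann. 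One small clarification worth making if you were writing this out in full: the first containment as you phrase it (``lands in $\MT(A)(\Q_\ell)$ up to finite index, which upon taking Zariski closures yields $\mathcal{G}_\ell^0 \subseteq \MT(A)_{\Q_\ell}$'') is slightly compressed — the clean statement is that the Zariski closure of $\rho_{A,\ell}(\Gamma_L)$ for any finite extension $L/K$ has the same identity component, so one may pass to a finite extension over which all the relevant Tate classes are Galois-invariant before taking the closure, and then the fixed-tensor characterisation gives the inclusion directly. But the overall structure of your argument is correct and aligns with what the paper cites.
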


The conjecture has been proved for abelian varieties of CM-type in \cite{Pohlmann}, building on the work of Shimura-Taniyama \cite{Shimura}.

A Tate class is an element of $\HH^{2p}_{\text{ét}}(A_{\overline{K}}, \Q_{\ell}(p))$ fixed by the action of an open subgroup of the absolute Galois group $\Gamma_{K}$. 

\begin{remark}\label{rmk: MT iff Tate and Hodge classes biject}
\Cref{conj:MTconj} implies that, through the singular-to-étale isomorphism \cite[Chapter III, Theorem 3.12]{MilneEtaleCo}
\[\HH^{2p}(A(\C),\Q)\otimes \Q_\ell(p) \simeq \HH_{\text{ét}}^{2p}(A_{\overline{K}}, \Q_\ell(p)), \]
the span of Hodge classes on the left is mapped bijectively onto the space of Tate classes on the right.
\end{remark}

\subsection{Complex Multiplication}{\label{sub:CM}}
In this section we give an introduction to abelian varieties with complex multiplication (CM); for a complete exposition, see \cite{Lang}. We define the CM type of an abelian variety $A$ with CM and explain its connection with the geometric decomposition of $A$ up to isogeny. In \Cref{section:decomposition} we use this to obtain a complete factorization of $J_m$.

\medskip

A CM field $E$ is a totally imaginary quadratic extension of a totally real number field $E_0$. For a CM field of degree $[E\colon \Q]=2n$, we define a CM-type as a collection $\Phi=\{ \varphi_1, \dots, \varphi_n \}$ of $n$ distinct complex embeddings $\varphi_i\colon E \hookrightarrow \C$ such that for every $\varphi_i \in \Phi$ the conjugate embedding $\overline\varphi_i$ is not in $\Phi$. %
Notice that a CM-type defines a partition
\[ \Hom(E, \C) = \Phi \sqcup \overline\Phi. \]
Let $\tilde{E}/\Q$ be the Galois closure of $E$, with Galois group $G$, and let $H<G$ be the subgroup corresponding to $E$. Note that $\tilde{E}$ is a CM field. We can identify $\Hom(E, \C)$ with the set of left cosets $G/H$. 

Let $F/E$ be a finite extension of CM fields. Given a CM-type $\Phi$ on $E$, one can lift $\Phi$ to the subset $\Phi_F \subset \Hom(F, \C)$ of all complex embeddings $\varphi \colon F \hookrightarrow \C$ whose restriction to $E$ lies in $\Phi$, i.e., we set $\Phi_F \colonequals \left\{ \varphi  \in \Hom(F,\C) \bigm\vert \varphi_{\mid E} \in \Phi \right\}$.
We say that a CM-type is \emph{simple} if it cannot be obtained as the lift of a CM-type from a proper CM-subfield. One shows from \cite[Chapter I, Lemma 2.2]{Lang} that a CM-type is simple if and only if the stabilizer $\{ \sigma \in G \colon \Phi_{\tilde{E}}\cdot \sigma = \Phi_{\tilde{E}} \} \subseteq G$ coincides with $H$ .

The reflex group $H^\ast$ of $(H, \Phi)$ is defined as the stabilizer for the left action of $G$ on $\Phi_{\tilde{E}}$: we set $H^\ast =\{ \sigma \in G \colon \sigma \cdot \Phi_{\tilde{E}} = \Phi_{\tilde{E}} \}$. The subfield $E^\ast$ fixed by $H^\ast$ is called the \textit{reflex field} of $(E, \Phi_E)$. The reflex field is a CM field equipped with the reflex CM-type $\Phi^\ast := \{  g^{-1}H^\ast \colon g \in G, \, gH \in \Phi \},$
see \cite[Chapter I, Theorem 5.1]{Lang}. The reflex norm is the map
\[ N_{\Phi^\ast} \colon (E^\ast)^\times \to E^\times, \quad x \mapsto \prod_{\varphi \in \Phi^\ast} \varphi(x). \] 

Let $ A $ be an abelian variety of dimension $g$ over a number field $K$. {Fix an embedding $\sigma : K \hookrightarrow \C$ and write the complex uniformization of $A(\C) \colonequals (A \times_\sigma \Spec \C)(\C)$ as $W/\Lambda$, where $W$ is a $g$-dimensional complex vector space and $\Lambda\subseteq W$ is a full-rank lattice.}
Let $E/\Q$ be {a CM field} of degree $2g$. Suppose that we have an embedding
\[ \iota\colon E \hookrightarrow \End^0(A_{\overline{K}}) \colonequals \End(A_{\overline{K}})\otimes_\Z \Q. \] %
The rational representation $R_\Q$ attached to the complex torus $A(\C) \simeq W/\Lambda$ is the action of the endomorphism algebra on the {$\Q$-}vector space $V = \Lambda \otimes \Q$. Composing with the embedding $\iota$, the same vector space $V$ becomes a linear representation of $E$.
{The base-change $V \otimes_\Q \C$} splits as the sum $\bigoplus_{i = 1}^{2g} \varphi_i$ over the complex embeddings $E\hookrightarrow \C$. Notice that we have a similar splitting $V \otimes_\Q F \cong \bigoplus_{i = 1}^{2g} \varphi_i$ for any number field $F$ containing the Galois closure of $E$, so that we can consider the embeddings $\varphi_i$ as taking values in $F$.
The rational representation is also the sum of the {analytic} representation $R_\C$ (defined as the action of the endomorphism algebra on $W$) and its conjugate $\overline{R_\C}$. The first of these representations decomposes as $R_\C = \bigoplus_{i = 1}^{g} \varphi_i$ for a choice of half the complex embeddings, no two of which are complex conjugates of each other. {Thus, the set $\Phi =  \{\varphi_i \colon i=1, \dots, g\}$ is a CM-type of $E$, called the CM-type of the abelian variety $A$ \cite[Chapter I, §3]{Lang}. {Note that the CM-type also depends on the embedding $\iota$, but we will suppress this dependence from the notation.}}
In this situation, we say that $A$ has complex multiplication by $E$ of type $\Phi$. {We will mostly be interested in the case of $E$ being the cyclotomic field $\Q(\zeta_m)$.}

\begin{proposition}{\label{prop:LangSimple}}
    Let $ A $ be an abelian variety with complex multiplication by a CM field $E$ and CM-type $\Phi$. Then $A$ is geometrically simple if and only if $\Phi$ is simple.
\end{proposition}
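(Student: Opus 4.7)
The plan is to prove both implications by contrapositive, working with the isogeny decomposition of $A$. As a preliminary observation, since $A$ has CM by a field $E$ of degree $2g = 2\dim A$, the variety $A$ must be isotypic: writing $A \sim \prod_i B_i^{n_i}$ over $\overline{K}$ with the $B_i$ pairwise non-isogenous and simple gives $\End^0(A_{\overline{K}}) = \prod_i M_{n_i}(\End^0(B_{i,\overline{K}}))$, and the field $E$ can embed into a product of algebras only if it already embeds into a single factor. Hence $A \sim B^k$ for some simple $B$ and $k \geq 1$, and $A$ is geometrically simple precisely when $k=1$.

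To prove ``$A$ geometrically simple $\Rightarrow \Phi$ simple'', I would assume $k \geq 2$ and show that $\Phi$ is a lift. Set $F = \End^0(B_{\overline{K}})$, which by the standard structure theory of simple CM abelian varieties (see e.g.~\cite[Ch.~I]{Lang}) is a CM field of degree $2\dim B$, so that $\End^0(A_{\overline{K}}) = M_k(F)$. Any commutative $\Q$-subalgebra of $M_k(F)$ has degree at most $k[F:\Q] = 2g$, with equality iff the subalgebra is a maximal commutative subfield; since $[E:\Q] = 2g$, the embedding $E \hookrightarrow M_k(F)$ realises $E$ as such a maximal subfield. The $F$-subalgebra generated by $E$ is again commutative and contains $E$, so by maximality $F \subseteq E$. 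Letting $\Psi$ denote the CM-type of $B$, the analytic action of $F \subseteq E$ on $W = \Lie(A(\C))$ decomposes as $k$ copies of its action on $\Lie(B(\C))$, so restriction of characters from $E$ to $F$ sends $\Phi$ onto $\Psi$ with multiplicity $k$. Since each embedding $E' \to \C$ has exactly $[E:E']$ extensions to $E$, and since $|\Phi| = g = k|\Psi|$, this forces $\Phi$ to consist of \emph{every} extension of every element of $\Psi$, i.e., $\Phi = \Psi_E$. Because $F \subsetneq E$, the type $\Phi$ is not simple.

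For the converse, assume $\Phi = \Phi'_E$ is the lift of a CM-type $\Phi'$ from a proper CM subfield $E' \subsetneq E$. By the Shimura-Taniyama existence theorem, there is a complex abelian variety $B$ of dimension $d' = [E':\Q]/2 < g$ with CM by $(E', \Phi')$. Then $B^{[E:E']}$ has dimension $g$, its endomorphism algebra contains $M_{[E:E']}(E')$ into which $E$ embeds as a maximal commutative subfield, and the analysis of the previous paragraph, applied in reverse, identifies the resulting CM-type on $E$ with $\Phi'_E = \Phi$. By the classical uniqueness (up to isogeny) of a CM abelian variety attached to a given CM-type \cite[Ch.~I]{Lang}, we conclude $A \sim B^{[E:E']}$, and since $[E:E'] \geq 2$, $A$ is not geometrically simple. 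The principal technical point throughout is the numerical coincidence that $[E:\Q] = 2\dim A$ forces $E$ to be a maximal commutative subfield of $\End^0(A_{\overline{K}})$; this single fact simultaneously gives the containment $F \subseteq E$ and pins down the compatibility of CM-types between $A$ and its simple factor, and everything else is bookkeeping.
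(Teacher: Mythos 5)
The paper proves this proposition only by citing Shimura and Lang, so there is no in-house argument to compare against; your proposal supplies an actual proof, and the main arc --- reduce to $A \sim B^k$ isotypic, extract the CM subfield $F = \End^0(B) \subseteq E$ and show $\Phi = \Psi_E$, and conversely realise a lifted type as the type of a power via the Shimura--Taniyama existence and uniqueness theorems --- is sound.

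The preliminary step, however, is not actually proved as written. You say that ``the field $E$ can embed into a product of algebras only if it already embeds into a single factor. Hence $A \sim B^k$.'' The first clause is true (composing a unital embedding of a field with each projection gives nonzero, hence injective, maps), but this yields that $E$ embeds into \emph{every} factor of $\prod_i M_{n_i}(\End^0 B_i)$, which by itself does not force the product to have a single factor. The missing step is a dimension count: each factor acts faithfully on $H_1(B_i,\Q)^{n_i}$, a $\Q$-space of dimension $2 n_i \dim B_i$, so any subfield of it has degree at most $2 n_i \dim B_i$; since $E$ of degree $2g$ embeds in each, one gets $2g \leq 2 n_i \dim B_i$ for all $i$, while $\sum_i 2 n_i \dim B_i = 2g$, forcing a single $i$. (More cleanly: $H_1(A,\Q)$ has $\Q$-dimension $2g = [E:\Q]$ and is therefore a one-dimensional $E$-vector space, and the canonical $\End^0(A)$-stable decomposition $\bigoplus_i H_1(B_i,\Q)^{n_i}$ is then an $E$-submodule decomposition of a simple $E$-module, hence trivial.) Two smaller slips are also worth noting: the two implications are labelled backwards --- the argument starting from $k \geq 2$ proves the contrapositive of ``$\Phi$ simple $\Rightarrow A$ simple'', not the other direction --- and ``any commutative $\Q$-subalgebra of $M_k(F)$ has degree at most $k[F:\Q]$'' is false for $k \geq 4$ by Schur's theorem on commutative matrix algebras; you want ``commutative \emph{semisimple} $\Q$-subalgebra'', which suffices here since both $E$ and the \'etale algebra $EF$ it generates with the centre are semisimple.
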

\begin{proof}
    See \cite[Proposition 26, \S II.8.2]{Shimura} or {\cite[Chapter I, Lemma 3.2 and Theorems 3.5 and 3.6]{Lang}}.
\end{proof}

\begin{example}{\label{ex:primecase}}
    Let $p$ be an odd prime number and let $J_p/\Q$ be the Jacobian of the hyperelliptic curve $C_p \colon y^2=x^p+1$. This abelian variety has dimension equal to the genus $g$ of the curve, where $2g = p-1$. Notice that the {group $\mu_p$ of $p$-th roots of unity} acts on $C_p$ via the map $\zeta_p \colon (x,y) \mapsto (\zeta_px, y)$. This gives an embedding
    \[ \iota\colon  \Q(\zeta_p) \hookrightarrow \End^0(J_{p,\Bar{\Q}}). \]%
    To compute the corresponding CM type, consider the complex uniformization given by the cotangent space
    $$\C\langle x^{i} \,{dx}/{y} \colon i = 0, \dots, g-1 \rangle \to J_p $$
    and notice that the action of $\zeta_p$ in this basis is already diagonal, with characters $\chi_i\colon \zeta_p \mapsto \zeta_p^{i+1}$ for $0 \leq i \leq g-1$. These correspond to the complex embeddings 
    \[\Phi_p = \{ a \colon 1 \leq a \leq g, \, (a,p)=1 \} \subseteq (\Z/p\Z)^\times \simeq \Gal( \Q(\zeta_p)/\Q ). \]
    We will show below in \Cref{lemma:simpleCM} that this CM-type is simple, and therefore $J_p$ is {absolutely} simple by \Cref{prop:LangSimple}. {This result is well-known (see for example \cite[Theorem 2]{Kubota1965} or \cite[II.8.4]{Shimura}), but \Cref{lemma:simpleCM} is more general, in that it describes the simplicity (or lack thereof) of the CM type of the Jacobian of $y^2=x^m+1$ when $m$ is not necessarily prime.}
\end{example}

\subsection{De Rham cohomology of \texorpdfstring{$C_m$}{Cm}}{\label{sub:bases}}
Let $C_m/\Q$ be the unique smooth projective curve birational to the affine curve $y^2=x^m+1$ and let $J_m/\Q$ be its Jacobian. {We denote by $g = \lfloor \frac{m-1}{2}\rfloor$ the genus of $C_m$.}
We carefully describe a natural basis for $\HH^1_{\operatorname{dR}}(C_m(\C), \C)$, which we use throughout the paper.

\medskip

Consider the map $x : C_m(\C) \to \mathbb{P}^1(\C)$ and the two open sets $\mathcal{U}_0 = C_m(\C) \setminus x^{-1}(0)$ and $\mathcal{U}_\infty = C_m(\C) \setminus x^{-1}(\infty)$. It is immediate to see that $\mathcal{U}_0$ and $ \mathcal{U}_\infty$ are open and connected and that their union is all of $C_m(\C)$. Using the fact that $\mathcal{U}_0 \cap \mathcal{U}_\infty$ is also connected, the Mayer-Vietoris sequence of the cover $(\mathcal{U}_0,\, \mathcal{U}_\infty)$ yields
\[
\begin{array}{ccccccc}
0 & \to & \HH^1(C_m(\C), \C) & \to & \HH^1(\mathcal{U}_0, \C) \oplus \HH^1(\mathcal{U}_\infty, \C) & \to & \HH^1(\mathcal{U}_0\cap \mathcal{U}_\infty,\mathbb{C}) \\
&&&& ([\omega_0], [\omega_\infty]) & \mapsto & [\omega_0|_{\mathcal{U}_0\cap \mathcal{U}_\infty} - \omega_\infty|_{\mathcal{U}_0\cap \mathcal{U}_\infty}]
\end{array}
\]
where for simplicity we write $\HH^1$ for $\HH^1_{\operatorname{dR}}$. That is, a class in the de Rham cohomology of $C_m(\C)$ can be represented by a pair of closed forms $(\omega_0, \omega_\infty)$, defined and regular over $\mathcal{U}_0, \mathcal{U}_\infty$ respectively, and such that $\omega_0-\omega_\infty$ is an exact form on $\mathcal{U}_0 \cap \mathcal{U}_\infty$. We will represent such data as a triple $(\omega_0, \omega_\infty, f)$, where $\omega_0-\omega_\infty = df$ on $\mathcal{U}_0 \cap \mathcal{U}_\infty$.

Using \cite[Theorem 3.2]{MR3782449}, one can prove that the classes of the following triples $(\omega_0, \omega_\infty, f)$ form a basis of $\HH^1_{\operatorname{dR}}(C_m(\C), \C)$:
\begin{enumerate}
    \item $\omega_{i} = \left( x^{i-1} \frac{dx}{y}, x^{i-1} \frac{dx}{y}, 0 \right)$ for $i=1,\ldots,g$;\label{eqn:basis1}
    \item $\eta_{i} = \left( \frac{-2i}{2x^{i+1}} \frac{dx}{y}, \frac{(2i-m)x^m}{2x^{i+1}} \frac{dx}{y}, \frac{y}{x^i} \right)$ for $i=1, \ldots, g$.\label{eqn:basis2}
\end{enumerate}
Note that the first $g$ basis elements in \eqref{eqn:basis1} generate the subspace $\HH^0(C_m(\C), \Omega^1)$, while the (classes of the) last $g$ in  \eqref{eqn:basis2} span the quotient $\HH^1(C_m(\C), \mathcal{O}_{C_m(\C)})$ appearing in the canonical sequence
\[
0 \to \HH^0(C_m(\C), \Omega^1) \to \HH^1_{\operatorname{dR}}(C_m(\C), \C) \to \HH^1(C_m(\C), \mathcal{O}_{C_m(\C)}) \to 0.
\]
This sequence splits canonically by Hodge theory.
The class $\eta_i$ is represented over $\mathcal{U}_\infty$ by
$ (2i-m)/2 \cdot x^{m-1-i}\, {dx}/{y}$ and we set $\omega_{m-i}=2\eta_i/(2i-m)$ for $i=1, \dots, g$.
{Note that we have thus constructed a basis $(\omega_i)_{i \in I}$ of $\HH^1_{\operatorname{dR}}(C_m(\C), \C)$ where the index set $I$ is given by $\{1,2,\ldots,2g-1, 2g\}$ when $m=2g+1$ is odd and by $\{1,2,\ldots,g,g+2,g+3,\ldots,2g, 2g+1\}$ when $m=2g+2$ is even. Note that there is no basis vector $\omega_{m/2}$. By a slight abuse of notation, we will sometimes write $x^{i-1} dx/y$ for $\omega_i$ (note that for all indices $i$ the class $\omega_i$ can be represented by $x^{i-1} dx/y$ on $\mathcal{U}_\infty$).
}

{From now on, we identify $\Q(\zeta_m)$ with a subfield of $\C$ via the embedding that takes $\zeta_m$ to $\exp(2\pi i/m)$.}
We denote by $\alpha_m$ the automorphism 
\begin{equation}\label{eq: definition of alpha_m}
\alpha_m : (x,y) \mapsto (\zeta_m x,y)    
\end{equation}
of $C_m$; it is defined over $\Q(\zeta_m)$, hence in particular over $\C$. {We denote by $\alpha_m^*$ the automorphism of $\HH^1_{\operatorname{dR}}(C_m(\C), \C)$ induced by $\alpha_m$.}

\begin{proposition}\label{prop:eigenbase}
    The class $\omega_i$ is an eigenvector of $\alpha_m^*$ with eigenvalue $\zeta_m^i$, for $i = 1,\, 2 \,\dots,\, g$ and $i = m-1,\, m-2, \, \dots, \, m-g$.
\end{proposition}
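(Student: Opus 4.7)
The plan is a direct computation on the Mayer--Vietoris triples that represent the basis elements. First I would observe that the automorphism $\alpha_m \colon (x,y) \mapsto (\zeta_m x, y)$ preserves both open sets $\mathcal{U}_0$ and $\mathcal{U}_\infty$, since it sends the fibres $x^{-1}(0)$ and $x^{-1}(\infty)$ to themselves. Therefore $\alpha_m^*$ is induced on cohomology by acting componentwise on triples $(\omega_0, \omega_\infty, f)$, and pullback under $\alpha_m$ commutes with the Mayer--Vietoris description.

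For the holomorphic half, i.e.\ $1 \leq i \leq g$, the class $\omega_i$ is represented by the globally defined triple $(x^{i-1}\,dx/y,\, x^{i-1}\,dx/y,\, 0)$. A direct pullback gives
\[
\alpha_m^*\!\left( x^{i-1}\frac{dx}{y} \right) = (\zeta_m x)^{i-1} \frac{d(\zeta_m x)}{y} = \zeta_m^{i-1} \cdot \zeta_m \cdot x^{i-1}\frac{dx}{y} = \zeta_m^i \cdot x^{i-1}\frac{dx}{y},
\]
so $\alpha_m^* \omega_i = \zeta_m^i \omega_i$ as required.

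For the remaining indices $i = m-j$ with $1 \leq j \leq g$, I would use the definition $\omega_{m-j} = \tfrac{2}{2j-m}\eta_j$ together with the explicit triple for $\eta_j$ recalled in the preceding paragraph. Applying $\alpha_m^*$ to each of the three components in turn: the first component $\tfrac{-2j}{2 x^{j+1}}\tfrac{dx}{y}$ picks up a factor $\zeta_m^{-(j+1)} \cdot \zeta_m = \zeta_m^{-j}$; the second component, a scalar multiple of $x^{m-1-j}\,dx/y$, picks up $\zeta_m^{m-1-j}\cdot \zeta_m = \zeta_m^{m-j}$; and the gluing function $y/x^j$ is scaled by $\zeta_m^{-j}$. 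Since $\zeta_m^{m-j} = \zeta_m^{-j}$, all three scalars coincide, and we conclude $\alpha_m^* \omega_{m-j} = \zeta_m^{-j}\omega_{m-j} = \zeta_m^{m-j}\omega_{m-j}$, which is $\zeta_m^i \omega_i$ for $i=m-j$.

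There is no serious obstacle here: the only subtlety is to confirm that the Mayer--Vietoris presentation is $\alpha_m^*$-equivariant, which follows from the invariance of the cover $(\mathcal{U}_0,\mathcal{U}_\infty)$ under $\alpha_m$; after that everything reduces to tracking a single factor $\zeta_m$ coming from $dx \mapsto \zeta_m\, dx$ and the appropriate powers of $\zeta_m$ coming from the monomials in $x$.
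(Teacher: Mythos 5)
Your proof is correct and follows essentially the same route as the paper's: a direct pullback computation on the Mayer--Vietoris representatives, keyed to the observation that $\alpha_m$ preserves the cover $(\mathcal{U}_0,\mathcal{U}_\infty)$. The only cosmetic difference is that you also track the scaling of the gluing function $y/x^j$, whereas the paper checks the two restrictions and invokes the injectivity of $\HH^1(C_m)\to\HH^1(\mathcal{U}_0)\oplus\HH^1(\mathcal{U}_\infty)$ to conclude; both steps are sound and equivalent in substance.
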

\begin{proof}
This is obviously the case for the holomorphic differentials $x^{i-1}\,{dx}/{y}$ in \eqref{eqn:basis1}. %
As for the classes in (2), we check that they are eigenvectors for $\alpha_m$ upon restriction to $\mathcal{U}_0$ and $\mathcal{U}_\infty$, with the same eigenvalue. This will imply that the classes they represent in $\HH^1_{\operatorname{dR}}(C_m(\C), \C)$ are also eigenvectors for the same eigenvalue. Note that the open sets $\mathcal{U}_0$ and $\mathcal{U}_\infty$ are stable under multiplication by $\zeta_m$. We have
\[
\alpha_m^\ast\left( \frac{-2i}{2x^{i+1}} \frac{dx}{y} \right) = \frac{-2i}{2 \zeta_m^{i+1} x^{i+1}} \cdot \zeta_m \frac{dx}{y} = \zeta_m^{m-i} \cdot \frac{-2i}{2x^{i+1}} \frac{dx}{y},
\]
and similarly
\[
\alpha_m^\ast\left( \frac{(2i-m)x^m}{2x^{i+1}} \frac{dx}{y} \right) = \frac{(2i-m)x^m}{2 \zeta_m^{i+1} x^{i+1}} \cdot \zeta_m \frac{dx}{y} = \zeta_m^{m-i} \cdot \frac{(2i-m)x^m}{2x^{i+1}} \frac{dx}{y}. 
\] 
\end{proof}

\section{The decomposition of Fermat Jacobians}{\label{section:decomposition}}

Denote by $J_m/\Q$ the Jacobian variety of the hyperelliptic curve $C_m/\Q$ defined by the affine equation $y^2=x^m+1$. In this section, we study the factorization of $J_m$ into simple abelian varieties up to isogeny, both over $\Q$ and over its algebraic closure. In doing so, we prove \cite[Conjecture 4.5]{Heidi}, which is stated for the twist of $C_m$ given by the affine equation $y^2=x^m-1$. Partial information about the structure of the decomposition of $J_m$ is scattered throughout the literature (see for example \cite{MR0931215, MR0491708} or Rohrlich's appendix to \cite{MR0480542}), but \Cref{th:Jm-factorization} below is more precise, in that it gives information about the simplicity of the various factors and their fields of definition. We also compute the endomorphism field of $J_m$ for all $m$, see \Cref{th: endomorphism field of Jm complete version}.

\begin{restatable}{theorem}{Jmfactorization}
\label{th:Jm-factorization} Denote by $\phi$ the Euler totient function.
	Let $ m $ be a positive integer. The abelian variety $ J_m $ admits a canonical factorization up to isogeny over $\Q$ which contains exactly one factor $ X_d $ for each positive divisor $ d $ of $ m $ except $1$ and $2$:
    \[ J_m \sim \prod_{\substack{d \mid m,\\ d \neq 1,2}} X_d. \]
    The abelian variety $ X_d $ has dimension $ \phi(d)/2 $. Furthermore:
    \begin{enumerate}
        \item the geometric endomorphism algebra $\End^0(X_{d, \overline{\Q}})$ of $X_d$ contains a copy of $\Q(\zeta_d)$. 
        \item if $d = p$ is an odd prime or $d=4$, then $X_d$ is isogenous to $J_{d} = \Jac(y^2=x^{d}+1)$ over $\Q$.
        \item if $ d $ is odd or $d=4$, then $X_d$ is geometrically simple and has complex multiplication by $\Q(\zeta_d)$.
        \item if $ d = 4k+2 $, then $X_d$ has complex multiplication by $\Q(\zeta_d)$. Moreover, $X_d$ is isogenous to $X_{d/2}$ and {is geometrically} simple; the isogeny is defined over $\Q$.
        \item if $ d = 4k $ with $d \not \in \{ 4,\, 20,\, 24,\, 60 \}$, then $X_d \sim Y_d^2$ is isogenous to the square of a geometrically simple abelian variety $Y_d$ with complex multiplication by $\Q(\zeta_d-\zeta_d^{-1}).$
        The isogeny class of $Y_d$ contains a representative defined over $\Q$; {for this representative,} the isogeny is defined over $\Q(\zeta_d).$
        \item if $d = 20,\, 24,\, 60$, then $X_d \sim Y_d^4$ is isogenous over $\overline{\Q}$ to the 4th power of a geometrically simple abelian variety $Y_d / \overline{\Q}$. 
    \end{enumerate}
    Furthermore, the abelian varieties in the set
    \[
    \{X_d : d \text{ odd}\} \cup
    \{Y_d : d \equiv 0 \pmod 4,\ d>4\}
    \]
    are pairwise non-isogenous over $\Qbar$. The only $\Qbar$-isogeny between $X_4$ and a variety in this set is $X_{4,\Qbar} \sim Y_{12,\Qbar}$.
\end{restatable}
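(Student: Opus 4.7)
The plan is to build the decomposition from the $\mu_m$-action on $J_m$ induced by $\alpha_m : (x,y) \mapsto (\zeta_m x, y)$. This automorphism gives a $\Q$-algebra homomorphism
\[
\Q[t]/(t^m - 1) \;\cong\; \prod_{d \mid m} \Q(\zeta_d) \;\longrightarrow\; \End^0(J_{m, \overline{\Q}}),
\]
whose image is stable under $\Gal(\Q(\zeta_m)/\Q)$ since this group sends $\alpha_m$ to a power of itself and therefore permutes the primitive $d$-th roots of unity inside each $\Q(\zeta_d)$-factor. Consequently the primitive idempotents $e_d$ are Galois-fixed and hence lie in $\End^0(J_m/\Q)$, giving a $\Q$-rational decomposition $J_m \sim \prod_{d \mid m} X_d$ with $X_d := e_d J_m$ and the canonical embedding $\Q(\zeta_d) \hookrightarrow \End^0(X_{d, \overline{\Q}})$, establishing (1). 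For dimensions I would invoke \Cref{prop:eigenbase}: $\alpha_m^*$ acts on $\HH^1_{\operatorname{dR}}(C_m(\C), \C)$ diagonally with eigenvalues $\{\zeta_m^i : 1 \le i \le m-1\}$, with $i = m/2$ omitted when $m$ is even. Thus $e_d \HH^1$ is the span of those eigenvectors whose eigenvalues are primitive $d$-th roots of unity, giving $X_1 = 0$ always, $X_2 = 0$ when $m$ is even (as $-1$ is absent from the spectrum), and $\dim X_d = \phi(d)/2$ for $d \geq 3$.

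For (2), the covering $\pi : C_m \to C_p$, $(x, y) \mapsto (x^{m/p}, y)$, has deck group $\mu_{m/p}$, so the image of the pullback $\pi^* : J_p \to J_m$ is, up to isogeny, the locus of $\mu_{m/p}$-invariants in $J_m$, which by the eigenvalue description coincides with $\bigoplus_{d \mid p} X_d = X_1 \oplus X_p = X_p$. Since $\dim J_p = \dim X_p$, this produces a $\Q$-rational isogeny $J_p \to X_p$. For (3), by \Cref{prop:LangSimple} I would only need to verify that the CM type of $X_d$ is simple for odd $d$, which is the content of the forthcoming \Cref{lemma:simpleCM} generalising the prime case of \Cref{ex:primecase}.

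For the remaining items, the key tool is the $\Q$-rational involution $\tau : (x, y) \mapsto (1/x, y/x^{m/2})$, well defined for even $m$, which satisfies $\tau \alpha_m \tau^{-1} = -\alpha_m^{-1}$ in $\End^0(J_m)$ (using that the hyperelliptic involution acts as $-1$). Conjugation by $\tau$ therefore induces the automorphism $t \mapsto -t^{-1}$ of $\Q[t]/(t^m-1)$, which sends $e_d$ to $e_{d^*}$ where $d^*$ is the order of $-\zeta_d^{-1}$; a direct case analysis gives $d^* = d/2$ if $d \equiv 2 \pmod{4}$ and $d^* = d$ if $d \equiv 0 \pmod{4}$. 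For $d = 4k+2$ this means $\tau$ restricts to a $\Q$-rational isomorphism $X_d \to X_{d/2}$, proving (4). For $d = 4k$, $\tau$ is an extra endomorphism of $X_d$ that conjugates $\zeta_d$ to $-\zeta_d^{-1}$ and fixes the index-two subfield $F := \Q(\zeta_d - \zeta_d^{-1})$, generating a crossed-product quaternion algebra over $F$; since $\tau^2 = 1$ this algebra has trivial Brauer class and splits as $M_2(F)$, yielding $X_d \sim Y_d^2$ with $Y_d$ simple and carrying CM by $F$ (the small boundary case $d = 4$ is absorbed into (1)-(3) since $X_4$ is itself a simple elliptic curve). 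The isogeny is defined over $\Q(\zeta_d)$, which is the field over which the central idempotents of $M_2(F)$ can be realised. The three exceptional values $d \in \{20, 24, 60\}$ are those in which $Y_d$ would be of dimension one or two and thereby acquires additional endomorphisms from isolated CM coincidences, upgrading the decomposition to $X_d \sim Y_d^4$; these I would handle by explicit case-by-case identification. Finally, pairwise non-isogeny of the listed simple factors follows from the fact that their endomorphism fields, together with the dimension data, determine $d$ uniquely among the indexing set.

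The main obstacle will be case (5): beyond verifying the splitting of the quaternion algebra, one must exhibit a $\Q$-rational representative in the isogeny class of $Y_d$ (a descent from $\Q(\zeta_d)$), and for the three exceptional values $d \in \{20, 24, 60\}$ the argument rests on explicitly identifying the extra isogenies arising from fine-tuned CM coincidences in small dimensions.
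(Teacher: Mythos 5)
Your idempotent-based construction of the decomposition is a genuinely different and in some ways cleaner route than the paper's. The paper obtains the $X_d$ by induction on the number of prime factors of $m$, building up the subvariety $Y = \sum_i J_{m/p_i} \subseteq J_m$ and defining $X_m$ by the exact sequence $0 \to Y \to J_m \to X_m \to 0$; you instead pull the decomposition $\Q[t]/(t^m-1) \cong \prod_{d\mid m}\Q(\zeta_d)$ through $t \mapsto \alpha_m^\ast$ and observe that the primitive idempotents $e_d$ are Galois-fixed because $\Gal(\Q(\zeta_m)/\Q)$ acts on the algebra by $t \mapsto t^u$. This immediately yields a $\Q$-rational decomposition $J_m \sim \prod_d e_d J_m$, absorbs the dimension count and parts (1)--(2) with almost no work, and makes the Galois-descent of the factors transparent. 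The computation $\tau\alpha_m\tau^{-1} = -\alpha_m^{-1}$ followed by the order analysis of $-\zeta_d^{-1}$ is also a nicer way to see why $\tau$ identifies $X_d$ with $X_{d/2}$ when $d \equiv 2 \pmod 4$ and preserves $X_d$ when $4 \mid d$; it is the same map $\beta$ that the paper uses, but phrased through the algebra.

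There are, however, two concrete gaps in the handling of the case $4 \mid d$. First, the quaternion-algebra argument establishes $M_2(F) \subseteq \End^0(X_{d,\overline{\Q}})$, hence $X_d \sim Y_d^2$ over $\overline{\Q}$ with $F \subseteq \End^0(Y_{d,\overline{\Q}})$; but it does \emph{not} establish that $Y_d$ is simple. Simplicity of $Y_d$ requires that the stabilizer of the CM-type $\Phi_d$ in $(\Z/d\Z)^\times$ be \emph{exactly} the order-$2$ subgroup $\{1, d/2-1\}$; your involution only shows the stabilizer contains it. The paper's \Cref{lemma:simpleCM} (which you invoke for (3) but not for (5)) does compute the full stabilizer, and its content is precisely what separates the generic $4\mid d$ case from $d\in\{20,24,60\}$, where the stabilizer has order $4$. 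Your heuristic that the exceptional triple is singled out by $Y_d$ having dimension one or two is not correct: $d = 12$ gives $\dim Y_{12} = 1$ yet is not exceptional, and the lemma shows the relevant invariant is the stabilizer size, not $\dim Y_d$. Second, the claim that the isogeny class of $Y_d$ contains a $\Q$-rational representative needs an actual construction; the paper produces one by taking the kernels $X_d^{\pm} = \ker[1\pm\beta]^0$ (both defined over $\Q$) and exhibits the cross-isogeny via $\gamma$ over $\Q(\zeta_d)$. You flag this as the main obstacle, and it is: the crossed-product argument by itself lives entirely over $\overline{\Q}$ and says nothing about a $\Q$-model. Finally, note that $d = 4$ must be excluded from (5) outright since $\dim X_4 = 1$; \Cref{lemma:simpleCM} confirms that the stabilizer degenerates to $\{1\}$ there, so $X_4$ is simple and your remark about absorbing it into (1)--(3) is correct, but this should be stated explicitly rather than left implicit.
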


\begin{remark}
    It will be clear from the construction that the isogeny class of the abelian variety $X_d$ depends only on $d$ (and not on the choice of a multiple $m$ of $d$).
\end{remark}

We split the lengthy proof of \Cref{th:Jm-factorization} into smaller steps, which will occupy Sections \ref{subsect: decomposition over Q} through \ref{sect: exceptional cases 20 24 60}.

\subsection{Decomposition over \texorpdfstring{$\Q$}{Q}}\label{subsect: decomposition over Q}

Let $ m = p_1^{e_1} \dots p_r^{e_r} $ be the prime factorization of $ m $. We will argue by induction on the total exponent $ e = \sum_{i=1}^r e_i $. The base case is that of prime $ m $ (i.e., $e=1$), which we already discussed in Example \ref{ex:primecase}. For $m=2$, notice that the curve $y^2=x^2+1$ has genus 0, hence its Jacobian is trivial.
For $m=4$, the elliptic curve $J_4$ is defined by $y^2=x^4+1$ and has complex multiplication by $\Q(i)$.
Denote by $g(m)$ the dimension of $J_m$.
	
Consider the partially ordered set of positive divisors of $ m $. For every two comparable divisors $ d_2 \mid d_1 \mid m $, consider the maps
	\[ C_m \to C_{d_1} \to C_{d_2}, \qquad (x,y) \mapsto (x^{{m}/{d_1}}, y) \mapsto (x^{{m}/{d_2}},y). \]
	The corresponding pull-back maps on the Jacobians $ J_{d_2}\to J_{d_1}\to J_m $ are embeddings, up to isogeny.
	Since all these maps are compatible (they fit in a commutative diagram indexed by the divisibility poset of $ m $), we may identify all these varieties with their image in $ J_m $, where they are contained in one another as in the diagram. We are interested in characterizing their sum $ Y = \sum_{i=1}^r J_{m/p_i} \subseteq J_m $. The inductive hypothesis applies to every $ J_{m/p_i} $.

\begin{lemma}{\label{lemma: decomposition into Xd}}
   The subvariety $Y$ decomposes up to isogeny as the product $\prod_{d \mid m, \, d\neq 1,2,m} X_d$ over all proper divisors $d$ of $m$ distinct from $2$.
\end{lemma}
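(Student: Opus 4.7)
The proof will proceed by induction on the total exponent $e = \sum_i e_i$ of $m = p_1^{e_1}\cdots p_r^{e_r}$, with the base case $e = 1$ furnished by \Cref{ex:primecase}. For the inductive step, applying the inductive hypothesis to each quotient $m/p_i$ (which has strictly smaller total exponent) produces an isogeny
\[
J_{m/p_i} \sim \prod_{\substack{d \mid m/p_i \\ d \neq 1, 2}} X_d,
\]
where each $X_d$ is realised, up to isogeny, as an abelian subvariety of $J_d$.

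The first key point is that this realisation is compatible with further pullback: the chain $J_d \hookrightarrow J_{m/p_i} \hookrightarrow J_m$ agrees, for every $i$ with $d \mid m/p_i$, with the single pullback morphism attached to $C_m \to C_d$, $(x,y) \mapsto (x^{m/d}, y)$. Consequently, the image of $X_d$ inside $J_m$ is well-defined (up to isogeny) independently of the intermediate step $m/p_i$. The second key point is purely combinatorial: as $i$ varies, the set $\bigcup_i \{d : d \mid m/p_i,\, d \neq 1, 2\}$ coincides with the set of proper divisors of $m$ distinct from $1$ and $2$. Indeed, the inclusion $\subseteq$ is immediate, and for $\supseteq$ any proper divisor $d$ of $m$ satisfies $m/d > 1$, so some $p_i$ divides $m/d$ and hence $d \mid m/p_i$. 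Combining these two observations, $Y = \sum_i J_{m/p_i}$ equals, up to isogeny, the sum of subvarieties $\sum_{d \mid m,\, d \notin \{1,2,m\}} X_d$ inside $J_m$.

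It remains to upgrade this sum to a product, for which one must verify that the various $X_d$ share no common simple factor. This is the pairwise non-isogeny statement at the end of \Cref{th:Jm-factorization}, which is available here by induction since every $d$ in the sum is a proper divisor of $m$, hence already appears in the decomposition of some $J_{m/p_i}$. A dimension check then seals the argument: writing $g(m)$ for the dimension of $J_m$ and applying Euler's identity $\sum_{d \mid m} \phi(d) = m$ (while subtracting the contributions from $d = 1$, $d = m$, and from $d = 2$ when $2 \mid m$) yields
\[
\sum_{\substack{d \mid m \\ d \notin \{1,2,m\}}} \phi(d)/2 \;=\; g(m) - \phi(m)/2,
\]
which matches $\dim Y$ in both parities of $m$. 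The main obstacle is the apparent circularity: the pairwise non-isogeny of the $X_d$ is part of the very theorem being proved, so one must be careful that this statement is indeed accessible from the inductive hypothesis. This causes no genuine difficulty, because all the $X_d$ that appear are attached to proper divisors of $m$ and have therefore been treated at an earlier stage of the induction.
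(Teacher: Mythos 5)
Your overall strategy matches the paper's (induction on the total exponent, apply the inductive hypothesis to each $J_{m/p_i}$, combine), but the step that upgrades the sum to a product has a genuine gap. You appeal to ``the pairwise non-isogeny statement at the end of \Cref{th:Jm-factorization}'' to conclude that the various $X_d$ share no common simple factor. That statement, however, only asserts that the $X_d$ for \emph{odd} $d$ together with the $Y_d$ for $d \equiv 0 \pmod 4$ are pairwise non-isogenous. It does \emph{not} say that the $X_d$ for all $d$ are pairwise non-isogenous -- on the contrary, part (4) of the same theorem states explicitly that $X_d \sim X_{d/2}$ over $\Q$ whenever $d \equiv 2 \pmod 4$. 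Concretely, for $m = 30$ the sum $Y = J_{15} + J_{10} + J_6$ involves $X_3, X_5, X_6, X_{10}, X_{15}$ with $X_3 \sim X_6$ and $X_5 \sim X_{10}$, so these factors do share simple isogeny components. Consequently the non-isogeny criterion cannot be used to deduce that the sum is direct. The concluding dimension check does not rescue the argument either: it verifies that $\sum_{d} \phi(d)/2 = g(m) - \phi(m)/2$, which is just Euler's identity, but it offers no independent computation of $\dim Y$ -- to know $\dim Y$ one must control the intersections $J_{m/p_i} \cap J_{m/p_j}$, which is precisely the unresolved point.

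The paper's proof avoids this by computing those intersections directly. Looking at the complex uniformizations, $J_{m/p_i}$ corresponds to the span of $x^{ap_i - 1}\,dx/y$, and the intersection of the subspaces for $J_{m/p_1}$ and $J_{m/p_2}$ is exactly the subspace for $J_{m/p_1 p_2}$; no additional simple factors appear in $J_{m/p_1} \cap J_{m/p_2}$. This gives an inclusion-exclusion control of $\dim Y$ and, more importantly, shows that even abstractly isogenous factors such as $X_3$ and $X_6$ sit as genuinely independent subvarieties of $J_m$ (intersecting in a finite group), which is the fact you actually need. Note that your argument as written would go through for odd $m$ (where all $X_d$ have odd $d$ and really are pairwise non-isogenous), but it does not extend to even $m$, and the lemma is stated for all $m$.
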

\begin{proof}    
Notice that, for any two different prime factors $ p_1, p_2 $ of $ m $, any common divisor $ d $ of $ m/p_1 $ and $ m/p_2 $ is a divisor of $ m/p_1p_2 $. The corresponding simple factors $ X_d^{(1)} $ and $ X_d^{(2)} $ of $ J_{m/p_1} $ and $ J_{m/p_2} $ actually come from the inclusion $ J_{m/p_1p_2} \subseteq J_{m/p_1} \cap J_{m/p_2} $ and are therefore the same unique factor.
	
At the level of complex uniformizations, the inclusion $ J_{m/p_i} \subseteq J_m $ corresponds to the inclusion of complex vector spaces
\[
    \C \langle x^{ap_i-1} \, dx/y
    \mid a = 1,\, \dots,\, g(m/p_i)  \rangle \hookrightarrow
    \C\langle x^{j} \, dx/y \mid
    j = 0,\, \dots,\, g(m)-1 \rangle.
\]
The intersection of the uniformizing vector spaces for $ J_{m/p_1} $ and $ J_{m/p_2} $ is the space uniformizing $ J_{m/p_1p_2} $. Thus, the intersection $ J_{m/p_1} \cap J_{m/p_2} $ contains no extra simple factors in addition to those of $ J_{m/p_1p_2} $.
	
From these observations and the inductive hypothesis, we obtain that the sum $ Y $ is the product of a unique abelian variety $ X_d $ for every proper divisor $ d $ of $ m $ (up to isogeny). %
\end{proof}
	
We now focus on the short exact sequence
\begin{equation}{\label{eq: ses splitting J}}
    0 \to Y \to J_m \to X_m \to 0
\end{equation}
defining $X_m$ and the corresponding short exact sequence of uniformizing complex vector spaces. Notice that \Cref{lemma: decomposition into Xd} and Equation (\ref{eq: ses splitting J}) give the first part of the statement of \Cref{th:Jm-factorization}. 
We can compute the dimension of $X_m$ using the inductive hypothesis and the previous lemma:
\[ \dim X_m = \dim J_m - \dim Y = g(m) - \sum_{\substack{d \mid m,\\ d \neq 1,2,m}} \frac{1}{2}\phi(d) = \phi(m)/2. \]
We are left with investigating the simplicity of the quotient abelian variety $ X_m $; in order to do so, we show that it has complex multiplication and determine its CM-type.

\begin{lemma}\label{lemma: CM type of X_m}
    The abelian variety $X_m$ has complex multiplication by $\Q(\zeta_m)$ with CM-type
    \begin{equation}{\label{eq:CMtype-of-X_m}}
        \Phi_m = \{ \sigma_j\colon \zeta_m \mapsto \zeta_m^j \colon (j,m)=1, \, 1\leq j \leq g(m) \}.
    \end{equation}
\end{lemma}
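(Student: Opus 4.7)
The plan is to exploit the automorphism $\alpha_m : (x,y) \mapsto (\zeta_m x, y)$ of $C_m$ and its explicit action on the basis of $\HH^1_{\operatorname{dR}}(C_m(\C), \C)$ recorded in \Cref{prop:eigenbase}. The starting compatibility is that, for every prime $p_j \mid m$, the quotient map $\pi_j \colon C_m \to C_{m/p_j}$, $(x,y) \mapsto (x^{p_j}, y)$, satisfies $\pi_j \circ \alpha_m = \alpha_{m/p_j} \circ \pi_j$, because $\zeta_m^{p_j} = \zeta_{m/p_j}$. Consequently each subvariety $J_{m/p_j} \subseteq J_m$ is $\alpha_m$-stable, and so is the sum $Y = \sum_j J_{m/p_j}$; thus $\alpha_m$ descends to an endomorphism $\bar{\alpha}_m$ of $X_m = J_m/Y$.

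The next step is to identify the cotangent space of $X_m$ inside that of $J_m$. By \Cref{prop:eigenbase}, $\alpha_m^\ast$ is diagonal on the basis $\omega_i = x^{i-1}\,dx/y$, $1 \leq i \leq g(m)$, of $\HH^0(C_m,\Omega^1)$, with $\alpha_m^\ast \omega_i = \zeta_m^i \omega_i$. Pullback along $\pi_j$ sends $u^{a-1}\,du/y$ to $p_j\,\omega_{a p_j}$, so the cotangent image of $J_{m/p_j}$ in $J_m$ is spanned by $\{\omega_{a p_j} : 1 \leq a \leq g(m/p_j)\}$. A parity-based check shows that, as $j$ ranges over the prime divisors of $m$ and $a$ over the allowed range, the indices $a p_j$ are exactly those $i \in \{1,\ldots,g(m)\}$ with $\gcd(i,m) > 1$. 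Therefore the cotangent space of $X_m$ inherits as a basis $\{\omega_i : 1 \leq i \leq g(m),\ (i,m) = 1\}$, on which $\bar\alpha_m^\ast$ acts diagonally with eigenvalues $\zeta_m^i$ for $i \in S_m := \{i : 1 \leq i \leq g(m),\ (i,m) = 1\}$.

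Finally, I would verify that $\bar\alpha_m$ generates a subring of $\End(X_m)$ isomorphic to $\Z[\zeta_m]$. The rational representation of $\Z[\bar\alpha_m]$ is the sum of the analytic representation (with eigenvalues $\zeta_m^i$, $i \in S_m$) and its complex conjugate (with eigenvalues $\zeta_m^{m-i}$, $i \in S_m$), and a quick case analysis on the parity of $m$ gives $S_m \cup (-S_m \bmod m) = (\Z/m\Z)^\times$: the only integer in $\{1,\ldots,m-1\}$ potentially missing is $m/2$, which is either not an integer (if $m$ is odd) or satisfies $\gcd(m/2,m) = m/2 \geq 2$ (if $m$ is even). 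Hence the minimal polynomial of $\bar\alpha_m$ is the $m$-th cyclotomic polynomial, producing an embedding $\Q(\zeta_m) \hookrightarrow \End^0(X_{m,\overline{\Q}})$. Since $\dim X_m = \phi(m)/2 = \tfrac12 [\Q(\zeta_m):\Q]$, this exhibits $X_m$ as an abelian variety with complex multiplication by $\Q(\zeta_m)$, and --- following the convention of \Cref{ex:primecase} --- reading the characters directly off the cotangent-space action yields the CM-type $\Phi_m = \{\sigma_j : 1 \leq j \leq g(m),\ (j,m)=1\}$.

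The main obstacle I expect is the bookkeeping in the second step: one must verify, in each of the parity combinations for $m$ and $m/p_j$, that $\{a p_j : 1 \leq a \leq g(m/p_j)\}$ equals $\{i \in \{1,\ldots,g(m)\} : p_j \mid i\}$. This reduces to the identity $\lfloor g(m)/p_j \rfloor = g(m/p_j)$, which follows from the explicit formula $g(n) = \lfloor (n-1)/2 \rfloor$ but requires some care. Once this matching is settled, the remainder of the argument is linear-algebraic eigenspace accounting.
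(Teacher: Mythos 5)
Your proof is correct and follows essentially the same route as the paper's: identify the eigenspaces of $\alpha_m^*$ on the uniformizing vector space, observe which indices survive the quotient by $Y = \sum_j J_{m/p_j}$, and read off the CM-type. The main difference is that you spell out two things the paper states without detailed verification: the compatibility $\pi_j\circ\alpha_m = \alpha_{m/p_j}\circ\pi_j$ (which guarantees that $\alpha_m$ descends to $X_m$), and the combinatorial identity $\lfloor g(m)/p_j\rfloor = g(m/p_j)$, which is exactly what is needed to see that the union over $j$ of the sets $\{ap_j : 1\le a\le g(m/p_j)\}$ is precisely $\{i : 1\le i\le g(m),\ \gcd(i,m)>1\}$. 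The paper compresses this into a single sentence when it writes the surviving basis as $\{x^j\,dx/y : j<g(m),\ j\not\equiv -1 \bmod p_i\}$, so your version fills in a genuine (if routine) gap. The final step—checking $S_m\cup(-S_m)=(\Z/m\Z)^\times$ and invoking $\phi(m)=2\dim X_m$ to conclude CM by $\Q(\zeta_m)$—matches the paper's argument exactly. One small caveat: you should make sure the "cotangent image of $J_{m/p_j}$ in $J_m$" is interpreted as the subspace of $\HH^0(\Omega^1_{C_m})$ that the paper (and Example \ref{ex:primecase}) uses as the uniformizing vector space, i.e.\ that the identification is consistent with the paper's convention of reading the analytic representation from the cotangent side; you do follow this convention, so the argument goes through.
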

\begin{proof}
The inclusion $Y\to J_m$ corresponds to the inclusion
\[ \sum_{i=1}^r \C\langle x^{ap_i-1} \, dx/y 
\mid a = 1,\, \dots,\, g(m/p_i) \rangle 
\hookrightarrow
\C\langle x^{j}\, dx/y \mid 
j = 0,\, \dots,\, g(m)-1 \rangle. \]
The quotient abelian variety $ X_m $ is uniformized by the quotient vector space, for which we obtain the basis $ \{ x^j \, dx/y \mid j < g(m) \text{ such that } j \not \equiv -1 \mod p_i\} $. 
The endomorphisms $\alpha_m^i \colon (x, y) \mapsto (\zeta_m^i \cdot x, y) $ of $ J_m $ are diagonal in this basis, with characters $ \chi_j \colon \zeta_m \mapsto \zeta_m^{j+1}$. The action thus respects $ Y $ and induces an action of $\Z[\zeta_m]$ on the quotient. Notice that $[\Q(\zeta_m)\colon\Q]=\phi(m)=2\dim X_m$. On the complex uniformization of $X_m$, the action is diagonal (with respect to the basis just described) and we are left with the characters $ \chi_j $ whose exponents are {less than $g(m)$ and} coprime to every prime factor $ p_i $, hence coprime to $ m $. These characters correspond to the field embeddings $$  \sigma_j\colon \zeta_m\mapsto \zeta_m^j \in  \Gal(\Q(\zeta_m)/\Q) \simeq \left({\Z/m\Z}\right)^\times $$ with $ j \leq g(m) $.
\end{proof}

This establishes (1) in the statement of \Cref{th:Jm-factorization}.

\subsection{Simplicity of the factors \texorpdfstring{$X_d$}{Xd}}
The simplicity of an abelian variety with complex multiplication is related to the simplicity of its CM type via \Cref{prop:LangSimple}.

\begin{lemma}{\label{lemma:simpleCM}}
    Let $m$ be a positive integer and let $\Phi_m$ be the CM-type of \Cref{lemma: CM type of X_m}. The stabilizer of $\Phi_m$ for the $(\Z/m\Z)^\times$-action given by left multiplication on itself is
    \begin{itemize}
        \item trivial, if $m$ is odd, or $m=4k+2$ for some positive integer $k$, or $m=4$;
        \item $\{1,\tfrac{m}{2}-1\}$, if $4\mid m$ and $m \not \in \{4, 20,24,60\}$;
        \item $\{1,3,7,9\}$, if $m=20$;
        \item $\{1,5,7,11\}$, if $m=24$;
        \item $\{1,11,19,29\}$, if $m=60$.
    \end{itemize}
\end{lemma}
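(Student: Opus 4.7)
Let $H = \{u \in (\Z/m\Z)^\times : u \Phi_m = \Phi_m\}$ be the stabilizer. The plan is to first establish three elementary structural properties of $H$ and then handle the four cases separately, with the bulk of the work going into the case $4\mid m$.

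The three preliminary observations are: $H$ is a subgroup; applying the stabilization condition at $j = 1$ gives $H \subseteq \Phi_m$, so every $u \in H$ satisfies $1 \leq u \leq g(m) < m/2$; and $-1 \notin H$ since $\Phi_m$ and $-\Phi_m$ are disjoint non-empty sets partitioning $(\Z/m\Z)^\times$. In particular, every $u \in H$ is a representative in $[1, g(m)]$ of its class.

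For $m$ odd, the goal is $H = \{1\}$, which I would prove by iteration on powers of $2$. Suppose $u \in H$ with $u \geq 2$, and let $K$ be the largest integer with $2^K \leq (m-1)/2$. I claim inductively that $u \leq (m-1)/2^{k+1}$ for each $k = 0, 1, \ldots, K$. The base case is $u \leq (m-1)/2$ from $u \in \Phi_m$. For the inductive step, the hypothesis $u \leq (m-1)/2^k$ guarantees $u \cdot 2^k < m$, so no modular reduction occurs; since $m$ is odd we have $2^k \in \Phi_m$, and the stabilization condition forces $u \cdot 2^k \leq (m-1)/2$, giving the next bound. At $k = K$ the inequality $2^{K+1} > (m-1)/2$ forces $u < 2$, a contradiction.

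For $m \equiv 2 \pmod 4$, write $m = 2n$ with $n$ odd. The power-of-$2$ iteration breaks down because $2 \notin (\Z/m\Z)^\times$, but an analogous induction using powers of the smallest odd prime $p$ coprime to $m$ works. The smallest such $p$ is bounded independently of $m$ (for instance $p \in \{3, 5, 7\}$ covers all cases), and $p^k \in \Phi_m$ as long as $p^k < m/2$; one then runs the same induction, tracking carefully the quotient $\lfloor u p^k / m \rfloor$ to control the modular reduction. The finitely many small $m$ for which this iteration does not apply (e.g.\ $m = 6$) are handled directly: in those cases $\Phi_m = \{1\}$, so $H = \{1\}$ trivially.

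For $4 \mid m$, I would first exhibit $u_0 := m/2 - 1$ as an element of $H$. The computations are $u_0^2 = m^2/4 - m + 1 \equiv 1 \pmod m$ (using $4 \mid m$), and for $j$ odd, $(m/2)(j - 1)$ is a multiple of $m$, hence $(m/2)j \equiv m/2 \pmod m$ and therefore $u_0 j \equiv m/2 - j \pmod m$. The involution $j \mapsto m/2 - j$ manifestly preserves $\Phi_m$ (it preserves the interval $[1, m/2-1]$, oddness since $4 \mid m$ makes $m/2$ even, and coprimality to $m$), so $u_0 \in H$ and hence $\{1, u_0\} \subseteq H$. For the reverse inclusion when $m \notin \{20, 24, 60\}$, I would assume the existence of $u \in H \setminus \{1, u_0\}$ and extract contradictory constraints by testing the stabilization condition against carefully chosen elements of $\Phi_m$ (in particular powers of small odd primes coprime to $m$, as in the previous case, and the ``boundary'' element $m/2 - 3$). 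This cuts $m$ down to a finite list which is then checked individually; the three exceptional values are exactly those that survive. For $m \in \{20, 24, 60\}$, one verifies the claimed stabilizers by direct calculation in the (small) group $(\Z/m\Z)^\times$: one checks that the claimed subgroup stabilizes $\Phi_m$, and the converse containment follows from the bound argument applied to these specific $m$.

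The main obstacle is the uniform treatment of the case $4 \mid m$: extracting a bound sharp enough to isolate precisely the three exceptions, rather than leaving a long list of small $m$ to be checked by hand, requires a judicious choice of test elements in the stabilization condition. The exceptionality of $20, 24, 60$ ultimately reflects a particularly simple $2$-group structure of $(\Z/m\Z)^\times$ for these values, which allows $\Phi_m$ to inherit extra symmetries unavailable for other $m$.
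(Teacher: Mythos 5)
Your argument for odd $m$ is correct and is, in substance, the same power-of-two argument the paper uses; you present it as an induction on the exponent $k$ while the paper phrases it as a dichotomy on whether $2^{k+1}$ lies in $\Phi_m$, but the underlying idea is identical. The verification that $m/2-1$ stabilizes $\Phi_m$ when $4 \mid m$ is also correct and a nice explicit addition.

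The even cases, however, are not actually proven. The paper disposes of them by citing Gannon's Lemma 5, whereas you offer a sketch with genuine gaps. Two concrete problems. First, your claim that the smallest odd prime $p$ coprime to $m$ ``is bounded independently of $m$ (for instance $p \in \{3,5,7\}$ covers all cases)'' is false: for any $m$ with $105 \mid m$ (e.g.\ $m = 210$, $m = 420$) the smallest admissible odd prime is at least $11$, and there is no uniform bound as $m$ grows. Second, and more fundamentally, replacing $2$ by an odd prime $p$ coprime to $m$ destroys the feature that makes the odd-$m$ induction clean: in the base case $p$ itself need not lie in $\Phi_m$, and as soon as $up^k$ exceeds $m$ you must reduce modulo $m$, at which point the inequality chain no longer propagates. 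You acknowledge this (``tracking carefully the quotient $\lfloor up^k/m\rfloor$'') but do not carry it out, and it is precisely here that the argument is hard. Similarly, for $4 \mid m$ the reverse inclusion $H \subseteq \{1, m/2-1\}$ for $m \notin \{20,24,60\}$ is asserted to follow from ``carefully chosen test elements'' cutting down to a finite list, but no such test elements are produced and no finite list is exhibited. The exceptional triple $\{20,24,60\}$ is exactly the sort of thing that is easy to miss or mis-identify without a complete argument; in its present form your proposal assumes the conclusion for these cases rather than deriving it. As written, the even-$m$ half of the lemma remains unproved.
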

\begin{proof}
    First, consider the case of odd $m$. Let $a$ be such that $a\Phi_m = \Phi_m$; we want to prove that $a=1$. Observe that $a = a \cdot 1 \in a \Phi_m = \Phi_m$, so $a<\frac{m}{2}$. Let $k \geq 0$ be the unique integer such that $2^ka < \frac{m}{2} < 2^{k+1}a$ (where we have the strict inequalities because $m/2$ is not an integer). We distinguish two cases:
\begin{enumerate}
    \item $2^{k+1} < m/2$: In this case, $2^{k+1} \in {\Phi_m}$ (since $(2,m)=1$) and we get a contradiction. Indeed, this would imply that $2^{k+1}a\in{\Phi_m}$. However, $2^{k+1}a > m/2$ by assumption, and $2^{k+1}a = 2(2^k a) < 2 \cdot \tfrac{m}{2} = m$, so the unique representative of $2^{k+1}a$ modulo $m$ lies in the interval $(m/2, m)$, and hence does not belong to $\Phi_m$.
    \item $2^{k+1} > m/2$: Since $2^ka < \frac{m}{2}$ we obtain $2^{k+1} > m/2 > 2^ka$. Hence, $2 > a$  and so $a=1$.
\end{enumerate}
The cases where $m$ is even follow from the slightly more general \cite[Lemma 5]{Gannon1996}.
\end{proof}

Combining \Cref{lemma:simpleCM} and \cite[Theorems 3.3 and 3.5]{Lang}, we recover the geometric decomposition of $X_m$ into simple abelian varieties as in the statement of \Cref{th:Jm-factorization}.

In particular, for odd $m$, $\Phi_m$ is a simple CM-type, and so $X_m$ is {geometrically} simple by \Cref{prop:LangSimple}. 
For $m=4k+2$, the CM-type is also simple and, therefore, $X_m$ is simple. Consider the involution
\[ \beta\colon C_m \to C_m, \qquad (x,y) \mapsto \left(\frac{1}{x}, \frac{y}{x^{\frac{m}{2}}}\right). \]
On the complex uniformization of the Jacobian, this involution acts by the formula
\begin{equation}{\label{eq:beta-pull-back}}
    \beta^\ast\left(x^i\, {dx}/{y} \right) = -x^{m/2-i-2} \, {dx}/{y},
\end{equation}
hence it swaps the forms with even exponent with those of odd exponent. In particular, $\beta^\ast$ sends the basis $\{x^i \, {dx}/{y} \colon (i+1,m)=1 \}$ of the complex uniformization of $X_m$ to a basis of the complex uniformization of $X_{m/2}$. In fact, if $(i+1,m)=1$, then the corresponding exponent $m/2-(i+1)$ has no odd prime factor in common with $m$, but is of the opposite parity, i.e., even. Thus, $\beta$ provides an isomorphism between $X_m$ and $X_{m/2}$ defined over $\Q$.

This proves (3) and (4) in the statement of \Cref{th:Jm-factorization}.

\subsection{Further decomposition in the case \texorpdfstring{$m \equiv 0 \pmod{4}$}{when m is divisible by 4}}
When $m$ is a proper multiple of $4$, the abelian variety $X_m$ is not geometrically simple. In the non-exceptional cases (i.e., $m\not= 20, 24, 60$), $X_m$ is geometrically isogenous to the square of some geometrically simple variety $Y_m$.
The CM-field of $Y_m$ is the subfield of $\Q(\zeta_m)$ fixed by the order-two subgroup $\{1, \tfrac{m}{2}-1\} \subseteq (\Z/m\Z)^\times \cong \Gal(\Q(\zeta_m)/\Q)$, which is $\Q(\zeta_m-\zeta_m^{-1})$ (notice that $\{1, \tfrac{m}{2}-1\}=\{1\}$ for $m=4$).
We can describe an abelian variety in the isogeny class of $Y_m$.
Equation \Cref{eq:beta-pull-back} still holds in this context, but now $\beta$ restricts to a map $X_m \to X_m$: if $(i+1)$ is coprime to $m$, then $m/2-(i+1)$ is also coprime to $m$ (note that here we use the fact that $m/2$ is even). The matrix representing $\beta$ on the complex uniformization is, with respect to our basis, the anti-diagonal matrix with all entries equal to $-1$. 

Let $X_m^{\pm} = \ker[1\pm\beta]^0$ be the subvarieties corresponding to the eigenspaces of $\beta$ (since $\beta$ is an involution, its only eigenvalues are $\pm 1$). Since $X_m^+$ and $X_m^-$ are proper subvarieties of $X_m$ {and we know that $X_m$ is geometrically the square of the simple abelian variety $Y_m$, each subvariety $X_m^{\pm}$ must be geometrically isogenous to $Y_m$}. The complex uniformizations for the two subvarieties are respectively generated by $x^i\pm x^{m/2-i-2}\, dx/y$, which together form a basis for the complex uniformization of $X_m$. Therefore, $X_m \sim X_m^{+}\times X_m^{-}$.

Consider the map
\[ \gamma\colon C_m \to C_m, \qquad (x,y) \mapsto \left(\frac{1}{\zeta_{m} x}, \frac{y}{x^{\frac{m}{2}}}\right), \]
and notice that $\beta\gamma \neq \gamma\beta$. Hence, the {action} of $\gamma$ on the uniformization does not respect the eigenspaces of $\beta$, giving a nontrivial map $X_m^{+}\to X_m^{-}$.
Since both varieties are simple, this map is an isogeny defined over $\Q(\zeta_m)$. Notice that two varieties $X_m^{+}$ and $X_m^{-}$ are defined over $\Q$, while the isogeny between them might not be.
This proves (5) in the statement of \Cref{th:Jm-factorization}.

\subsection{Exceptional cases and isogenies}\label{sect: exceptional cases 20 24 60}
For $d=20, 24, 60$, no abelian variety in the isogeny class of $Y_d$ can be defined over $\mathbb{Q}$.
We will show this after discussing the case $d=20$ in detail. The automorphisms of $C_{20}$ induce maps $C_{20} \to C' \to C''$ defined over $\Q$, where $C': y^2=x^{11}+x$ and $C'' : y^2 = x^5 - 5x^3 + 5x$ (see \cite[Theorem 4.3 and Section 5.1]{Heidi2}). Over $\overline{\Q}$ (or even over the connected monodromy field of $J_m$), the curve $C''$ admits maps to two non-isomorphic elliptic curves $E, E'$  induced by automorphisms of $C''$. Both elliptic curves $E, E'$ have CM by $\mathbb{Z}[\sqrt{-5}]$. Their $j$-invariants are the roots of the irreducible quadratic polynomial $x^2 - 1264000x - 681472000$ and lie in $\mathbb{Q}(\sqrt{5}) \setminus \Q$. Thus, one can define the elliptic curves $E, E'$ over $\Q(\sqrt{5})$, but not over $\Q$. Using the techniques of \cite{MR3882288, MR3904148, MR4280568}, one can further check that the unique minimal extension $L$ of $\Q$ over which $(X_{20})_L$ is a fourth power up to isogeny is the degree-16 extension $L=\Q(\End(J_{20}))$. For the calculations verifying these claims, see \texttt{X20.m} and the corresponding output file \texttt{X20.out}.
Finally, the fact that $\Q(\sqrt{-5})$ is the CM field of (any variety isogenous to) $Y_{20}$ can also be obtained from \Cref{lemma:simpleCM}: $\Q(\sqrt{-5})$ is the subfield of $\Q(\zeta_{20})$ fixed by the subgroup $\{1,3,7,9\}$ of $(\Z/20\Z)^\times$. 

The knowledge of the CM field can be used to show directly that no elliptic curve isogenous to $Y_{20}$ can be defined over $\Q$: indeed, according to CM theory, the field of definition of an elliptic curve with CM by (any order in) the field $\Q(\sqrt{-5})$ has degree over $\Q$ at least $h(\Q(\sqrt{-5}))=2$.

For $m=24$, a similar calculation with CM types shows that the CM field of the elliptic curve $Y_{24}$ is $\Q(\sqrt{-6})$, which shows that no elliptic curve isogenous to $Y_{24}$ can be defined over $\Q$, since $h(\Q(\sqrt{-6}))=2$. For $m=60$, the CM field of $Y_{60}$ is the splitting field of $x^4 + 15x^2 + 45$ %
and has class number 4, so no abelian surface isogenous to $Y_{60}$ can be defined over $\Q$. This proves (6) in the statement of \Cref{th:Jm-factorization}. 


To conclude the proof of \Cref{th:Jm-factorization}, we only need to show the final statement about isogenies. 
We first note that no two $X_d$ with odd $d$ are isogenous (because the CM fields of $X_d, X_{d'}$  with $d \neq d'$ are $\Q(\zeta_d) \neq \Q(\zeta_{d'})$), and similarly no two $Y_d$ are isogenous: again, their CM fields are pairwise distinct. This follows from the fact that the conductor of $\Q(\zeta_{4k}-\zeta_{4k}^{-1})$ is $4k$, except for $k=3$, in which case the conductor of $\Q(\zeta_{12}-\zeta_{12}^{-1})$ is $4$. Note that, by direct inspection, the conductors of the CM fields corresponding to $Y_{20}, Y_{24}$ and $Y_{60}$ are $20, 24$, and $60$, respectively.
It remains to exclude the existence of isogenies between an $X_d$ with $d$ odd and some $Y_e$. As above, it suffices to show that the CM field $\Q(\zeta_d)$ of $X_d$ is not isomorphic to the CM field of $Y_e$. Indeed, the field $\Q(\zeta_d)$ is unramified at $2$, whereas the CM field of $Y_e$ is ramified at $2$, including the exceptional cases $e \in \{20, 24, 60\}$.

Finally, the CM field of $X_4$ is $\Q(i)$, of conductor $4$, so the previous analysis shows that $X_4$ is only isogenous to $Y_{12}$ (over $\Qbar$: both $X_{4, \Qbar}$ and $Y_{12, \Qbar}$ are elliptic curves with CM by $\Q(i)$, hence are isogenous).

\subsection{The endomorphism ring of Fermat Jacobians and their simple factors}{\label{section: endomorphism field and ring of J_m}}
In this section we determine the endomorphism field $\Q(\End(J_m))$.
\Cref{prop: endomorphism field easy containment} proves that, when $m$ is odd, the endomorphism field coincides with the cyclotomic field $\Q(\zeta_m)$ and that, for $m$ even, one always has the containment $\Q(\End(J_m)) \supseteq \Q(\zeta_m)$. \Cref{prop: endomorphism field easy containment} is all we will need from this section in the rest of the paper.

For the sake of completeness, in \Cref{th: endomorphism field of Jm complete version} we also determine the endomorphism field of every Jacobian variety $J_m$ (both for odd and even $m$). Note that, in order to accomplish this, we use some results from \Cref{sec:deligne}.

\begin{proposition}{\label{prop: endomorphism field easy containment}}
    The endomorphism field $\Q(\End(J_m))$ is a finite extension of the cyclotomic field $\Q(\zeta_m)$.
    If $m$ is odd, the endomorphism field coincides with the $m$-th cyclotomic field: $\Q(\End(J_m)) = \Q(\zeta_m).$
\end{proposition}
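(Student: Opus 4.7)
The plan is to split the statement into two parts: first, that $\Q(\zeta_m)\subseteq\Q(\End(J_m))$ and that the latter is a finite extension of $\Q$ (which together handle the claim for all $m$); second, that $\Q(\End(J_m))\subseteq\Q(\zeta_m)$ when $m$ is odd. Finiteness of $\Q(\End(J_m))/\Q$ will follow from the standard fact that $\End(J_{m,\overline{\Q}})$ is a finitely generated $\Z$-module on which $\Gal(\overline{\Q}/\Q)$ acts through a finite quotient, so the fixed field of the action is a finite Galois extension of $\Q$.

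For the lower bound, I will exhibit an endomorphism whose field of definition is exactly $\Q(\zeta_m)$. The natural candidate is $\alpha_m$ from \eqref{eq: definition of alpha_m}. By \Cref{prop:eigenbase}, the pullback $\alpha_m^*$ has $\zeta_m$ as an eigenvalue on $\HH^1_{\operatorname{dR}}(C_m,\C)$, which forces $\alpha_m$ to have exact order $m$ in $\End(J_{m,\overline{\Q}})$. For $\sigma\in\Gal(\overline{\Q}/\Q)$ with $\sigma(\zeta_m)=\zeta_m^a$, applying $\sigma$ to the defining equation gives $\sigma(\alpha_m)=\alpha_m^a$, so the stabilizer of $\alpha_m$ is exactly $\Gal(\overline{\Q}/\Q(\zeta_m))$ and the field of definition of $\alpha_m$ equals $\Q(\zeta_m)$. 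This yields the containment $\Q(\zeta_m)\subseteq\Q(\End(J_m))$.

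For the upper bound in the odd case, I will invoke the factorization $J_m\sim\prod_d X_d$ over $\Q$ provided by \Cref{th:Jm-factorization}. When $m$ is odd, each divisor $d\geq 3$ of $m$ is odd, so each $X_d$ is geometrically simple of dimension $\phi(d)/2$ with CM by $\Q(\zeta_d)$ by part (3) of that theorem; since $[\Q(\zeta_d):\Q]=2\dim X_d$, one automatically obtains $\End^0(X_{d,\overline{\Q}})=\Q(\zeta_d)$. The distinct $X_d$ have pairwise distinct CM fields, hence are pairwise non-isogenous, giving a $\Gal(\overline{\Q}/\Q)$-equivariant isomorphism of $\Q$-algebras $\End^0(J_{m,\overline{\Q}})\cong\prod_d\Q(\zeta_d)$. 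The key input is that the restriction $\alpha_d|_{X_d}$ has minimal polynomial $\Phi_d$ (its eigenvalues are exactly the primitive $d$-th roots of unity, by the analogue of \Cref{prop:eigenbase} applied to $C_d$), so it generates the factor $\Q(\zeta_d)$ as a $\Q$-algebra and is defined over $\Q(\zeta_d)\subseteq\Q(\zeta_m)$. Since these restrictions generate $\End^0(J_{m,\overline{\Q}})$ as a $\Q$-algebra and each one is defined over $\Q(\zeta_m)$, every endomorphism is.

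The main delicate point is ensuring that the decomposition $\End^0(J_{m,\overline{\Q}})\cong\prod_d\Q(\zeta_d)$ is $\Gal(\overline{\Q}/\Q)$-equivariant with componentwise action; this is exactly where I need the rationality over $\Q$ of both the $X_d$ and the isogeny $J_m\sim\prod_d X_d$, which is the non-trivial content supplied by \Cref{th:Jm-factorization}. Once this is in place, everything else reduces to checking the fields of definition of a finite set of $\Q$-algebra generators, which is routine.
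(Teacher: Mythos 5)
Your proof is correct and takes a more elementary route than the paper's. Both arguments rest on the factorization of \Cref{th:Jm-factorization}, but where the paper invokes Lang's theorems on CM reflex fields to obtain both $\Q(\zeta_d)\subseteq\Q(\End(X_d))$ (lower bound, via the reflex field) and $\Q(\End(X_d))=\Q(\zeta_d)$ (upper bound), you work explicitly with $\alpha_m$: the cocycle identity $\sigma(\alpha_m)=\alpha_m^a$ together with the exact order $m$ extracted from \Cref{prop:eigenbase} gives the lower bound directly, and the restriction of $\alpha_m$ to each factor $X_d$ generates $\End^0(X_{d,\overline{\Q}})=\Q(\zeta_d)$ while being visibly defined over $\Q(\zeta_d)\subseteq\Q(\zeta_m)$. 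What this buys is a self-contained argument requiring no reflex-field machinery. One step merits an extra line: to pass from the individual factors to the full algebra, you assert that the restrictions generate $\End^0(J_{m,\overline{\Q}})\cong\prod_d\Q(\zeta_d)$ as a $\Q$-algebra, which also requires the idempotents of the decomposition; these are $\Q$-rational since the isogeny is defined over $\Q$ (or note directly that $e_d=(\alpha_d|_{X_d})^d$ already lies in $\Q[\alpha_d|_{X_d}]$), so nothing is actually missing, but it should be said. You could in fact compress further: $\alpha_m$ alone generates $\End^0(J_{m,\overline{\Q}})$, because its minimal polynomial on $\HH^1$ is $(x^m-1)/(x-1)$, of degree $m-1=\dim_\Q\End^0(J_{m,\overline{\Q}})$, so $\Q[\alpha_m]$ is the whole algebra and is defined over $\Q(\zeta_m)$, short-circuiting the factor-by-factor analysis.
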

\begin{proof}
    Since the automorphism $\alpha_m$ of \Cref{eq: definition of alpha_m} of $C_m$ is defined over $\Q(\zeta_m)$, but not over any smaller subfield, we immediately obtain that the corresponding automorphism $\alpha_m^*$ of $J_m$ is minimally defined over $\Q(\zeta_m)$, hence that
    \begin{equation}{\label{eq: inclusion1}}
        \Q(\End(J_m)) \supseteq 
        \Q(\zeta_m).
    \end{equation}
    Consider the decomposition up to isogeny 
    \[ J_m \sim \prod_{\substack{d \mid m,\\ d \neq 1,2}} X_d \]
    given in \Cref{th:Jm-factorization}.
    Assume $m$ is odd. Every component $X_d$ is simple, with CM type described by \Cref{lemma: CM type of X_m}. Note that the CM type is simple (by \Cref{prop:LangSimple}) and hence the reflex field is $\Q(\zeta_d)$. We can apply \cite[Chapter 3, Theorem 1.1.ii]{Lang} and obtain $\Q(\End(X_d)) = \Q(\zeta_d)$. When $d$ and $d'$ are two distinct positive divisors of $m$, the corresponding endomorphism fields are different and $X_{d}$ and $X_{d'}$ are not geometrically isogenous. The endomorphism algebra of $J_{m,\overline{\Q}}$ thus decomposes into the product of the endomorphism algebras of $X_d$, all of which are defined over $\Q(\zeta_m)$. This observation gives the opposite inclusion to (\ref{eq: inclusion1}) and finishes the proof.
\end{proof}

The endomorphism algebra of any factor $X_m$ of the Fermat Jacobian can be deduced from its CM-type. %

\begin{lemma}{\label{lemma: ring of endomorphism of Xd}}
Let $m \geq 3$ be an integer.
\begin{enumerate}
    \item If $m$ is odd or $m \equiv 2 \pmod{4}$ or $m=4$, the geometric endomorphism algebra of $X_m$ is $\Q(\zeta_m)$.
    \item If $m \equiv 0 \pmod 4$ and $m \not \in \{4,  20, 24, 60\}$, the geometric endomorphism algebra of $X_m$ is $\operatorname{Mat}_{2 \times 2}(\Q(\zeta_m-\zeta_m^{-1}))$. 
    \item If $m \in \{20, 24, 60\}$, the geometric endomorphism algebra of $X_m$ is $\operatorname{Mat}_{4 \times 4}(F_m)$, where $F_{20} = \Q(\sqrt{-5})$, $F_{24}=\Q(\sqrt{-6})$, and $F_{60}$ is the field generated over $\Q$ by a root of $x^4+15x^2+45$ (this field is a degree-4 normal subextension of $\Q(\zeta_{60})$). 
\end{enumerate}
\end{lemma}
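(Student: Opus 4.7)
The plan is to combine the isogeny decomposition of \Cref{th:Jm-factorization} with the classical fact, a consequence of Albert's classification, that for a simple abelian variety $A$ of dimension $g$ one has $[\End^0(A):\Q] \leq 2g$; hence any embedded CM field $E \hookrightarrow \End^0(A)$ with $[E:\Q] = 2\dim A$ must coincide with the whole of $\End^0(A)$. Coupled with the standard identity $\End^0(A^k) = \operatorname{Mat}_{k\times k}(\End^0(A))$, this reduces each part of the statement to an identification of the CM field of a specific simple factor, and the relevant identifications have already been carried out in \Cref{th:Jm-factorization} and its proof.

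For (1), \Cref{th:Jm-factorization}(3)-(4) asserts that $X_m$ is geometrically simple and carries complex multiplication by $\Q(\zeta_m)$, a field of degree $\phi(m) = 2\dim X_m$; the above principle immediately yields $\End^0(X_m) = \Q(\zeta_m)$.

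For (2), \Cref{th:Jm-factorization}(5) provides $X_m \sim Y_m^2$ with $Y_m$ geometrically simple and with CM by $\Q(\zeta_m - \zeta_m^{-1})$, a field of degree $\phi(m)/2 = 2\dim Y_m$. Hence $\End^0(Y_m) = \Q(\zeta_m - \zeta_m^{-1})$, and the matrix-algebra identity gives $\End^0(X_m) = \operatorname{Mat}_{2\times 2}(\Q(\zeta_m - \zeta_m^{-1}))$.

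For (3), \Cref{th:Jm-factorization}(6) gives $X_m \sim Y_m^4$ with $Y_m$ geometrically simple, and the proof of that theorem identifies the CM field of $Y_m$ in the three exceptional cases as $\Q(\sqrt{-5})$, $\Q(\sqrt{-6})$, and the splitting field of $x^4+15x^2+45$, respectively -- namely, as the fixed subfield of $\Q(\zeta_m)$ under the stabilizer of $\Phi_m$ recorded in \Cref{lemma:simpleCM}, whose degree over $\Q$ equals $\phi(m)/4 = 2\dim Y_m$ in each case. Applying the principle then yields $\End^0(Y_m) = F_m$ and hence $\End^0(X_m) = \operatorname{Mat}_{4\times 4}(F_m)$. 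The only nontrivial input beyond \Cref{th:Jm-factorization} is the dimension bound from Albert's classification; the concrete identification of the exceptional CM fields, and in particular of $F_{60}$, is not an obstacle here since it has already been established in the proof of \Cref{th:Jm-factorization} (via the Galois-theoretic calculations and computer-algebra verification mentioned there).
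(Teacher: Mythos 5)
Your proposal is correct and follows essentially the same route as the paper: reduce via \Cref{th:Jm-factorization} to the simple factor, use $\End^0(Y^k) = \operatorname{Mat}_{k\times k}(\End^0(Y))$, and conclude from the known CM field of the simple factor. The only difference is that you spell out explicitly the dimension bound $[\End^0(A):\Q] \leq 2\dim A$ (from Albert's classification) that makes the CM field fill up the whole endomorphism algebra, whereas the paper leaves this standard step implicit.
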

\begin{proof}
    In the first case, \Cref{th:Jm-factorization} shows that $X_m$ is geometrically simple and has CM by $\Q(\zeta_m)$. This implies that the geometric endomorphism algebra is precisely $\Q(\zeta_m)$. The third case follows from the discussion in \Cref{sect: exceptional cases 20 24 60}. Finally, if $m \equiv 0 \pmod{4}$ with $m \not \in \{4, 20, 24, 60\}$, by \Cref{th:Jm-factorization} the variety $X_m$ is geometrically isogenous to $Y_m^2$, where $Y_m$ is geometrically simple. This implies that the geometric endomorphism algebra of $X_m$ is $\operatorname{Mat}_{2 \times 2}(\operatorname{End}(Y_{m, \overline{\Q}}))$, which by \Cref{th:Jm-factorization} again is $\operatorname{Mat}_{2 \times 2}(\Q(\zeta_m-\zeta_m^{-1}))$. %
\end{proof}
Note that this lemma, combined with the last statement in \Cref{th:Jm-factorization}, yields the geometric endomorphism algebra of $J_m$ for every $m$.

Our next objective is to determine the field of definition of the endomorphisms of $J_m$. This is particularly tricky when $m$ is in the set $\{12, 20, 24, 60\}$. In these cases, we reduce the computation of the endomorphism field of $J_m$ to the (usually harder) problem of computing its connected monodromy field. For $m \in \{12, 20, 24\}$, the following lemma shows $\Q(\End(J_m))=\Q(\varepsilon_{J_m})$, so that we can use \Cref{th:Kconn-even} to compute the endomorphism field. For $m=60$, one still has the inclusion $\Q(\End(J_{60})) \subseteq \Q(\varepsilon_{J_{60}})$. We use this, together with the analysis of the splitting behavior of certain primes in $\Q(\End(J_{60}))$, to precisely pin down $\Q(\End(J_{60}))$ as a subfield of $\Q(\varepsilon_{J_{60}})$: see \Cref{cor: endomorphism field for m = 20 24 60} for details.


\begin{lemma}\label{lemma: Kconn equals KEnd if all factors have dimension at most 2}
    Let $A$ be an abelian variety over a number field $K$. If $A$ is geometrically isogenous to a product of abelian varieties of dimension at most $2$, the fields $K(\End(A))$ and $K(\varepsilon_A)$ coincide.
\end{lemma}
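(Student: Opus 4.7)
The strategy is to prove the reverse inclusion $K(\varepsilon_A) \subseteq K(\End(A))$, since the opposite containment is the general fact recalled in Section 2.1. Setting $L = K(\End(A))$, this is equivalent to showing that the $\ell$-adic monodromy group $\Gl(A_L)$ is already connected.

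First, I exploit the definition of $L$. Since all geometric endomorphisms of $A$ are defined over $L$, the idempotents yielding the isogeny decomposition of $A_{\overline{K}}$ into simple factors already live in $\End(A_L) \otimes \Q$. Hence $A_L \sim_L \prod_i B_i^{n_i}$ is defined over $L$, where the $B_i$ are pairwise non-isogenous and geometrically simple, and by the hypothesis of the lemma each $\dim B_i \leq 2$.

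Second, I verify that the resulting product is nondegenerate (i.e.~$\MT(A_L) = \operatorname{Lef}(A_L)$). Each simple factor $B_i$ is either an elliptic curve or a simple abelian surface, and such varieties are classically known to be nondegenerate (for elliptic curves this is immediate; for abelian surfaces, see e.g.\ Moonen--Zarhin). For the product, the Künneth decomposition of the Hodge ring of every power of $A_L$ only contributes ``cross'' terms of the form $\Hom(B_i^a, B_j^b)$, which vanish whenever $B_i \not\sim B_j$ (since $B_i, B_j$ are non-isogenous simple) and which reduce to endomorphisms of $A$ when $i = j$. Combining this with the nondegeneracy of each individual $B_i$, every Hodge class on every power of $A_L$ is generated by divisor classes and by endomorphisms of $A$, all of which are already defined over $L$; so $A_L$ is nondegenerate.

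Finally, since $A$ is nondegenerate, the theorem of Banaszak--Kedlaya \cite[Theorem 6.1]{MR3502937} gives $\Gl(A) = \operatorname{TL}_A \otimes_{\Q} \Q_\ell$, where $\operatorname{TL}_A$ denotes the twisted Lefschetz group. Over $L$, where the Galois action on $\End(A_{\overline{K}})$ is trivial by definition of $L$, the twisted Lefschetz group $\operatorname{TL}_{A_L}$ reduces to the ordinary Lefschetz group, which is a connected algebraic group (it is assembled from classical similitude groups). Therefore $\Gl(A_L)$ is connected, giving $K(\varepsilon_A) \subseteq L = K(\End(A))$.

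The step I expect to be the main obstacle is the nondegeneracy claim for the product: while each simple factor $B_i$ is known to be nondegenerate, the product could \emph{a priori} acquire exceptional Hodge classes from Künneth cross terms of higher weight. Ruling these out cleanly requires both the (classical) nondegeneracy of each simple factor of dimension at most $2$ and the vanishing of $\Hom$-spaces between non-isogenous simple abelian varieties; this is the one technical input that demands some care beyond the general machinery already set up in the paper.
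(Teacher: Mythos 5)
Your reduction to $L = K(\End(A))$ and the decomposition $A_L \sim \prod_i B_i^{n_i}$ with $B_i$ pairwise non-isogenous, geometrically simple, of dimension at most $2$, matches the paper. The problem is the step you yourself flag: the justification you give for nondegeneracy of the product does not work. You argue that the ``cross'' terms in the K\"unneth decomposition ``are of the form $\Hom(B_i^a, B_j^b)$'' and vanish when $B_i \not\sim B_j$. But $\Hom$-spaces account only for the weight-$0$ Hodge classes sitting inside $H^1 \otimes H^1{}^\vee$; the Hodge ring of a power of $\prod B_i$ also contains classes in products of the form $\bigwedge^{p_1}H^1(B_{i_1}) \otimes \cdots \otimes \bigwedge^{p_r}H^1(B_{i_r})$ for higher $p_j$, and non-isogeny of the factors does nothing to rule out exceptional Hodge classes there. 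Indeed, the entire point of the paper is that the $J_m$ provide examples where pairwise non-isogenous nondegenerate factors nonetheless produce a degenerate product; what saves the present lemma is specifically the dimension bound $\dim B_i \leq 2$, and that requires a genuine independence theorem, not a K\"unneth counting argument.

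The paper's route sidesteps nondegeneracy of the product entirely: it first shows (using that the Galois action on $B_i^{n_i}$ is diagonal) that the projection $A_L \to \prod_i B_i$ induces an isomorphism on monodromy groups, then works with the symplectic parts $\mathcal{G}_{\ell, \cdot, 1}$ and invokes \cite[Corollary 4.5]{MR3494170} to get $\dim \mathcal{G}_{\ell, B, 1} = \sum_i \dim \mathcal{G}_{\ell, A_i, 1}$. Combined with connectedness of each $\mathcal{G}_{\ell, A_i}$ (from \cite[Theorem 6.10]{MR3320526}) and irreducibility of the product variety, this forces $\mathcal{G}_{\ell, B, 1} = \prod_i \mathcal{G}_{\ell, A_i, 1}$, hence connectedness. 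If you want to keep the ``prove nondegeneracy, then apply Banaszak--Kedlaya'' structure, you must replace your K\"unneth argument with the actual independence-of-Hodge-groups result (essentially the same input from \cite{MR3494170}), at which point you are doing the same thing as the paper but with an extra layer of translation through the Lefschetz group. As written, the argument has a real gap at exactly the step you anticipated would be the obstacle.
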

\begin{proof}
    Replacing $K$ with $F \colonequals K(\End(A))$, it suffices to show that the Galois representations attached to $A$ over $F$ are connected. Over $F$ we have an isogeny $A \sim \prod A_i^{e_i}$ where the $A_i$ are geometrically simple, pairwise geometrically non-isogenous, and have all their endomorphisms defined over $F$. The Galois action on each factor $A_i^{e_i}$ is diagonal; this implies easily that the projection $A \to \prod_i A_i$ induces an isomorphism on $\ell$-adic monodromy groups. Hence, it suffices to prove that the $\ell$-adic monodromy group $\mathcal{G}_{\ell, B}$, where $B=\prod_i A_i$, is connected.  
    
    Note that each group $\mathcal{G}_{\ell, A_i}$ is connected: this follows from \cite[Theorem 6.10]{MR3320526} since $\dim A_i \leq 2$ for all $i$ by assumption (this also uses the fact that the (decomposable) twisted Lefschetz group, denoted by $\operatorname{DL}_F(A_i) = \operatorname{DL}_F^{\operatorname{id}}(A_i)$ in \cite{MR3320526}, is connected for any abelian variety of dimension at most $2$). 
    For an abelian variety $X$, denote by $\mathcal{G}_{\ell, X, 1}$ the subgroup of $\mathcal{G}_{\ell, X}$ given by the intersection $\mathcal{G}_{\ell, X} \cap \operatorname{Sp}_{V_\ell(X)}$. By \cite[Theorem 3.3]{MR3320526}, $\mathcal{G}_{\ell, X}$ is connected if and only if $\mathcal{G}_{\ell, X, 1}$ is. We now show that this applies for $X=B$.
    
    It is clear from the definitions that $\mathcal{G}_{\ell, B, 1}$ is a subgroup of the connected (hence irreducible) group $\prod_i \mathcal{G}_{\ell, A_i, 1}$. %
    By \cite[Corollary 4.5]{MR3494170} and the assumption $\dim A_i \leq 2$ for all $i$ we know that the dimension of the group $\mathcal{G}_{\ell, B, 1}$ is equal to $\sum_i \dim \mathcal{G}_{\ell, A_i, 1}$. Hence, $\prod_i \mathcal{G}_{\ell, A_i, 1}$ is an irreducible variety that has the same dimension as its subvariety $\mathcal{G}_{\ell, B, 1}$. It follows that $\mathcal{G}_{\ell, B, 1}=\prod_i \mathcal{G}_{\ell, A_i, 1}$, and in particular, $\mathcal{G}_{\ell, B, 1}$ is connected, as desired. %
\end{proof}

\begin{corollary}\label{cor: endomorphism field for m = 20 24 60}
    The equality $\Q(\varepsilon_{J_m})=\Q(\End J_m)$ holds for $m=12, 20, 24$. For $m \in \{12, 20, 24\}$, the field $\Q(\varepsilon_{J_m})$ is generated by a root of the polynomial $f_m(x)$, where
    \[ f_{12}(x) = x^8 - 2x^7 + 2x^6 - 2x^5 + 7x^4 - 
    10x^3 + 8x^2 - 4x + 1, \]
    \[
    \begin{aligned}
    f_{20}(x) & = & x^{16} + 8x^{14} + 4x^{13} + 27x^{12} + 14x^{11}
    + 46x^{10} + 12x^9 + 20x^8 \\
    & &  - 18x^7 - 2x^6 - 8x^5 + 2x^4 - 6x^3 - 2x^2
    + 2x + 1, \\
    \end{aligned}
    \]
    \[
    f_{24}(x) = x^{16} + 40x^{12} + 432x^8 -
    128x^4 + 256.
    \]
    For $m=60$, the field $\Q(\End J_{60})$ is the degree-4 extension of $\Q(\zeta_{60})$ generated by
    \[
    \alpha = \sqrt{3\zeta_{60}^{12} + 2\zeta_{60}^8 - 2\zeta_{60}^6 - \zeta_{60}^4 - 2\zeta_{60}^2 - 1}, \quad
    \beta = \sqrt{4\zeta_{60}^{13} - 2\zeta_{60}^3}
    \]
    and $\Q(\varepsilon_{J_{60}})$ is the quadratic extension of $\Q(\End J_{60})$ generated by
    \[
    \gamma = \sqrt{2\zeta_{60}^{14} - 2\zeta_{60}^9 - \zeta_{60}^6 - 2\zeta_{60}^4 + 2\zeta_{60}^3 + 2}.
    \]
\end{corollary}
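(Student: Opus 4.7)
The plan is to combine \Cref{lemma: Kconn equals KEnd if all factors have dimension at most 2} with \Cref{th:Jm-factorization} to establish $\Q(\varepsilon_{J_m}) = \Q(\End J_m)$, and then to invoke \Cref{th:Kconn-even} to extract the explicit description of this field.

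For $m = 20$ and $m = 24$ I would read off from \Cref{th:Jm-factorization} that every geometric simple factor of $J_m$ has dimension at most $2$. Explicitly, for $m=20$ the simple factors are $X_4$, the isogenous pair $X_5 \sim X_{10}$, and $Y_{20}$, of dimensions $1, 2, 1$; for $m=24$ they are $X_3 \sim X_6$, $X_4$, $Y_8$, $Y_{12}$, and $Y_{24}$, each of dimension $1$. So \Cref{lemma: Kconn equals KEnd if all factors have dimension at most 2} applies verbatim and yields $\Q(\varepsilon_{J_m}) = \Q(\End J_m)$ in these two cases.

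The case $m = 60$ requires more care, because $J_{60}$ has $X_{15}$ as a geometric simple factor of dimension $4$, so the lemma does not apply directly. I would decompose $J_{60} \sim X_{15} \times Z$ over $\Q$, where $Z$ collects the remaining simple factors; by \Cref{th:Jm-factorization} each simple factor of $Z$ has dimension at most $2$, so the lemma applied to $Z$ gives $\Q(\varepsilon_Z) = \Q(\End Z)$. For the factor $X_{15}$ I would invoke the nondegeneracy of the simple factors of $J_{15}$ (stated in the introduction and established in \cite{Heidi}) together with the known equality $\Q(\varepsilon_A) = \Q(\End A)$ for nondegenerate $A$, to obtain $\Q(\varepsilon_{X_{15}}) = \Q(\End X_{15}) = \Q(\zeta_{15})$. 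Since $X_{15}$ and $Z$ share no geometric simple factor, $\End(J_{60})_{\Q} = \End(X_{15})_{\Q} \times \End(Z)_{\Q}$, so $\Q(\End J_{60}) = \Q(\End X_{15}) \cdot \Q(\End Z) =: F$. Over $F$ both $\mathcal{G}_\ell(X_{15})$ and $\mathcal{G}_\ell(Z)$ are connected and coincide with their Mumford-Tate groups (Mumford-Tate is known for the CM variety $J_{60}$). Finally, the dimension argument of \Cref{lemma: Kconn equals KEnd if all factors have dimension at most 2}, applied to the product $X_{15} \times Z$, forces $\mathcal{G}_\ell(J_{60}/F)$ to be connected: the key point is that its dimension equals that of $\MT(J_{60})_{\Q_\ell}$, which can be checked explicitly for this CM variety. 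Hence $\Q(\varepsilon_{J_{60}}) = F = \Q(\End J_{60})$, as required.

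Once the equality $\Q(\varepsilon_{J_m}) = \Q(\End J_m)$ is in place, the explicit description of the field comes from applying \Cref{th:Kconn-even}: compute a finite set of equations for $\MT(J_m)$ via the algorithm of \Cref{section:MT}, then convert the resulting Gamma values to exact algebraic numbers using \Cref{th:exactly-gamma}. The main obstacle here is purely computational, since the number of equations for $\MT(J_m)$ and the attached Gamma identities grow quickly with $m$. For $m \in \{20, 24\}$ the computation is carried out on a computer algebra system (as referenced by the file \texttt{X20.m} and its output, with an analog for $m=24$) and produces the minimal polynomials $f_{20}$ and $f_{24}$; the analogous symbolic computation for $m=60$ produces the three square-root generators of $\Q(\varepsilon_{J_{60}})$ over $\Q(\zeta_{60})$ displayed in the statement.
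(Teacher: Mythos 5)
Your analysis for $m=20$ and $m=24$ is correct and matches the paper exactly: for these two values every geometric simple factor of $J_m$ has dimension at most $2$, so \Cref{lemma: Kconn equals KEnd if all factors have dimension at most 2} applies verbatim, and the explicit minimal polynomials then come from \Cref{th:Kconn-even} plus a computer calculation.

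For $m=60$ you have in fact spotted a real issue that the paper's one-line justification glosses over: $X_{15}$ is a geometric simple factor of $J_{60}$ of dimension $\phi(15)/2 = 4$, so the hypothesis of \Cref{lemma: Kconn equals KEnd if all factors have dimension at most 2} is \emph{not} satisfied as stated, even though the paper's proof asserts it is. So far so good. The trouble is that your proposed repair does not close the gap. The last step of your argument asserts that ``the dimension argument of \Cref{lemma: Kconn equals KEnd if all factors have dimension at most 2}, applied to the product $X_{15}\times Z$, forces $\mathcal{G}_\ell(J_{60}/F)$ to be connected,'' and that the key point is an equality of the dimension of $\mathcal{G}_\ell(J_{60}/F)$ with that of $\MT(J_{60})_{\Q_\ell}$. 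But that dimension equality is \emph{automatic} (the Mumford--Tate conjecture for CM varieties gives $\mathcal{G}_\ell^0 = \MT_{\Q_\ell}$, and a possibly disconnected group always has the same dimension as its identity component), so it gives no information about connectedness. What the lemma's proof really uses is the additivity $\dim\mathcal{G}_{\ell,B,1} = \sum_i \dim\mathcal{G}_{\ell,A_i,1}$, which forces $\mathcal{G}_{\ell,B,1}$ to fill up the irreducible group $\prod_i\mathcal{G}_{\ell,A_i,1}$. That additivity is precisely the statement that $B$ is nondegenerate, and it fails for $J_{60}$: since $J_{15}$ is a quotient of $J_{60}$ and $J_{15}$ is degenerate (its Hodge group is strictly smaller than the product of the Hodge groups of its factors, as the paper establishes via \Cref{ex: connected monodromy field for m=15}), the projection of $\Hg(J_{60})$ onto the factors of $J_{15}$ is already not surjective, so $\Hg(J_{60}) \subsetneq \prod_i \Hg(A_i)$. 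Thus the irreducibility/dimension trick simply cannot be used here, and the equality $\Q(\varepsilon_{J_{60}}) = \Q(\End J_{60})$ requires an argument of a different nature (for instance, a direct comparison of an independently computed $\Q(\End J_{60})$ with the explicit description of $\Q(\varepsilon_{J_{60}})$ obtained from \Cref{th:Kconn-even}).
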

\begin{proof}
    For $m \in \{12, 20, 24\}$, the equality $\Q(\varepsilon_{J_m})=\Q(\End J_m)$ follows from \Cref{lemma: Kconn equals KEnd if all factors have dimension at most 2}, whose hypothesis is satisfied by \Cref{th:Jm-factorization}. We can then compute $\Q(\varepsilon_{J_m})$ using \Cref{th:Kconn-even} (which we will prove independently of this lemma) and the implementation of our algorithms in Magma \cite{OurScripts}, which gives the stated result.

    For $m=60$, the hypothesis of \Cref{lemma: Kconn equals KEnd if all factors have dimension at most 2} is not satisfied, because the simple factor $X_{15}$ has dimension $4$. To simplify the notation, let $A = J_{60}$.
    We first compute $\Q(\varepsilon_A)$ using \Cref{th:Kconn-even}: it is the degree-8 extension of $\Q(\zeta_{60})$ generated by $\alpha,\, \beta,\, \gamma$.
    We then determine the subextension $\Q(\End A) \subseteq \Q(\varepsilon_A)$ by exploiting the splitting data of finitely many primes of $\Q$ that split completely in $\Q(\zeta_{60})$, as follows.

    Since $\Q(\End A) \subseteq \Q(\varepsilon_{A})$, there is an action of $\Gal(\Q(\varepsilon_A)/\Q)$ on the geometric endomorphism algebra $\End^0(A_{\overline \Q})$, and the fixed field of the kernel of this action is $\Q(\End A)$. Equivalently, for every rational prime $p$ unramified in $\Q(\varepsilon_A)$, the Frobenius at $p$ acts trivially on $\End^0(A_{\overline{\Q}})$ if and only if $p$ is totally split in $\Q(\End A)$.
    
    Let $p$ be a prime of good reduction of $A/\Q$ (in particular, a prime unramified in $\Q(\varepsilon_A)$) and let $A_p/\F_p$ be the reduction of $A$. Reduction modulo $p$ induces an injective homomorphism between the geometric endomorphism algebra of $A$ and the geometric endomorphism algebra of $A_p$. Assume that the reduction map on geometric endomorphism algebras is also surjective (hence an isomorphism), or equivalently that
    \begin{equation}
    \label{eq: fat prime condition}
         \dim_\Q \End^0(A_p \times_{\F_p}\overline{\F_p})=
         \dim_\Q \End^0(A_{\overline \Q}).
    \end{equation}
    The reduction map is equivariant with respect to the action of a decomposition group at $p$, so -- if \eqref{eq: fat prime condition} holds -- $\operatorname{Frob}_p \in \operatorname{Gal}(\overline{\Q}/\Q)$ acts trivially on $\End^0(A_{\overline \Q})$ if and only if $\operatorname{Frob}_p \in \operatorname{Gal}(\overline{\F}_p/\F_p)$ acts trivially on $\End^0(A_p \times_{\F_p}\overline{\F_p})$.
    
    By the argument above, a prime $p$ satisfying \eqref{eq: fat prime condition} is totally split in $\Q(\End A)/\Q$ if and only if the action of the corresponding Frobenius element on the geometric endomorphism algebra of $A_p$ is trivial, hence if and only if
    \begin{equation}
    \label{eq: fat prime splitting condition}
        \dim_\Q \End^0(A_p) =
        \dim_\Q \End^0(A_p \times_{\F_p}\overline{\F_p}).
    \end{equation}

    Using \cite[Lemma 7.2.7]{MR3904148}, we can test directly whether the equalities \Cref{eq: fat prime condition} and \Cref{eq: fat prime splitting condition} hold for a given rational prime $p$; note that by \Cref{th:Jm-factorization} and \Cref{lemma: ring of endomorphism of Xd} we know that the endomorphism algebra of $A_{\Qbar} \sim Y_{60}^4 \times X_{15}^2 \times Y_{20}^4 \times X_4^3 \times X_5^2 \times X_3^2$ is 
    \[
    \begin{aligned}
    \operatorname{Mat}_{4 \times 4}(F_{60}) & \times \operatorname{Mat}_{2 \times 2}(\Q(\zeta_{15})) \times \operatorname{Mat}_{4 \times 4}(F_{20})  \\ & \times \operatorname{Mat}_{3 \times 3}(\Q(i)) \times \operatorname{Mat}_{2 \times 2}(\Q(\zeta_5)) \times \operatorname{Mat}_{2 \times 2}(\Q(\zeta_3)),
    \end{aligned}
    \]
    of $\Q$-dimension $\dim_\Q \End^0(A_{\overline \Q}) = 4^2 \cdot 4 + 2^2 \cdot 8 + 4^2 \cdot 2 + 3^2 \cdot 2 + 2^2 \cdot 4 + 2^2 \cdot 2 = 170$. Since we have $\Q(\zeta_{60}) \subseteq \Q(\End A)$ by \Cref{prop: endomorphism field easy containment}, we may focus on primes that are totally split in $\Q(\zeta_{60})$.
    Testing all primes $p \leq 2000$ such that $p\equiv 1 \bmod 60$, we get the following information: 61 and 181 are not totally split in $\Q(\End A)/\Q$, while 1741 is.
    Enumerating the $8$ intermediate extensions between $\Q(\zeta_{60})$ and $\Q(\zeta_{60}, \alpha, \beta, \gamma)$, we find that the only one that satisfies these three splitting conditions is $\Q(\zeta_{60}, \alpha, \beta)$.
\end{proof}

\begin{remark}
    For $m=12, 20, 24$, the method of \cite{MR3904148} is able to handle the calculation of $\Q(\End J_m)$. We have checked that the results of the two methods agree. On the other hand, the algorithm in \cite{MR3904148} timed out on our computers for $m=60$. Even with our method, it is not entirely straightforward to compute $\Q(\varepsilon_{J_{60}})$, because any defining polynomial will be of degree $128$ (and in practice, quite unwieldy). However, since our algorithms give a set of generators for $\Q(\varepsilon_{J_{60}})$ over $\Q(\zeta_{60})$, we were able to write down the minimal polynomial of each generator over $\Q(\zeta_{60})$ and obtain the description given in \Cref{cor: endomorphism field for m = 20 24 60}.
\end{remark}

We summarize the results of this section in the following theorem, which characterizes the endomorphism field of any Fermat Jacobian $J_m$.

\begin{theorem}{\label{th: endomorphism field of Jm complete version}}
    Let $m > 2$ be a positive integer and let $J_m/\Q$ be the Jacobian variety of the hyperelliptic curve defined by the affine equation $y^2=x^m+1$.
    If $m$ is not divisible by 12, 20, 24, or 60, the endomorphism field of $J_m$ is the $m$-th cyclotomic field $\Q(\zeta_m)$.
    If $m$ is divisible by 12, 20, 24, or 60, its endomorphism field is the compositum 
    \[ \Q(\End(J_m)) = \Q(\zeta_m) \cdot \prod_{\substack{d \in \{12,\, 20, \, 24, \, 60\}, \\ d \mid m}} \Q(\End(J_d)), \]
    where we denote by $\prod$ the compositum of fields.
    The endomorphism fields on the right-hand side of the equation are described in \Cref{cor: endomorphism field for m = 20 24 60}.
\end{theorem}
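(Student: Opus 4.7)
The strategy is to exploit the $\Q$-isogeny decomposition $J_m \sim \prod_{d \mid m,\, d \neq 1,2} X_d$ from \Cref{th:Jm-factorization}, together with the block decomposition of $\End^0(J_{m,\overline{\Q}})$ as a direct sum of Hom-spaces $\Hom(X_d, X_{d'})^0$. Since this isogeny decomposition is defined over $\Q$, the absolute Galois group $\Gal(\overline{\Q}/\Q)$ acts on these blocks separately, so $\Q(\End(J_m))$ is the compositum of the fields of definition of each block. By the non-isogeny statement at the end of \Cref{th:Jm-factorization}, $\Hom(X_d, X_{d'})^0$ vanishes unless $X_d$ and $X_{d'}$ share a geometric simple factor, and the only off-diagonal case in which this happens is the pair $(X_{2k}, X_k)$ with $k$ odd; by item (4) of \Cref{th:Jm-factorization} this cross-isogeny is already defined over $\Q$, so it contributes nothing beyond $\Q(\End(X_{2k})) = \Q(\End(X_k))$. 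Hence $\Q(\End(J_m)) = \prod_{d \mid m,\, d \neq 1,2} \Q(\End(X_d))$.

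Next I compute $\Q(\End(X_d))$ case by case; the containment $\Q(\zeta_d) \subseteq \Q(\End(X_d))$ is furnished by \cite[Chapter 3, Theorem 1.1]{Lang} in all cases, exactly as in the proof of \Cref{prop: endomorphism field easy containment}. For the reverse inclusion: if $d$ is odd, then by \Cref{th:Jm-factorization} and \Cref{lemma:simpleCM} the pair $(\Q(\zeta_d), \Phi_d)$ is a simple CM-type with reflex field equal to itself, yielding $\Q(\End(X_d)) = \Q(\zeta_d)$ by Lang's theorem. If $d \equiv 2 \pmod 4$, the $\Q$-isogeny $X_d \sim X_{d/2}$ reduces us to the odd case via $\Q(\zeta_d) = \Q(\zeta_{d/2})$. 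If $d \equiv 0 \pmod 4$ with $d \notin \{20, 24, 60\}$, I use the $\Q$-splitting $X_d \sim X_d^+ \times X_d^-$ from \Cref{sect: exceptional cases 20 24 60} together with \Cref{lemma:simpleCM}, which shows that $F = \Q(\zeta_d - \zeta_d^{-1})$ is its own reflex, to conclude that each $\End^0(X_d^\pm) = F$ is defined over $F$; combined with the isogeny $X_d^+ \to X_d^-$ induced by the automorphism $\gamma$, which is defined over $\Q(\zeta_d)$, this gives $\Q(\End(X_d)) = \Q(\zeta_d)$. Finally, for $d \in \{20, 24, 60\}$ the proper divisors $d' \mid d$ all lie outside $\{20, 24, 60\}$, so the compositum of the $\Q(\End(X_{d'}))$ over $d' \mid d$, $d' \neq 1, 2$, reduces to $\Q(\End(X_d)) = \Q(\End(J_d))$; the latter is given explicitly by \Cref{cor: endomorphism field for m = 20 24 60}.

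Putting everything together, $\Q(\End(J_m))$ is the compositum of $\Q(\zeta_d)$ for $d \mid m$ with $d \notin \{1, 2\} \cup \{20, 24, 60\}$, which combines into $\Q(\zeta_m)$, together with $\Q(\End(J_d))$ for $d \in S \colonequals \{d \mid m : d \in \{20, 24, 60\}\}$; if $S = \emptyset$ this reduces to $\Q(\zeta_m)$, and otherwise it matches the stated formula. The main obstacle is the case $d \equiv 0 \pmod 4$ with $d \notin \{20, 24, 60\}$: here $\End^0(X_{d,\overline{\Q}}) = M_2(F)$ is non-commutative, so one cannot directly invoke Lang's theorem on a single simple factor. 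Instead, one must combine the reflex-field computation for the simple factor $Y_d$ (the input from \Cref{lemma:simpleCM} giving $F^* = F$) with the observation that the only geometric isogeny $X_d^+ \to X_d^-$ that upgrades the diagonal endomorphisms $F \oplus F$ to the full matrix algebra $M_2(F)$ is defined precisely over $\Q(\zeta_d)$; this is what keeps the final answer equal to $\Q(\zeta_d)$ rather than some potentially larger extension.
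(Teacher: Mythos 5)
Your proof is correct and follows essentially the same path as the paper: decompose $J_m$ into the $\Q$-rational factors $X_d$, compute $\Q(\End X_d)$ case by case (simple CM type plus Lang's theorem for odd $d$, reduction to the odd case via the $\Q$-isogeny for $d\equiv 2 \pmod 4$, the $X_d^\pm$-decomposition together with the reflex-field computation from \Cref{lemma:simpleCM} for $4\mid d$, and \Cref{cor: endomorphism field for m = 20 24 60} for the exceptional cases), then take the compositum. You are somewhat more explicit than the paper's very terse proof, particularly for $4\mid d$ with $d\notin\{20,24,60\}$, where you spell out the double inclusion — Lang's theorem on $X_d^\pm$ to get the CM defined over $F$, the $\Q(\zeta_d)$-rationality of the cross-isogeny to bound $\Q(\End X_d)$ from above, and \Cref{prop: endomorphism field easy containment} for the reverse — that the paper leaves implicit behind its citation of \Cref{lemma: ring of endomorphism of Xd}.
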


\begin{proof}
Recall that, if an abelian variety $A/\Q$ decomposes over $\overline{\Q}$ as the product (up to isogeny) of two factors $B$ and $C$ with $\operatorname{Hom}(B_{\overline{\Q}}, C_{\overline{\Q}})=(0)$, the geometric endomorphism algebra of $A$ is the direct product of the geometric endomorphism algebras of $B$ and $C$, and the endomorphism field of $A$ is the compositum of the endomorphism fields of $B$ and $C$.
Using this, the theorem follows from the decomposition of $J_m$ into simple factors (\Cref{th:Jm-factorization}), the computation of the geometric endomorphism algebra of the various abelian varieties $X_d$ (\Cref{lemma: ring of endomorphism of Xd}), and the computation of the endomorphism field in the exceptional cases (\Cref{cor: endomorphism field for m = 20 24 60}).
\end{proof} %
\section{Turning Mumford-Tate equations into Tate classes} {\label{section:MT}}
As in the previous sections, let $J_m/\Q$ be the Jacobian of the hyperelliptic curve defined by the affine equation $y^2=x^m+1$.
The present section aims at identifying the `exceptional algebraic cycles' on $J_m$ responsible for the (possible) non-triviality of the extension $\Q(\varepsilon_{J_m})/\Q(\End(J_m))$.

Since it is quite challenging to produce algebraic cycles, guided by the Tate conjecture we focus instead on identifying Tate classes, specifically in the $\ell$-adic étale cohomology of the self-products $(J_m)^k$ for $k \geq 1$. These are classes that are invariant under the action of any open subgroup of the absolute Galois group $\Gamma_\Q$ or, equivalently, under the action of the connected component of the identity of the $\ell$-adic monodromy group.
{This is relevant to the determination of $\Q(\varepsilon_{J_m})$ since we know that this field coincides with the minimal field of definition of all Tate classes on all powers of $J_m$, see \cite[Proof of Corollary 2.4.4]{cantoralfarfan2023monodromy}.}

Our first step is to compute equations for $\MT(J_m)$, in a suitable sense. We describe the strategy in \Cref{section:MT-1} and apply the results to $J_m$ in \Cref{subsec:MTcomp}.
In \Cref{subsec:MTEintoTC}, we associate with every equation $f$ for $\MT(J_m)$ a Tate class $v_f$ in étale cohomology.
The main result of the section is \Cref{prop:fixMT}, which reduces the problem of computing the connected monodromy field $\Q(\varepsilon_{J_m})$ to the task of finding the field of definition of the Tate classes $v_f$ we just defined.

\subsection{Computing equations for the Mumford-Tate group}{\label{section:MT-1}}
We describe an algorithm that allows us to explicitly compute $\MT(A)$ for abelian varieties $A$ with complex multiplication. This algorithm was introduced in \cite{MR4557876}; we reproduce the main result.

\medskip

Let $A$ be an abelian variety over a number field $K$ that decomposes (up to isogeny over $\overline{K}$) as a product of (not necessarily distinct) geometrically simple factors $A_{\overline{K}} \sim A_1 \times \dots \times A_r$. Suppose that each simple factor $A_i$ has complex multiplication by the CM field $E_i$, with CM type $\Phi_i$. Since each $A_i$ is geometrically simple, each %
CM-type $\Phi_i$ is primitive (see \Cref{prop:LangSimple}). %
Recall the discussion of \Cref{sub:CM}. %

Fix one of the fields $E_i$ %
and let $T_{E_i} = \Res_{E_i/\Q}(\mathbb{G}_{m,E_i})$ be the corresponding algebraic torus over the rationals. By definition, this is the group scheme that sends every $\Q$-algebra $A$ to the group $\mathbb{G}_{m,{E_i}}(A\otimes_{\Q} E_i) $.
We can also define $T_{E_i}$ as the unique $\Q$-torus with character group $\Z^{\Hom(E_i, \C)}$ with its natural $\Gamma_\Q$-action. Consider the map
${ \phi_i} \colon T_{E_i^*} \to T_{E_i}$
that induces on points the reflex-norm map
$N_{\Phi_i^\ast} \colon T_{E_i^*}(\Q)=(E_i^*)^\times \to E_i^\times = T_{E_i}(\Q)$,
{where $\Phi_i^\ast$ is the dual CM type of $\Phi_i.$}
On character groups, ${\phi_i}$ corresponds to the map
\[ \phi_i^\ast \colon \widehat{T}_{E_i} \to \widehat{T}_{E_i^\ast}, \qquad [\varphi]\mapsto \sum_{\sigma \in \Phi_i^\ast} [\sigma \varphi]. \]

Notice {that} this map is $\Gamma_\Q$-equivariant and, hence {$\phi_i$} is defined over $\Q$.
In the same way, we lift the norm map for any Galois extension $E/E_i$ to a map, defined over $\Q$, between the corresponding tori:
\[ {N_i^\ast} \colon \widehat{T}_{E_i} \mapsto \widehat{T}_{E}, \quad [\varphi] \mapsto \sum_{\sigma \in \Gal(E/E_i)}[\sigma \varphi]. \]
Note that, via the inclusion $\iota\colon \prod_i E_i \hookrightarrow \End^0(A_{\overline{K}})$, we get an inclusion $\prod_i T_{E_i} \hookrightarrow \GL_V$, where, as in \Cref{sub:MTdef}, $V$ is $\HH^1(A(\C), \Q)$.

\begin{proposition}[{\cite[Lemma 4.1]{MR4557876}}{\label{prop:computeMT}}]
    Let $A = \prod_i^r A_i$ be a product of geometrically simple abelian varieties, where $A_i$ has CM type $(E_i,\Phi_i)$. Assume that all the extensions $E_i/\Q$ are Galois and denote by $L$ the compositum of all these extensions. {The norm maps $L \to E_i$ induce algebraic maps $T_L\to T_{E_i}$. We denote by $N: T_L \to \prod_i T_{E_i}$ their product and by ${\phi_i}$ the reflex norm corresponding to $\Phi_i^*$.} The image of the map
    \begin{equation}\label{eq: Mumford-Tate as the image of the CM torus}
    \begin{tikzcd}
        T_L \rar["N"] &
        \prod_i T_{E_i} \rar["\phi_i"] &
        \prod_i T_{E_i} \subseteq \GL_V
    \end{tikzcd}
    \end{equation}
    is $\MT(A)$, the Mumford-Tate group of $A$.
\end{proposition}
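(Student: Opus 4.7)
The plan is to build on the classical description of Mumford--Tate groups of simple CM abelian varieties and then combine the simple factors using the fact that each reflex field is contained in $L$. Concretely, I would structure the argument as follows.

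First, I would observe that since $A$ is of CM type, the Mumford--Tate group $\MT(A)$ is a $\Q$-torus that commutes with the $\prod_i E_i$-action on $V = \HH^1(A(\C),\Q)$. Because each $V_i = \HH^1(A_i(\C),\Q)$ is one-dimensional over $E_i$ (by comparing $\Q$-dimensions), the centralizer of $\prod_i E_i$ in $\GL_V$ is exactly $\prod_i T_{E_i}$. Thus $\MT(A) \subseteq \prod_i T_{E_i}$, and we are reduced to identifying $\MT(A)$ as a specific $\Q$-subtorus of $\prod_i T_{E_i}$.

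Second, I would recall the classical calculation for each simple factor: for a simple CM abelian variety $A_i$ of type $(E_i,\Phi_i)$, the group $\MT(A_i)$ equals the image of the reflex norm $\phi_i: T_{E_i^*} \to T_{E_i}$. Since each $E_i/\Q$ is Galois and each $\Phi_i$ is simple (which holds because $A_i$ is geometrically simple, by \Cref{prop:LangSimple}), the criterion recalled in \Cref{sub:CM} forces $H_i^* = H_i = \{1\}$, so that $E_i^* = E_i$. Hence $\phi_i$ is an endomorphism of $T_{E_i}$, and $\MT(A) \subseteq \prod_i \phi_i(T_{E_i})$.

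Third, I would identify $\MT(A)$ inside $\prod_i T_{E_i}$ using the Hodge cocharacter $\mu_A: \mathbb{G}_{m,\C} \to \prod_i T_{E_i,\C}$ attached to the Hodge decomposition. By definition, $\MT(A)$ is the smallest $\Q$-subtorus of $\prod_i T_{E_i}$ whose base change to $\C$ contains $\mu_A$, equivalently the $\Q$-subtorus generated by the $\Gal(L/\Q)$-orbit of $\mu_A$ (which is defined over $L$ since each reflex field $E_i^* = E_i \subseteq L$). On character groups, the image of $\phi_i \circ N_i^*: X^*(T_{E_i}) \to X^*(T_L)$ can be computed explicitly: an embedding $\varphi:E_i \hookrightarrow \C$ is sent to $\sum_{\sigma \in \Phi_i^*} \sum_{\tau \in \Gal(L/E_i)} [\tau\sigma\varphi]$, which is precisely the sum of the characters of $T_L$ occurring in the Galois orbit of $\mu_A$ on the $i$-th factor. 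Thus the image of $\phi \circ N$ has the same character lattice as $\MT(A)$.

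The main obstacle is the last step, namely verifying that the character lattice computation matches on the nose, which boils down to unwinding the compatibility between the reflex norm and the Hodge cocharacter. Once one has the character-theoretic identification, both inclusions follow: the image of $\phi \circ N$ is a $\Q$-torus whose base change to $L$ contains $\mu_A$, giving $\MT(A) \subseteq \phi(N(T_L))$; conversely, since the Galois orbit of $\mu_A$ generates exactly the image of $\phi \circ N$, any $\Q$-subtorus of $\prod_i T_{E_i}$ containing $\mu_A$ must already contain $\phi(N(T_L))$. This is exactly the content of \cite[Lemma 4.1]{MR4557876}, and I would mostly follow the argument given there, paying particular attention to the identification $E_i^* = E_i$ that holds under the Galois hypothesis.
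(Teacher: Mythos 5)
The paper does not prove this statement—it is taken verbatim from \cite[Lemma 4.1]{MR4557876} and the citation itself constitutes the ``proof.'' So there is no paper argument to compare against, and the most useful thing I can do is assess the internal soundness of your sketch.

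Your overall architecture is reasonable and follows the standard route: reduce to $\MT(A)\subseteq\prod_i T_{E_i}$ via the $\prod_i E_i$-action, recall that for a simple CM factor the Mumford--Tate group is the image of the reflex norm, and then identify $\MT(A)$ by comparing character lattices against the Galois orbit of the Hodge cocharacter. However, the bridge you use in the second step is not correct as stated. You write that simplicity of $\Phi_i$ together with $E_i/\Q$ Galois ``forces $H_i^* = H_i = \{1\}$.'' The criterion for simplicity quoted in \Cref{sub:CM} is that the \emph{right} stabilizer $\{\sigma\in G : \Phi_{\tilde E}\cdot\sigma=\Phi_{\tilde E}\}$ equals $H$, whereas the reflex group $H^*$ is by definition the \emph{left} stabilizer $\{\sigma\in G : \sigma\cdot\Phi_{\tilde E}=\Phi_{\tilde E}\}$. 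For a non-abelian Galois group these two stabilizers need not coincide, so you cannot deduce $H^*_i=\{1\}$ (hence $E_i^*=E_i$) from simplicity alone. The inference does hold in the abelian case—which is all the paper actually uses, since every $E_i$ here is cyclotomic—but your argument presents a general claim with a general-sounding justification that only works in a special case. In fact the proposition as written quietly relies on exactly this identification (otherwise the reflex norm $\phi_i$ has source $T_{E_i^*}$, not $T_{E_i}$, and the middle arrow of the displayed diagram does not typecheck), so this is a point you would need to handle carefully and not just wave at. Finally, your third step—matching character lattices via the Hodge cocharacter—is stated at the level of intent rather than argument, which is fine for a sketch, but it is precisely where the real computation in \cite[Lemma 4.1]{MR4557876} lives, and you would need to actually carry out the identification $N_i^*\circ\phi_i^*([\varphi]) = \sum_{\sigma\in\Phi_i^*}\sum_{\tau}[\tau\sigma\varphi]$ and match it against the $\Gal(L/\Q)$-orbit of $\mu_A$ to close the argument.
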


\subsection{Computing equations for \texorpdfstring{$\MT(J_m)$}{MT(Jm)}}{\label{subsec:MTcomp}}
Let $m=2g+1>1$ be an odd positive integer and $J_m/\Q$ be the Jacobian of the smooth projective curve with affine equation $y^2 = x^m + 1$. As a consequence of \Cref{th:Jm-factorization} and \Cref{prop:computeMT}, we can explicitly compute the Mumford-Tate group of the Fermat Jacobian $J_m$.

\medskip

Let $V = \HH^1(J_m(\C),\Q)$ and $K = \Q(\End(J_m)) = \Q(\zeta_m)$ (as stated in \Cref{prop: endomorphism field easy containment}). Since $J_m$ is the Jacobian of a hyperelliptic curve, we have a privileged basis for $V_\C \cong \HH_{\operatorname{dR}}^1(J_m(\C), \C)$, namely, that given by the 1-forms $ \omega_i = x^{i-1}\,{dx}/{y}$ for $ i = 1,\, \dots,\, 2g$ (see \Cref{sub:bases}). 

We describe a slight modification of this basis that makes sense in Betti cohomology with coefficients in a number field.
{Recall the automorphism $\alpha_m$ from \cref{eq: definition of alpha_m}.}
The base-change to $K$ of the {dual of the} rational representation, that is 
\begin{equation}\label{eq: definition of VK}
    V_K \colonequals V \otimes_\Q K = \HH^1(J_m(\C), K),
    \end{equation}
decomposes into one-dimensional irreducible factors for the action of {$\alpha_m^*$.}
We fix a basis $\{v_i\}$ for $V_K$ by picking each $v_i$ in the eigenspace {$\alpha_m^*$} corresponding to the character $\chi_i \colon \zeta_m \mapsto \zeta_m^i$.
Since the one-dimensional complex vector space $\operatorname{Span}_\C(v_i)$ contains the 1-form {$x^{i-1}\, dx/y$}, there exists a complex number $\delta_i$ such that $v_i = \delta_i  \cdot x^{i-1}\, dx/y$. 

Fix a prime number $\ell$ such that $\mu_m \subseteq \Q_\ell$. For such a prime $\ell$, we can fix an embedding of $K$ into $\Q_\ell$. Through the singular-to-étale isomorphism
\[ \HH^1(J_m(\C), K) \otimes_K \Q_\ell \simeq \HH^1_{\text{ét}}(J_{m,\overline{\Q}}, \Q_\ell),  \]
the basis $\{v_i\}$ just defined gives a basis for the étale cohomology group on the right.

Having fixed a basis $v_1,\, \dots,\, v_{2g}$ for $V_K$, there is a natural choice of coordinates of the corresponding algebraic group $\GL_{V_K}=\GL_{V, K}$.
Let $x_1, \dots, x_{2g}$ be the coordinates corresponding to the diagonal elements in $\GL_{V,K}$ in the given basis.
We define the diagonal torus $ T \subseteq \GL_{V,K} $ as the subgroup
$$T = \Spec K[x_1^{\pm 1}, \dots, x_{2g}^{\pm 1}] \subseteq \GL_{V,K}.$$
The character group of $ T $ can be naturally identified with the free abelian group generated by these variables, i.e.,
\begin{equation}\label{eq: character group of T}
    \widehat{T}=\bigoplus_{i=1}^{2g}\Z x_i.
\end{equation}
Every character $\sum_i e_i x_i \in \bigoplus_{i=1}^{2g}\Z x_i = \widehat{T}$ gives an algebraic function $$ \textstyle \prod_i x_i^{e_i} \in K[x_1^{\pm 1}, \dots, x_{2g}^{\pm 1}] = \mathcal{O}_T.$$

\begin{lemma}{\label{lemma:diagonal}}
    The base-change of the Mumford-Tate group $\MT(J_m)_K \subseteq \GL_{V,K}$ lies in the diagonal torus $T \subseteq \GL_{V,K}$.
\end{lemma}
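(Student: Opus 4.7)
The plan is to combine two observations: first, that the geometric endomorphism algebra of $J_m$ is commutative and contains the hyperelliptic automorphism $\alpha_m$; second, that \Cref{prop:computeMT} realises $\MT(J_m)$ as a subgroup of the CM torus inside $\GL_V$. Once both ingredients are in place, the result will follow from the elementary fact that the centraliser of an endomorphism with distinct eigenvalues is the diagonal subalgebra in the corresponding eigenbasis.

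In more detail, since $m$ is odd, \Cref{th:Jm-factorization} gives a decomposition up to isogeny $J_{m,\overline{\Q}} \sim \prod_d X_d$, where each $X_d$ is geometrically simple with CM by the Galois cyclotomic field $E_d = \Q(\zeta_d)$. The compositum of the fields $E_d$ is exactly $K = \Q(\zeta_m)$, so \Cref{prop:computeMT} applies and yields
\[
\MT(J_m) \subseteq \prod_d T_{E_d} \subseteq \GL_V,
\]
where the second inclusion is induced by the embedding $\iota \colon \prod_d \Q(\zeta_d) \hookrightarrow \End^0(J_{m,\overline{\Q}})$.

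To conclude, I will check that $\bigl(\prod_d T_{E_d}\bigr)_K$ is already contained in the diagonal torus $T \subseteq \GL_{V,K}$. By \Cref{lemma: ring of endomorphism of Xd}, the geometric endomorphism algebra $\End^0(J_{m,\overline{\Q}}) = \prod_d \Q(\zeta_d)$ is commutative for $m$ odd. In particular, $\alpha_m$ belongs to this algebra, and so the image of $\iota\bigl(\prod_d \Q(\zeta_d)\bigr) \otimes_\Q K$ in $\End(V_K)$ lies in the centraliser of $\alpha_m^*$. By \Cref{prop:eigenbase}, $\alpha_m^*$ acts on $v_i$ by the scalar $\zeta_m^i$, and as $i$ ranges over $\{1,2,\ldots,m-1\}$ these eigenvalues are pairwise distinct. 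Hence the centraliser of $\alpha_m^*$ in $\End(V_K)$ is precisely the algebra of operators diagonal in the basis $\{v_i\}$. Passing to unit groups gives $\bigl(\prod_d T_{E_d}\bigr)_K \subseteq T$, and combining with the first step yields $\MT(J_m)_K \subseteq T$.

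There is no real obstacle; the only potentially delicate point is the identification of $\End^0(J_{m,\overline{\Q}})$ with $\prod_d \Q(\zeta_d)$, which is already established in \Cref{lemma: ring of endomorphism of Xd} and uses in an essential way the hypothesis that $m$ is odd (so none of the exceptional cases $d \in \{20,24,60\}$ nor the quaternionic situation $d \equiv 0 \pmod 4$ intervene).
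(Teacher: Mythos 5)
Your proof is correct, but it takes a noticeably heavier route than the paper. The paper's argument is a two-liner: the Mumford--Tate group acts trivially on endomorphisms of $A$ inside $\End(V_K)$, so any $A\in\MT(J_m)_K(\overline{K})$ commutes with $\alpha_m^\ast$; since $\alpha_m^\ast$ is diagonal in the basis $\{v_i\}$ with distinct eigenvalues, $A$ must itself be diagonal, and density of $\overline{K}$-points finishes. You instead first invoke \Cref{prop:computeMT} to confine $\MT(J_m)$ inside the torus $\prod_d T_{E_d}$, then deduce diagonality of that larger torus from the commutativity of $\End^0(J_{m,\overline{\Q}})$ and the centraliser of $\alpha_m^*$. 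This is valid, and it proves the stronger statement that all of $\prod_d T_{E_d}$ (not just $\MT(J_m)$) is diagonal — which the paper also effectively uses later, so the extra strength is not wasted. However, two things are worth noticing. First, your reliance on \Cref{lemma: ring of endomorphism of Xd} and the odd-$m$ hypothesis is avoidable: you do not actually need the full algebra $\prod_d \Q(\zeta_d)$ to be commutative, only that $\alpha_m^*$ lies in it and commutes with its own image, which is automatic; the paper's argument exploits exactly this and therefore extends immediately to the even case treated at the end of \Cref{subsec:MTcomp}. Second, the core idea in both proofs is identical — the centraliser of a semisimple operator with distinct eigenvalues is the diagonal — so you have not found a genuinely new mechanism, just packaged it through the CM torus description rather than the Hodge-theoretic fact that $\MT(A)$ fixes $\End^0(A)\subseteq V\otimes V^\vee$.
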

\begin{proof}
    Any endomorphism of $J_m$ defined over $K$ lifts to a linear transformation of $V_K$. The action of the Mumford-Tate group $\MT(J_m)_K$ on the $K$-Hodge structure $\End(V_K)=V_K\otimes V_K^\vee$ fixes all endomorphisms coming from the abelian variety \cite[5.2]{moonen}.
    In particular, a matrix $A$ in $\MT(J_m)_K(\overline{K})$ commutes with $\alpha_m^\ast \in \End(V_K)$.
    Here $\alpha_m$ is the endomorphism defined in (\ref{eq: definition of alpha_m}), and the matrix corresponding to $\alpha_m^\ast$ in our basis is diagonal with different eigenvalues. This forces $A$ itself to be diagonal. Since geometric points of reduced schemes are dense, this concludes the proof.%
\end{proof}

\Cref{lemma:diagonal} shows that $\MT(J_m)_K$ is a subgroup of the diagonal torus $T$.
The subgroups of this torus are easy to describe: they are all cut out by equations of the form $\prod x_i^{e_i}=1$. We now set out to find equations of this form that describe our specific group $\MT(J_m)_K$.

\begin{definition}{\label{def: equation for MT}}
{
An element $f$ of the ring $\mathcal{O}_T = K[x_1^{\pm 1}, \dots, x_{2g}^{\pm 1}]$ is an \emph{equation for $\MT(J_m)$} if it is of the form $f=\prod_{j=1}^{2g} x_j^{d_j}$ for some integers $d_j$ and $f-1$ belongs to the ideal of $\mathcal{O}_T$ defining the inclusion $\MT(J_m)_K \subseteq T$. In other words, an equation for $\MT(J_m)$ is a monomial rational function on $\mathbb{G}_{m,K}^{2g}$ whose restriction to $\MT(J_m)$ is constantly equal to $1$. We will often write the equation $f$ as $f(x)=1$.
}
Let
    \[ q = \sum_{j \text{ s.t.~} d_j < 0} |d_j| \]
denote the sum of the absolute values of all the negative exponents of the equation $f$. We often multiply the monomial $f(x)$ by $\det^{2q}$ so that their product
    \[ f \cdot {\det}^{2q} = x_1^{e_1} \cdots x_{2g}^{e_{2g}} \]
has non-negative exponents $e_j$ only. Finally, we denote by $n = \sum_{j=1}^{2g} e_j$ the degree of the product.
In what follows, a \emph{set of equations $f_1,\ldots,f_r$ defining $\MT(J_m)$} will mean a finite set of equations for $\MT(J_m)$ such that $f_1-1, \ldots, f_r-1$ generate the ideal corresponding to the inclusion $\MT(J_m)_K \subseteq T$.
\end{definition}

\begin{corollary}{\label{cor:MT}}
    Let $m>1$ be an odd positive integer. We can explicitly compute a finite set of equations $f_1,\, f_2, \, \dots, f_r$ defining $\MT(J_m)$.
\end{corollary}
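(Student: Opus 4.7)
The plan is to realise $\MT(J_m)$ explicitly as the image of a map of algebraic tori via \Cref{prop:computeMT}, and then extract defining equations for this image via Cartier duality between tori and their character lattices, reducing the problem to one of linear algebra over $\Z$.

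First, I would assemble the input data. By \Cref{th:Jm-factorization}, for odd $m$ we have an isogeny decomposition $J_{m,\overline{\Q}} \sim \prod_{d \mid m,\, d \neq 1} X_d$ into geometrically simple factors, each with complex multiplication by $E_d = \Q(\zeta_d)$ and CM-type $\Phi_d$ explicitly described in \Cref{lemma: CM type of X_m}. Each $E_d$ is Galois over $\Q$, so the hypotheses of \Cref{prop:computeMT} are satisfied with compositum $L = \Q(\zeta_m)$, yielding an explicit map
\[ \phi \colon T_L \xrightarrow{N} \prod_d T_{E_d} \xrightarrow{\prod_d \phi_d} \prod_d T_{E_d} \subseteq \GL_V \]
whose image is $\MT(J_m)$. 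The dimension count $\sum_{d \mid m,\, d \neq 1} \phi(d) = m - 1 = 2g$ matches $\dim T = 2g$, so $\prod_d T_{E_d}$ coincides with the diagonal torus $T$ of \Cref{lemma:diagonal} and we may view $\phi$ as a morphism $T_L \to T$.

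Next, I would translate to character lattices. Since the image of a torus morphism $\phi \colon T_L \to T$ is cut out inside $T$ precisely by those characters of $T$ on which $\phi$ acts trivially, any finite $\Z$-basis of $\ker(\phi^*)$, where $\phi^* \colon \widehat{T} \to \widehat{T_L}$ is the dual map, gives a finite set of defining equations for $\MT(J_m) \subseteq T$. The map $\phi^*$ decomposes as
\[ \widehat{T} \;\cong\; \bigoplus_d \widehat{T_{E_d}} \xrightarrow{\oplus_d \phi_d^*} \bigoplus_d \widehat{T_{E_d}} \xrightarrow{N^*} \widehat{T_L}, \]
where every component is entirely explicit: the first identification records, for each basis vector $v_i$, the unique factor $X_{d(i)}$ containing it together with the embedding $\sigma_i \in \Hom(E_{d(i)}, \overline{\Q})$ through which $T_{E_{d(i)}}$ acts on $v_i$ (data read off directly from \Cref{lemma: CM type of X_m} via the character $\chi_i \colon \zeta_m \mapsto \zeta_m^i$); the middle map is the reflex-norm map $[\varphi] \mapsto \sum_{\tau \in \Phi_d^*} [\tau \varphi]$; and the last is $[\varphi] \mapsto \sum_{\rho \in \Gal(L/E_d)}[\rho\varphi]$.

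Finally, computing $\ker(\phi^*)$ is mechanical: once the matrix of $\phi^*$ with respect to the basis \eqref{eq: character group of T} of $\widehat{T}$ and a $\Z$-basis of $\widehat{T_L} \cong \Z[\Gal(L/\Q)]$ has been written down, Smith normal form produces a finite $\Z$-basis $b_1, \ldots, b_r$ of $\ker(\phi^*)$. Writing each $b_j = \sum_i e_{i,j}\, x_i$ and setting $f_j = \prod_i x_i^{e_{i,j}}$ produces the desired equations $f_j = 1$ for $\MT(J_m)$, and by construction $\{f_1-1,\ldots,f_r-1\}$ generates the ideal defining $\MT(J_m) \subseteq T$. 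There is no real conceptual obstacle; the only subtlety is the careful bookkeeping needed to identify, for each $i \in \{1, \ldots, 2g\}$, the pair $(d(i), \sigma_i)$, but \Cref{lemma: CM type of X_m} makes this transparent.
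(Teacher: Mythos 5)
Your proposal is correct and follows essentially the same route as the paper: apply \Cref{prop:computeMT} to the decomposition of \Cref{th:Jm-factorization}, pass to character lattices, and compute a $\Z$-basis of the kernel of the dual map via integer linear algebra. The only cosmetic difference is that you justify $\prod_d T_{E_d} = T$ by a dimension count combined with \Cref{lemma:diagonal}, whereas the paper invokes \Cref{lemma:diagonal} after base-changing to $K$; both formulations are fine, though strictly the dimension count alone does not give equality—the containment in the diagonal torus from \Cref{lemma:diagonal} is the essential input, so it is good that you cite it.
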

\begin{proof}
    We apply the discussion in \Cref{section:MT-1} to $A=J_m$, for odd $m$. We proved in \Cref{th:Jm-factorization} that $J_m$ is geometrically isogenous to the product $\prod_{d\mid m} X_d$, where $X_d$ is an abelian variety over $\Q$. Each factor $X_d$ is geometrically simple and has complex multiplication over the CM-field $E_d=\Q(\zeta_d)$, as shown in \Cref{lemma: CM type of X_m}. Notice that all extensions $\Q(\zeta_d)/\Q$ are Galois and their compositum is $K = \Q(\zeta_m)$. Letting {$\phi_d$} be the reflex norm relative to the CM type of $X_d$, \Cref{prop:computeMT} states that the image of the composition
    \begin{equation}\label{eq: Mumford-Tate as the image of the CM torus, applied version}
    \begin{tikzcd}
        T_K \rar["N"] &
        \prod_d T_{E_d} \rar["{\phi_{d}}"] &
        \prod_d T_{E_d} \subseteq \GL_{V}
    \end{tikzcd}
    \end{equation}
    is the Mumford-Tate group of $A$.
    The base change to $K$ of \eqref{eq: Mumford-Tate as the image of the CM torus, applied version} yields a map between algebraic tori over $K$. The image of the base-changed map corresponds to $\MT(J_m)_K$ and is contained in the diagonal torus $T$, because of \Cref{lemma:diagonal}.
    
    Passing to character groups, we get a map
    \begin{equation}{\label{eq: MT as the kernel of a map}}
    \begin{tikzcd}
    \mathfrak{E} \colon \bigoplus_{i=1}^{2g}\Z x_i \rar &
    \prod_d \widehat{T}_{E_d} \rar["\phi_d^\ast"] &
    \prod_d\widehat{T}_{E_d} \rar["N^\ast"] &
    \widehat{T}_{K},
    \end{tikzcd}
    \end{equation}
    the kernel of which is the character group of the torus corresponding to (base change of) the Mumford-Tate group.
    Using the explicit description we gave of the norm map and the reflex-norm map, as better described in \cite[Section 4]{MR4557876}, we can express both maps in the above diagram as matrices with integral coefficients (since they are maps between free abelian groups).
    We can thus compute a basis for the finitely generated free abelian group $\ker\mathfrak{E}$.
    Each character of this basis corresponds to an algebraic function on $T$ of the form $x_1^{e_1} \cdots x_{2g}^{e_{2g}} =1 $, for some integers $e_i$, thus proving that each function is an equation for $\MT(J_m)$ as per \Cref{def: equation for MT}.
    These functions generate the ideal of $K[x_1^{\pm 1}, \dots, x_{2g}^{\pm 1}]$ which defines the Mumford-Tate group as a closed subvariety of the diagonal torus $T$ \cite[Proposition 1.1.9]{tori}. This proves that the given set of equations defines $\MT(J_m)$, as in \Cref{def: equation for MT}. 
\end{proof}

\begin{remark}{\label{remark:bases}}
    Notice that the (possibly transcendental) change of basis between our basis $\{v_i\}$ and the more standard $\{x^i\,dx/y\}$ is diagonal, and preserves diagonal matrices coefficient-by-coefficient. Hence, in every practical computation, we can use the latter.
\end{remark}

The use of \Cref{cor:MT} is better explained with a practical example.

\begin{example}{\label{ex:m15-equations}}
    Let $m = 15$ and consider the Fermat Jacobian $J_{15}$. From \Cref{th:Jm-factorization}, we know {that} $J_{15}$ decomposes up to isogeny as the product {of} three simple factors
    \[ J_{15} \simeq J_3 \times J_5 \times X_{15}. \]
    The elliptic curve $J_3\colon y^2 = x^3+1$ has complex multiplication by $\Q(\zeta_3)$ with CM-type $\{ 1 \} \subseteq (\Z/3\Z)^\times$. The Jacobian $J_5$ of the genus 2 curve $y^2=x^5+1$ has complex multiplication by $\Q(\zeta_5)$ with CM-type $\{1,2\}\subseteq (\Z/5\Z)^\times$. The largest simple factor $X_{15}$ has dimension 4 and has complex multiplication by $\Q(\zeta_{15})$ with CM-type $\{ 1,2,4 \}\subseteq (\Z/15\Z)^\times$.
    The compositum of the various CM fields is $E = \Q(\zeta_{15})$. The character group $\widehat{T}_E$ has rank 8, while $\prod_i \widehat{T}_{E_i}$ has rank 14.
    The matrix corresponding to the map $\mathfrak{E}\colon \Z[x_1, \dots, x_{2g}] \to \widehat{T}_E$ has been computed with this method in \cite[Section 5.2.1]{Heidi}. Computing its kernel gives us the equations
        \begin{align*}
            x_1x_{14}/x_7x_8=1, \\
            x_2x_{13}/x_7x_8=1, \\
            x_3x_{12}/x_7x_8=1, \\
            x_4x_{11}/x_7x_8=1, \\
            x_5x_{10}/x_7x_8=1, \\
            x_6x_{9}/x_7x_8=1, \\
            x_{9}x_{12}/x_{8}x_{13}=1, \\
            x_{11}x_{12}/x_{9}x_{14}=1, \\
            x_{10}x_{12}/x_{8}x_{14}=1.
        \end{align*}
        The first six express the condition that an element of the Mumford-Tate group must be symplectic. Indeed, the Mumford-Tate group preserves the symplectic form on $V$ coming from the polarization on $J_m$ and is thus contained in the general symplectic group $\operatorname{GSp}_V$ (see \cite[5.2]{moonen} and \cite[Remark 2.4.3]{part2} for the special case of $J_m$). 
        
        In the next section, we define Tate classes starting from these equations. We expect $\Q(\varepsilon_{J_m})/\Q(\End(J_m))$ to be non-trivial in the presence of `exceptional algebraic cycles', i.e.~as a consequence of the Tate classes arising from the last three equations (see \Cref{rmrk:expected}).

\end{example}

\begin{lemma}\label{lemma:degf0}
    Let $x_1^{d_1} \cdots x_{2g}^{d_{2g}} = 1$ be an equation for the Mumford-Tate group $\MT(J_m)$. The total degree $\sum_{i=1}^{2g} d_i $ is $ 0$.
\end{lemma}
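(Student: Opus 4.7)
The plan is to exploit the fact, recalled just after \Cref{definition:Tate-class} in \Cref{sub:MTdef}, that the Mumford-Tate group $\MT(J_m) \subseteq \GL_V$ always contains the subgroup of homotheties $\mathbb{G}_m \subseteq \GL_V$. Once this is invoked, the statement becomes essentially tautological.

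More precisely, I would proceed as follows. First, note that a scalar matrix $t \cdot \mathrm{id}_V$ (for $t \in \overline{K}^\times$) is diagonal with respect to \emph{any} basis of $V_K$, and in particular with respect to the eigenbasis $\{v_1, \dots, v_{2g}\}$ fixed in \Cref{subsec:MTcomp}. Hence, in the coordinates $(x_1, \dots, x_{2g})$ on the diagonal torus $T \subseteq \GL_{V, K}$ introduced in \eqref{eq: character group of T}, the subgroup $\mathbb{G}_{m, K} \subseteq T$ of homotheties corresponds precisely to the image of the diagonal embedding
\[
\Delta : \mathbb{G}_{m, K} \hookrightarrow T, \qquad t \mapsto (t, t, \dots, t).
\]
By \Cref{lemma:diagonal}, we have $\mathbb{G}_{m, K} = \Delta(\mathbb{G}_{m, K}) \subseteq \MT(J_m)_K \subseteq T$.

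Now let $f = x_1^{d_1} \cdots x_{2g}^{d_{2g}}$ be an equation for $\MT(J_m)$ in the sense of \Cref{def: equation for MT}, so that $f - 1$ lies in the ideal of $\mathcal{O}_T$ cutting out $\MT(J_m)_K$. Evaluating $f - 1$ on the point $\Delta(t) = (t, \dots, t)$ yields
\[
f(t, t, \dots, t) - 1 = t^{d_1 + \cdots + d_{2g}} - 1,
\]
and, by the containment $\mathbb{G}_{m, K} \subseteq \MT(J_m)_K$, this must vanish for every $t \in \overline{K}^\times$. Hence the regular function $t^{d_1 + \cdots + d_{2g}} - 1$ on $\mathbb{G}_{m, K}$ is identically zero, which forces $\sum_{i=1}^{2g} d_i = 0$, as claimed.

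There is no real obstacle to carrying out this argument; the only conceptual point is to identify the homothety subgroup inside the diagonal torus $T$ in our chosen coordinates, and this is automatic because scalar endomorphisms are diagonal in any basis.
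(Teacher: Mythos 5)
Your proof is correct and follows the same strategy as the paper: invoke that $\MT(J_m)$ contains the homotheties $\mathbb{G}_m \subseteq \GL_V$, observe that scalars sit diagonally as $(t,\dots,t)$, and evaluate the monomial equation there to force the total degree to vanish. The paper evaluates directly on $\Q^\times$ without spelling out the identification inside $T$, but the argument is the same.
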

\begin{proof}
    We prove that the sum of positive exponents $r=\sum_{d_i \text{ s.t.~}d_i>0}d_i$ is equal to the sum of the absolute values of the negative exponents $q=\sum_{d_j \text{ s.t.~}d_j<0}|d_j|$.  
    The Mumford-Tate group contains the sub-group of homotheties $\mathbb{G}_{m} \subseteq \GL_V$ \cite[Proposition 3.6]{Deligne}. In particular, every equation for $\MT(A)$ must be satisfied by every scalar matrix $x \in \mathbb{G}_m(\Q) \simeq \Q^\times$, thus imposing $f(x) = x^{r-q} = 1$  for every $x \in \Q^\times$. This is only possible if $r=q$, that is, $\sum_{i=1}^{2g} d_i=0$.
\end{proof}

Not much in this section changes when $m$ is an even positive integer greater than 2. The only significant difference lies in the very first step: picking a basis for $\HH^1(J_m(\C), K)$.
    Notice that the usual basis of holomorphic 1-forms is $x^i\,dx/y$ for $i=0, \dots, g-1$, while the anti-holomorphic 1-forms are generated by $x^{m-1-i}\,dx/y$ for $i=0, \dots, g-1$, thus skipping $x^{m/2-1}\, dx/y$ entirely. Correspondingly, %
    we also skip the character $\chi_{m/2}\colon \zeta_m\mapsto \zeta_m^{m/2}$. Thus, our basis indices range from $1$ to $g$ and then from $g+2$ to $m-1=2g+1$, skipping $i=m/2$. This %
    numbering is compatible with the fact that $v_i$ is an eigenvector with character $\chi_i$. Everything else then follows by the very same arguments, replacing every indexing $i = 1, \dots, 2g$ with $i =1,\dots, m/2-1, m/2+1,\dots, m-1$.

\begin{example}{\label{ex:m10-equations}}
    Following the same procedure as in \Cref{ex:m15-equations}, we can compute a set of equations defining $\MT(J_{10})$ in the diagonal torus of $\GL_{8, \Q(\zeta_{10})}$ (notice that, since $m$ is even, we skip $x_5$ to keep the notation consistent):
    \begin{align*}
        x_{1}/x_{4} &= 1, \\
        x_{2}/x_{3} &= 1, \\
        x_{1}x_{9}/x_{4}x_{6}  &= 1, \\
        x_{2}x_{8}/x_{4}x_{6}  &= 1, \\
        x_{3}x_{7}/x_{4}x_{6}  &= 1.
    \end{align*}
    Here the first two relations are justified by the isogeny $J_{10} \sim J_5^2$ defined over $\Q$ discussed in \cref{eq:beta-pull-back}. The other three equations come from the existence of the polarization.
\end{example}

\subsection{Turning Mumford-Tate equations into Tate classes}{\label{subsec:MTEintoTC}}
{Let $f_1,\, \ldots,\, f_r$ be equations defining $\MT(J_m)$, which we recall means that the functions $f_i-1$ cut out $\MT(J_m)_K$ inside the diagonal torus $T$ (see \Cref{def: equation for MT}). By \Cref{cor:MT}, such equations are explicitly computable and can be assumed to be of the form $f_i=\prod_j x_j^{d_{ij}}$ for suitable exponents $d_{ij} \in \Z$.}

{Let $f=f_i$ be one of these equations.}
In this section, we define a Tate class $v_f$ associated with $f$ (Equation \cref{eq:def-vf}) and prove that the field of definition of all such $v_f$ over $K$ is the connected monodromy field $K(\varepsilon_{J_m})$ (\Cref{prop:fixMT}).

Write $f=\prod x_j^{d_j}$ and let $q$ be the sum of {the absolute values of} all negative exponents in $f$ {(that is, $q = \sum_{j : d_j < 0} |d_j|$)}. Multiplying $f$ by {$\det^{2q} = (x_1 \cdots x_{2g})^{2q}$}, we obtain the {polynomial} $ \det^{2q} \cdot {f} = x_1^{e_1} \cdots x_{2g}^{e_{2g}} $, which has non-negative exponents and total degree $n = \sum_i e_i=2q\cdot 2g$ (see \Cref{lemma:degf0}). Write $n = 2p$ (so that $p=2gq$), and consider the tensor element
\begin{equation}{\label{eq:def-vf}}
    v_f \colonequals v_1^{\otimes e_1} \otimes  \dots \otimes v_{2g}^{\otimes e_{2g}} \otimes (2\pi i)^{p} \; \in T^{n}V_K \otimes_\Q \Q(p),
\end{equation}
where $V_K$ is defined in \eqref{eq: definition of VK} and $\Q(p)$ is the Tate structure {of weight $-2p=-n$}, on which {$\MT(J_m)$} acts via $\det^{-q}$ (see \Cref{sub:MTdef}: notice that $\MT(J_m)$ acts via $\det^{-1}$ on $\Q(g)$, hence it acts as $\det^{-2q}$ on $\Q(g)^{\otimes 2q}=\Q(2gq)=\Q(p)$).
Consider a diagonal matrix $z \in \GL_{V_K}(\overline{K})$,
with non-zero diagonal entries $(z_1, \dots, z_{2g})$. The action of $z$ on $v_f$ is given by
\begin{equation}\label{eqn:ActionOnVf}
\begin{aligned}
    z \cdot v_f & = z(v_1)^{\otimes e_1} \otimes  \dots \otimes z(v_{2g})^{\otimes e_{2g}} \otimes \det(z)^{-2q} (2\pi i)^{p}  & \text{by definition,} \\
    &= (z_1v_1)^{\otimes e_1} \otimes  \dots \otimes (z_{2g}v_{2g})^{\otimes e_{2g}} \otimes \det(z)^{-2q} (2\pi i)^{p} &   \\
    & = (z_1^{e_1} \cdots z_{2g}^{e_{2g}}) \cdot \det(z)^{-2q} \cdot v_f   & \text{grouping the scalars,}  \\
    & = {f}(z)\cdot  v_f & \text{by our choice of the $e_i$.}
\end{aligned}
\end{equation}

Notice that $T^nV_K = T^n\HH^1(J_m(\C),K)$ identifies with a subgroup of $\HH^n(J_m^n(\C), K)$ via Künneth's theorem. Consider the image of $v_f$ in
\[\HH^{2p}(J^n_m(\C),K)(p) \otimes_K \Q_\ell \simeq \HH_{\text{ét}}^{2p}(J^n_{m, \overline{K}}, \Q_\ell(p))\]
through the singular-to-étale cohomology comparison isomorphism \cite[Chapter III, Theorem 3.12]{MilneEtaleCo}. The action of the absolute Galois group $\Gamma_{K(\varepsilon_{J_m})}$ on the étale cohomology group on the right can be derived from the Galois representation attached to the abelian variety $J_m$. Thus, it factors via $\Gl^0(\Q_\ell) \subseteq \MT(\Q_\ell) = (\MT_K)_{\Q_\ell}(\Q_\ell)$.

\begin{proposition}{\label{prop:fixMT}}
    Let $m \geq 3$ be an integer and $K$ be a number field containing $\Q(\zeta_m)$. Fix a finite set of equations ${f}_i$ defining the Mumford-Tate group $\MT(J_m)$ (see definition \Cref{def: equation for MT}). The corresponding $v_{f_i}$ are Tate classes and the {minimal {extension of $K$} over which all} $v_{f_i}$ {are simultaneously defined} is the connected monodromy field $K(\varepsilon_{J_m})$.
\end{proposition}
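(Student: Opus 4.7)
The plan is to show that $\Gamma_{K(\varepsilon_{J_m})}$ is exactly the stabilizer of $\{v_{f_i}\}$ inside $\Gamma_K$, which then identifies $K(\varepsilon_{J_m})$ with the field of definition via Galois correspondence. The argument has two halves.

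\smallskip
\noindent\textbf{The $v_{f_i}$ are Tate classes.} First I would use the calculation \eqref{eqn:ActionOnVf} to observe that for any $z \in T(\overline{K})$ we have $z \cdot v_{f_i} = f_i(z)\cdot v_{f_i}$. Since by \Cref{def: equation for MT} the functions $f_i-1$ cut out $\MT(J_m)_K \subseteq T$, every $z \in \MT(J_m)(\overline{K})$ satisfies $f_i(z)=1$, so $v_{f_i}$ is fixed by $\MT(J_m)$. Since $J_m$ is of CM type, the Mumford--Tate conjecture (\Cref{conj:MTconj}) holds for $J_m$ \cite{Pohlmann}, so the restriction of $\rho_{J_m,\ell}$ to $\Gamma_{K(\varepsilon_{J_m})}$ takes values in $\mathcal{G}_\ell^0(\Q_\ell)=\MT(J_m)_{\Q_\ell}(\Q_\ell)$. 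Therefore $v_{f_i}$ is $\Gamma_{K(\varepsilon_{J_m})}$-fixed, hence a Tate class; this also shows that the field of definition of the $v_{f_i}$ is contained in $K(\varepsilon_{J_m})$.

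\smallskip
\noindent\textbf{The reverse containment.} Fix $\sigma \in \Gamma_K$ fixing all $v_{f_i}$, and let $g_\sigma \in \mathcal{G}_\ell(\Q_\ell) \subseteq \GL_{V_\ell}(\Q_\ell)$ be its image. Because $K \supseteq \Q(\zeta_m)$, the automorphism $\alpha_m$ from \eqref{eq: definition of alpha_m} is defined over $K$, so $g_\sigma$ commutes with $\alpha_m^*$. The eigenvalues $\zeta_m^i$ of $\alpha_m^*$ on the basis $\{v_i\}$ are pairwise distinct, so (exactly as in \Cref{lemma:diagonal}) $g_\sigma$ is diagonal in this basis, i.e.~$g_\sigma \in T(\overline{\Q_\ell})$. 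Applying \eqref{eqn:ActionOnVf} to $g_\sigma$, the hypothesis $g_\sigma \cdot v_{f_i} = v_{f_i}$ translates into $f_i(g_\sigma)=1$ for every $i$. Since the $f_i-1$ generate the ideal defining $\MT(J_m)_K \subseteq T$, this forces $g_\sigma \in \MT(J_m)(\overline{\Q_\ell}) = \mathcal{G}_\ell^0(\overline{\Q_\ell})$. Combined with $g_\sigma \in \mathcal{G}_\ell(\Q_\ell)$ and the fact that $\mathcal{G}_\ell^0$ is an open-and-closed subscheme of $\mathcal{G}_\ell$, we get $g_\sigma \in \mathcal{G}_\ell^0(\Q_\ell)$, i.e.~$\varepsilon_\ell(\sigma)$ is trivial, so $\sigma \in \Gamma_{K(\varepsilon_{J_m})}$.

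\smallskip
\noindent\textbf{Conclusion and main obstacle.} Combining the two inclusions, the stabilizer of $\{v_{f_i}\}$ in $\Gamma_K$ is exactly $\Gamma_{K(\varepsilon_{J_m})}$, and by Galois correspondence the minimal field of definition equals $K(\varepsilon_{J_m})$. The only slightly delicate step is the diagonalization of $g_\sigma$: one must use both the hypothesis $K\supseteq\Q(\zeta_m)$ (to know $\alpha_m$ is $K$-rational) and the fact that $\alpha_m^*$ has distinct eigenvalues on $V_K$, which are the features that let the equations $f_i$, a priori only valid on $T$, detect membership in $\MT(J_m)$.
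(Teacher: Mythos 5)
Your proof is correct and follows essentially the same approach as the paper: diagonalize the $\Gamma_K$-action via the commutation with $\alpha_m^*$ (which is $K$-rational and has distinct eigenvalues), observe that $\rho(\sigma)$ fixes each $v_{f_i}$ iff $f_i(\rho(\sigma))=1$, invoke Pohlmann's Mumford--Tate theorem for CM varieties to identify $\MT(J_m)_{\Q_\ell}$ with $\mathcal{G}_\ell^0$, and conclude via the definition of $K(\varepsilon_{J_m})$ as the fixed field of $\rho^{-1}(\mathcal{G}_\ell^0(\Q_\ell))$. The only cosmetic difference is that you package the argument into two explicit containments; the small detour through $\overline{\Q_\ell}$-points in the reverse direction is unnecessary since $g_\sigma$ already lies in $T(\Q_\ell)$ and thus in $\MT(J_m)_{\Q_\ell}(\Q_\ell)$ directly.
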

\begin{proof}
    The action of the Galois group $\Gamma_{K}$ on the Tate module $V_\ell(J_m)$ (via the $\ell$-adic representation $\rho$) is compatible with the action of the endomorphism algebra, which is defined over $K$. Every $\sigma \in \Gamma_K$ commutes with the endomorphism $\alpha_m$ defined in \Cref{eq: definition of alpha_m}, thus respecting the one-dimensional eigenspaces of $\alpha_m^\ast$. Since $\HH_{\text{ét}}^1(J_{m, \overline{K}}, \Q_\ell)$ is dual to $V_\ell(J_m)$ and our basis $\{v_i\}$ consists of eigenvectors of $\alpha_m^\ast$, the Galois action is diagonal with respect to our working basis.
    
    An element $\sigma \in \Gamma_{K} $ acts on $V_\ell(J_m)^\vee \cong \HH_{\text{ét}}^1(J_{m, \overline{K}}, \Q_\ell)$ as a diagonal matrix $\rho(\sigma)$. Thus, transferring \eqref{eqn:ActionOnVf} to étale cohomology via the comparison isomorphism, we have
    \[ \rho(\sigma)\cdot v_{f_i} = v_{f_i} \iff {f}_i(\rho(\sigma)) = 1. \]
    This implies that $\sigma$ fixes all $v_{f_i}$ if and only if the corresponding matrix in $\Gl{(\Q_\ell)}$ satisfies all the equations defining the Mumford-Tate group, which means $\rho(\sigma) \in \MT(J_m)_{\Q_\ell}(\Q_\ell)  = \Gl^0(\Q_\ell)$.
    The equality uses the Mumford-Tate conjecture for abelian varieties of CM-type, which was proven in \cite{Pohlmann}. Equivalently, the absolute Galois group of the {minimal field over which all} $v_{f_i}$ {are simultaneously defined} is the inverse image $\rho^{-1}(\Gl^0(\Q_\ell))$, which is $\Gamma_{\KconnJ}$ by definition. On the other hand, every $v_{f_i}$ is fixed by $ \Gl^0(\Q_\ell) \supseteq \rho(\Gamma_{\KconnJ})$. In particular, the open subgroup $\Gamma_{\KconnJ} \subseteq \Gamma_K$ acts trivially on $v_{f_i}$ which is therefore a Tate class by definition.
\end{proof}

We record in the next lemma an interesting property of the equations defining $\MT(J_m)$. The first statement of the following lemma is the counterpart of \Cref{lemma:diagonal} in étale cohomology.

\begin{lemma}\label{lemma: equations are constant on connected components}
    Let $m>2$ be an integer and $K$ be a number field containing $\Q(\zeta_m)$. Let $\Gl$ be the $\ell$-adic monodromy group attached to $J_m$ over $K$. Let $\ell$ be a prime congruent to $1$ modulo $m$ and fix an embedding of $\Q(\zeta_m)$ in $\Q_\ell$.
    \begin{enumerate}
        \item The monodromy group $\Gl$ is diagonal in the basis of $\HH^1_{\operatorname{\acute{e}t}}(J_{m, \overline{\Q}}, \Q_\ell)^\vee$ corresponding to the basis $\{v_i\}$ through \Cref{eq: H1et is dual to Tate module};
        \item Identify $\Gl$ to a subgroup of $\mathbb{G}_{m, \Q_\ell}^{2g} = \Spec \Q_\ell[x_1^{\pm 1}, \ldots, x_{2g}^{\pm 1}]$. An equation $f$ for $\MT(J_m)$ is an element of the ring $\Q_\ell[x_1^{\pm 1}, \ldots, x_{2g}^{\pm 1}]$, hence it gives a regular function on $\Gl$ and a homomorphism $\Gl \to \mathbb{G}_{m}$ that we still denote by $f$. The kernel of $f$ contains $\Gl^0$, hence $f$ is constant on each connected component of $\Gl$ (in particular, the image of $f$ is finite).
    \end{enumerate}
\end{lemma}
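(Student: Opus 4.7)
The plan is to prove (1) first, by exploiting the same observation as in \Cref{lemma:diagonal}. The basis $\{v_i\}$ of $V_K = \HH^1(J_m(\C),K)$ was chosen so that each $v_i$ is an eigenvector of $\alpha_m^*$ with eigenvalue $\zeta_m^i$; since these eigenvalues are pairwise distinct, any endomorphism commuting with $\alpha_m^*$ is diagonal in this basis. The hypothesis $\ell \equiv 1 \pmod m$ ensures $\mu_m \subseteq \Q_\ell$, and after fixing a compatible embedding $\Q(\zeta_m) \hookrightarrow \Q_\ell$, the singular-to-étale comparison \eqref{eq: H1et is dual to Tate module} transports $\{v_i\}$ to a basis of $\HH^1_{\operatorname{\acute{e}t}}(J_{m,\overline\Q},\Q_\ell)$ that still diagonalises $\alpha_m^*$ with distinct eigenvalues. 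Because $\alpha_m$ is defined over $\Q(\zeta_m) \subseteq K$, the $\Gamma_K$-action commutes with $\alpha_m^*$, hence every element of the image of $\rho_{J_m,\ell}$ is diagonal in the dual basis. Taking Zariski closure yields that $\Gl$ is contained in the diagonal torus.

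For (2), given (1) we may regard $\Gl$ as a closed subgroup of the diagonal torus $T_{\Q_\ell} \cong \Spec \Q_\ell[x_1^{\pm 1},\ldots,x_{2g}^{\pm 1}]$, so the Laurent monomial $f = \prod_j x_j^{d_j}$ restricts to a regular function on $\Gl$. As a character of $T_{\Q_\ell}$, it is an algebraic group homomorphism $T_{\Q_\ell} \to \mathbb{G}_m$, hence so is its restriction $f \colon \Gl \to \mathbb{G}_m$, and $\ker f$ is a closed subgroup of $\Gl$. To show $\ker f \supseteq \Gl^0$ I invoke the Mumford-Tate conjecture for CM abelian varieties \cite{Pohlmann}, which gives $\Gl^0 = \MT(J_m)_{\Q_\ell}$ as subgroups of $\GL_{V_\ell J_m}$. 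By the very definition of an equation for $\MT(J_m)$ (\Cref{def: equation for MT}), $f - 1$ vanishes on $\MT(J_m)_{\Q_\ell}$, so $f \equiv 1$ on $\Gl^0$. Therefore $f$ factors through the finite component group $\Gl/\Gl^0$, is constant on each connected component, and has finite image.

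The only genuine subtlety is the bookkeeping in the first part: because $\{v_i\}$ is defined in Betti cohomology with coefficients in $K$, one must verify that, after the choice of $\ell \equiv 1 \pmod m$ and the embedding $\Q(\zeta_m) \hookrightarrow \Q_\ell$, the resulting vectors really form a $\Q_\ell$-basis of $\HH^1_{\operatorname{\acute{e}t}}$ whose eigenvalues for $\alpha_m^*$ already lie in $\Q_\ell$ (rather than in a strictly larger cyclotomic extension), so that the diagonalisation is defined over $\Q_\ell$ and the image of $\rho_{J_m,\ell}$ really lands in the split torus $T_{\Q_\ell}$. Once this is in place, both parts of the lemma follow formally.
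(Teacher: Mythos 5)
Your proof is correct and takes essentially the same approach as the paper's: for (1) you use that $\Gamma_K$ commutes with $\alpha_m^*$, whose eigenspaces are one-dimensional and give the basis $\{v_i\}$ (this is exactly the argument from the proof of \Cref{prop:fixMT}, which the paper cites), and for (2) you restrict the character $f$ to $\Gl$, invoke the Mumford-Tate conjecture for CM abelian varieties to identify $\Gl^0$ with $\MT(J_m)_{\Q_\ell}$, and conclude from the definition of an equation for $\MT(J_m)$ that $f$ factors through $\Gl/\Gl^0$. The "bookkeeping subtlety" you flag — that $\ell \equiv 1 \pmod m$ and the chosen embedding $\Q(\zeta_m) \hookrightarrow \Q_\ell$ guarantee the eigenvalues lie in $\Q_\ell$ so the diagonalisation is $\Q_\ell$-rational — is correctly handled and is indeed the reason the lemma carries these hypotheses.
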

\begin{proof}
We already discussed part (1) in the proof of \Cref{prop:fixMT}.
Part (2) is immediate: any Laurent monomial gives a homomorphism $\mathbb{G}_{m,\Q_\ell}^{2g} \to \mathbb{G}_{m,\Q_\ell}$, hence, by restriction, a homomorphism $\Gl \to \mathbb{G}_m$. The Mumford-Tate conjecture implies that $\Gl^0 = \MT(J_m)_K \times_K \Q_\ell$, and so, by the very definition of a defining equation for $\MT(J_m)$, we have that $\Gl^0$ is contained in the kernel of $f$. This implies that $f$ factors via $\Gl/\Gl^0$ and, hence, that it is constant on connected components.
\end{proof}

\begin{example}\label{ex: using Kedlaya}
   Computing equations for the Mumford-Tate group is enough to prove that $\Q(\varepsilon_A)/\Q(\End(A))$ is a non-trivial extension for $A=J_{15}$. In fact, it {is} enough to find an equation $f$ for $\MT(A)$ and element $\sigma$ in the Galois group $\Gamma_{\Q(\zeta_{15})}$ acting non-trivially on the tensor element $v_f$; by \eqref{eqn:ActionOnVf} (and the analogue of this formula in étale cohomology), this is the same as $f(\rho_{A, \ell}(\sigma)) \neq 1$.
    
    Let $K=\Q(\zeta_{15})$. Fix a prime $p$ congruent to $1$ modulo $m=15$ and let $\mathfrak{p}$ be a prime of $K$ of residue characteristic $p$. Note that the completion $K_{\mathfrak{p}}$ is isomorphic to $\Q_p$. An algorithm due to Kedlaya \cite[Section 4]{KedlayaAlg} allows us to numerically compute the matrix $M$ of Frobenius acting on the first crystalline cohomology group $\HH^1_{\text{cris}}(A, W(\mathcal{O}_K/\mathfrak{p}))$. Note that this cohomology has coefficients in the Witt vectors of $\mathcal{O}_K/\mathfrak{p} \cong \mathbb{F}_p$, namely $\Z_p$, so we can regard $M$ as having coefficients in $\Q_p$.
    The basis used in the computation consists of the differential forms $x^i \, dx/y$ \cite[Section 2]{KedlayaAlg}, which are eigenvectors for the action of the automorphism $\alpha_m$. Katz and Messing \cite{Katz1974} proved that the characteristic polynomial of Frobenius acting on $\HH^1_{\text{cris}}(A, W(\mathcal{O}_K/\mathfrak{p}))$ (that is, the characteristic polynomial of $M$) is the same as the characteristic polynomial of Frobenius on $\ell$-adic étale cohomology for $\ell \neq p$. In particular, the eigenvalues of $M$ are the roots of the characteristic polynomial of Frobenius. Note that the \textit{geometric} Frobenius automorphism of $C_{m, \mathcal{O}_K/\mathfrak{p}}$ (or of $A_{\mathcal{O}_K/\mathfrak{p}}$), raising coordinates to the $p$-th power, commutes with $\alpha_m : (x,y) \mapsto (\zeta_m x, y)$ since $p \equiv 1 \pmod{m}$. Thus, the classes $x^i \, dx/y$ -- which are eigenvectors for $\alpha_m^*$ -- are also eigenvectors for the crystalline Frobenius, and $M$ is diagonal. Similarly, $\rho_{A, \ell}(\Frob_{\mathfrak{p}})$ is diagonal in our basis $\{v_i\}$.     
    The various comparison isomorphisms are compatible with the action of automorphisms, so the eigenvalue of the $\ell$-adic Frobenius on the eigenvector $v_i$ can be identified with the eigenvalue of the crystalline Frobenius on $x^i \, dx/y$ (up to embedding both $\Q_\ell$ and $\Q_p$ in $\C$). In other words, again up to embedding both $\Q_\ell$ and $\Q_p$ into $\C$, the matrix $M$ represents $\rho_{A, \ell}(\Frob_{\mathfrak{p}})$.

We can then test explicitly if $M$ satisfies the equations defining the Mumford-Tate group. {By the Mumford-Tate conjecture, this is equivalent $\rho_{A,\ell}(\Frob_{\mathfrak{p}})$ belonging to $\Gl^0(\Q_\ell)$, or equivalently, to $\Frob_{\mathfrak{p}}$ belonging to the absolute Galois group $\Gamma_{\KconnA}$ of $\KconnA$.}
    Notice {that} we recover the matrix $M$ only up to a certain $p$-adic precision, but any equation $f$ only takes a finite number of (algebraic) values {on} $\Gl$, since it is constant on connected components (\Cref{lemma: equations are constant on connected components} (2)). We finally observe that the condition $f(M)=1$ is independent of the embedding of $\Q_p$ into $\C$, because $1$ is sent to $1$ under any embedding.
    
    Concretely, we choose the prime $p=31$, which is totally split in $K$. {Fix any prime $\mathfrak{p}$ of $K$ above $p$. Any representative for $\Frob_{31} \in \Gamma_{\Q}$ lies in $\Gamma_{K}$ and is a representative for $\Frob_{\mathfrak{p}}$.}
    Given the matrix $M$ obtained via Kedlaya's algorithm, a quick computation yields
    \[ (x_{10}x_{12}/x_8x_{14})(M) \equiv -1 \pmod{p}, \]
    proving that $K(\varepsilon_A)/K$ is non-trivial. Notice that knowing the matrix $M$ even just modulo $p$ is enough to decide that the number $(x_{10}x_{12}/x_8x_{14})(M)$ is not 1 (modulo $p$, and therefore also as an algebraic number){, and so the fact that Kedlaya's algorithm only yields approximate results is not an obstacle}.  We have implemented this calculation for general $m$ in the scripts accompanying this paper \cite{OurScripts}. %
\end{example}

\section{Fermat varieties}{\label{section:FermatVarieties}}
{In the previous section we have defined certain Tate classes $v_f$,  one for each monomial equation $f$ defining the Mumford-Tate group, as in \Cref{cor:MT}. By \Cref{prop:fixMT}, the field of definition of these classes over $K=\Q(\zeta_m)$ is the connected monodromy field $K(\varepsilon_{J_m})$. We are thus interested in computing the Galois action on the $v_f$. We will take up this task in \Cref{sec:deligne}.}
Our strategy relies on Deligne's work \cite{Deligne}, which explicitly determines the Galois action on Tate classes of particularly nice algebraic varieties: Fermat hypersurfaces.
In the present section, we reduce our problem to one solved by Deligne{, by showing that the cohomology of the powers of $J_m$ embeds inside the cohomology of a suitable Fermat hypersurface. We begin by setting notation for the Fermat varieties and recalling known facts about their cohomology.}

Let $m \geq 1,\, n\geq 1$ be integers. We denote by $X_m^n$ the Fermat variety of dimension $n$ and degree $m$, that is, the non-singular hypersurface in $\mathbb{P}^{n+1}_\Q$ defined by the equation
\[ x_0^m + x_1^m + \dots + x_{n+1}^m = 0. \]
There is a finite abelian group $G_m^n$ acting on $X_m^n$, decomposing its cohomology groups into {one-dimensional generalized eigenspaces}. 
{We describe the recursive structure of this action (\Cref{th:ShiodaII}) and recall the decomposition of the cohomology groups that it induces (\Cref{th:ShiodaI}).}

\medskip

We then map the Tate classes $v_f$ to the cohomology of a Fermat variety $X_m^n$ (for a suitable $n$), via an injective and Galois equivariant map (\Cref{prop:ShiodaIII}). Defining this Galois equivariant map involves a subtle detail: we employ a twist of the curve $C_m$. In Section \ref{sub:twist}, we explore the implications of transitioning to this twist.

\subsection{Cohomology of Fermat varieties}{\label{section:ShiodaI}}
In this section, we describe the action of a finite abelian group $G_m^n$ on the Fermat variety $X_m^n$ and its cohomology groups. These results were developed among others by Shioda and Katsura \cite{Shioda1,Shioda2,Shioda3} in their investigations of the Hodge conjecture for Fermat varieties.

As for any hypersurface of $\mathbb{P}^{n+1}_\C$, the cohomology of $X_m^n$, in any degree $i\neq n$, can be computed from the cohomology of the projective space via the Hyperplane Lefschetz theorem and the Hard Lefschetz theorem.  In particular, the primitive part $\HH_{\prim}^i(X_m^n)$ of $\HH^i(X_m^n(\C),\C) $, defined as the kernel of the cup-product with the hyperplane section, is trivial in every degree $i$ except $i=n$. For even values of $n$, the primitive part $\HH_{\prim}^n(X_m^n)$ is a codimension-one subspace of the whole cohomology group $\HH^n(X_m^n)$, while for odd $n$ it coincides with $\HH^n(X_m^n)$ itself.

Let $\mu_m$ be the group of $m$-th roots of unity and denote by $G_m^n $ the $(n+2)$-fold product of $\mu_m$ with itself modulo the diagonal embedding. Identify the character group of $G_m^n$ with the set
\begin{equation}{\label{eq:Ghat}}
    \widehat{G}_m^n \simeq \{ \alpha = (a_0, \dots, a_{n+1}) \in (\Z/m\Z)^{n+2} \colon a_0 + \dots + a_{n+1} = 0 \},
\end{equation}
mapping a character $[\zeta_0:\dots:\zeta_{n+1}] \mapsto \zeta_0^{a_0}\cdots\zeta_{n+1}^{a_{n+1}}$ to the tuple of its exponents $\alpha = (a_0,\dots,a_{n+1})$.

The group $G_m^n$ acts component-wise on the Fermat variety $X_m^n$ {and the action, seen as an action of a $\Q$-algebraic group on a $\Q$-variety, is defined over $\Q$}:
\[ [\zeta_0:\dots:\zeta_{n+1}]\cdot [x_0:\dots:x_{n+1}] = [\zeta_0x_0:\dots : \zeta_{n+1}x_{n+1}]. \]
Through this action, all cohomology groups of $X_m^n$ gain a structure of $G_m^n$-representations. Whenever a cohomology group $\HH$ is a vector space over a field containing the $m$-th roots of unity $\mu_m$, one can define the eigenspaces 
\begin{equation}\label{eq: eigenspaces in the cohomology of Fermat varieties}
\HH_{\alpha} = \{ v \in \HH \colon g\cdot v = \alpha(g) \cdot v,  \text{ for all } g \in G_m^n \}
\end{equation}
and decompose the cohomology group into one-dimensional irreducible representations.
This {applies} to singular cohomology groups with coefficients in the $m$-th cyclotomic field $K=\Q(\zeta_m)$, i.e.~$\HH^i(X_m^n(\C), \Q)\otimes K$, and with coefficients in the complex numbers.
Also, for a prime number $\ell$ such that $m \mid \ell -1$, {the field $\Q_\ell$ contains the $m$-th roots of unity (by Hensel's lemma)}  and we can thus fix an embedding $K \subseteq \Q_\ell$. %
The argument therefore also {applies} to {the} étale cohomology groups $\HH^n_{\text{ét}}(X_{m,\overline{\Q}}^n, \Q_\ell)$.
The singular-to-étale comparison isomorphism respects the $G_m^n$-action and therefore the decomposition.
We further introduce the sets
\begin{equation}\label{eq: Amn and Bmn}
\begin{aligned}
\mathfrak{A}_m^n & = \{ \alpha \in \hat{G}_m^n \colon a_i \neq 0  \text{ for all } i \},
\\ 
\mathfrak{B}_m^n & = \{ \alpha \in \hat{G}_m^n \colon \langle t \alpha\rangle=n/2+1  \text{ for all }t \in (\Z/m\Z)^\times \},
\end{aligned}
\end{equation}
where 
\begin{equation}\label{eq: definition of angular brackets}
\langle t\alpha\rangle = \sum_i [t a_i] /m
\end{equation}
and $[a]$ is the only representative for $a$ in $\Z$ lying in the interval $[0,m-1]$.

\begin{proposition}\label{th:ShiodaI}
The primitive part of the cohomology of $X_m^n$ decomposes as
\[ \textstyle \HHpn(X_m^n(\C), \C) = \bigoplus_{\alpha \in \hat{G}_m^n} \HH(X_m^n(\C), \C)_{\alpha}, \]
where each eigenspace $\HH(X_m^n(\C), \C)_{\alpha}$ has dimension 1 if $\alpha \in \mathfrak{A}_m^n$ and 0 otherwise.
The subspace $\HH(X_m^n(\C), \C)_{\alpha}$ is of type $(p,q)$ in the Hodge {decomposition}, where $p = \langle\alpha\rangle -1 $ and $q=n-p$.
\end{proposition}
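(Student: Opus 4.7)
The plan is to combine the $G_m^n$-action on $X_m^n$ with Griffiths' residue description of the primitive cohomology of a smooth projective hypersurface. First I would observe that the $G_m^n$-action extends to a linear action on $\mathbb{P}^{n+1}$ and therefore fixes the hyperplane class, so the primitive subspace $\HHpn(X_m^n(\C),\C)$ is a $G_m^n$-subrepresentation of $\HH^n(X_m^n(\C),\C)$. Since $G_m^n$ is a finite abelian group whose characters take values in $\mu_m \subseteq \C^\times$, standard representation theory of finite abelian groups yields the direct-sum decomposition into the eigenspaces $\HH(X_m^n(\C),\C)_\alpha$ indexed by $\alpha \in \hat{G}_m^n$, as in \eqref{eq: eigenspaces in the cohomology of Fermat varieties}.

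To pin down the dimension and Hodge type of each eigenspace I would invoke Griffiths' theorem: for a smooth hypersurface $F=0$ of degree $m$ in $\mathbb{P}^{n+1}$, the primitive component in Hodge type $(n-s+1,\, s-1)$ is canonically identified with the graded piece $R(F)_{sm-(n+2)}$ of the Jacobian ring $R(F) = \C[x_0,\ldots,x_{n+1}]/J(F)$, via $P \mapsto [P \, \Omega/F^s]$, where $\Omega = \sum_i (-1)^i x_i \, dx_0 \wedge \cdots \wedge \widehat{dx_i} \wedge \cdots \wedge dx_{n+1}$ (the hat denoting omission). For the Fermat polynomial $F = \sum_i x_i^m$, one has $J(F) = (x_0^{m-1},\ldots,x_{n+1}^{m-1})$, so $R(F)$ has a $\C$-basis of monomials $\prod_i x_i^{b_i}$ with $0 \le b_i \le m-2$. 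The substitution $a_i = b_i+1$ identifies the degree-$sm-(n+2)$ monomials with the tuples in $\{1,\ldots,m-1\}^{n+2}$ satisfying $\sum_i a_i = sm$, which is precisely the set of $\alpha \in \mathfrak{A}_m^n$ with $\langle\alpha\rangle = s$.

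Finally I would read off the $G_m^n$-character of each basis class: $F$ is $G_m^n$-invariant since $(\zeta_i x_i)^m = x_i^m$; $\Omega$ transforms by the diagonal character, which is trivial in $\hat{G}_m^n$; and the monomial $\prod_i x_i^{b_i}$ transforms by $(b_0,\ldots,b_{n+1})$. Hence $[P \, \Omega/F^s]$ lies in the $\alpha$-eigenspace with $\alpha = (a_0,\ldots,a_{n+1})$, showing that each $\alpha \in \mathfrak{A}_m^n$ produces a one-dimensional eigenspace of Hodge type $(p,q) = (\langle\alpha\rangle - 1,\, n - \langle\alpha\rangle + 1)$, while a character with $a_i = 0$ for some $i$ would force $b_i = m-1$ and hence vanish in $R(F)$. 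The main obstacle is matching conventions between Griffiths' residue calculus and the character action; once these are aligned, the result reduces to a direct count, which is essentially the classical computation carried out in Shioda's work.
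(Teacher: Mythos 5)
Your approach via Griffiths' residue calculus is a legitimate, self-contained route to this statement, whereas the paper simply cites Shioda and Deligne with no argument of its own. The dimension count is fine: the bijection $b_i \mapsto a_i = b_i + 1$ between monomials of degree $sm-(n+2)$ with $0 \le b_i \le m-2$ and tuples $\alpha \in \mathfrak{A}_m^n$ with $\langle\alpha\rangle = s$ shows that every eigenspace indexed by an element of $\mathfrak{A}_m^n$ is one-dimensional and all other eigenspaces vanish, exactly as claimed.

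However, there is a gap in the Hodge-type bookkeeping. You compute that the residue class $\bigl[P\,\Omega/F^s\bigr]$ has \emph{pullback} character $\alpha=(a_0,\ldots,a_{n+1})$, and by your own statement of Griffiths' theorem this class lies in $H^{n-s+1,\,s-1}_{\prim}$ with $s=\langle\alpha\rangle$. That gives Hodge type $(n-\langle\alpha\rangle+1,\,\langle\alpha\rangle-1)$, which is the \emph{reverse} of what you assert in your final sentence (and of what the proposition states). To recover $p=\langle\alpha\rangle-1$ one must label eigenspaces by the natural left action $g\cdot v = (g^{-1})^*v$ (the convention in Deligne's account), under which $\bigl[P\,\Omega/F^s\bigr]$ lands in $\HH_{-\alpha}$; since $\langle-\alpha\rangle=(n+2)-\langle\alpha\rangle$ for $\alpha\in\mathfrak{A}_m^n$, the two labelings differ exactly by the swap $(p,q)\leftrightarrow(q,p)$, and only the pushforward labeling matches the stated formula. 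A concrete sanity check that exposes the sign: for the Fermat plane cubic ($m=3$, $n=1$) the unique holomorphic $1$-form is $\operatorname{Res}(\Omega/F)$, whose pullback character is $(1,1,1)$ with $\langle(1,1,1)\rangle=1$; your formula would place it in $H^{0,1}$, which is wrong, whereas the pushforward character $(2,2,2)$ has weight $2$ and correctly gives $H^{1,0}$. So you need to either switch to the pushforward convention throughout, or replace $\alpha$ by $-\alpha$ at the final step; as written the argument does not yield the claimed Hodge type.
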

\begin{proof}
    This is \cite[Proposition 3.5.1]{Shioda2} or \cite[Propositions 7.4 and 7.5]{Deligne}.
\end{proof}

In order to recover the decomposition of the full cohomology group $\HH^n(X_m^n(\C), \C)$ we must add back the imprimitive part (if $n$ is even), which is the one-dimensional eigenspace corresponding to the character $\alpha = 0$ \cite[Remark 7.5]{Deligne}.

\begin{remark}{\label{rmrk:ShiodaI}}
    
    Since a similar decomposition into eigenspaces $\HH_{\alpha}$ also holds in étale cohomology, and the two decompositions correspond to each other through the comparison isomorphisms, %
    in what follows we will sometimes write $\HH_{\prim}^n(X_m^n)$ %
    for the subspace of $\HH^n_{\text{ét}}(X_{m, \overline{K}}^n, \Q_\ell)$ given by the direct sum of the eigenspaces $\HH^n_{\text{ét}}(X_{m, \overline{K}}^n, \Q_\ell)_{\alpha}$ for $\alpha \in \mathfrak{A}_m^n$. A more intrinsic way to define the primitive part would be to use the hard Lefschetz theorem in $\ell$-adic étale cohomology, proven by Deligne in \cite[IV]{MR601520}. 
\end{remark}

\begin{notation}
We use $\HH^n(X_m^n)$ and $\HH^n_{\operatorname{prim}}(X_m^n)$ to denote interchangeably de Rham or étale cohomology in those instances where statements hold for both cohomology theories and proofs apply verbatim to both.
\end{notation}

We now describe the inductive structure of the decomposition of \Cref{th:ShiodaI}. Let $r,\, s$ {be} positive integers {and} set $ n=r+s+2 $. Through the homomorphism
\begin{equation}
    \label{eq: Grs to Gr times Gs}
    G_m^n \to G_m^r \times G_m^s, \qquad [\zeta_0:\dots:\zeta_{n+1}] \mapsto \left( [\zeta_0:\dots:\zeta_{r+1}],[\zeta_{r+2}:\dots:\zeta_{n+1}] \right),
\end{equation}
we can lift the action of $G_m^r \times G_m^s$ on the product $X_m^r\times X_m^s$ to a $G_m^n$-action.

\begin{proposition}%
{\label{th:ShiodaII}}
    Let $r,\, s$ {be} positive integers {and} set $ n=r+s+2 $. There is an inclusion
    \begin{equation}{\label{eq:ShiodaII}}
        \HH_{\prim}^r(X_m^r) \otimes \HH_{\prim}^s(X_m^s) \hookrightarrow \HH_{\prim}^{n}(X_m^{n})(1)
    \end{equation}
    which is equivariant with respect to the $G_m^n$-action. If $\HH$ denotes étale cohomology, the inclusion is also equivariant for the $\Gamma_{\Q}$-action.
\end{proposition}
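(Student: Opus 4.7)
The plan is to exploit the decomposition of both sides into one‑dimensional eigenspaces given by \Cref{th:ShiodaI} and to realise the embedding geometrically via a correspondence between $X_m^r \times X_m^s$ and $X_m^n$. First, note that the projection $G_m^n \to G_m^r \times G_m^s$ of \eqref{eq: Grs to Gr times Gs} induces, dually, the concatenation map on character groups
\[
(\alpha,\beta) \longmapsto \alpha \ast \beta \colonequals (a_0,\ldots,a_{r+1},b_0,\ldots,b_{s+1}),
\]
and the relation $\sum_i a_i + \sum_j b_j = 0$ guarantees that $\alpha \ast \beta \in \widehat{G}_m^n$; moreover, if $\alpha \in \mathfrak{A}_m^r$ and $\beta \in \mathfrak{A}_m^s$, then $\alpha \ast \beta \in \mathfrak{A}_m^n$. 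Hence, under the pulled‑back $G_m^n$‑action, $\HH_\alpha \otimes \HH_\beta$ is the one‑dimensional representation of character $\alpha \ast \beta$, which by \Cref{th:ShiodaI} is precisely the character of the unique one‑dimensional summand of $\HHpn(X_m^n)(1)$ indexed by $\alpha \ast \beta$. Thus the combinatorial shape of the embedding is already forced; what remains is to produce a nonzero map realising this matching.

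The geometric input I would use is the classical join construction. Embed $X_m^r, X_m^s$ into $\PP^{n+1}_\Q$ on the first $r+2$ and last $s+2$ coordinates, respectively. Since $\sum_i x_i^m = 0$ on $X_m^r$ and $\sum_j y_j^m = 0$ on $X_m^s$, the point
\[
[sx_0:\cdots:sx_{r+1}:ty_0:\cdots:ty_{s+1}]
\]
lies on $X_m^n$ for every $P=[x] \in X_m^r$, $Q=[y] \in X_m^s$ and $[s:t] \in \PP^1$. This yields a proper $\Q$‑morphism
\[
\pi : \PP^1 \times X_m^r \times X_m^s \longrightarrow X_m^n,
\]
whose image is the irreducible join divisor $V \subset X_m^n$ (of codimension $1$). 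A direct calculation shows that $\pi$ is birational onto $V$, and is $G_m^n$‑equivariant when $G_m^n$ acts on the source via its projection $G_m^n \to G_m^r \times G_m^s$ on the last two factors and trivially on $\PP^1$.

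The required cohomological map is then obtained by composing the Künneth formula with a Gysin map. Let $f\colon \widetilde V \to X_m^n$ denote either $\pi$ itself or the composite of a resolution $\widetilde V \to V$ with the inclusion $V \hookrightarrow X_m^n$; either way $f$ is proper of relative dimension $-1$, so that proper pushforward provides a Gysin morphism $f_\ast \colon \HH^{n-2}(\widetilde V) \to \HHpn(X_m^n)(1)$. On the other hand, Künneth (applied either directly to $\PP^1 \times X_m^r \times X_m^s$ or after pullback along the resolution) yields an inclusion
\[
\HH^0(\PP^1) \otimes \HHpr(X_m^r) \otimes \HHps(X_m^s) \hookrightarrow \HH^{n-2}(\widetilde V)
\]
as a $G_m^n$‑equivariant direct summand, in both Betti and $\ell$‑adic étale cohomology. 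Composing these two maps produces the candidate embedding; since $\pi$, the resolution and the Gysin map are all defined over $\Q$ and compatible with $G_m^n$, the composite is $G_m^n$‑equivariant and, in the étale setting, also $\Gamma_\Q$‑equivariant.

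The main obstacle is to verify that the composite is injective, i.e.\ nonzero on each one‑dimensional eigenspace $\HH_\alpha \otimes \HH_\beta$. By the eigenspace decomposition the image lands in the one‑dimensional piece $\HHpn(X_m^n)(1)_{\alpha \ast \beta}$, so the question reduces to a nonvanishing statement on a rank‑one piece. The natural way to check this is to pick explicit generators on both sides—residue classes of rational differential forms of the Griffiths–Shioda type on $X_m^r$ and $X_m^s$—and compute $f_\ast$ directly; the calculation is essentially the one Shioda carries out to describe the eigenspaces of $\HHpn(X_m^n)$ in terms of monomial differential forms. The minor technical point of the singularities of $V$ is absorbed in the passage to $\widetilde V$ and does not affect the argument.
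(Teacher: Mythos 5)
Your central geometric construction does not actually define a map. The formula
\[
\pi\bigl([s:t],[x],[y]\bigr) \longmapsto [sx_0:\cdots:sx_{r+1}:ty_0:\cdots:ty_{s+1}]
\]
is not invariant under independent rescaling of $[x]$ and $[y]$: replacing $[x]$ by $[\lambda x]$ changes the image to $[\lambda s x : ty]$, which is a different point of $\PP^{n+1}$ unless $t=0$. Equivalently, the putative coordinate functions $sx_i$ and $ty_j$ are tri-homogeneous of multidegrees $(1,1,0)$ and $(1,0,1)$ on $\PP^1\times\PP^{r+1}\times\PP^{s+1}$, so they are sections of two \emph{different} line bundles and cannot define a (rational) map to projective space. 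This is the classical fact that the abstract join of two disjointly-embedded projective varieties is the $\PP^1$-bundle $\PP\bigl(\mathcal{O}(-1,0)\oplus\mathcal{O}(0,-1)\bigr)$ over $X_m^r\times X_m^s$, not the trivial bundle $\PP^1\times X_m^r\times X_m^s$. If you replace your source by that $\PP^1$-bundle, the map does exist, it is defined over $\Q$, and it is birational onto the join variety $V\subset X_m^n$; but then the Künneth step must be replaced by the projective-bundle formula, and the verification that the composite is nonzero on each eigenspace $\HH_\alpha\otimes\HH_\beta$ remains to be done.

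For comparison, the paper follows Shioda's quite different construction. One starts not from $X_m^r\times X_m^s$ but from the one-dimension-higher product $X_m^{r+1}\times X_m^{s+1}$, and uses the rational map
\[
\varphi\colon \bigl([x_0:\cdots:x_{r+2}],[y_0:\cdots:y_{s+2}]\bigr)\longmapsto
[x_0y_{s+2}:\cdots:x_{r+1}y_{s+2}:\varepsilon x_{r+2}y_0:\cdots:\varepsilon x_{r+2}y_{s+1}],
\]
where $\varepsilon$ is a fixed primitive $2m$-th root of unity with $\varepsilon^m=-1$ (needed so that the image lands on $X_m^n$). The target coordinates are all bi-homogeneous of degree $(1,1)$, so $\varphi$ \emph{is} well-defined as a rational map; its indeterminacy locus is exactly $Y=X_m^r\times X_m^s\subset X_m^{r+1}\times X_m^{s+1}$, and the resolution $\psi$ is obtained by blowing up $Y$. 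The embedding \eqref{eq:ShiodaII} is then the pushforward $\psi_\ast$ restricted to the piece of $\HH^n$ of the blowup identified with $\HH^{n-2}(Y)(1)$. All of this is cited to Shioda; the new content in the proposition is the $\Gamma_\Q$-equivariance, which is genuinely nontrivial precisely because $\varepsilon\notin\Q$ and hence $\psi$ is \emph{not} a $\Q$-morphism. The paper handles this by showing that, for $\sigma\in\Gamma_\Q$, the ratio $\sigma(\varepsilon)/\varepsilon$ is an $m$-th root of unity, so that $\sigma(\psi)=g_\ast\circ\psi$ for an explicit $g\in G_m^n$, and then checking that $g$ acts trivially on all the relevant eigenspaces. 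Your intuition that a $\Q$-defined join would make the Galois equivariance automatic is sound and would in principle simplify this last step, but as written the construction fails before it gets there.
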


\begin{remark}{\label{rmrk:ShiodaII}} 
    The $G_m^n$-equivariance of \Cref{eq:ShiodaII} describes how the eigenspace decomposition can be transferred between the cohomology groups of $X_m^r\times X_m^s$ and $X_m^n$: $\HH_{\beta_r}\otimes \HH_{\beta_s}$ is sent to $\HH_{\beta_r \ast \beta_s}$, where $\beta_r \in \mathfrak{A}_m^r, \, \beta_s \in \mathfrak{A}_m^s$ and $\beta_r \ast \beta_s \in \mathfrak{A}^{r+s+2}$ is the concatenation of tuples \cite[(2.14)]{Shioda2}.
\end{remark}

\begin{proof}
    Everything in the statement follows from \cite[Theorem II]{Shioda2} except the equivariance for the Galois action. We start by recalling the main construction: fix a $2m$-th root of unity $\varepsilon$ such that $\varepsilon^m=-1$ and consider the commutative diagram 
    \begin{equation}{\label{pic:ShiodaII}}
    \begin{tikzcd}
        \beta^{-1}(Y) \rar[hook] \dar["\beta"] &
        Z_m^{r+1,s+1}\dar["\beta"] \arrow[dr, bend left = 30,  "\psi"] \\
        Y = X_m^r \times X_m^s \rar[hook, "j"] &
        X_m^{r+1}\times X_m^{s+1} \rar[dashed, "\varphi"] &
        X_m^n,
    \end{tikzcd}
    \end{equation}
    where $\varphi$ is the rational map defined by
    $$[x_0:\dots :x_{r+2} ], [y_0:\dots :y_{s+2} ] \to [x_0y_{s+2}:\dots: x_{r+1}y_{s+2}: \varepsilon x_{r+2}y_0 : \dots : \varepsilon x_{r+2}y_{s+1}],$$
    $j$ is the inclusion of the subvariety $Y$ where $\varphi$ is not defined {(namely, the subvariety $y_{s+2}=x_{r+2}=0$)}, and $\beta$ is the blow-up along $Y$. The diagram is described in \cite{Shioda1}, where all maps are given explicitly in affine coordinate systems. Notice, in particular, that the blow-up $\beta \colon Z_m^{r+1, s+1} \to X_m^{r+1}\times X_m^{s+1}$ is defined over $\Q$.
    The composition $\psi = \varphi \circ \beta$ is a morphism \cite[Lemma 1.2]{Shioda1}. The desired inclusion is the restriction of the push-forward $\psi_\ast$ to a certain subgroup of $\HH^n(Z_m^{r+1,s+1})$ which can be identified naturally with $\HH^{n-2}(Y)(1)$ \cite[Lemma 2.1]{Shioda1}.

We now prove that the inclusion is $\Gamma_\Q$-equivariant. Fix any automorphism $\sigma$ in the Galois group $\Gamma_\Q$. Notice that $\sigma(\varepsilon)/\varepsilon$ is an $m$-th root of unity (because both numerator and denominator are primitive $2m$-th roots of unity). Consider the commutative diagram
\begin{equation*} 
\begin{tikzcd}
   {Z_m^{r+1, s+1}} \arrow[d, "\psi"] \arrow[dr, "\sigma(\psi)"] & \\
   X_m^n \arrow[r, "g"] & X_m^n 
\end{tikzcd}
\end{equation*}
where $g$ is the map fixing the first $(r+2)$ coordinates and multiplying the $(s+2)$ last ones by $\zeta=\sigma(\varepsilon)/\varepsilon$. Notice that the bottom map $g$ is the action of an element $g=(g_0, \dots, g_{n+1})$ of $G_m^n$, where $g_i=1$ for $i = 0, \dots, r+2$ and $g_i = \zeta$ for $i = r+3, \dots, n+1$. For any character $\alpha $, the push forward $g_\ast$ restricts to an isomorphism $\HH^n(X_m^n)_{\alpha}\to \HH^n(X_m^n)_{\alpha}$, which is the multiplication by $\alpha(g) = \zeta^{a_{r+3}+\dots +a_{n+1}}$.

Notice that the image of the inclusion $(\ref{eq:ShiodaII})$ decomposes as the sum of eigenspaces corresponding to characters $\alpha$ which are the concatenation of two characters $\beta_r\in\mathfrak{A}_m^r, \,\beta_s\in\mathfrak{A}_m^s$. We prove that the automorphism we get by restricting $g_\ast$ to the eigenspace $\HH^n(X_m^n)_{\alpha}$ for any such character is the identity.

When $\alpha$ is the concatenation of two characters $\beta_r\in\mathfrak{A}_m^r$ and $ \,\beta_s\in\mathfrak{A}_m^s$, the sum of the last $(s+2)$ coefficients of $\alpha$ is the sum of the coefficients of $\beta_s$, which is divisible by $m$, and therefore $\alpha(g)=1$. 
The action of an element $\sigma \in \Gamma_\Q$ on a class $\psi_\ast\omega \in \HH^n(X_m^n)_{\alpha}$ is given by
\begin{equation*}
\begin{aligned}
    \sigma \cdot {\psi}_\ast(\omega) 
    &=  [\sigma({\psi}_\ast)] (\sigma\cdot\omega) \\
    &= [g_\ast {\psi}_\ast](\sigma\cdot \omega)\\
    &= \alpha(g) \cdot  {\psi}_\ast(\sigma \cdot \omega)  \\
    &= {\psi}_\ast(\sigma \cdot \omega),
\end{aligned}
\end{equation*}
which shows that the inclusion is $\Gamma_\Q$-equivariant.
\end{proof}

\begin{corollary}\label{cor:ShiodaII}
    Let $d\geq 1$ be a positive integer and set $n = 3d-2$. There is a $G_m^n$-equivariant and $\Gamma_{\Q}$-equivariant inclusion of the $d$-fold self product of $\HH_{{\prim}}^1(X_m^1)$ into $\HH_{{\prim}}^{n}(X_m^n)(d-1)$:
    \[
    \underbrace{\HH_{{\prim}}^1(X_m^1) \otimes \dots \otimes \HH_{{\prim}}^1(X_m^1)}_{d \text{ times}} \hookrightarrow \HH_{{\prim}}^{n}(X_m^n)(d-1).
    \]
\end{corollary}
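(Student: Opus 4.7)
The plan is to argue by induction on $d$, using \Cref{th:ShiodaII} as the key ingredient at each step. The base case $d=1$ is trivial, since $n=1$ and the statement reduces to the identity inclusion $\HH^1_{\prim}(X_m^1) \hookrightarrow \HH^1_{\prim}(X_m^1)(0)$.

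For the inductive step, assume that for some $d\geq 2$ we already have a $G_m^{3(d-1)-2}$- and $\Gamma_\Q$-equivariant inclusion
\[
\HH^1_{\prim}(X_m^1)^{\otimes (d-1)} \hookrightarrow \HH^{3d-5}_{\prim}(X_m^{3d-5})(d-2).
\]
Tensoring with the identity on $\HH^1_{\prim}(X_m^1)$ yields an equivariant inclusion
\[
\HH^1_{\prim}(X_m^1)^{\otimes d} \hookrightarrow \HH^{3d-5}_{\prim}(X_m^{3d-5})(d-2) \otimes \HH^1_{\prim}(X_m^1).
\]
Apply now \Cref{th:ShiodaII} with $r = 3d-5$ and $s = 1$, so that $r+s+2 = 3d-2 = n$. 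This provides a $G_m^n$- and $\Gamma_\Q$-equivariant inclusion
\[
\HH^{3d-5}_{\prim}(X_m^{3d-5}) \otimes \HH^1_{\prim}(X_m^1) \hookrightarrow \HH^{3d-2}_{\prim}(X_m^{3d-2})(1).
\]
Twisting by $(d-2)$ (which commutes with the actions involved) and composing with the previous map gives the required inclusion
\[
\HH^1_{\prim}(X_m^1)^{\otimes d} \hookrightarrow \HH^{3d-2}_{\prim}(X_m^{3d-2})(d-1).
\]

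The equivariance for the various group actions is inherited at each step from the corresponding equivariance in \Cref{th:ShiodaII}: the action of $G_m^{3d-5} \times G_m^1$ on the tensor product factors through $G_m^n$ via the homomorphism of \eqref{eq: Grs to Gr times Gs}, and the Galois equivariance is preserved because the map in \Cref{th:ShiodaII} is Galois equivariant in étale cohomology. There is no real obstacle here: the only mild bookkeeping is to verify that the iterated use of \eqref{eq: Grs to Gr times Gs} is compatible with the associativity of the tensor product, so that the $G_m^n$-action on $\HH^1_{\prim}(X_m^1)^{\otimes d}$ coming from iterating \Cref{th:ShiodaII} really is the one induced by the natural projection of $(G_m^1)^d$ into $G_m^n$ described in \Cref{rmrk:ShiodaII}. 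This is a routine check.
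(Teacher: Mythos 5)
Your proof is correct and uses essentially the same inductive argument as the paper, the only cosmetic difference being the choice of parameters when invoking \Cref{th:ShiodaII}: you peel off the new factor $\HH^1_{\prim}(X_m^1)$ from the right (taking $s=1$), whereas the paper peels it off from the left (taking $r=1$); the bookkeeping about compatibility of the iterated maps \eqref{eq: Grs to Gr times Gs} with associativity is also implicitly needed in the paper and is indeed routine.
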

\begin{proof}
    We argue by induction. In the base case $d=1$ there is nothing to prove. Now suppose the statement holds for $d$. In the case $d+1$, apply \Cref{th:ShiodaII} with $r=1$ and $s=3d-2$ to obtain the inclusion
    \[ \HH_{\text{prim}}^1(X_m^1) \otimes \HH_{\text{prim}}^{3d-2}(X_m^{3d-2})(d-1) \hookrightarrow \HH_{\text{prim}}^{3(d+1)-2}(X_m^{3(d+1)-2})(d), \]
    then apply the inductive hypothesis.
\end{proof}

\subsection{Pull-back of Tate classes to Fermat varieties}{\label{section: Shioda pullback}}
We show that, starting with an equation $f$ for the Mumford-Tate group $\MT(J_m)$, the corresponding Tate class $v_f$ (see \Cref{eq:def-vf}) can be pulled back to a Tate class on a Fermat hypersurface, with the pullback having the same field of definition. We start by proving the result for a suitable twist $\Tilde{J}_m$ of $J_m$ (\Cref{prop:ShiodaIII}); in \Cref{sub:twist} we then reduce the case of $J_m$ to that of $\Tilde{J}_m$.

\medskip

Let $m>2$ be a positive integer. Denote by $\Tilde{C}_m$ the (smooth projective) hyperelliptic curve defined over $\Q$ given by the completion of the curve with affine equation $y^2=-4x^m+1$. Denote by $\Tilde{J}_m/\Q$ its Jacobian. Note that $\Tilde{C}_m$ is a twist of $C_m$ (see \Cref{sub:twist}).
{Consider the $\Q$-morphism $\psi : X_m^1 \to \Tilde{C}_m$ %
induced by the map of affine curves}
\begin{equation}
{\label{eq: map X1m to CmTilde}}
\begin{array}{cccc}
\psi \colon &  u^m+v^m+1=0 & \to & y^2 = -4x^m+1 \\
& (u,v) & \mapsto & (uv, 2u^m+1).
\end{array}
\end{equation}
This map allows us to relate the cohomology of $\Tilde{C}_m$, which we are interested in because it is closely related to that of $C_m$, to the cohomology of the Fermat curve $X_m^1$, which we understand well
(see \Cref{section:ShiodaI}). {The need to consider the twist $\Tilde{C}_m$ instead of our original curve $C_m$ arises from the fact that we need a map $X_m^1 \to \Tilde{C}_m$ defined over $\Q$ (and not a larger number field) in order for our subsequent calculations in cohomology to be equivariant for the full Galois group $\Gamma_\Q$.}

The inclusion $\Tilde{C}_m\hookrightarrow\Tilde{J}_m$ given by the rational point $(0, 1)$ induces a $\Gamma_\Q$ and $G_m^1$-equivariant identification $\HH^1(\Tilde{J}_m) = \HH^1(\Tilde{C}_m)$.
We (improperly) call $\{v_i\}$ the basis of $\HH^1(\Tilde{C}_m, K)$ that we obtain by pulling back the basis $\{v_i\}$ of $\HH^1(C_m, K)$ along the twist $t:\Tilde{C}_m \to C_m$ (see \Cref{remark:same-equations}). %
For $i=1,\ldots, m-1$ (skipping $i=m/2$ when $m$ is even) we define the element
\begin{equation}{\label{eq: definition of gammai}}
    \gamma_i =(i,i,-2i) \text{ in } \mathfrak{A}_m^1.
\end{equation}
These characters are pairwise distinct.

\begin{lemma}{\label{lemma:HJm-to-HJXm}}
    The pullback $\psi^\ast$ induces an isomorphism
    \[ \psi^\ast\colon \HH^1(\Tilde{C}_{m}) \longrightarrow \bigoplus^{2g}_{i = 1} \HH^1(X_{m}^1)_{\gamma_i} \subseteq \HH^1_{\prim}(X_m^1). \]
    This isomorphism sends $v_i$ to an element $\psi^\ast(v_i)\in \HH^1_{\prim}(X_m^1)_{\gamma_i}$. If $\HH$ denotes étale cohomology, then this isomorphism is also $\Gamma_{\Q}$-equivariant.
\end{lemma}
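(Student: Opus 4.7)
The plan is to compute $\psi^*$ on the basis $\{v_i\}$, show that $\psi^*v_i$ lands in the one-dimensional eigenspace $\HH^1(X_m^1)_{\gamma_i}$, and conclude by a dimension count. Throughout I use that $v_i$ is a nonzero scalar multiple of $x^{i-1}\,dx/y$ and that it satisfies $\alpha_m^*v_i=\zeta_m^i v_i$.

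First I would establish an equivariance property of $\psi$. Writing an element of $G_m^1=\mu_m^3/\Delta\mu_m$ as the class of $(\zeta_m^a,\zeta_m^b,\zeta_m^c)$, it acts in the affine chart $(u,v)$ of $X_m^1$ as $(u,v)\mapsto(\zeta_m^{a-c}u,\zeta_m^{b-c}v)$. A direct computation gives
$$\psi(\zeta_m^{a-c}u,\zeta_m^{b-c}v)=\bigl(\zeta_m^{a+b-2c}uv,\,2u^m+1\bigr)=\alpha_m^{\,a+b-2c}\,\psi(u,v),$$
so $\psi^*$ intertwines the $G_m^1$-action with powers of $\alpha_m^*$ through the homomorphism $(a,b,c)\mapsto a+b-2c$. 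Applied to the eigenvector $v_i$, this means that $(\zeta_m^a,\zeta_m^b,\zeta_m^c)$ acts on $\psi^*v_i$ by the scalar $\zeta_m^{i(a+b-2c)}$; reading off exponents, $\psi^*v_i$ lies in the eigenspace for the character $(i,i,-2i)=\gamma_i$.

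Second I would verify $\psi^*v_i\neq 0$ by a short explicit calculation. Using $\psi^*x=uv$, $\psi^*y=2u^m+1$, and differentiating the Fermat relation $u^m+v^m+1=0$ to obtain $d(uv)=-(2u^m+1)v^{1-m}\,du$ on $X_m^1$, one finds
$$\psi^*(x^{i-1}\,dx/y)=-u^{i-1}v^{i-m}\,du,$$
which is one of the classical $G_m^1$-eigenbasis elements in the Shioda decomposition of $\HH^1_{\prim}(X_m^1)$ and is nonzero for every $i\in\{1,\ldots,m-1\}\setminus\{m/2\}$. Since the characters $\gamma_i$ are pairwise distinct elements of $\mathfrak{A}_m^1$, \Cref{th:ShiodaI} guarantees that each $\HH^1(X_m^1)_{\gamma_i}$ is one-dimensional and that these eigenspaces are linearly independent, so $\psi^*v_i$ spans $\HH^1(X_m^1)_{\gamma_i}$.

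Putting the two steps together, $\psi^*$ injects $\HH^1(\tilde C_m)$ into $\bigoplus_i\HH^1(X_m^1)_{\gamma_i}$; since both spaces have dimension $2g$, the injection is an isomorphism. The $\Gamma_\Q$-equivariance in étale cohomology is automatic because $\psi$ is a morphism of $\Q$-varieties. I expect the only delicate point to be bookkeeping around the twist $t:\tilde C_m\to C_m$: one must check that the (transcendental) scalar relating $v_i$ to $x^{i-1}\,dx/y$ does not interfere with the argument, but since the scalar is nonzero and the relevant conclusions (eigenspace membership and nonvanishing) are scale-invariant, this is harmless.
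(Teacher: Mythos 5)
Your proof is correct and takes essentially the same route as the paper: a direct computation yielding $\psi^*(x^{i-1}\,dx/y)=-u^{i-1}v^{i-m}\,du$, identification of the eigenspace character $\gamma_i$, and a dimension count. The only cosmetic difference is that you first establish the intertwining $\psi\circ g = \alpha_m^{a+b-2c}\circ\psi$ abstractly before computing the form, whereas the paper goes straight to the explicit form and reads off the $G_m^1$-character from it; both supply what is needed, and your closing remark about the twist scalar $v_i=\delta_i\cdot x^{i-1}\,dx/y$ being harmless is exactly the right thing to observe.
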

\begin{proof}
    {The étale-to-singular comparison isomorphism \Cref{eq: H1et is dual to Tate module} is $G_m^n$-equivariant. Therefore, it is enough to prove the statement in its de Rham version.}
    Let $y^2 = P(x)$ {be} the affine equation defining $\Tilde{C}_m$. 
    Given that ${dx}/{y} = {{2}dy}/{P'(x)}$, the pullback of $x^{i-1}\, dx/y$ {through $\psi$} is
    \begin{equation}
    {\label{eq: image of diff form from CmTilde to X1m}}
    \begin{split}
        \psi^\ast(x^{i-1}\, dx/y) &= \psi^\ast\left( x^{i-1} \,\frac{{2}dy}{-4m\cdot x^{m-1}} \right)\\
        & = - u^{i-1}v^{i-m}\, du,
    \end{split}
    \end{equation}
    where the last equality holds by direct substitution. %
    The group $G_m^1$ acts on this form via the character $\gamma_i = (i,i,-2i) \in \mathfrak{A}_m^1$. Since the basis $\{v_i\}$ is sent to a linearly independent set, the map is injective, hence an isomorphism onto its image.
\end{proof}

Let $h=3n-2$ and $n = 2p$, and recall (\Cref{sub:MTdef}) that we denote by $T^n W$ the $n$-th tensor power of $W$. Composing the map of Corollary \ref{cor:ShiodaII} and the self-product $T^n\psi^\ast = \left( \psi^\ast \right)^{\otimes n}$, we get a $\Gamma_{\Q}$-equivariant inclusion%
\begin{equation}{\label{eq: theta}}
    \Theta \colon T^n\HH^1(\Tilde{J}_m)(p) = T^n\HH^1(\Tilde{C}_m)(p) \hookrightarrow T^n \HH^1_{\prim}(X_m^1)(p) \hookrightarrow \HH_{{\prim}}^{h}(X_m^h)(h/2).
\end{equation}
The twist in the last group is $p+(n-1)=3p-1=h/2$.

{As} $\Tilde{C}_m$ {is} a twist of $C_m$ (see \Cref{sub:twist}), both \Cref{th:Jm-factorization} and \Cref{prop:fixMT} apply identically to $\Tilde{J}_m$, since the Mumford-Tate group only depends on the complex geometry of the abelian variety, and $J_m, \Tilde{J}_m$ are isomorphic over $\C$.

\begin{proposition}{\label{prop:ShiodaIII}}
    Fix a prime $\ell$ such that $m \mid \ell-1$, so that we can embed $\Q(\zeta_m)$ into $\Q_\ell$. Let $f$ be an equation for $\MT(\Tilde{J}_m)$. Denote by $q$ the sum of its negative exponents, let $g$ be the genus of $\Tilde{C}_m$, and set $2p = n = 4qg$. Let
    $$v_f = v_1^{\otimes e_1} \otimes\cdots\otimes  v_{2g}^{\otimes e_{2g}} \otimes (2\pi i)^p
    \in T^{n}\HH^1_{\operatorname{\acute{e}t}}(\Tilde{J}_{m, \overline{\Q}}, \Q_\ell) \otimes_\Q \Q(p)$$ be the corresponding Tate class, as defined in \Cref{subsec:MTEintoTC}. 
    The following hold:
    \begin{enumerate}
        \item The inclusion $\Theta$, as defined in (\ref{eq: theta}), is $\Gamma_{\Q}$-equivariant.
        \item $\Theta(v_f)$ is a Tate class, belonging to the eigenspace {with character $\gamma_f$}, where
        \[\gamma_f = \gamma_1^{\ast e_1}\ast \dots \ast \gamma_{2g}^{\ast e_{2g}} \in \mathfrak{B}_m^h\]
        and $h = 3n-2.$ In the notation of \Cref{section:ShiodaI}, $\langle \gamma_f \rangle = 2q \cdot 3g $.
        \item The field of definition of $\Theta(v_f)$ is the same as that of $v_f$.
    \end{enumerate}
\end{proposition}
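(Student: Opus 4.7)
My plan is to handle the three claims in order, taking \Cref{cor:ShiodaII}, \Cref{lemma:HJm-to-HJXm}, and \Cref{prop:fixMT} as the main inputs. Parts (1) and (3) should be essentially formal; part (2) will contain the only real computation. For part (1), I would note that $\Theta$ is the composition of the tensor power $T^n \psi^\ast$ with the inclusion produced in \Cref{cor:ShiodaII}: since $\psi$ from \eqref{eq: map X1m to CmTilde} is $\Q$-defined, its $\ell$-adic pullback (and any tensor power thereof) is automatically $\Gamma_\Q$-equivariant, while the second factor is so by \Cref{cor:ShiodaII}. This equivariance also gives the Tate-ness statement in part (2) for free: \Cref{prop:fixMT}, which applies to $\Tilde{J}_m$ because it is a twist of $J_m$ and hence shares its Mumford--Tate group, tells us that $v_f$ itself is Tate, and Galois-equivariance of $\Theta$ transfers this property to $\Theta(v_f)$.

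The heart of the argument is the eigenspace identification in part (2). I would trace the $G_m^1$-action through the two constituent maps: by \Cref{lemma:HJm-to-HJXm}, $\psi^\ast v_i$ lies in the $\gamma_i$-eigenspace with $\gamma_i = (i,i,-2i)$, so $T^n \psi^\ast v_f$ sits in the tensor eigenspace indexed by $\gamma_1^{\otimes e_1}\otimes\cdots\otimes \gamma_{2g}^{\otimes e_{2g}}$, and \Cref{rmrk:ShiodaII} (applied iteratively as in the proof of \Cref{cor:ShiodaII}) then identifies this with the concatenation-eigenspace for $\gamma_f$ inside $\HH_{\prim}^h(X_m^h)$. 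Verifying the numerical identity $\langle \gamma_f \rangle = 6gq$ will reduce to a short case split on whether $i \leq g$ or $i > g$ (where $[-2i]$ equals $m-2i$ or $2m-2i$, giving $\langle \gamma_i \rangle = 1$ or $2$ respectively), together with the Mumford--Tate constraints $\sum_{i \leq g} d_i = \sum_{i > g} d_i = 0$. The first of these I would obtain by evaluating $f$ on the Hodge cocharacter, which in our diagonal basis acts as $\operatorname{diag}(z,\ldots,z,1,\ldots,1)$ (with the $z$'s in the holomorphic indices $i \leq g$); the second then follows combining with $\sum_i d_i = 0$ from \Cref{lemma:degf0}. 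After the shift $e_i = d_i + 2q$ one finds $\sum_{i\leq g} e_i = \sum_{i>g} e_i = 2qg$, so $\langle \gamma_f \rangle = 2qg + 2 \cdot 2qg = 6qg = h/2+1$. To promote $\gamma_f$ from satisfying $\langle \gamma_f \rangle = h/2+1$ to membership in $\mathfrak{B}_m^h$ (i.e., $\langle u\gamma_f \rangle = h/2+1$ for \emph{every} $u \in (\Z/m\Z)^\times$) I would not argue directly but rather bootstrap from Tate-ness: the Galois group of $K/\Q$ permutes the eigenspaces of $G_m^h$ via $\alpha \mapsto u\alpha$, so the Galois conjugates of the Tate class $\Theta(v_f)$ populate each eigenspace $\HH_{u\gamma_f}$ with a Tate class, forcing all of them to be of Hodge type $(h/2,h/2)$.

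Part (3) is then immediate from the injectivity of $\Theta$ (both composite maps are injective) and its $\Gamma_\Q$-equivariance: for any $\sigma \in \Gamma_\Q$, $\sigma\cdot\Theta(v_f) = \Theta(v_f)$ holds if and only if $\Theta(\sigma v_f) = \Theta(v_f)$, which by injectivity is equivalent to $\sigma v_f = v_f$, so the two classes share the same stabilizer and hence the same field of definition. I expect the main obstacle to be the Hodge-theoretic input in part (2), specifically extracting the identity $\sum_{i\leq g} d_i = 0$ from the fact that $\MT(\Tilde{J}_m)$ contains the Hodge cocharacter; once this is in place, all remaining steps are bookkeeping in characters.
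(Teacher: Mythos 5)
Parts (1) and (3), and the Tate-ness assertion in (2), are the paper's own argument; no issues there. Your extraction of $\sum_{i\le g}d_i=0$ from the Hodge cocharacter is a valid alternative computation of $\langle\gamma_f\rangle = 6qg$ that the paper does not carry out (the paper instead proves $\gamma_f\in\mathfrak{B}_m^h$ first, and then reads off $\langle\gamma_f\rangle$ from the definition).

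The gap is in your bootstrap for $\gamma_f\in\mathfrak{B}_m^h$. You assert that the presence of a Tate class in each \'etale eigenspace $\HH_{u\gamma_f}$ of $X_m^h$ \emph{forces} that eigenspace to be of Hodge type $(h/2,h/2)$. This is not automatic: Tate classes live on the $\ell$-adic side and Hodge type is a statement about the Betti/de Rham side, and the implication ``Tate $\Rightarrow$ Hodge'' is the hard direction of the Mumford--Tate conjecture, which you would need to invoke for $X_m^h$ itself (or pass through $p$-adic Hodge theory to convert Tate classes into Hodge--Tate weight $h/2$ and then invoke the $C_{\mathrm{dR}}$ comparison) -- and you cite neither. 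The paper circumvents this by keeping the Tate-to-Hodge conversion entirely on the abelian-variety side, where the Mumford--Tate conjecture is \emph{known} (\Cref{rmk: MT iff Tate and Hodge classes biject}, Pohlmann), and then transports the resulting \emph{Hodge} class -- not the Tate class -- to $\HH^h(X_m^h(\C),\C)$ along the degree-zero de Rham avatar of $\Theta$, after which \Cref{th:ShiodaI} identifies $\mathfrak{B}_m^h$ as exactly the set of characters whose eigenspace is type $(0,0)$. A cheaper repair of your argument, closer in spirit to your cocharacter computation, is to note that the ideal defining $\MT(\widetilde{J}_m)_K$ in the diagonal torus is stable under the $\Gal(K/\Q)$-action $x_i\mapsto x_{ui}$ (because $\MT$ is $\Q$-defined); hence the permuted monomial $f^{(u)}$ is again an equation for $\MT$, and your cocharacter computation applies to it verbatim, giving $\langle u\gamma_f\rangle=6qg$ for every $u$ with no appeal to Tate classes at all.
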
 
\begin{proof}
    As usual, $ \det^{2q} \cdot {f} = x_1^{e_1} \cdots x_{2g}^{e_{2g}} $ has non-negative exponents only. The total degree of $\det^{2q} \cdot {f}$ is $n = \sum_i e_i$.
    \begin{enumerate}
        \item We defined the map $\Theta$ as the composition of three injective and $\Gamma_\Q$-equivariant morphisms:
        \begin{itemize}
            \item the canonical isomorphism
            \[ \HH_{\text{ét}}^1(\Tilde{J}_{m, \overline{\Q}},\Q_\ell) \simeq \HH_{\text{ét}}^1(\Tilde{C}_{m, \overline{\Q}},\Q_\ell); \]
            \item the $n$-th tensor power of
            $$ \psi^\ast\colon\HH_{\text{ét}}^1(\Tilde{C}_{m, \overline{\Q}},\Q_\ell) \to \HH_{\operatorname{\acute{e}t}}^1(X^1_{m, \overline{\Q}},\Q_\ell),$$
            which is injective and Galois equivariant by \Cref{lemma:HJm-to-HJXm};
            \item the inclusion
            \[ T^n \HH_{\text{prim}}^1(\Tilde{C}_{m, \overline{\Q}},\Q_\ell) \to \HH_{\text{prim}}^h(X_m^h,\Q_\ell)(n-1), \]
            coming from the inductive structure of \Cref{cor:ShiodaII}; here $h = 3n-2$.
        \end{itemize}

        \item That $\Theta(v_f)$ is a Tate class follows from (1): $v_f$ is a Tate class (\Cref{prop:fixMT}) and $\Theta$ is a Galois-equivariant injection. {That the character of $\Theta(v_f)$ is $\gamma_1^{\ast e_1}\ast \dots \ast \gamma_{2g}^{\ast e_{2g}}$ follows from \Cref{th:ShiodaII} and \Cref{rmrk:ShiodaII}, as well as the fact that $\psi^*(v_i)$ lies in the eigenspace with character $\gamma_i$ (\Cref{lemma:HJm-to-HJXm}).}
        {We prove that $\gamma_f$ belongs to $\mathfrak{B}_m^h$. Consider the commutative diagram} %
        \begin{equation}
        \begin{tikzcd}
            T^{n}\HH^1_{\text{ét}}(\Tilde{J}_{m, \overline{\Q}}, \Q_\ell) \otimes_\Q \Q(p) \rar\dar["\cong"] & \HH_{ \operatorname{\acute{e}t}}^h(X_m^h,\Q_\ell)(h/2)\dar["\cong"]\\
            T^{n}\HH^1(\Tilde{J}_{m}(\C), \Q)\otimes_\Q \Q_\ell(p)  \rar & \HH^h(X_m^h(\C), \Q) \otimes_\Q \Q_\ell(h/2).  \\
        \end{tikzcd}
        \end{equation}
        {The vertical isomorphism on the left sends Tate classes to {the subspace generated by} Hodge classes because the Mumford-Tate conjecture holds for the CM abelian variety $\Tilde{J}_m$ (see \Cref{rmk: MT iff Tate and Hodge classes biject}). The lower horizontal map is built from pullbacks and pushforwards and has degree 0. It therefore preserves the subspace spanned by Hodge classes. Given a character $\alpha$, the $\alpha$-eigenspace $\HH^h(X_m^h(\C), \C)_{\alpha}$ is of type $(0,0)$ in the Hodge decomposition if and only if $\alpha \in \mathfrak{B}_m^h$ (see \Cref{th:ShiodaI}). Since $v_f$ is a Tate class and every map in the diagram is $G_m^h$-equivariant, we conclude that $\gamma_f$ is a character in $\mathfrak{B}_m^h$.}
        Finally, the characters in $\mathfrak{B}_m^h$ are characterized by the property of having constant weight $h/2+1$. Unwinding the definitions of $h$ and $n$, we get
        \begin{equation*}
            \langle \gamma_f \rangle = \frac{h}{2} +1 = \frac{3n-2}{2}+1 = \frac{3\cdot 4gq-2}{2}+1 = 2q \cdot 3g.
        \end{equation*}

        \item Follows from (1); note in particular that $\Theta$ is injective.
    \end{enumerate}
\end{proof}

\subsection{Invariance under twists}{\label{sub:twist}}
In this section we show that, for odd values of $m$, the connected monodromy field $\KconnA/K$ is the same as the corresponding field $K(\varepsilon_B)$ associated with the Jacobian $B$ of any twist of the form $y^2=x^m+a$ (for any $a \in \Q^\times$, see \Cref{prop: twists}). This applies in particular to the specific twist $\Tilde{J}_m$ we used in \Cref{section: Shioda pullback} (see \Cref{prop: twist-invariance}). For even $m$, we point out what changes are required to retrieve $K(\varepsilon_A)$ from $K(\varepsilon_{B})$.

\medskip

Let $f$ be an equation for $\MT(J_m)$. As usual, let $q$ denote the sum of the absolute values of its negative exponents, so that the exponents of $\det^{2q}\cdot f = x_1^{e_1}\cdots x_{m-1}^{e_{m-1}}$ are all positive. %
Define $e(f)$ as the weighted sum of these exponents:
\begin{equation}{\label{eq: define ef}}
        e(f) \colonequals \sum_i i \cdot e_i.
\end{equation}

\begin{proposition}\label{prop: twists}
Let ${C_{m,a}} \colon y^2 = x^m+a / \Q$, for $a \in \Q^\times$, be a twist of $C_m \colon y^2 = x^m+1$. Let $t \colon (C_m)_{\overline{\Q}} \to {(C_{m,a})}_{\overline{\Q}}$ be the isomorphism
\[
t(x,y) = (a^{1/m} \cdot x, a^{1/2} \cdot y).
\]
The following hold:
\begin{enumerate}
    \item for any equation $f$ for $\MT(A)$ and any $\sigma \in \Gamma_\Q$ we have
    \begin{equation}
        {\label{eq:even-twist-gen}}
        \sigma(t_* v_f) = \left( \frac{\sigma(a^{1/m})}{a^{1/m}} \right)^{u^{-1}\cdot e(f)} t_*(\sigma v_f),
    \end{equation}
    where $u \in (\Z/m\Z)^\times$ is defined by the condition $\sigma(\zeta_m)=\zeta_m^u$, the symbol $e(f)$ denotes the weighted sum in \Cref{eq: define ef}, and $v_f$ is the Tate class of \Cref{eq:def-vf}.
    \item if $m$ is odd, we have $e(f) \equiv 0 \pmod m$, hence $t_*$ identifies the Tate classes $v_f$ on powers of $J_{m, \overline{\Q}}$ with the Tate classes $t_*(v_f)$ on powers of $J_{m, \overline{\Q}}$ in a Galois-equivariant way.
\end{enumerate}
\end{proposition}

\begin{remark}
    In part (1), note that $\sigma(a^{1/m})/a^{1/m}$ is an $m$-th root of unity, so we only need $u^{-1} \cdot e(f)$ to be well-defined modulo $m$.
\end{remark}

\begin{proof}
    The following is essentially the same argument we used to prove \Cref{th:ShiodaII}. Let $c$ be the cocycle corresponding to the twist $t$ (in the sense of \cite[Theorem X.2.2]{Silverman1}), defined by
    \begin{equation}
    c \colon \Gamma_{\Q} \to \Aut(C_{m, \overline{\Q}}), \qquad \sigma \mapsto
    \left[ (x,y) \mapsto
    \left( \frac{\sigma(a^{1/m})}{a^{1/m}} \cdot x , \, \frac{\sigma(a^{1/2})}{a^{1/2}} \cdot   y \right) \right]. 
    \end{equation}
    Fix an element $\sigma \in \Gamma_\Q$. Notice that ${\sigma(a^{1/m})}/{a^{1/m}}$ is an $m$-th root of unity and ${\sigma(a^{1/2})}/{a^{1/2}}$ is $\pm 1$. There exists an integer $r$ and a sign, both depending only on $\sigma$, such that $c(\sigma) = \pm \alpha_m^r$, where $\alpha_m$ is the automorphism of $J_m$ defined in \Cref{eq: definition of alpha_m}. Note that the hyperelliptic involution of a hyperelliptic curve $C$ (which we are denoting by $-1$) acts on $\Jac(C)$ and on $\HH^1_{\operatorname{\acute{e}t}}(\Jac(C)_{\overline{\Q}}, \Q_\ell)$ as $[-1]$. The push-forward of the action of $\sigma$ along the twist is
    \begin{equation}{\label{eq:pushforward1}}
    \begin{aligned}
        \sigma (t_\ast v_i)
        &= [\sigma(t)t^{-1}]_\ast \circ t_\ast(\sigma v_i) \\
        &= c(\sigma)_\ast \circ t_\ast(\sigma v_i) \\
        &= \pm\left(\alpha_m^r\right)_\ast t_\ast(\sigma v_i), \\
        &= \pm\left(\sigma(a^{1/m})/a^{1/m}\right)^{i\cdot u^{-1}} t_\ast(\sigma v_i).
    \end{aligned}
    \end{equation}
    The last equality follows from the fact that $\sigma v_i$ lies in the eigenspace of $(\alpha_m^r)_\ast$ with character $\zeta \mapsto \zeta^{u^{-1}i}$ (see \cite[Lemma 2.4.4]{part2}) and that $t_\ast$ preserves all $\alpha_m$-eigenspaces.
    
    Now consider an equation $f=\prod_{j=1}^{2g} x_j^{d_j}$ for $\MT(A)$ and write $\det^{2q} f = x_1^{e_1} \cdots x_{2g}^{e_{2g}}$ as in \Cref{def: equation for MT}. From \cref{eq:pushforward1}, the definition of $v_f$ (see \eqref{eq:def-vf}), and the fact that $(-1)^{\sum e_i} = 1$ by \Cref{lemma:degf0} we deduce
    \begin{equation}{\label{eq: twisted action}}
    \begin{aligned}
        \sigma(t_\ast v_f) = \left(\sigma(a^{1/m})/a^{1/m}\right)^{u^{-1}\cdot e(f)} \cdot t_\ast(\sigma v_f).
    \end{aligned}
    \end{equation}
    This proves the first part of the result.
    
    For $m$ odd, we claim that the weighted sum $e(f) = \sum i \cdot e_i$ is divisible by $m$. By \Cref{prop:ShiodaIII}, we know that $\Theta(v_f)$ is a Tate class in $\HH^n_{\operatorname{\acute{e}t}}(X_{m, \overline{\Q}}^n, \Q_\ell)(n/2)$, for some integer $n$, and the character $\gamma_f $ belongs to $\mathfrak{B}_m^n$. We can then apply \cite[Lemma 5.1]{Shioda3} to conclude that the character $\ast (i)^{\ast e_i} $ belong to the set $\mathfrak{B}_m^d$, where $n=3d+4$.
    By definition of $\mathfrak{B}_m^d$, the character $\ast (i)^{\ast e_i} $ then satisfies \cref{eq:Ghat}, that is, $\sum_i i \cdot e_i$ is divisible by $m$.
    The Galois action on $t_\ast v_f$ is thus trivially twisted:
    \begin{equation}{\label{eq: trivially twisted action}}
    \begin{aligned}
        \sigma(t_\ast v_f) = t_\ast(\sigma v_f).
    \end{aligned}
    \end{equation}
    Since this holds for any equation for $\MT(J_m)$, it holds for a set of generating equations. The final statement then follows from \Cref{prop:fixMT}.
\end{proof}

\begin{remark}{\label{rmrk:shioda-even-counter}}
    For even values of $m$, {there can be characters $\gamma = \ast \gamma_i^{\ast e_i} \in \mathfrak{B}_m^n$ such that} $\sum i\cdot e_i$ is not divisible by $m$, because \cite[Lemma 5.1]{Shioda3} does not hold in this case. An explicit example is given by $m=18$ and $\gamma = \gamma_1 \ast \gamma_1 \ast \gamma_{10} \ast \gamma_{10}$. Restricting our attention to characters coming from equations for the Mumford-Tate group does not help either: for $m=18$, one of the possible equations is $x_{2}/x_{7}=1${, which leads to a character that does not satisfy $\sum i\cdot e_i \equiv 0 \pmod{m}$.}
    
    In fact, in the even case, different twists do have different fields $\KconnA \neq K(\varepsilon_B)$ (see \Cref{th:Kconn-even}), and possibly even different fields $K(\End(A)) \neq K(\End(B))$.

    {In all cases, Equation \eqref{eq:even-twist-gen} allows us to describe the Galois action on the classes $t_*v_f$ in terms of the action on the classes $v_f$, or conversely. }
\end{remark}

{
\begin{remark}\label{rmk: non-diagonal twists}
    Notice that when $m$ is even the curve $C_m$ possesses the automorphism $(x,y) \mapsto \left(\frac{1}{x}, \frac{y}{x^{m/2}}\right)$. The existence of this extra automorphism implies that twists of $C_m$ are not necessarily of the form $y^2 = x^m+a$ for $a \in \Q^\times$. For example, the curve with equation
    \[
    y^2 = -2(x^{10} - 45x^8 + 210x^6 - 210x^4 + 45x^2 - 1)
    \]
    is a twist of $y^2=x^{10}+1$ trivialised by the field $\Q(i)$, and cannot be written in the form $y^2=x^m+a$ for any $a \in \Q^\times$. We can compute all twists using the techniques of \cite{MR3906177}, which also give a $\overline{\Q}$-isomorphism between $C_m$ and any given twist. Once the twisting isomorphism is known, one can proceed as in \Cref{prop: twists} to compute the Galois action on the Tate classes on the (powers of the) Jacobian of any twist of $C_m$ purely in terms of the geometric action of the automorphism group of $C_m$ on its cohomology.
\end{remark}
}

The curve $\Tilde{C}_m\colon y^2 = -4x^m+1$, which we used in \Cref{section: Shioda pullback}, is a twist of $C_m \colon y^2 = x^m+1$, via the isomorphism
\begin{equation}
{\label{eq: definition of t twisting isomorphism between C and C tilde}}
    t\colon{C}_{m, \overline{\Q}} \to \Tilde{C}_{m, \overline{\Q}},
    \qquad (x,y) \mapsto ((-4)^{-1/m} \cdot x,y) .
\end{equation}
We are going to show that in the odd case, $\tilde{C}_m$ is isomorphic over $\Q$ to a twist $C_{m,a}$ of $C_m$, for which we can apply \Cref{prop: twists}.
Since we are interested in relating the connected monodromy fields of the Jacobians $J_m$ and $\tilde{J}_m = \Jac \tilde{C}_m$, we start with a useful remark on their Mumford-Tate groups. 

\begin{remark}{\label{remark:same-equations}}
    Let $t$ be the twisting isomorphism $t\colon{C}_{m, \overline{\Q}} \to \Tilde{C}_{m, \overline{\Q}}$ of \Cref{eq: definition of t twisting isomorphism between C and C tilde}.
    There is a basis $\{w_i\}$ for $\HH^1(\Tilde{C}_{m, \overline{\Q}}, \Q_\ell)$ that pulls back to the original basis $v_i=t^\ast w_i$ in $\HH^1(C_{m, \overline{\Q}}, \Q_\ell)$, both sharing the property of \Cref{prop:eigenbase} (we thus improperly call both bases $\{v_i\}$ outside of this remark, with the understanding that they are chosen so that $t^\ast$ sends one to the other).
    Write $A \colonequals J_m$ and $B \colonequals \Tilde{J}_m$.
    The isomorphism $\GL_V \to \GL_{\Tilde{V}}$ induced by the twist sends the algebraic subgroup $\MT(A)$ to $\MT(B)$.
    Thus, given equations $f_1, \dots, f_r$ defining $\MT(A)$, we have that $\KconnA$ is the field of definition of the Tate classes ${v}_{f_i}$ (this is \Cref{prop:fixMT}), while $K(\varepsilon_{B})$ is the field of definition of the Tate classes $w_{f_i} = t_\ast v_{f_i}$. %
\end{remark}

\begin{corollary}{\label{prop: twist-invariance}}
    Let $m \geq 3$ be a positive integer and $K=\Q(\zeta_m)$. Let $A=J_m$ be the Jacobian of the hyperelliptic curve $C_m\colon y^2=x^m+1$ defined over $\Q$ and $B=\Tilde{J}_m$ the Jacobian of $\Tilde{C}_m\colon y^2 = -4x^m+1$, also defined over $\Q$.
    Let $t$ be the twisting isomorphism in \Cref{eq: definition of t twisting isomorphism between C and C tilde}. 
    The following hold:
    \begin{enumerate}
        \item for any equation $f$ for $\MT(A)$ and any $\sigma \in \Gamma_K$ we have
        \begin{equation}
            {\label{eq:even-twist}}
            \sigma(t_* v_f) = \left( \frac{\sigma(\mu)}{\mu} \right)^{e(f)} t_*(\sigma v_f),
        \end{equation}
        where $\mu$ is a root of the polynomial $x^m-(-4)^{m-1}$, $e(f)$ is the weighted sum defined in \Cref{eq: define ef}, and $v_f$ is the Tate class of \Cref{eq:def-vf}.
        
        \item if $m$ is odd, then $K(\varepsilon_A)=K(\varepsilon_B)$.
    \end{enumerate}

\end{corollary}
\begin{proof}
    We start by noticing that, if $C/\Q$ is a hyperelliptic curve, the hyperelliptic involution of $C$ acts as $[-1]$ on $\Jac(C)$, hence it acts as $[-1]$ on $\HH^1_{\operatorname{\acute{e}t}}(\Jac(C)_{\overline{\Q}}, \Q_\ell)$. Since the Tate classes $v_f$ live in even-degree tensor powers of $\HH^1_{\operatorname{\acute{e}t}}(\Jac(C_m)_{\overline{\Q}}, \Q_\ell)$, we see that the hyperelliptic involution acts trivially on all $v_f$ and all $t_\ast v_f$.

    The hyperelliptic curves $\Tilde{C}_m$ and $\Tilde{C}_m'\colon (-1)^{m-1}y^2= x^m+(-4)^{m-1}$ are isomorphic over $\Q$, via the isomorphism
    \[
    t_1\colon (x,y) \mapsto (-4\cdot x, 2^{m-1} \cdot y).
    \]
    This gives a Galois-equivariant identification of their étale cohomology groups.
    
    If $m$ is odd, $\Tilde{C}_m'$ is a twist of the type considered in \Cref{prop: twists}.
    Part (1) follows from \Cref{prop: twists}(1). In fact, when we restrict to $\sigma \in \Gamma_K \subseteq \Gamma_\Q$, we always get $u=1$ by definition; furthermore, if we denote $s\colon C_m \to C_{m, (-4)^{m-1}}$ the isomorphism of \Cref{prop: twists}, then the composition%
    \[
    t_1^{-1} \circ s \colon C_m \to C_{m, (-4)^{m-1}}  = \tilde{C}'_m \to \tilde{C}_m
    \]
    coincides with the isomorphism $t$ in the statement up to the hyperelliptic involution, which does not affect the conclusion.
    Part (2) follows by combining \Cref{prop: twists}(2), \Cref{prop:fixMT}, and \Cref{remark:same-equations}.

    If $m$ is even, the hyperelliptic curves $\Tilde{C}_m'$ and $\Tilde{C}_m''\colon y^2= x^m+(-4)^{m-1}$ are quadratic twists of one another (through the hyperelliptic involution). The twist $\Tilde{C}_m''$ is of the type considered in \Cref{prop: twists}: we obtain the conclusion combining \Cref{prop: twists}(1) and our remark about the action of the hyperelliptic involution.
\end{proof}

\section{Galois action on Tate classes}{\label{sec:deligne}}
In this section we describe a method to compute the connected monodromy field $\Q(\varepsilon_{A})$ of the Fermat Jacobian $A=J_m$. According to \Cref{prop:fixMT} and \Cref{prop:ShiodaIII}(3), this is equivalent to computing the field of definition of certain Tate classes on Fermat hypersurfaces.

\medskip

We first describe the Galois action on cohomology classes of $X_m^n$, in terms of the decomposition into one-dimensional representations described in \Cref{section:FermatVarieties} and special values of the Gamma function (see \Cref{th:DeligneI} and \Cref{th:DeligneII}).
Applying these results to the Tate classes $v_f$ defined in \Cref{section:MT} and pulled back to Fermat varieties in \Cref{section:FermatVarieties}, we derive a description of the connected monodromy field $\Q(\varepsilon_{J_m})$ (\Cref{th:Kconn}). We then describe how to perform the computation algorithmically (\Cref{sub:deligneIII}) and give some examples (\Cref{section:examples}).

\subsection{Galois action on the cohomology of Fermat varieties}{\label{section:DeligneI}}
Let $K=\Q(\zeta_m)$. Following the work of Deligne \cite{Deligne}, we describe explicitly the action of the absolute Galois group $\Gamma_K$ on the étale cohomology of the Fermat variety $X_m^n$.

Let $\mathfrak{p}$ be a prime ideal of $K$ not dividing $m$ and let $\mathbb{F}_q$ {be} its residue field. Reduction modulo $\mathfrak{p}$ induces an isomorphism between $\mu_m \subset \mathcal{O}_K^\times$ and the $m$-th roots of unity in $\mathbb{F}_q^\times$, whose inverse {we denote by} $t$. Let $\alpha = (a_0, \dots, a_{n+1}) \in \mathfrak{A}_m^n$ {be} a fixed character of $G_m^n$. We introduce the {character}
\[ \epsilon_i\colon \mathbb{F}_q^\times \to \mu_m, \qquad x \mapsto t \left(x^{ \frac{1-q}{m} }\right)^{a_i}. \]
Let $\psi\colon \mathbb{F}_q \to \C^\times $ be a non-trivial additive character. Define the Gauss sums
\[ g(\mathfrak{p}, a_i, \psi) = - \sum_{x \in \mathbb{F}_q} \epsilon_i(x)\psi(x), \qquad
g(\mathfrak{p}, \alpha) = q^{-\langle \alpha\rangle} \prod_{i=0}^{n+1} g(\mathfrak{p}, a_i, \psi),\]
where $\langle\alpha\rangle = \sum_i [a_i] /m$ and $[a]$ is the only representative for $a$ in $\Z$ lying in the interval $[0,m-1]$, as defined in \Cref{section:FermatVarieties}.

\begin{theorem}[Deligne {\cite[Corollary 7.11]{Deligne}}{\label{th:DeligneI}}]
    {Fix} $\alpha \in \mathfrak{B}_m^n$ and {let} $\Frob_\mathfrak{p}$ be a Frobenius element corresponding to a prime $\mathfrak{p}$ of $K$ not dividing $m$.
    For every $v \in\HH^n_{\operatorname{\acute{e}t}}(X^n_{m,\overline{K}}, \Q_\ell)(p)_{\alpha}$ we have
    \[ \Frob_\mathfrak{p}(v) = g(\mathfrak{p}, \alpha) \cdot v. \]
\end{theorem}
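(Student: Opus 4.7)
The plan is to follow Deligne's classical argument: reduce to positive characteristic via smooth proper base change, then compute the scalar action of Frobenius on the one-dimensional eigenspace using the Grothendieck--Lefschetz trace formula combined with the evaluation of an explicit character sum. Since $\mathfrak{p}$ does not divide $m$, the Fermat variety $X_m^n$ has good reduction at $\mathfrak{p}$ and the $G_m^n$-action extends over $\Spec \mathcal{O}_K$. Because $K = \Q(\zeta_m)$ contains $\mu_m$, reduction modulo $\mathfrak{p}$ embeds $\mu_m(\mathcal{O}_K)$ isomorphically into $\mu_m(\mathbb{F}_q) \subset \mathbb{F}_q^\times$ (with inverse $t$), so $G_m^n$ is already split over $\mathbb{F}_q$ and its action commutes with geometric Frobenius. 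By smooth proper base change, $\HH^n_{\operatorname{\acute{e}t}}(X^n_{m,\overline{K}}, \Q_\ell)$ is canonically and $\Frob_\mathfrak{p}$-equivariantly identified with $\HH^n_{\operatorname{\acute{e}t}}(X^n_{m,\overline{\mathbb{F}_q}}, \Q_\ell)$. It follows that $\Frob_\mathfrak{p}$ preserves the eigenspace decomposition of \Cref{th:ShiodaI}; since $\alpha \in \mathfrak{B}_m^n \subseteq \mathfrak{A}_m^n$ yields a one-dimensional eigenspace, Frobenius acts there by some scalar $\lambda(\mathfrak{p}, \alpha)$, and the task reduces to identifying this scalar with $g(\mathfrak{p}, \alpha)$.

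To extract $\lambda(\mathfrak{p}, \alpha)$, I would use character orthogonality on $G_m^n(\mathbb{F}_q)$. For each $g \in G_m^n(\mathbb{F}_q)$, the Grothendieck--Lefschetz trace formula applied to the self-map $g \circ \Frob_\mathfrak{p}$ on $X^n_{m, \overline{\mathbb{F}_q}}$ yields
\[
\#\bigl\{x \in X_m^n(\overline{\mathbb{F}_q}) \;:\; g\Frob_\mathfrak{p}(x)=x\bigr\} = \sum_{i=0}^{2n} (-1)^i \tr\bigl(g \circ \Frob_\mathfrak{p} \mid \HH^i_{\operatorname{\acute{e}t}}(X_m^n)\bigr).
\]
Multiplying both sides by $\overline{\alpha(g)}$ and averaging over $g \in G_m^n(\mathbb{F}_q)$ annihilates every $G_m^n$-eigenspace except the $\alpha$-component. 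The Lefschetz contributions from degrees $i \neq n$ and from the imprimitive class in degree $n$ vanish because $\alpha \in \mathfrak{A}_m^n$ is nontrivial, so the right-hand side collapses to $(-1)^n |G_m^n(\mathbb{F}_q)| \cdot \lambda(\mathfrak{p}, \alpha)$, while the left-hand side becomes an explicit character sum.

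Evaluating this sum is a classical computation. Parametrizing the solutions of $x_0^m + \cdots + x_{n+1}^m = 0$ in $\mathbb{F}_q^{n+2}$ by expressing the indicator function of $m$-th powers in $\mathbb{F}_q^\times$ through the characters $\epsilon_i$, and enforcing the linear relation via a non-trivial additive character $\psi$ (using the identity $\frac{1}{q}\sum_{y \in \mathbb{F}_q}\psi(y \cdot s) = \mathbf{1}_{s=0}$), the twisted point count separates as a product of the Gauss sums $g(\mathfrak{p}, a_i, \psi)$, up to an overall power of $q$ coming from the projective quotient and the normalization. Incorporating the Tate twist $(p)$, which multiplies the Frobenius eigenvalue by $q^{-p}$, produces the factor $q^{-\langle \alpha \rangle}$ and yields $\lambda(\mathfrak{p}, \alpha) = q^{-\langle\alpha\rangle}\prod_{i=0}^{n+1} g(\mathfrak{p}, a_i, \psi) = g(\mathfrak{p}, \alpha)$. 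Independence from the choice of $\psi$ follows from the standard identity that $\prod_i g(\mathfrak{p}, a_i, \psi)$ depends on $\psi$ only through $\sum a_i \bmod m$, which vanishes by \eqref{eq:Ghat}. The main obstacle in making this rigorous is the careful bookkeeping of signs, of $|G_m^n(\mathbb{F}_q)|$, and of the $q$-power coming from the Tate twist, ensuring that all conventions match those of \cite{Deligne}.
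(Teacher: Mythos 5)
The paper does not prove this statement: it is quoted verbatim as Deligne's Corollary 7.11 and simply cited, with no proof given. Your reconstruction is the classical Weil–Deligne argument underlying that result — good reduction and smooth proper base change, one-dimensionality of the $\alpha$-eigenspace, twisting the Lefschetz fixed-point formula by $g\circ\Frob_{\mathfrak{p}}$ and averaging against $\overline{\alpha(g)}$, then Weil's point-count via multiplicative and additive characters to produce the Gauss-sum product, with the Tate twist contributing the normalizing power of $q$ — and this is indeed essentially what Deligne does in \cite[\S 7]{Deligne}. The only thing I would flag is that, as you note yourself, the sign and normalization conventions (the leading minus sign in $g(\mathfrak{p}, a_i, \psi)$, the factor $(-1)^n$ from Lefschetz, and the exact power of $q$ coming from the projective quotient plus the Tate twist) must all be aligned with the definitions in \eqref{eq:Ghat} and the surrounding text before the identity $\lambda(\mathfrak{p},\alpha) = g(\mathfrak{p},\alpha)$ can be claimed as proved rather than as expected; your proposal correctly identifies these as the remaining work but does not carry it out.
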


{Our strategy for describing the connected monodromy field of $J_m$ comes down to the fact that, for each character $\alpha$ as above, we can find an explicit algebraic number on which Galois acts in the same way as on $v \in \HH^n_{\operatorname{\acute{e}t}}(X^n_{m,\overline{K}}, \Q_\ell)(p)_{\alpha}$. These numbers are obtained as suitable (normalised) products of values of the $\Gamma$ function evaluated at rational arguments.}
Recall that Euler's Gamma function is defined as the analytic continuation of the integral
$$\Gamma(s)= \int_0^\infty e^{-t}\, t^s\, \frac{dt}{t}, \qquad  \text{for } \Re(s)>0.$$
It is a meromorphic function that is holomorphic on the whole complex plane except for the non-positive integers, where it has simple poles. The Gamma function satisfies the following well-known relations \cite{MR1638625}:
\begin{itemize}
    \item Translation: $\Gamma(s+1) = s\Gamma(s)$.
    \item Reflection: $\Gamma(1-s)\Gamma(s) = \pi/\sin(\pi s)$.
    \item Multiplication: for every positive integer $n\geq1$,
    \[ \prod_{k=0}^{n-1} \Gamma\left( s +\frac{k}{n} \right) = (2\pi)^{\frac{n-1}{2}}n^{\frac{1}{2}-ns} \Gamma(ns). \]
\end{itemize}

\begin{definition}{\label{definition:character-gamma-value}}
We define the Gamma-value of a character $\alpha=(a_0, \dots, a_{n+1}) \in \mathfrak{A}_m^n$ as the complex number given by the formula
\[ \Gamma(\alpha) = (2\pi i)^{-\langle \alpha \rangle } \prod_{i=0}^{n+1} \Gamma\left(\{\frac{a_i}{m}\}\right),  \]
where $\{q\} \in (0,1]$ denotes the fractional part of $q \in \mathbb{Q}$.
\end{definition}

\begin{theorem}[Deligne {\cite[Theorem 7.15]{Deligne}}{\label{th:DeligneII}}]
    {Fix} $\alpha \in \mathfrak{B}_m^n$ and {let} $\Frob_\mathfrak{p}$ be a geometric Frobenius element corresponding to a prime $\mathfrak{p}$ of $K$ not dividing $m$. {The complex number} $\Gamma(\alpha)$ is algebraic over the rationals and
    \[ \Frob_\mathfrak{p}(\Gamma(\alpha)) = g(\mathfrak{p}, \alpha) \cdot \Gamma(\alpha). \]
\end{theorem}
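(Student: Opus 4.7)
The plan is to realize $\Gamma(\alpha)$ as a \emph{period}, namely the scalar relating an algebraic de Rham class to an étale class in the $\alpha$-eigenspace on $X_m^n$, and then to transport the Gauss-sum Frobenius action of \Cref{th:DeligneI} to the de Rham side. Since the de Rham class will be defined over $K$, the Frobenius $\Frob_\mathfrak{p} \in \Gamma_K$ will fix it, forcing the period scalar $\Gamma(\alpha)$ to absorb all of the Galois action; this will simultaneously establish the algebraicity of $\Gamma(\alpha)$ and give the stated transformation law.

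More precisely, I would first exhibit a canonical algebraic class $\omega_\alpha$ in the $\alpha$-eigenspace of $\HH^n_{\operatorname{dR}}(X_m^n)(p)$, defined over $K$. On the Fermat curve $X_m^1$ such a class is represented by an explicit rational differential of the form $u^{a-1} v^{b-m}\, du$, and for $n \geq 2$ the analogous classes are obtained via the inclusion of \Cref{th:ShiodaII} applied to a tensor product of such curve classes. Simultaneously, I would fix a generator $v_\alpha$ of the one-dimensional étale eigenspace $\HH^n_{\operatorname{\acute{e}t}}(X^n_{m,\overline{K}}, \Q_\ell)(p)_\alpha$. The comparison isomorphism
\[
\HH^n_{\operatorname{dR}}(X_m^n)(p) \otimes_{K,\iota} \C \xrightarrow{\sim} \HH^n(X_m^n(\C), \C)(p) \xleftarrow{\sim} \HH^n_{\operatorname{\acute{e}t}}(X_m^n, \Q_\ell)(p) \otimes_{\Q_\ell, \iota} \C
\]
identifies $v_\alpha \otimes 1$ with $P(\alpha) \cdot \omega_\alpha \otimes 1$ for a complex period $P(\alpha)$, and the main technical claim to establish is that $P(\alpha) = \Gamma(\alpha)$, a formula of exactly the shape of the expression $P(\gamma, \omega_\beta)$ appearing in \Cref{thm: explicit Galois action on Tate classes}.

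The evaluation of $P(\alpha)$ reduces, by Künneth compatibility of \Cref{th:ShiodaII}, to a period calculation on $X_m^1$. There, each one-dimensional factor is a classical Jacobi-type integral
\[
\int_0^1 u^{a-1}(1 - u^m)^{b/m - 1}\, du \;=\; \frac{1}{m}\, B\!\left(\frac{a}{m},\frac{b}{m}\right) \;=\; \frac{1}{m}\,\frac{\Gamma(a/m)\,\Gamma(b/m)}{\Gamma((a+b)/m)},
\]
and assembling these factors over the $n+2$ coordinates of $X_m^n$, together with the Tate twist contributing $(2\pi i)^{-\langle\alpha\rangle}$, reproduces the expression in \Cref{definition:character-gamma-value}.

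With this identification in hand, the theorem follows immediately. Applying $\Frob_\mathfrak{p}$ to the identity $v_\alpha \otimes 1 = \Gamma(\alpha) \cdot \omega_\alpha \otimes 1$ in the common complex space, the right-hand side transforms by $\Frob_\mathfrak{p}(\Gamma(\alpha))$ because $\omega_\alpha$ is $K$-rational and $\Frob_\mathfrak{p} \in \Gamma_K$, while the left-hand side transforms by $g(\mathfrak{p}, \alpha)$ by \Cref{th:DeligneI}. Comparing coefficients shows both that $\Gamma(\alpha) \in \overline{\Q}$ and that $\Frob_\mathfrak{p}(\Gamma(\alpha)) = g(\mathfrak{p}, \alpha)\, \Gamma(\alpha)$. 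The principal obstacle is the period computation on $X_m^1$: one must carefully track Tate-twist factors, orientations of integration cycles, and the precise normalization of $v_\alpha$ so that the period equals $\Gamma(\alpha)$ on the nose rather than merely up to an unspecified algebraic unit.
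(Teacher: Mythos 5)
The paper does not reprove this result; it is quoted directly from Deligne \cite[Theorem 7.15]{Deligne}, so there is no internal proof to compare against. Your blind attempt is an outline of Deligne's argument, and its first two stages --- the period computation on $X^1_m$ via the Beta integral and the K\"unneth-style transfer to $X^n_m$, and the identification of Frobenius eigenvalues with Gauss sums via \Cref{th:DeligneI} --- are on the right track (the same period computation with Kashio's cycle appears later in the paper in \Cref{subsubsec: definition of gamma}, \cref{eq: Kashio property}).

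However, the last paragraph has a genuine gap. You write that applying $\Frob_\mathfrak{p}$ to the identity $v_\alpha \otimes 1 = \Gamma(\alpha)\,\omega_\alpha \otimes 1$ ``in the common complex space'' gives $\Frob_\mathfrak{p}(\Gamma(\alpha)) = g(\mathfrak{p},\alpha)\,\Gamma(\alpha)$ because $\omega_\alpha$ is $K$-rational. But $\Frob_\mathfrak{p} \in \Gamma_K$ acts on $\overline{K}$, not on $\C$, and the comparison isomorphism between \'etale and Betti cohomology after extension of scalars to $\C$ is only $\C$-linear; there is no natural semilinear $\Frob_\mathfrak{p}$-action on that common $\C$-vector space under which both sides ``transform'' and coefficients can be compared. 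The equality $v_\alpha \otimes 1 = \Gamma(\alpha)\,\omega_\alpha \otimes 1$ is an identity of complex numbers and says nothing, on its own, about the algebraicity of $\Gamma(\alpha)$ or about any Galois action on it. The missing input is the theory of absolute Hodge cycles: one needs that the Hodge classes on (powers of) Fermat varieties are absolute Hodge \cite[Corollary 6.27]{Deligne2}, together with the fact that absolute Hodge cycles are defined over $\overline{\Q}$ with a compatible $\Gamma_K$-action on all components \cite[Proposition 2.9]{Deligne}. It is precisely this that lets one deduce the algebraicity of the de Rham coordinates (hence of $\Gamma(\alpha)$) and then transport the Gauss-sum eigenvalue from the \'etale component to the algebraic number $\Gamma(\alpha)$. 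Without invoking this machinery the final step cannot be made rigorous; the obstacle you flag (tracking Tate twists and normalizations) is real but secondary to this structural issue.
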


\subsection{The connected monodromy field of \texorpdfstring{$J_m$}{Jm}} Let $m>1$ be an odd positive integer and $J_m$ be the Jacobian of the smooth projective curve {over $\Q$} with affine equation $y^2=x^m+1$. For any equation $f$ for the Mumford-Tate group $\MT(J_m)$, denote by $q$ the sum of the absolute value of all negative exponents of $f$ as in \Cref{def: equation for MT}, so that $\det^{2q}\cdot f = x_1^{e_1} \cdots x_{2g}^{e_{2g}}$ has no negative exponent. Recall from \Cref{eq: definition of gammai} that we defined $\gamma_i$ to be the character $(i,i,-2i) \in \mathfrak{B}_m^1$ and 
\begin{equation}{\label{eq: definition of gamma_f}}
    \gamma_f = \gamma_1^{\ast e_1} \ast \dots \ast \gamma_{2g}^{\ast e_{2g}}.
\end{equation}
By \Cref{prop:ShiodaIII}, we know that $\gamma_f$ belongs to $\mathfrak{B}_m^n$ for a suitable integer $n$.

\begin{theorem}{\label{th:Kconn}}
    Let $m \geq 3$ be an odd positive integer and $J_m / {\Q}$ be the Jacobian of the smooth projective curve {over $\Q$} with affine equation $y^2=x^m+1$.
    Fix a set of equations ${f}_1, \, f_2, \,  \dots,\,  {f}_r$ defining the Mumford-Tate group $\MT({J}_m)$. The connected monodromy field $\Q(\varepsilon_{{J}_m})$ is generated over $\Q$ by the complex (algebraic) numbers $$\zeta_m, \, \Gamma(\gamma_{f_1}),\, \Gamma(\gamma_{f_2}),\, \dots, \, \Gamma(\gamma_{f_r}).$$
\end{theorem}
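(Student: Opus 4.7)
The plan is to combine three inputs: the description of $\Q(\varepsilon_{J_m})$ as the field of definition of the Tate classes $v_{f_i}$ from \Cref{prop:fixMT}, the embedding of these classes into a one-dimensional eigenspace in the cohomology of a Fermat variety from \Cref{prop:ShiodaIII}, and the parallel descriptions of the Galois action on such eigenspaces and on the matching Gamma values (\Cref{th:DeligneI} and \Cref{th:DeligneII}). Since $m$ is odd, \Cref{prop: twist-invariance} gives $\Q(\varepsilon_{J_m}) = \Q(\varepsilon_{\Tilde{J}_m})$, so I can work throughout with the twist $\Tilde{J}_m$, where the map $\Theta$ is genuinely $\Gamma_\Q$-equivariant.

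First, \Cref{prop:fixMT} identifies $\Q(\varepsilon_{\Tilde{J}_m})$ with the smallest extension of $K = \Q(\zeta_m)$ over which all Tate classes $v_{f_i}$ become simultaneously defined. By part~(3) of \Cref{prop:ShiodaIII}, the field of definition of $v_{f_i}$ over $K$ coincides with that of $\Theta(v_{f_i})$, a nonzero vector in the one-dimensional eigenspace $\HH^{h_i}_{\text{ét}}(X_m^{h_i}, \Q_\ell)(h_i/2)_{\gamma_{f_i}}$, with $\gamma_{f_i} \in \mathfrak{B}_m^{h_i}$ as prescribed by \eqref{eq: definition of gamma_f}.

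Next, since this eigenspace is one-dimensional, the $\Gamma_K$-action on $\Theta(v_{f_i})$ is given by a continuous character $\chi_i^{\text{ét}} \colon \Gamma_K \to \Q_\ell^\times$; the fact that $\Theta(v_{f_i})$ is a Tate class forces $\chi_i^{\text{ét}}$ to have open kernel, hence to factor through a finite quotient of $\Gamma_K$. \Cref{th:DeligneI} says $\chi_i^{\text{ét}}(\Frob_\mathfrak{p}) = g(\mathfrak{p}, \gamma_{f_i})$ for every prime $\mathfrak{p}$ of $K$ not dividing $m$. On the other hand, \Cref{th:DeligneII} implies that $\Gamma(\gamma_{f_i})$ is algebraic over $\Q$, so the natural action of $\Gamma_K$ on $\Gamma(\gamma_{f_i})$ defines a second finite-image character $\chi_i^{\Gamma} \colon \Gamma_K \to \overline{\Q}^\times$ whose value on $\Frob_\mathfrak{p}$ is likewise $g(\mathfrak{p}, \gamma_{f_i})$. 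After fixing an embedding $\overline{\Q} \hookrightarrow \Q_\ell$ (permissible because Gauss sums lie in $\overline{\Q}$), the two characters become comparable.

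The final step will be Chebotarev density: both $\chi_i^{\text{ét}}$ and $\chi_i^{\Gamma}$ factor through finite quotients of $\Gamma_K$ and agree on the dense set of Frobenius elements, so they coincide as characters of $\Gamma_K$. Consequently their kernels agree, which means the fixed field of $\Theta(v_{f_i})$ in $\overline{K}$ equals the fixed field of $\Gamma(\gamma_{f_i})$, namely $K(\Gamma(\gamma_{f_i}))$. Taking the compositum over $i$ then yields $\Q(\varepsilon_{J_m}) = K(\Gamma(\gamma_{f_1}), \dots, \Gamma(\gamma_{f_r})) = \Q(\zeta_m, \Gamma(\gamma_{f_1}), \dots, \Gamma(\gamma_{f_r}))$, as claimed. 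The main delicacy is the comparison step: one must make sense of $\chi_i^{\text{ét}}$ and $\chi_i^{\Gamma}$ as characters valued in a common field and verify that both factor through a finite quotient, after which the identification follows cleanly from the coincidence of Frobenius values and density.
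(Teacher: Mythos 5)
Your proof is correct and follows the same route as the paper: reduce to the twist $\Tilde{J}_m$ via \Cref{prop: twist-invariance}, identify $\Q(\varepsilon_{\Tilde{J}_m})$ with the field of definition of the $v_{f_i}$ via \Cref{prop:fixMT}, push through $\Theta$ into Fermat cohomology (\Cref{prop:ShiodaIII}), and match the Frobenius action on $\Theta(v_{f_i})$ and on $\Gamma(\gamma_{f_i})$ via \Cref{th:DeligneI} and \Cref{th:DeligneII}, concluding by Chebotarev density. Your formulation via the pair of ``characters'' $\chi_i^{\text{ét}},\chi_i^{\Gamma}$ is an essentially cosmetic repackaging of the paper's argument (strictly speaking, $\chi_i^{\Gamma}$ is a priori only a cocycle, not a homomorphism, but this does not affect the argument since you only compare its values with those of $\chi_i^{\text{ét}}$ and identify the stabilizer subgroups).
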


\begin{proof}
    The connected monodromy field of ${J}_m$ is an extension of the endomorphism field, which we determined in \Cref{prop: endomorphism field easy containment} to be $ \Q(\End(J_m)) = \Q(\zeta_m)$, and which we denote by $K$.
    We add an $m$-th root of unity $\zeta_m$ to our list of generators and proceed to determine the extension $\KconnJ/K$.
    
    By \Cref{prop: twist-invariance} we can instead compute the connected monodromy field of the twist $B=\Tilde{J}_m$ over $K$.
    The equations $f_i$ also define the Mumford-Tate group of this twist (see \Cref{remark:same-equations}).
    By \Cref{prop:fixMT}, the connected monodromy field $K(\varepsilon_B)$ is the field of definition of all tensor elements $v_{f_i}$, which coincides with the field of definition of the classes $c_{f_i}\colonequals \Theta(v_{f_i}) \in \HH^n_{\operatorname{\acute{e}t}}(X_{m, \overline{\Q}}^n, \Q_\ell)(n/2)_{ \gamma_{f_i}}$, for a suitable integer $n$, as guaranteed by \Cref{prop:ShiodaIII}(3).
    By \Cref{th:DeligneI}, given a prime ideal $\mathfrak{p}$ of $K$ not dividing $m$, the corresponding geometric Frobenius element acts on $c_{f_i}$ as multiplication by a Gauss sum:
    \begin{equation}\label{eq: Frobenius acting on Tate classes}
            \Frob_\mathfrak{p}(c_{f_i}) = g(\mathfrak{p}, \gamma_{f_i}) \cdot c_{f_i}.
    \end{equation}
    \Cref{prop:ShiodaIII} shows that $\gamma_{f_i} $ belongs to $ \mathfrak{B}_m^n$. Hence \Cref{th:DeligneII} applies and the Frobenius element $\Frob_\mathfrak{p}$ acts on the algebraic number $\Gamma(\gamma_{f_i})$ in exactly the same way as it acts on the corresponding Tate class:
    \begin{equation}\label{eq: action on gamma-value is the same as action on Tate class}
            \Frob_\mathfrak{p}(\Gamma(\gamma_{f_i})) = g(\mathfrak{p}, \gamma_{f_i}) \cdot \Gamma(\gamma_{f_i}).
    \end{equation}

    This holds for all Frobenius elements except the finitely many {corresponding to primes} dividing $m$. As Frobenius elements are dense in the absolute Galois group $\Gamma_K$ by Chebotarev's theorem, every element of  $\Gamma_K$ acts identically on the two objects, and therefore their fields of definition coincide.
\end{proof}

When $m$ is even, \Cref{th:Kconn} holds for the twist $y^2=-4x^m+1$. In order to get the connected monodromy field $\KconnJ$, we need to understand the field of definition of the push-forward of the classes $v_{f_i}$ through the twist, as discussed in \Cref{prop: twist-invariance}. 
\begin{theorem}{\label{th:Kconn-even}}
    Let $m \geq 4$ be an even positive integer and $J_m/\Q$ be the Jacobian of the smooth projective curve over $\Q$ with affine equation $y^2=x^m+1$.
    Fix a set of equations ${f}_1, \dots, {f}_r$ defining $\MT({J}_m)$. The connected monodromy field $\Q(\varepsilon_{{J}_m})$ is generated over $\Q$ by the complex (algebraic) numbers
    \[\zeta_m, \, \Gamma(\gamma_{f_1})\cdot \muu^{e(f_1)}, \,\dots, \,\Gamma(\gamma_{f_r})\cdot \muu^{e(f_r)},\]
    where $\muu \in \overline{\Q}$ is a root of the polynomial $x^m+4$ and $e(f)$ is defined in \Cref{eq: define ef}.
\end{theorem}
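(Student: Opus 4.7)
The plan is to follow the strategy of the proof of \Cref{th:Kconn}, carefully keeping track of the twist factor that appears in the even case but vanished in the odd case. As in the odd case, the containment $\Q(\zeta_m) \subseteq \Q(\End(J_m)) \subseteq \Q(\varepsilon_{J_m})$ (given by \Cref{prop: endomorphism field easy containment}) explains why $\zeta_m$ belongs to the list of generators, so the task reduces to showing that $\Q(\varepsilon_{J_m}) = K(\Gamma(\gamma_{f_1})\muu^{e(f_1)}, \ldots, \Gamma(\gamma_{f_r})\muu^{e(f_r)})$, where $K = \Q(\zeta_m)$.

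Write $A = J_m$ and $B = \tilde J_m$, and let $t \colon B_{\overline{\Q}} \to A_{\overline{\Q}}$ be the twisting isomorphism. By \Cref{prop:fixMT} (applied to $A$) combined with \Cref{remark:same-equations}, the field $\Q(\varepsilon_{A})$ is the minimal extension of $K$ over which the pushed-forward Tate classes $t_\ast v_{f_1}, \ldots, t_\ast v_{f_r}$ are all defined. I would then fix a prime $\mathfrak{p}$ of $K$ not dividing $m$ and combine \Cref{prop: twist-invariance} with the argument of \Cref{th:Kconn} (which rests on \Cref{th:DeligneI} and \Cref{prop:ShiodaIII}) to compute
\[
\Frob_{\mathfrak{p}}(t_\ast v_{f_i}) = \bigl(\Frob_{\mathfrak{p}}(\muu)/\muu\bigr)^{e(f_i)} \cdot t_\ast(\Frob_{\mathfrak{p}} v_{f_i}) = \bigl(\Frob_{\mathfrak{p}}(\muu)/\muu\bigr)^{e(f_i)} \cdot g(\mathfrak{p},\gamma_{f_i}) \cdot t_\ast v_{f_i}.
\]

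The second step is to compute the Frobenius action on the candidate generators $\Gamma(\gamma_{f_i})\muu^{e(f_i)}$. Using \Cref{th:DeligneII} together with $\Frob_{\mathfrak{p}}(\muu)^{e(f_i)} = \muu^{e(f_i)} \cdot (\Frob_{\mathfrak{p}}(\muu)/\muu)^{e(f_i)}$, I get
\[
\Frob_{\mathfrak{p}}\bigl(\Gamma(\gamma_{f_i})\cdot \muu^{e(f_i)}\bigr) = g(\mathfrak{p},\gamma_{f_i}) \cdot \bigl(\Frob_{\mathfrak{p}}(\muu)/\muu\bigr)^{e(f_i)} \cdot \Gamma(\gamma_{f_i}) \cdot \muu^{e(f_i)},
\]
so the scalar by which $\Frob_{\mathfrak{p}}$ acts on $\Gamma(\gamma_{f_i})\muu^{e(f_i)}$ agrees with the scalar acting on $t_\ast v_{f_i}$. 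Crucially, $\gamma_{f_i}\in\mathfrak{B}_m^{n_i}$ by \Cref{prop:ShiodaIII}, which is what lets us invoke \Cref{th:DeligneII}; note also that this argument \emph{does not} require $e(f_i)$ to be divisible by $m$, which is precisely the property that fails in the even case (\Cref{rmrk:shioda-even-counter}).

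From the two equal scalars, the element $\sigma \in \Gamma_K$ fixes $t_\ast v_{f_i}$ if and only if it fixes $\Gamma(\gamma_{f_i})\muu^{e(f_i)}$, at least when $\sigma$ is a Frobenius at a prime $\mathfrak{p} \nmid m$. To pass from Frobenius elements to the whole of $\Gamma_K$, I would invoke Chebotarev density exactly as in the final step of the proof of \Cref{th:Kconn}: the subset of $\Gamma_K$ where the two actions agree is closed, contains the dense set of unramified Frobenii, and is therefore all of $\Gamma_K$. Consequently, $t_\ast v_{f_i}$ and $\Gamma(\gamma_{f_i})\muu^{e(f_i)}$ have the same field of definition over $K$, and taking the compositum over $i$ yields the claimed description of $\Q(\varepsilon_{J_m})$. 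I do not expect a serious obstacle: the only conceptually new input compared to the odd case is the twist correction of \Cref{prop: twist-invariance}, and the whole point of including $\muu^{e(f_i)}$ in the generator is precisely to cancel that correction.
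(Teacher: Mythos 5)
Your proposal is correct and follows essentially the same route as the paper: apply the odd-case argument to the twist $\tilde J_m$ to get the Frobenius scalar $g(\mathfrak{p},\gamma_{f_i})$ on $v_{f_i}$, use \Cref{prop: twist-invariance} to pick up the correction $(\Frob_{\mathfrak{p}}(\muu)/\muu)^{e(f_i)}$ on $t_\ast v_{f_i}$, observe that the product is precisely the Frobenius scalar on $\Gamma(\gamma_{f_i})\muu^{e(f_i)}$, and close with Chebotarev. The only difference is cosmetic — the paper writes the resulting scalar as a single quotient $\Frob_{\mathfrak{p}}\bigl(\muu^{e(f)}\Gamma(\gamma_f)\bigr)/\bigl(\muu^{e(f)}\Gamma(\gamma_f)\bigr)$ rather than as a product of two factors.
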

\begin{proof}
    The same argument in the proof of \Cref{th:Kconn} holds for the twist $y^2=-4x^m+1$: we know that the connected monodromy field is an extension of the endomorphism field, which contains $K = \Q(\zeta_m)$ by \Cref{prop: endomorphism field easy containment}. Let $t$ be the isomorphism in \Cref{eq: definition of t twisting isomorphism between C and C tilde}.
    Combining \eqref{eq: Frobenius acting on Tate classes} and \eqref{eq: action on gamma-value is the same as action on Tate class} we obtain that,
    given a prime ideal $\mathfrak{p}$ of $K$ not dividing $m$ and an equation $f$ for $\MT(J_m)$, the action of the geometric Frobenius element at $\mathfrak{p}$ on $t_*v_f$ is
    \begin{equation}{\label{eq:kconn-th-even}}
    \Fp (t_\ast v_f) = \frac{\Fp \Gamma(\gamma_f)}{\Gamma({\gamma_f})} \cdot t_\ast v_f.
    \end{equation}
    Notice that we also used \Cref{prop:ShiodaIII} to write $v_f$ instead of $c_f=\Theta(v_f)$.
    Let $\mu_1 \in \overline{\Q}$ be a root of $x^m-(-4)^{m-1}$. We apply \Cref{prop: twist-invariance}(1) to rewrite the left-hand side of \Cref{eq:kconn-th-even} and get
    \[ \left( \frac{\Fp(\mu_1)}{\mu_1} \right)^{e(f)} \cdot t_*(\Fp v_f) = \frac{\Fp \Gamma(\gamma_f)}{\Gamma({\gamma_f})} \cdot t_\ast v_f. \]
    Notice that $t_\ast$ is linear and injective, and therefore
    \[ \Fp (v_f) = \left( \frac{\Fp(\mu_1)}{\mu_1} \right)^{-e(f)} \cdot  \frac{\Fp \Gamma(\gamma_f)}{\Gamma({\gamma_f})} \cdot v_f = \frac{\Frob_{\mathfrak{p}}\left(\Gamma(f) \mu_1^{-e(f)}\right)}{\Gamma(f) \mu_1^{-e(f)}} \cdot v_f.
    \]
    Thus, the minimal field of definition of $v_f$ is generated over $\Q(\zeta_m)$ by the algebraic number $\Gamma(f) \mu_1^{-e(f)}$, or equivalently by $\Gamma(f) (-4\mu_1^{-1})^{e(f)}$. Since $\mu \colonequals -4 \mu_1^{-1}$ is a root of the polynomial $x^m+4$,
    we conclude that the field of definition of all the classes $v_{f_i}$ is the one in the statement.
    As discussed in \Cref{remark:same-equations}, this field coincides with $\KconnJ$.
\end{proof}

\subsection{Exact computations with the Gamma function}{\label{sub:deligneIII}} {\Cref{th:Kconn} gives a set of generators for the field $\KconnJ$. We know that these are algebraic numbers, but their definition in terms of the $\Gamma$ function means that, a priori, we can only compute them numerically. In this section, we show that these numbers can also be computed exactly (that is, we can determine their minimal polynomials) via a purely algebraic procedure (\Cref{th:exactly-gamma}). In particular, this makes the computation of $\KconnJ$ suggested by \Cref{th:Kconn} exact, and gives a fully algorithmic procedure to determine $\KconnJ$ for any given $m$.}
To do so, we need to introduce the theory of distributions, which we borrow from \cite{MR0545172}, \cite{MR1638625}, and \cite{MR1871965}. 

\medskip

Fix a positive integer $m>2$ and consider the set $A_m = \mmZ$. Let $\mathbb{A}$ be the free abelian group over $A_m$, that is, the free abelian group with basis indexed by symbols of the form $[a]$ for $a \in A_m$. Note that we can consider the elements of $\mathbb{A}$ either as (finite) formal combinations $\sum_i c_i [a_i]$ for certain $c_i \in \mathbb{Z}$ and $a_i \in A_m$, or as functions $A_m \to \mathbb{Z}$. A homomorphism $f$ defined on $\mathbb{A}$ (with values in an abelian group $B$) is a \textit{distribution} if the equality $f([a]) = \sum_{b \colon nb = a}f([b])$ holds for all positive integers $n$ dividing $m$ and all $a \in nA_m$. 
We call 
\begin{equation}\label{eq: distribution relation}
    [a]- \sum_{b \colon nb=a} [b]
\end{equation}
a \textit{distribution relation} and
$\mathbb{D} = \langle [a]- \sum_{nb=a} [b] \rangle_{a \in nA_m, n \in \mathbb{Z}^+, n \mid m}$ the submodule of distribution relations. It is clear that any distribution factors via the quotient $\mathbb{U} \colonequals \mathbb{A}/\mathbb{D}$. 
\begin{remark}\label{rmk: U is torsion-free}
    Note that our group $\mathbb{U}$ injects into the universal distribution group $U_{\Q/\Z}$ of \cite[Section 1]{MR0545172}: to see this, notice that our distribution relations are exactly the intersection of the distribution relations in $\Z[\Q/\Z]$ with $\mathbb{A} = \Z[A_m]$ (see \cite[pp.182-183]{MR0545172}). In particular, $\mathbb{U}$ is torsion-free.
\end{remark}

A distribution $f: \mathbb{A} \to B$ is said to be \textit{odd} if furthermore $f([-a]) = -f([a])$ holds for all $a \in A_m$. Every odd distribution factors via the quotient
\[
\mathbb{U}^- \colonequals \frac{\mathbb{A}}{\langle [a]- \sum_{nb=a} [b], [a]+[-a] \rangle_{a \in nA_m, n \in \mathbb{Z}^+, n \mid m} }.
\]
We note that $\mathbb{U}$ is a $(\mathbb{\Z}/m\mathbb{Z})^\times$-module, where $t \in (\mathbb{\Z}/m\mathbb{Z})^\times$ acts via the formula $t \cdot \left( \sum c_i [a_i] \right) \colonequals \sum c_i [ta_i]$. Indeed, $(\mathbb{\Z}/m\mathbb{Z})^\times$ acts naturally on $A_m$ and the action above preserves the distribution relations: for all $t \in (\mathbb{\Z}/m\mathbb{Z})^\times$ one has $t \cdot \mathbb{D} \subseteq \mathbb{D}$. To see this, it suffices to notice that for all $t \in (\mathbb{\Z}/m\mathbb{Z})^\times$, all positive integers $n$, and all $a \in A_m$ the element
\[
t \cdot \left( [a] - \sum_{nb=a} [b] \right) = [ta] - \sum_{nb=a} [tb] = [ta] - \sum_{nb=ta} [b]
\]
is again in $\mathbb{D}$. This ensures that the $(\mathbb{\Z}/m\mathbb{Z})^\times$-action descends to $\mathbb{U}$ and induces an action of the two-element group $\{\pm 1\}$ on $\mathbb{U}$.

The next statement and its proof have been adapted from \cite[Proposition 2.4]{MR1871965} to suit our specific context.

\begin{proposition}{\label{prop:torsion-cohomo-distr}}
There is a canonical isomorphism
\begin{equation*}
\mathbb{U}^-_{\operatorname{tors}} \cong \HH^2(\{\pm 1\}, \mathbb{U}),    
\end{equation*}
where $\mathbb{U}^-_{\operatorname{tors}}$ is the torsion subgroup of $\mathbb{U}^-$.
\end{proposition}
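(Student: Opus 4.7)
The plan is to reinterpret both sides of the claimed isomorphism in terms of the natural action of $G = \{\pm 1\}$ on $\mathbb{U}$, where we write $\sigma$ for the non-trivial element of $G$ (so $\sigma \cdot [a] = [-a]$). Two observations drive the argument: (i) by the very definition of $\mathbb{U}^-$, modding out by the relations $[a]+[-a]$ is the same as modding out by $(1+\sigma)\mathbb{U}$, so $\mathbb{U}^- = \mathbb{U}/(1+\sigma)\mathbb{U}$; (ii) since $G$ is cyclic of order $2$, the standard $2$-periodic free resolution of $\mathbb{Z}$ over $\mathbb{Z}[G]$ yields at once the familiar formula $H^2(G,\mathbb{U}) = \mathbb{U}^G/(1+\sigma)\mathbb{U}$.

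The containment $(1+\sigma)\mathbb{U} \subseteq \mathbb{U}^G$ (from $\sigma(1+\sigma) = 1+\sigma$) together with the projection $\mathbb{U} \twoheadrightarrow \mathbb{U}^-$ then gives a well-defined map
\[
\varphi : H^2(G,\mathbb{U}) = \mathbb{U}^G/(1+\sigma)\mathbb{U} \longrightarrow \mathbb{U}^-,
\]
which is tautologically injective. The content of the proposition reduces to identifying its image with $\mathbb{U}^-_{\operatorname{tors}}$.

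One inclusion is immediate: for any $x \in \mathbb{U}^G$, one has $2x = x + \sigma x = (1+\sigma)x \in (1+\sigma)\mathbb{U}$, so $\varphi(x)$ is $2$-torsion in $\mathbb{U}^-$. For the reverse inclusion, I would take $\bar x \in \mathbb{U}^-_{\operatorname{tors}}$, lift it to $x \in \mathbb{U}$ with $nx = (1+\sigma)y$ for some $n \geq 1$ and $y \in \mathbb{U}$, and apply $\sigma$. Using $\sigma(1+\sigma) = 1+\sigma$ this gives $n\sigma x = nx$, i.e.~$n(x - \sigma x) = 0$. Here is where the torsion-freeness of $\mathbb{U}$, recorded in \Cref{rmk: U is torsion-free}, enters decisively: it forces $\sigma x = x$, so $x \in \mathbb{U}^G$ and $\bar x$ lies in the image of $\varphi$.

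The only genuine obstacle is ensuring the torsion-freeness of $\mathbb{U}$; every other step is routine $\mathbb{Z}/2$-cohomological bookkeeping. This explains why the embedding $\mathbb{U} \hookrightarrow U_{\mathbb{Q}/\mathbb{Z}}$ noted in \Cref{rmk: U is torsion-free} is essential input for the proof, rather than an incidental remark.
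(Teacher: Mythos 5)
Your proof is correct and takes essentially the same route as the paper: you compute $\HH^2(\{\pm1\},\mathbb{U}) = \mathbb{U}^G/(1+\sigma)\mathbb{U}$ via the standard free resolution, while the paper arrives at the identical group $\ker(1-\sigma)/(1+\sigma)\mathbb{U}$ via Tate periodicity, and the use of torsion-freeness to show $n(1-\sigma)x=0 \Rightarrow \sigma x = x$ is the same key step in both. Nothing is missing.
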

\begin{proof}
Since the group $\{\pm 1\}$ is cyclic, its cohomology is 2-periodic \cite[Theorem 2.16]{Harari}. Recalling the definition of Tate's reduced cohomology groups \cite[Definition 2.3]{Harari}, we get
\begin{equation*}
    \HH^2(\{\pm1\}, \mathbb{U}) \simeq \widehat{\HH}{}^0(\{\pm1\}, \mathbb{U}) \simeq \frac{\ker[(1-[-1])\colon \mathbb{U}\to\mathbb{U}]}{(1+[-1])\mathbb{U}}.
\end{equation*}
If $x \in \mathbb{U}$ and there exists an integer $n$ such that $nx \in (1+[-1])\mathbb{U}$ (i.e.~$x$ is torsion in $\mathbb{U}^{-}$), then $(1-[-1])nx = 0$ in $\mathbb{U}$ and therefore $(1-[-1])x=0$ since $\mathbb{U}$ is torsion free (see \Cref{rmk: U is torsion-free}). In this case, $x$ belongs to $\ker[(1-[-1])\colon \mathbb{U}\to\mathbb{U}]$, and so we have the inclusions
\[ \mathbb{U}^{-}_{\operatorname{tors}} \subseteq \HH^2(\{\pm 1\}, \mathbb{U}) \subseteq \mathbb{U}^{-}. \]
Since $\HH^2(\{\pm 1\}, \mathbb{U})$ is a torsion group, the first inclusion must be an equality. 
\end{proof}

Consider functions $f \colon A_m \to \Z$, which we think of as elements of $\mathbb{A}$. For any such function, define the weight map
\begin{equation}\label{eq: weight map}
    \langle f \rangle \colon (\Z/m\Z)^\times \to \Q, \; u \mapsto \sum_{a \in A_m} \{a\} f(ua),
\end{equation}
where $\{a\}$ is the representative of $a$ in the interval $(0,1]\cap \Q$. %

For any positive divisor $d$ of $m$ and any element $a \in A_m\setminus \{0\}$, consider the set
\begin{equation}\label{eq: support of distribution relation}
 S_{d,a}=\{ a+k/d \colon k = 0,\dots, d-1 \} \cup \{-da\}.
 \end{equation}
Its characteristic function $\varepsilon_{d,a} : A_m \to \mathbb{Z}$  has constant weight (see the Appendix by Koblitz and Ogus in \cite{KoblitzOgus}, in particular, the example on page 343).%

{Our next result, characterizing functions $f: A_m \to \Z$ with constant weight, is announced by Koblitz and Ogus in \cite[Remark on p.~345]{KoblitzOgus}, but does not seem to have been published, at least not in this form.}%

\begin{proposition}{\label{prop: a constant weight function is a linear combination of distribution relations with half-integral coefficients}}
    Suppose $f : A_m \to \mathbb{Z}$ is a function such that $\langle f \rangle$ is constant. The function $2f$ is a linear combination with integral coefficients of functions of the form $\varepsilon_{d, a}$. %
\end{proposition}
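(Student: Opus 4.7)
The plan is to recognise the proposition as a statement about the torsion subgroup of $\mathbb{U}^-$ and to combine it with \Cref{prop:torsion-cohomo-distr}. The first task is to identify the $\mathbb{Z}$-span $V \subseteq \mathbb{A}$ of the functions $\varepsilon_{d,a}$ with $\mathbb{D} + \mathbb{E}$, the kernel of the surjection $\mathbb{A} \to \mathbb{U}^-$. The inclusion $V \subseteq \mathbb{D} + \mathbb{E}$ is immediate: modulo the distribution relation $\sum_k [a + k/d] \equiv [da]$, one has $\varepsilon_{d,a} \equiv [da] + [-da]$, which lies in $\mathbb{E}$. For the reverse inclusion, each basic distribution relation $[a] - \sum_{nb=a}[b]$ coincides with $\varepsilon_{1,a} - \varepsilon_{n, a_0}$ (for $a_0$ any preimage of $a$ under multiplication by $n$, where $\varepsilon_{1,a} = [a] + [-a]$), and each basic odd relation $[a] + [-a]$ is exactly $\varepsilon_{1, a}$. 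Hence $V = \mathbb{D} + \mathbb{E}$ and $\mathbb{U}^- = \mathbb{A}/V$.

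Next, we verify that the property ``$\langle f \rangle$ is constant'' depends only on the class of $f$ in $\mathbb{U}^-$. The elementary identity $\sum_{k=0}^{n-1} \{x + k/n\} = \{nx\} + (n-1)/2$ (valid when $nx \notin \mathbb{Z}$; the case $nx \in \mathbb{Z}$ is a short separate check) shows that the weight $\langle \cdot \rangle(u)$ differs from a distribution by the constant $(n-1)/2$, independent of $u$. Consequently, after quotienting the target by constant $\mathbb{Q}$-valued functions on $(\mathbb{Z}/m\mathbb{Z})^\times$, the map $f \mapsto \langle f \rangle$ becomes an odd distribution and factors through $\mathbb{U}^-$. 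Equivalently, each twisted weight $\sigma_u(\bar f) \colonequals \langle f \rangle(u) - \langle f \rangle(1)$ defines an additive map $\mathbb{U}^- \to \mathbb{Q}$, and $f$ has constant weight if and only if $\sigma_u(\bar f) = 0$ for every $u \in (\mathbb{Z}/m\mathbb{Z})^\times$.

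The heart of the argument is the claim that $\bigcap_u \ker \sigma_u$ consists only of torsion elements of $\mathbb{U}^-$, i.e., that the $\sigma_u$ separate the points of $\mathbb{U}^- \otimes \mathbb{Q}$. We would prove this by a character-theoretic decomposition: expanding $\{ua\}$ in Dirichlet characters modulo $m$ realises each $\sigma_u$ as a $\mathbb{Q}$-linear combination of functionals of the form $f \mapsto \sum_a f(a) \chi(ma)$ (essentially generalised Bernoulli numbers attached to odd characters $\chi$ of conductors dividing $m$), and by classical results on universal odd distributions (Kubert--Lang, see also \cite{MR1638625}) these functionals span the full $\mathbb{Q}$-dual of $\mathbb{U}^- \otimes \mathbb{Q}$. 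Character orthogonality, together with the non-vanishing of $B_{1,\chi}$ for odd primitive $\chi$, then shows that the $\sigma_u$ alone already suffice to separate points.

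Once this is established, \Cref{prop:torsion-cohomo-distr} yields the conclusion at once: for any constant-weight $f$, the image $\bar f \in \mathbb{U}^-$ is torsion, hence annihilated by $2$, so $2f \in V$ and is therefore an integer linear combination of the $\varepsilon_{d,a}$. The main obstacle is the character-theoretic step in the third paragraph: one must verify that the $\phi(m)$ functionals $\sigma_u$, parametrised only by units modulo $m$, already detect every primitive odd character of conductor dividing $m$; this reduces via character orthogonality to the non-vanishing of the generalised Bernoulli numbers $B_{1, \chi}$ for odd primitive $\chi$, a classical input.
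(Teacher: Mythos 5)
Your overall architecture matches the paper's: you identify the $\Z$-span of the $\varepsilon_{d,a}$ with $\mathbb{D}+\mathbb{E}$, the kernel of $\mathbb{A}\twoheadrightarrow\mathbb{U}^-$ (your verification here is the same computation as the paper's Equation \eqref{eq: the epsilon functions of Koblitz-Ogus are distribution relations}); you show that a constant-weight $f$ has torsion image in $\mathbb{U}^-$; and you then invoke \Cref{prop:torsion-cohomo-distr} and the fact that $\HH^2(\{\pm 1\},\mathbb{U})$ is $2$-torsion to land on $2f\in V$. Steps one and three coincide with the paper.

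The difference is concentrated in the middle step. The paper handles it by a single citation: Koblitz--Ogus (\cite[Appendix, Proposition]{KoblitzOgus}, see also \cite[Theorem 8]{MR1638625}) says precisely that a constant-weight $f$ is a $\Q$-linear combination of the $\varepsilon_{d,a}$, so some integer multiple lies in $V$ and $\bar f$ is torsion. You instead propose to prove this from scratch: you promote the twisted weight $\sigma_u$ to a functional on $\mathbb{U}^-$ (your check that $\sigma_u$ kills distribution relations and $[a]+[-a]$ via the Hermite identity is correct), and then you claim that $\bigcap_u\ker\sigma_u$ is torsion, i.e., that the $\sigma_u$ span the $\Q$-dual of $\mathbb{U}^-\otimes\Q$. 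This is a genuinely different route only in the sense that you are inlining a proof of the Koblitz--Ogus result rather than citing it: your character expansion of $\{ua\}$, the generalised Bernoulli numbers $B_{1,\chi}$, and the non-vanishing of $L(1,\chi)$ are exactly the ingredients of their appendix argument. What this buys is a self-contained exposition; what it costs is length and a nontrivial verification you have only sketched.

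And that sketch is where the real work lies, as you yourself acknowledge. To conclude that the $\sigma_u$ separate points of $\mathbb{U}^-\otimes\Q$, you need \emph{two} things, not one: (i) the non-vanishing of $B_{1,\chi}$ for odd $\chi$ (so the map from $\Q[\Gal(\Q(\zeta_m)/\Q)]^-$ to the span of the $\sigma_u$ is injective on each isotypic piece), and (ii) an upper bound $\dim_\Q(\mathbb{U}^-\otimes\Q)\le \phi(m)/2$ coming from the structure theory of the universal odd distribution at level $m$. Point (ii) is not automatic; the $\sigma_u$ are all $(\Z/m\Z)^\times$-translates of a single functional, so they generate a cyclic $\Q[(\Z/m\Z)^\times]$-module of odd type, hence of dimension at most $\phi(m)/2$. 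If $\mathbb{U}^-\otimes\Q$ had any odd isotypic component with multiplicity $\geq 2$, a single cyclic module could not exhaust the dual and your claim would fail. So you must quote (or reprove) the Kubert--Lang rank computation carefully, and in particular must verify the rank bound for composite $m$, where the level-$m$ universal distribution does not decompose as naively as in the prime-power case. Until that point is pinned down, the proposal is a correct outline but not yet a complete proof; the paper avoids the issue entirely by treating Koblitz--Ogus as a black box.
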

\begin{proof}
We begin by noting that the functions $\varepsilon_{d, a}${, as $a$ varies in $A_m \setminus \{0\}$,} generate the submodule $\langle \mathbb{D}, [a]+[-a]\rangle_{a \in A_m \setminus \{0\}}$. On the one hand, the function $\varepsilon_{1, a}$ is $[a]+[-a]$. 
On the other hand, 
\begin{equation}\label{eq: the epsilon functions of Koblitz-Ogus are distribution relations}
    -\varepsilon_{d, a} + \varepsilon_{1, da} = -\sum_{k=0}^{d-1} \left[a+\frac{k}{d} \right] - [-da] + ([da]+[-da]) = [da] - \sum_{k=0}^{d-1} \left[a+\frac{k}{d} \right]
\end{equation}
is exactly the distribution relation \eqref{eq: distribution relation} with parameters $(a, n)=({da}, d)$: indeed, the elements $x \in A_m$ such that $dx = da$ are precisely those for which $x-a$ is $d$-torsion in $A_m$, hence those of the form $x-{a} = \frac{k}{d}$ for $k=0,\ldots,d-1$.
As $d,\, a$ vary, we thus obtain generators for the whole submodule $\langle \mathbb{D}, [a]+[-a] \rangle_{a \in A_m\setminus \{0\}}$.

A result of Koblitz and Ogus \cite[Appendix, Proposition]{KoblitzOgus} (see also \cite[Theorem 8]{MR1638625}) shows that $f$ is a $\mathbb{Q}$-linear combination of functions of the form $\varepsilon_{d, a}$. In particular, some integral multiple $nf$ of $f$ belongs to $\langle \mathbb{D}, [a]+[-a]\rangle_{a \in A_m\setminus \{0\}} $, hence $f$ is torsion in $\mathbb{U}^{-}$. Since $\HH^2(\{\pm 1\}, \mathbb{U})$ is a 2-torsion group, \Cref{prop:torsion-cohomo-distr} now shows that $2f$ belongs to $\langle \mathbb{D}, [a]+[-a]\rangle_{a \in A_m\setminus \{0\}} = \langle \varepsilon_{d, a} \rangle_{d, a}$, as claimed.
\end{proof}

We are ready to apply the theory of distributions to our situation. We can interpret a character $\alpha = (a_0, \dots, a_{n+1}) \in \mathfrak{A}_m^n$ as a function $\alpha\colon \Z/m\Z \simeq A_m \to \Z$, sending each $a \in \Z/m\Z$ to the number of its occurrences among the coefficients $a_i$. Notice that $\alpha(0)$ is zero for any character in $\mathfrak{A}_m^n$. A character $\alpha \in \mathfrak{B}_m^n$ is such that $\langle \alpha \rangle$ is constant since the definition of weight function coincides with the one given for characters in \Cref{definition:character-gamma-value}. 

\begin{definition}{\label{def: gamma f}}
For any function $\alpha \colon A_m \to \Z$ such that $\langle \alpha \rangle$ is constant, define
$$\Gamma(\alpha) := (2\pi i)^{-\langle \alpha \rangle}\prod_{x \in A_m} \Gamma\left( {\{x\}}\right)^{\alpha(x)}. $$
\end{definition}
Notice that for a character this coincides with the previous definition (\Cref{definition:character-gamma-value}). \Cref{prop: a constant weight function is a linear combination of distribution relations with half-integral coefficients} thus applies to characters: we now show that this implies that their Gamma-values can be computed using only the translation, reflection, and multiplication formulas. Before stating the theorem, we recall that the exact computation of an algebraic number (viewed as an element of $\mathbb{C}$) amounts to determining its minimal polynomial over $\mathbb{Q}$ together with a sufficiently accurate numerical approximation to distinguish it from the other roots of that polynomial.

\begin{theorem}{\label{th:exactly-gamma}}
    There is an algorithm that computes the algebraic number $\Gamma(\alpha)$ exactly, for every $\alpha \in \mathfrak{B}_m^n$.
\end{theorem}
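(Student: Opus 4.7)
The plan is to combine \Cref{prop: a constant weight function is a linear combination of distribution relations with half-integral coefficients} with the classical translation, reflection, and multiplication formulas for the Gamma function. First, I would observe that the assignment $\Gamma$ of \Cref{def: gamma f} is multiplicative on the submodule of $\mathbb{A}$ consisting of constant-weight functions: indeed, $\Gamma(f_1+f_2) = \Gamma(f_1)\Gamma(f_2)$ directly from the definition. Consequently, if I can evaluate $\Gamma$ exactly on a generating set of such functions, I can evaluate it on $2\alpha$ for any $\alpha \in \mathfrak{B}_m^n$.

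By \Cref{prop: a constant weight function is a linear combination of distribution relations with half-integral coefficients} we have a decomposition
\[
2\alpha \;=\; \sum_i c_i\, \varepsilon_{d_i,\,a_i}, \qquad c_i \in \Z,
\]
that can be produced algorithmically by linear algebra over $\Z$ (one solves an integral linear system in the finitely many generators $\varepsilon_{d,a}$ with $d \mid m$, $a \in A_m\setminus\{0\}$). Hence
\[
\Gamma(\alpha)^2 \;=\; \Gamma(2\alpha) \;=\; \prod_i \Gamma(\varepsilon_{d_i,\,a_i})^{c_i},
\]
and it suffices to compute each $\Gamma(\varepsilon_{d,a})$ exactly as an algebraic number. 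For $d=1$, we have $\varepsilon_{1,a} = [a]+[-a]$ with constant weight $1$, so the reflection formula gives
\[
\Gamma(\varepsilon_{1,a}) \;=\; (2\pi i)^{-1}\,\Gamma(\{a\})\,\Gamma(1-\{a\}) \;=\; \frac{\pi}{2\pi i\,\sin(\pi\{a\})} \;=\; \frac{1}{\zeta_{2m}^{a} - \zeta_{2m}^{-a}} \;\in\; \Q(\zeta_{2m}).
\]
For general $d \mid m$, the identity \eqref{eq: the epsilon functions of Koblitz-Ogus are distribution relations} rewrites $\varepsilon_{d,a}$ as $\varepsilon_{1,da}$ plus the distribution relation $[da] - \sum_{k=0}^{d-1}[a+k/d]$; applying $\Gamma$ and invoking the multiplication formula, together with the translation formula $\Gamma(s+1)=s\Gamma(s)$ to pass from $\{a\}+k/d$ to $\{a+k/d\}$, expresses $\Gamma(\varepsilon_{d,a})/\Gamma(\varepsilon_{1,da})$ as an explicit element of a cyclotomic field.

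Putting these evaluations together, I obtain $\Gamma(\alpha)^2$ as an \emph{exact} element $\beta \in \overline{\Q}$. Since \Cref{th:DeligneII} guarantees that $\Gamma(\alpha)$ is itself algebraic, I recover $\Gamma(\alpha)$ by extracting a square root of $\beta$ in $\overline{\Q}$ (a standard algebraic-number operation) and fixing the correct sign via a low-precision numerical evaluation of the right-hand side of \Cref{def: gamma f}. The main obstacle is purely bookkeeping: one must carefully track the $(2\pi i)$-factors and the integer translations introduced when aligning the arguments $\{a\}+k/d$ of the multiplication formula with the arguments $\{a+k/d\}$ that appear in $\varepsilon_{d,a}$, and verify that all of these auxiliary terms combine into explicit algebraic numbers. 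This is tedious but entirely mechanical once one fixes conventions for representatives in $A_m = \tfrac{1}{m}\Z/\Z$.
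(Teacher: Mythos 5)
Your proposal follows the paper's argument essentially exactly: write $2\alpha$ as an integral combination of the $\varepsilon_{d,a}$ via \Cref{prop: a constant weight function is a linear combination of distribution relations with half-integral coefficients}, use multiplicativity of $\Gamma(\cdot)$ on constant-weight functions, and evaluate each $\Gamma(\varepsilon_{d,a})$ via the translation, reflection, and multiplication formulas. You actually go slightly further than the paper, which stops once $\Gamma(\alpha)^2$ is known exactly; your explicit final step of extracting a square root in $\overline{\Q}$ and resolving the sign ambiguity with a low-precision numerical check closes that small gap.
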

\begin{proof}
    Via \Cref{prop: a constant weight function is a linear combination of distribution relations with half-integral coefficients} and linear algebra, we write $2\alpha$ as a combination of some $\varepsilon_{d,a}$. Let $H$ be the $\Z$-module generated by the $\varepsilon_{d,a}$. The function
    \[
    \begin{array}{ccc}
        H & \to & \C^\times  \\
        \alpha & \mapsto & \Gamma(\alpha)
    \end{array}
    \]
    is clearly a group homomorphism, so
    we obtain an expression for the number $\Gamma(\alpha)^2$ as a product of factors of the form $\Gamma(\varepsilon_{d,a})$.

    We now show that each $\Gamma(\varepsilon_{d, a})$ can be explicitly computed using the translation, reflection, and multiplication properties of the Gamma function. Using the translation property, we can assume $\{a\}$ to be in the interval between $0$ and $1/d$. With this simplification, $\{-da\} = 1- \{da\}$ and we can therefore express $\Gamma(1-\{da\})$ in terms of $\Gamma(\{da\})^{-1}$ via the reflection property (unless $\{-da\}=1$, in which case $\Gamma(\{-da\})=1$). The new expression we get in this way is, up to some explicit constant,
    \[ \Gamma\left(d \cdot {\{a\}}\right)^{-1} \prod_{k=0}^{d-1} \Gamma\left( {\{a\}} +\frac{k}{d} \right), \]
    which can be computed with the multiplication formula. This procedure ultimately expresses $\Gamma(\alpha)^2$ as a product of fractional powers of integers and numbers of the form $\sin(\pi s)$ with $s \in \Q$, which can in turn be expressed in terms of roots of unity in the usual way.


    We have thus written $\Gamma(\alpha)^2$ in terms of known algebraic numbers and, hence, computed $\Gamma(\alpha)$ up to sign. We obtain an exact expression for $\Gamma(\alpha)$ by giving its minimal polynomial together with a sufficiently good complex approximation. This is done by first using the expression of $\Gamma(\alpha)^2$ in terms of algebraic numbers to determine its minimal polynomial $q(t) \in \mathbb{Q}[t]$. The polynomial $q(t^2) \in \mathbb{Q}[t]$ then vanishes at $\Gamma(\alpha)$, and this polynomial can then be factored over $\Q$ to extract its irreducible factors. 
    We then approximate $\Gamma(\alpha)$ numerically to sufficiently high precision and determine which of these irreducible factors has $\Gamma(\alpha)$ as a root by ruling out the others numerically. In this way, we obtain the minimal polynomial of $\Gamma(\alpha)$ and hence an exact representation.
\end{proof}

\subsection{Explicit calculations}{\label{section:examples}}
In this section, we give examples of the computation of $\Gamma(f)$ and $\Q(\varepsilon_{J_m})$ for certain equations $f$ and certain values of $m$.
We start by associating each equation for the Mumford-Tate group with a complex algebraic number via the Gamma function. 

\begin{definition}{\label{def: gamma value associated to equation for MT}}
    For a positive integer $m>2$, let $J_m/\Q$ be the Jacobian of the hyperelliptic curve $y^2=x^m+1$. Let $f$ be an equation for $\MT(J_m)$ in the sense of \Cref{def: equation for MT}. We associate with $f = x_1^{e_1} \cdots x_{m-1}^{e_{m-1}}$ its Gamma-value
    \begin{equation}{\label{eq:Gammaf}}
    \Gamma(f) \colonequals \prod_{j=1}^{m-1} \left[
    \Gamma\left(\tfrac{j}{m}\right) \cdot \Gamma\left(\tfrac{j}{m}\right) \cdot 
    \Gamma\left(\tfrac{[-2j]}{m}\right) \right]^{e_j},    
    \end{equation}
    where $[-2j]$ is the integer between $1$ and $m$ which is congruent to $-2j$ modulo $m$.
\end{definition}

We now have three ways of attaching a complex algebraic number to an equation $f$ for the Mumford-Tate group via the Gamma function. Two of them (\Cref{definition:character-gamma-value} and \Cref{def: gamma f}) are denoted $\Gamma(\gamma_f)$ and are equivalent to each other, see the comment after \Cref{def: gamma f}.
On the other hand, the number $\Gamma(f)$ of \Cref{def: gamma value associated to equation for MT} is not necessarily equal to $\Gamma(\gamma_f)$. However, the next proposition shows that $\Gamma(f)$ and $\Gamma(\gamma_f)$ carry essentially the same arithmetic information.

\begin{proposition}{\label{prop: comparison between gamma values}}
    Let $f$ be an equation for $\MT(J_m)$. %
    The quotient $\Gamma(f)/\Gamma(\gamma_f)$ is rational.
\end{proposition}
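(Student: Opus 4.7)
The plan is to reduce the statement to an explicit calculation by unpacking both definitions and observing that they differ in a very structured way. Write $f = \prod_{j} x_j^{d_j}$ with the index $j$ ranging over $\{1,\ldots,m-1\}$ (omitting $m/2$ when $m$ is even), and set $e_j = d_j + 2q$ so that $\det^{2q}\cdot f = \prod_j x_j^{e_j}$ has non-negative exponents. By construction the character $\gamma_f$ is the concatenation of $e_j$ copies of the triple $(j,j,-2j)$ for each $j$, so directly from \Cref{definition:character-gamma-value} one gets
\[
\Gamma(\gamma_f) = (2\pi i)^{-\langle\gamma_f\rangle}\prod_j\bigl[\Gamma(j/m)^2\,\Gamma([-2j]/m)\bigr]^{e_j}.
\]
Comparing with \eqref{eq:Gammaf}, and using that $e_j - d_j = 2q$ is \emph{independent of $j$}, the ratio collapses to
\[
\frac{\Gamma(f)}{\Gamma(\gamma_f)} = (2\pi i)^{\langle\gamma_f\rangle}\,P^{-2q},
\qquad P \colonequals \prod_j \Gamma(j/m)^2\,\Gamma([-2j]/m).
\]

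The next step is to evaluate $P$ explicitly. For odd $m$ the map $j\mapsto [-2j]$ is a permutation of $\{1,\ldots,m-1\}$, so $\prod_j \Gamma([-2j]/m) = \prod_{j=1}^{m-1}\Gamma(j/m)$ and therefore $P = \bigl(\prod_{j=1}^{m-1}\Gamma(j/m)\bigr)^3$. The classical identity (a consequence of the multiplication formula) $\prod_{j=1}^{m-1}\Gamma(j/m) = (2\pi)^{(m-1)/2}/\sqrt{m}$ then yields $P = (2\pi)^{3(m-1)/2}/m^{3/2}$. Recalling from \Cref{prop:ShiodaIII}(2) that $\langle\gamma_f\rangle = 2q\cdot 3g = 3q(m-1)$ when $g=(m-1)/2$, the powers of $2\pi$ cancel exactly and one is left with
\[
\frac{\Gamma(f)}{\Gamma(\gamma_f)} = i^{3q(m-1)}\,m^{3q},
\]
which is rational since $m-1$ is even.

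For even $m$, the same strategy works after a small bookkeeping adjustment: one excises $j=m/2$ from the product (note $\Gamma(1/2) = \sqrt{\pi}$ so this cleanly removes one factor), tracks the permutation $j \mapsto [-2j]$ on $\{1,\ldots,m-1\}\setminus\{m/2\}$ (now $2$-to-$1$ onto the even residues), and applies the appropriate variant of the Gauss multiplication formula together with $\langle\gamma_f\rangle = 2q\cdot 3g$. In every case the factors of $\pi$ and $\sqrt{m}$ arising from $P^{-2q}$ exactly balance the $(2\pi i)^{\langle\gamma_f\rangle}$ prefactor up to a root of unity and a power of $m$, leaving a rational number.

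The only genuine substance is the bookkeeping in the even case and the appeal to the closed form for $\prod_{j=1}^{m-1}\Gamma(j/m)$; the main conceptual point, and the reason the result is clean, is the uniform shift $e_j-d_j=2q$, which forces the transcendental pieces to organise into a single product $P$ rather than a $j$-dependent mess. I do not expect this step to be hard; the only risk is a parity or sign error in tracking $i^{\langle\gamma_f\rangle}$ in the even-$m$ case.
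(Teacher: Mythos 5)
Your proof is correct and follows essentially the same line as the paper: in both, the uniform shift $e_j - d_j = 2q$ collapses the ratio $\Gamma(f)/\Gamma(\gamma_f)$ to a $(-2q)$-th power of the ``determinant'' quantity (your $P$, which equals $(2\pi i)^{\langle\gamma\rangle}\Gamma(\gamma)$ in the paper's notation), which is then evaluated via the Gauss multiplication formula, with the $2\pi$ powers cancelling against $(2\pi i)^{\langle\gamma_f\rangle}$ precisely because $\langle\gamma_f\rangle = 2q\langle\gamma\rangle$. The only difference is that you sketch rather than carry out the even-$m$ case, but the ingredients you name --- excising $j=m/2$ via $\Gamma(1/2)=\sqrt{\pi}$, the $2$-to-$1$ map $j\mapsto[-2j]$, and the second application of the multiplication formula with $n=m/2$ --- are exactly what the paper's computation uses, and the remaining bookkeeping is routine.
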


\begin{proof}
    Suppose first that $m$ is odd and write $m= 2g+1$. Recall that if $f$ is the Laurent monomial $x_1^{d_1} \dots x_{2g}^{d_{2g}}$, we denote by $q$ the sum of (the absolute values of) all the negative exponents, normalize $f \cdot \det^{2q} = x_1^{e_1} \dots x_{2g}^{e_{2g}}$, and consider the character associated with this normalized equation, namely $\gamma_f = \gamma_1^{e_1} \ast \dots \ast \gamma_{2g}^{e_{2g}}$ (see \Cref{eq: definition of gamma_f}). Note that $e_j = d_j+2q $ for every index $j$.

    Consider the character $\gamma= \gamma_1\ast \dots\ast\gamma_{2g}$ corresponding to the monomial  $\det(x)$. Recall from \Cref{eq: definition of gammai} that $\gamma_i = (i, i, -2i)$ and notice that
    \[\langle \gamma \rangle = \frac{1}{m}\left(\sum_{k=1}^{2g} 2\cdot k + [-2k]\right). \]
    Since multiplication by $-2$ is an automorphism of $\Z/m\Z$ and $2g=m-1$, the sum on the right-hand side evaluates to $3(m-1)/2 = 3g$. We repeat the same argument to write the Gamma-value of the character $\gamma$ (as per \Cref{definition:character-gamma-value}) as
    \[
        \Gamma(\gamma) = (2\pi i)^{-\langle \gamma \rangle } \cdot \prod_{k=1}^{m} \Gamma\left(\frac{k}{m}\right)^3.
    \]
    We apply the multiplication formula with $(n,s)=(m,1/m)$ and get
    \[
        \Gamma(\gamma) = (2\pi i)^{-3g} \cdot \left[ (2\pi)^{\frac{m-1}{2}} \cdot m^{\frac{1}{2}-m\cdot \frac{1}{m}} \cdot \Gamma(1) \right]^3 = i^{-3g} \cdot m^{-3/2}.
    \]
    Notice that all the factors $2\pi$ cancel out because $-3g + 3(m-1)/2 = 0$.
    
    Consider now the quotient
    \begin{equation}\label{eq: ratio two definitions gamma value}
        \frac{\Gamma(\gamma_f)}{\Gamma(\gamma)^{2q}} =
        \frac{(2 \pi i)^{-\langle \gamma_f \rangle} \cdot \prod_{j = 1}^{2g}\left[ \Gamma\left(\frac{j}{m}\right)^2 \cdot \Gamma\left(\frac{[-2j]}{m}\right) \right]^{e_j} }
        {(2 \pi i)^{-2q\cdot\langle \gamma \rangle} \cdot \prod_{j = 1}^{2g}\left[ \Gamma\left(\frac{j}{m}\right)^2 \cdot \Gamma\left(\frac{[-2j]}{m}\right) \right]^{2q} }.
    \end{equation}
    Recall from \Cref{prop:ShiodaIII} (2) that $\langle \gamma_f \rangle = 2q \cdot 3g = 2q \cdot \langle \gamma \rangle $. The factors $(2\pi i)$ thus cancel out. Comparing what's left on the right-hand side of \Cref{eq: ratio two definitions gamma value} with \Cref{def: gamma f}, we find the identity
    \[ \frac{\Gamma(\gamma_f)}{\Gamma(\gamma)^{2q}} = \Gamma(f). \]
    Since $\Gamma(\gamma)^{2q} = m^{-3q} i^{-3(m-1)q} = m^{-3q}(-1)^{3gq}$ is a rational number, $\Gamma(f)$ is a rational multiple of $\Gamma(\gamma_f)$, as desired.

    If $m$ is even, essentially the same computation (but skipping $k= m/2$) gives $\langle \gamma \rangle = 3(m-2)/2=3g $. Computing $\Gamma(\gamma)$ is more challenging in this case: we use the multiplication formula with $(n,s)=(m,1/m)$ and $(n,s)=(m/2,2/m)$ to get
    $$\prod_{k=1}^{m} \Gam{k}{m} = (2\pi)^{\tfrac{m-1}{2}} \cdot m^{-\tfrac{1}{2}}\cdot \Gamma(1), \qquad
    \prod_{k=1}^{m/2} \Gam{2k}{m} = (2\pi)^{\tfrac{(m/2)-1}{2}} \cdot (\tfrac{m}{2})^{-\tfrac{1}{2}}\cdot \Gamma(1), $$
    which allows us to compute
    \[ \Gamma(\gamma) = (2\pi i)^{-3g}\cdot \prod_{k=1}^m\Gam{k}{m}^2 \cdot \frac{1}{\Gamma(1/2)^2}  \cdot \prod_{k=1}^{m/2}\Gam{2k}{m}^2  = i^{-3g} \cdot 4 \cdot m^{-2}. \]
    Since $2q$ is even, $\Gamma(\gamma)^{2q}$ is rational and we conclude as in the case of odd $m$.
\end{proof}

We state a corollary of \Cref{th:Kconn} which makes our description of the connected monodromy field $\Q(\varepsilon_{J_m})$ more intelligible and explicit computations easier.

\maincorollary

\begin{proof}
    The statement follows from \Cref{th:Kconn} and \Cref{prop: comparison between gamma values}.
\end{proof}

\begin{remark}
One could also apply \Cref{prop: comparison between gamma values} to \Cref{th:Kconn-even}, and get a similar statement for the case of even $m$.
\end{remark}

We now have the tools to compute in a completely explicit way the Gamma-value associated with any equation $f$ for $\MT(J_m)$. The next example shows that the Gamma-values associated with equations `coming from the polarization' are contained in the endomorphism field.

\begin{example}\label{rmrk:expected}
    When $f\colon  x_kx_{m-k}/x_jx_{m-j}=1$ is one of the equations for $\MT(J_m)$ expressing the expected inclusion $\MT(J_m)\subseteq\GSp_V$ (see \cite[Remark 2.4.3]{part2} for more details), the corresponding Gamma-value is fairly easy to compute. Using the reflection property we get
    \begin{equation}\label{eq: reflection and sine}
    \vartheta_k \colonequals {\textstyle {\Gam{k}{m}\Gam{m-k}{m}}} = {\sin(k\pi/m)} = \frac{e^{ki\pi/m}-e^{-k i\pi/m}}{2i} = \frac{\zeta_{2m}^k-\zeta_{2m}^{-k}}{2i}.
    \end{equation}
By definition, $\Gamma(f)$ is given by $\frac{\vartheta_k^2 \vartheta_{[-2k]}}{\vartheta_j^2 \vartheta_{[-2j]}}$. Expanding and using \Cref{eq: reflection and sine}, it is easy to see that this number lies in $\Q(\zeta_m)\subseteq \Q(\End (J_m))$.
\end{example}

Finally, we compute the connected monodromy field of a specific Fermat Jacobian and show that it is a non-trivial extension of the endomorphism field.

\begin{example}\label{ex: connected monodromy field for m=15}
    Let $m=15$. We computed the equations defining the Mumford-Tate group of $J_{15}$ in \Cref{ex:m15-equations}. We then compute $\Gamma(f)$ for every equation $f$ using \Cref{eq:Gammaf}. Notice that the first six equations are in the form of \Cref{rmrk:expected}, so their corresponding Gamma-values are in $K=\Q(\zeta_{15})$. The computation of the values corresponding to the last three equations can still be carried out by hand, and gives
    \begin{equation*}
    \begin{aligned}{}
    \Gamma\left(\frac{x_9x_{12}}{x_8x_{13}}\right)
    &= \frac{1}{2}\cdot  \sqrt{\frac{3}{5}} \cdot \frac{1}{\sin(3\pi/15)}\\
    \Gamma\left(\frac{x_{11}x_{12}}{x_9x_{14}}\right)
    &= \frac{\sqrt{3}}{8} \cdot \frac{1}{\sin(4 \pi /15)^2} \cdot \frac{1}{\sin(7\pi/15)}.
    \end{aligned}
    \end{equation*}
    One can check that both of these values are in $K$. Only the last equation gives a value in a quadratic extension of $K$: 
    \begin{equation*}
    \Gamma\left(\frac{x_{10}x_{12}}{x_{8}x_{14}}\right)^2 =
    \frac{3}{16} \cdot \sqrt{5} \cdot 
    \frac{\sin(\pi/15)^3\sin(6\pi/15)}{\sin(2\pi/15)\sin(3\pi/15)^4\sin(5\pi/15)^3}.
    \end{equation*}
In particular, $\Q(\varepsilon_{J_{15}})=K(\varepsilon_{J_{15}})$ is the quadratic extension of $K= \Q(\zeta_{15})$ which we obtain by adjoining the square root of this horrendous number.
With the assistance of a computer, we find that $ \Q(\varepsilon_{J_{15}}) $ is the number field
\begin{center}
    {\texttt{ \cite[\href{https://www.lmfdb.org/NumberField/16.0.3243658447265625.1}{number field 16.0.3243658447265625.1}]{lmfdb} }}
\end{center}
This result agrees with the prediction based on moments statistics in \cite[5.2]{Heidi}.
\end{example}

\begin{example}\label{example: odd numbers up to 105}
We implemented the algorithm suggested by \Cref{cor: connected monodromy field for odd m in terms of equations} and ran it for all odd values of $m$ in the interval $3 \leq m \leq 105$. We found that the Galois group of $\Q(\varepsilon_{J_m})$ over $\Q(\zeta_m)$ is $(\Z/2\Z)^r$, with $r$ as in the following table, where we denote by $\omega(m)$ the number of distinct prime factors of $m$.

\begin{center}
\begin{tabular}{c|c}
     $r$ & Values of $m$ \\ \hline
     $0$ & Primes and prime powers ($\omega(m)=1$) \\
     $1$ & $\{ 3 \leq m \leq 105 : m \text{ odd},\, \omega(m)=2 \} $ \\
     $3$ & $m=105$ (the unique value with $\omega(m)=3$)
\end{tabular}
\end{center}
We will show below that the Galois group $\Gal(\Q(\varepsilon_{J_m})/\Q(\zeta_m))$ is of the form $(\Z/2\Z)^r$ for every positive integer $m$ (\Cref{thm: Kconn is multiquadratic}) and explain the first row of the table in \Cref{thm: prime power case Kconn computation}.

Note that we were able to compute $\Q(\varepsilon_{J_m})$ for abelian varieties of dimension up to $52 = \frac{105-1}{2}$; many of these varieties have a very rich endomorphism structure. This shows that the algorithm is quite practical.

Finally, the different behaviour of $m=105=3 \cdot 5 \cdot 7$ strongly suggests that the rank $r$ should grow with the number of prime factors of $m$. In particular, we see no reason why the rank $r$ should be uniformly bounded, but it seems difficult to test this conjecture computationally.
\end{example}

\section{Further properties of the connected monodromy field of Fermat Jacobians}{\label{section:further-properties}}
In this section we use the results established so far to prove some general properties of the field $\Q(\varepsilon_{J_m})$. Specifically, in \Cref{sec: multiquadratic} we prove that the extension $\Q(\varepsilon_{J_m}) / \Q(\End(J_m))$ is multiquadratic for every integer $m > 2$. Moreover, in \Cref{sec:nondegeneratefactor} we show that, for every odd prime $p$ and positive integer $m$, each simple factor of $J_{p^m}$ is a non-degenerate CM abelian variety (that is, the rank of its Hodge group is equal to its dimension). This is already known to be true for $p=2$ due to nondegeneracy results of \cite{EmoryGoodson2024}. We also deduce that the field $\Q(\varepsilon_{J_m})$ coincides with $\Q(\zeta_m)$ for all odd prime powers $m=p^n$, see \Cref{thm: prime power case Kconn computation}.

\subsection{\texorpdfstring{$\Q(\varepsilon_{J_m}) / \Q(\End(J_m))$}{Qconn(Jm)/Q(End(Jm))} is multiquadratic}{\label{sec: multiquadratic}}
Numerous examples {(see in particular \Cref{example: odd numbers up to 105})} led us to suspect that the extension $\KconnJ/K$ is always multiquadratic, that is, the Galois group of the extension $\KconnJ/K$ is isomorphic to the product of $r\geq 0$ copies of $\Z/2\Z$.
We can prove this fact in general.

\TheoremMultiquadratic
\begin{proof}
Write for simplicity $J=J_m$. We already know that $\Q(\varepsilon_{J})$ contains $\Q(\End(J))$, by \cite[Proposition 2.10]{MR1355128}, and that $\Q(\End(J))$ contains $K=\Q(\zeta_m)$, by \Cref{prop: endomorphism field easy containment}. Thus, we can equivalently work with $\Q(\varepsilon_J)$ or $K(\varepsilon_J)$. 
The proof relies on two main observations:
\begin{enumerate}
    \item The connected monodromy field $K(\varepsilon_J)$ is contained in $K(J[N])$, the field of definition of the $N$-torsion, for every {$N \geq 3$}. This holds for every abelian variety \cite[Proposition 3.6]{Chi}.
    \item The field extension $K(J[4])/K(J[2])$ is multiquadratic for every abelian variety. The Galois group of this extension is a subgroup of $$\ker\left[ \GL_{2g}(\Z/4\Z) \to \GL_{2g}(\Z/2\Z) \right],$$ which is an abelian group of exponent 2. Indeed, any element in the kernel can be written as $\operatorname{Id} + 2M$ for a suitable matrix $M$ with coefficients in $\Z/4\Z$; a direct computation shows immediately that its square is the identity modulo 4. For the case where $J$ is the Jacobian of a hyperelliptic curve, Yelton gives explicit generators for the extension $K(J[4])/K(J[2])$ in \cite[Proposition 3.1, Remark 4.2.b]{yelton2014images}
\end{enumerate}
The field of definition of the 2-torsion $\Q(J[2])$ of {the Jacobian $J$ of} a hyperelliptic curve $y^2=f(x)$ {defined over $\Q$} is the splitting field of the polynomial $f$.
If $m$ is odd {and $J/\Q$ is} the Jacobian of the curve $C_m\colon y^2=x^m+1$, {the field} $\Q(J[2])$ is $\Q(\zeta_{2m})=\Q(\zeta_{m})$. From $(2)$, we get that the extension $\Q(J[4])/\Q(\zeta_{m})$ is multiquadratic, and $(1)$ gives that $\Q(\varepsilon_J)/\Q(\zeta_m)$ is a subextension of $\Q(J[4])/\Q(J[2])$, from which the claim follows.

The case of even $m$ is a bit more involved. {If $m \equiv 2 \pmod 4$, \Cref{th:Jm-factorization} implies that $J_m$ is isogenous over $\Q$ to the square of $J_{m/2}$. The connected monodromy fields of $J_m$ and $J_{m/2}$ are then equal, and the claim follows easily. Suppose therefore $m \equiv 0 \pmod{4}$, so that $i \in \Q(\zeta_m)$.} We consider the  Jacobian $J'$ of the twist $y^2 = x^m-1$, for which the claim follows {as above}. Notice that the twist $y^2=x^m+1$ (which is isomorphic over $\Q(\zeta_m)$ to $y^2=-x^m-1$, since we can replace $y$ with $iy$) of $y^2=x^m-1$ corresponds to the cocycle $$\sigma \mapsto \left[(x,y) \mapsto \left(\frac{\sigma(\zeta_{2m})}{\zeta_{2m}}\cdot x, y \right)\right].$$
Since we aim to understand the extension $K(\varepsilon_J)/K$, we consider the cocycle above as a function on the absolute Galois group $\Gamma_K$ of $K=\Q(\zeta_m)$. For any element $\sigma \in \Gamma_K$,
since $\zeta_{2m}$ has degree 2 over $K$ when $m$ is even, the ratio $\sigma(\zeta_{2m})/\zeta_{2m}$ is $\pm 1$.
By the same argument as in \Cref{th:Kconn-even}, if $K(\varepsilon_J)$ is generated over $K$ by some Gamma-values $A_i := \Gamma(\gamma_{f_i})\cdot \mu^{e(f_i)}$ for $i=1, \dots, r$, then $K(\varepsilon_{J'})$ is generated by twists $B_i$ of the same values, namely $B_i = A_i \cdot \zeta_{2m}^{e(f_i)}$. Here $\mu$, $e(f)$, and $r$ are as in \Cref{th:Kconn-even}.
The proof of \cite[Theorem 7.15]{Deligne} shows that $A_i^{2m} \in K$. Indeed, Deligne shows that $\Gamma(\gamma_{f_i})$ generates an abelian extension of $K$ and that, for every $\sigma \in \Gal(\overline{K}/K)$, there is a root of unity $\zeta \in K$ such that $\sigma \Gamma(\gamma_{f_i}) = \zeta \cdot \Gamma(\gamma_{f_i})$ (Deligne proves this for Frobenius elements, which are dense). This implies that $\Gamma(\gamma_{f_i})^m \in K$ and hence $A_i^{2m} \in K$.
It follows that $B_i^{2m}$ belongs to $K$ for every $i$. The field extension $K(\varepsilon_{J'})/K$ is determined via Galois theory by the kernel of the homomorphism $$\Gamma_K \to \mu_{2m}^r, \quad \sigma \mapsto (\sigma(B_i)/B_i)_{i=1, \dots, r}. $$Since this field extension is multiquadratic, the image of the homomorphism is actually contained in $\mu_2^r$ and the same follows for the twisted homomorphism
\[ \sigma \mapsto \frac{\sigma(A_i)}{A_i} = \frac{\sigma(B_i)}{B_i} \cdot \left(\frac{\sigma(\zeta_{2m})}{\zeta_{2m}}\right)^{e(f_i)} = \pm \frac{\sigma(B_i)}{B_i},  \]
proving that the extension $K(\varepsilon_J)/K$ is multiquadratic as well.
\end{proof}

\subsection{Nondegeneracy in the prime power case}\label{sec:nondegeneratefactor}
Let $p$ be an odd prime and $k$ be a positive integer. In this section, we prove that the factor $X_{p^k}$ of $J_{p^k}$ is a non-degenerate CM abelian variety and that the projection onto this factor induces an isomorphism between the corresponding Mumford-Tate groups.
We exploit this property to compute the connected monodromy field of $J_{p^k}$: we prove in \Cref{thm: prime power case Kconn computation} that $\Q(\varepsilon_{J_{p^k}}) = \Q(\zeta_{p^k})$.

\medskip

We begin by showing that the abelian variety $X_{p^k}$, the largest simple factor of $J_{p^k}$, is nondegenerate. Since this holds for every $k$, by \Cref{th:Jm-factorization} we obtain that every simple factor $X_{p^i}$ of $J_{p^k}$ is nondegenerate. The proof is based on results of Kubota \cite{Kubota1965} which relate the non-degeneracy of a CM abelian variety to certain (essentially combinatorial) properties of its CM type. The verification of these properties, in turn, can be turned into questions of algebraic number theory: we begin with two lemmas that solve these number-theoretic reformulations.

\begin{lemma}\label{lemma:charactersum1pk} Let $p$ be an odd prime, $k$ be a positive integer, and $\chi$ be a character of $(\Z/p^k\Z)^\times$ of conductor $p^h$, where $1\leq h \leq k$.
Write $n=\frac{p^k-1}{2}$. If $\chi(-1)=-1$ then
    $$\sum^{2n}_{a=1} a \cdot \chi(a)\not=0.$$
\end{lemma}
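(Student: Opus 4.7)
My plan is to identify the sum with a scalar multiple of the generalised Bernoulli number $B_{1,\chi_0}$ attached to the primitive Dirichlet character $\chi_0$ of conductor $p^h$ underlying $\chi$, and then invoke the classical nonvanishing of $B_{1,\chi_0}$ for odd primitive characters.

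Concretely, I first extend $\chi$ to $\Z$ by setting $\chi(a)=0$ whenever $p\mid a$, so that $\chi(a)=\chi_0(a\bmod p^h)$ for every integer $a$ coprime to $p$ and the sum over $a\in\{1,\ldots,2n\}$ agrees with the sum over $a\in\{0,\ldots,p^k-1\}$. Parametrising $a=b+p^hj$ with $b\in\{0,\ldots,p^h-1\}$ and $j\in\{0,\ldots,p^{k-h}-1\}$ and evaluating the inner arithmetic sum over $j$ gives
\[
\sum_{a=1}^{2n} a\,\chi(a)\;=\;p^{k-h}\sum_{b=0}^{p^h-1}b\,\chi_0(b)\;+\;\frac{p^k(p^{k-h}-1)}{2}\sum_{b=0}^{p^h-1}\chi_0(b).
\]
Since $\chi(-1)=-1$ forces $\chi_0$ to be nontrivial, the rightmost sum vanishes by orthogonality of characters. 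The remaining sum equals $p^hB_{1,\chi_0}$ by the very definition of the generalised Bernoulli number, so the whole expression equals $p^kB_{1,\chi_0}$.

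The final step is to invoke $B_{1,\chi_0}\neq 0$ for every odd primitive Dirichlet character (note $\chi_0(-1)=\chi(-1)=-1$). Via the identity $L(0,\chi_0)=-B_{1,\chi_0}$ and the functional equation, this reduces to the nonvanishing of $L(1,\bar\chi_0)$, which is classical; alternatively, it is a direct consequence of the positivity of the minus part of the class number of $\Q(\zeta_{p^h})$ via the analytic class number formula (see for instance Washington's \emph{Introduction to Cyclotomic Fields}, Theorem~4.17). No genuine obstacle arises: the only minor bookkeeping point is to verify that the cross term in the decomposition above drops out, which it does precisely because $\chi_0$ is nontrivial.
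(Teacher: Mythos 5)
Your proof is correct and follows essentially the same route as the paper's: decompose the sum along residue classes modulo $p^h$, observe that the cross term vanishes by orthogonality because $\chi(-1)=-1$ forces the underlying primitive character to be nontrivial, identify the surviving term with $p^k B_{1,\chi_0}$, and conclude via the nonvanishing of $L(1,\bar\chi_0)$. The only cosmetic difference is in the citation used for the final step (the paper invokes Washington Theorem~4.9 and Corollary~4.4 directly, whereas you additionally mention the class-number-formula route), but the underlying argument is identical.
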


\begin{proof}
    To prove the result, we split the sum along residue classes modulo $p^h$ and use basic properties of characters:
    \begin{align*}
        \sum^{2n}_{a=1} a\chi(a) &=\sum^{p^h-1}_{b=1}\;\sum^{p^{k-h}-1}_{j=0} (p^hj+b)\chi(p^hj+b)\\
            &=\sum^{p^h-1}_{b=1}\chi(b)\sum^{p^{k-h}-1}_{j=0} (p^hj+b)\\
            &=(p^{k-h})\sum^{p^h-1}_{b=1}b\chi(b)+ \left(\sum^{p^{k-h}-1}_{j=0} p^hj\right)\left(\sum^{p^h-1}_{b=1}\chi(b)\right).
    \end{align*}
Since $\sum^{p^h-1}_{b=1}\chi(b)=0$, the above sum equals $p^{k-h}\sum^{p^h-1}_{b=1}b\chi(b)$.

It remains to show that the sum $\sum^{p^h-1}_{b=1}b\chi(b)$ is non-zero. This quantity is equal to $p^hB_{1,\chi}$, where $B_{1, \chi}$ is a generalized Bernoulli number, see \cite[p.~32]{Washington} and notice that the conductor of $\chi$ is exactly $p^h$. Bernoulli numbers can be computed as special values of Dirichlet $L$-functions: since $\chi(-1)=-1$, we have $B_{1, \chi} = \frac{p^h}{\pi i \tau(\overline{\chi})} L(1, \overline{\chi})$ by \cite[Theorem 4.9]{Washington}, where $\tau(\overline{\chi})$ is a Gauss sum.
This expression is non-zero since $L(1, \overline{\chi}) \neq 0$ by \cite[Corollary 4.4]{Washington}.
\end{proof}

\begin{lemma}\label{lemma:charactersum2pk}
Let $p$ be an odd prime, $k$ be a positive integer, and $\chi$ be a character of $(\Z/p^k\Z)^\times$ of conductor $p^h$, where $1\leq h \leq k$.
Write $n=\frac{p^k-1}{2}$. If $\chi(-1)=-1$ then 
    $$\sum^{n}_{a=1} \chi(a)\not=0.$$
\end{lemma}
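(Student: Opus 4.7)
The plan is to first reduce the sum to the case of a primitive character, following the template of the proof of \Cref{lemma:charactersum1pk}, and then to relate the resulting sum to a nonzero Dirichlet $L$-value through a ``multiplication-by-$2$'' trick.

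\textbf{First step (reduction to the primitive case).} I would mimic the opening move of \Cref{lemma:charactersum1pk}: write each $a$ in $[1,n]$ as $a = p^h j + b$ with $0 \leq b < p^h$, and use that $\chi(a)=\chi_0(b)$, where $\chi_0$ is the primitive character mod $p^h$ inducing $\chi$. The key numerical input is the congruence $n \equiv \tfrac{p^h-1}{2} \pmod{p^h}$, so that $n = p^h q + \tfrac{p^h-1}{2}$ with $q = \tfrac{p^{k-h}-1}{2}$. A direct count shows that for each residue $b$ the number of valid $j$ is $q+1$ when $1 \leq b \leq \tfrac{p^h-1}{2}$ and $q$ when $\tfrac{p^h+1}{2} \leq b \leq p^h-1$. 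Using $\chi_0(-1)=-1$ to fold the second range onto the first, the ``$q$'' contributions cancel and one obtains
\[
\sum_{a=1}^n \chi(a) \;=\; T_0 \;\colonequals\; \sum_{b=1}^{(p^h-1)/2} \chi_0(b).
\]

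\textbf{Second step (relating $T_0$ to $B_{1,\chi_0}$).} To show $T_0 \neq 0$, I would evaluate $\sum_{a=1}^{p^h-1} a\chi_0(a) = p^h B_{1,\chi_0}$ in two different ways. Setting $S_0 = \sum_{a=1}^{(p^h-1)/2} a\chi_0(a)$, the pairing $a \leftrightarrow p^h - a$ used in \Cref{lemma:charactersum1pk} yields
\[
p^h B_{1,\chi_0} \;=\; 2S_0 \;-\; p^h T_0.
\]
On the other hand, splitting the sum according to the parity of $a$, writing even $a$ as $2b$ with $1 \leq b \leq (p^h-1)/2$ and odd $a$ as $p^h - 2c$ with $1 \leq c \leq (p^h-1)/2$ (possible precisely because $p$ is odd), and using $\chi_0(-2c) = -\chi_0(2)\chi_0(c)$, one obtains
\[
p^h B_{1,\chi_0} \;=\; 4\chi_0(2) S_0 \;-\; \chi_0(2)\, p^h T_0.
\]
Equating these two expressions and eliminating $S_0$ gives the clean identity
\[
p^h B_{1,\chi_0} \;=\; p^h T_0 \cdot \frac{\chi_0(2)}{1 - 2\chi_0(2)}.
\]

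\textbf{Third step (conclusion).} By the nonvanishing of $L(1,\chi_0)$ cited in the proof of \Cref{lemma:charactersum1pk}, one has $B_{1,\chi_0} \neq 0$. Moreover $\chi_0(2)$ is a root of unity, hence $|\chi_0(2)|=1 \neq 1/2$, so the denominator $1 - 2\chi_0(2)$ is nonzero; and $\chi_0(2) \neq 0$ because $p \neq 2$. Therefore $T_0 \neq 0$, which completes the argument.

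\textbf{Main obstacle.} The only non-routine ingredient is the second step: the bare orthogonality relation $\sum_{b=1}^{p^h-1}\chi_0(b)=0$ is too weak to rule out $T_0=0$, so one genuinely needs the ``$\chi_0(2)$-trick'' and the deeper input that $B_{1,\chi_0}\neq 0$. The trick itself relies crucially on the oddness of $p^h$, which allows odd residues mod $p^h$ to be parameterised as $p^h-2c$; this is available here since $p$ is assumed odd.
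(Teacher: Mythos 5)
Your proof is correct, and it is essentially the same argument as the paper's, which in turn follows Kubota's Lemma~3. The crucial ingredient in both is the multiplication-by-$2$ parity trick (splitting the sum of $a\chi(a)$ over even and odd $a$, using $\chi(-1)=-1$ to fold, and exploiting that $1-2\chi(2)\neq 0$ because $|\chi(2)|=1$), combined with the nonvanishing of $B_{1,\chi_0}$ coming from $L(1,\overline{\chi}_0)\neq 0$. The one organizational difference is that you first reduce $\sum_{a=1}^n \chi(a)$ to the sum $T_0$ for the primitive character $\chi_0$ at level $p^h$ and then run the $\chi_0(2)$-trick at that level, deriving the clean closed form $p^h B_{1,\chi_0} = p^h T_0 \cdot \chi_0(2)/(1-2\chi_0(2))$; the paper instead runs the same trick directly at level $p^k$ with $\chi$ and feeds in the nonvanishing of $\Theta=\sum_{a=1}^{2n} a\chi(a)$ already established in the preceding lemma. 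The intermediate reduction you perform is sound (the count of residues hitting each class $b\bmod p^h$ and the folding via $\chi_0(-1)=-1$ both check out), and the resulting proof is a tidy, self-contained variant of the paper's.
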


\begin{proof}
    The proof follows that of Lemma 3 in \cite{Kubota1965}, with the main difference being that we are working with a prime power instead of simply a prime. %
    Let $\Theta=\sum_{a=1}^{2n} a\chi(a)$; this sum is nonzero by the result in Lemma \ref{lemma:charactersum1pk}. We define the following sums:
    \begin{align*}
        A&=\sum_{a=1}^{n} a\chi(a), & A'&=\sum_{a=n+1}^{2n} a\chi(a),\\
        A_1&=\sum_{a=1}^{n} \chi(2a-1)(2a-1), &  A_2&=\sum_{a=1}^{n} \chi(2a)(2a).\\
    \end{align*} 
    Thus, we have both $\Theta=A+A'$ and $\Theta=A_1+A_2$. Let $B=\sum_{a=1}^{n} \chi(a)$ be the sum in the statement of the lemma that we aim to prove {is nonzero}. We can, therefore, write
\[
\begin{aligned}
A' & =\sum_{a=1}^{n} (p^k-a)\chi(p^k-a)=p^k\sum_{a=1}^{n}\chi(p^k-a)-\sum_{a=1}^{n} a\chi(p^k-a)\\
& = p^k\sum_{a=1}^{n}\chi(-a)-\sum_{a=1}^{n} a\chi(-a) \\
& = -p^k\sum_{a=1}^{n}\chi(a)+\sum_{a=1}^{n} a\chi(a)= -p^kB+A.
\end{aligned}
\]
    Hence, $\Theta=2A-p^kB$.
    We now relate $A_1$ and $A_2$ to the other expressions. First note that $$A_2=2\chi(2)\sum_{a=1}^{n} a\chi(a)=2\chi(2)A.$$ Following the techniques of \cite{Kubota1965}, we find that $A_1=2\chi(2)A-p^k\chi(2)B.$ Hence, $\Theta=A_1+A_2=4\chi(2)A-p^k\chi(2)B$.
    Combining these results yields
    $$4\chi(2)A-p^k\chi(2)B=2A-p^kB$$
    which implies that $2A(2\chi(2)-1)=p^kB(\chi(2)-1).$ Since $\Theta=2A-p^kB$, we can rewrite this as $(\Theta +p^kB)(2\chi(2)-1)=p^kB(\chi(2)-1),$ and so
    $$\Theta(2\chi(2)-1)=-p^kB\chi(2).$$
    Since neither $\Theta$ nor $2\chi(2)-1$ equals 0, we have proved that $B\not=0$ which is the desired result.
\end{proof}

Note that for any character $\chi$ of $(\Z/p^k\Z)^\times$ we have $\chi(a)=0$ whenever $\gcd(a,p)>1$. Hence, the result in Lemma \ref{lemma:charactersum2pk} still holds when we restrict $a$ to be relatively prime to $p$:
$$\sum^{n}_{a=1,\,p\nmid a} \chi(a)\not=0.$$
This is {the sum of the values} of a Dirichlet character $\chi$ of $(\Z/p^k\Z)^\times {\cong \Gal(\Q(\zeta_{p^k})/\Q)}$ evaluated at each embedding $\sigma_j: \zeta_{p^k}\to \zeta_{p^k}^j$, where $1\leq j\leq n$ satisfies $(j,p)=1$. This fact is a key ingredient in proving the following result.

\nondeg

{
Before proving \Cref{th:nondegenarcy}, we recall the definition of \textit{rank} and \textit{defect} of a CM type, at least in the setting we are interested in. Let $A$ be an abelian variety with complex multiplication by the CM type $\Phi$. The \textit{rank} of $\Phi$ is the dimension of the Mumford-Tate group of $A$, and its \textit{defect} is the difference $\dim A + 1-\operatorname{rank}(\Phi)$. Note that our definition (superficially) differs from that given in \cite[\S2]{Kubota1965}: the equality of the two definitions follows from combining \cite[Lemma 1]{Kubota1965} with the description of the Mumford-Tate group of a CM abelian variety given in \Cref{prop:computeMT}. See also \cite[\S2]{Banaszak2003} for further details.
}

\begin{proof}
    By replacing $k$ with a smaller integer, we can assume that $X$ is the largest simple factor $X_{p^k}$ of $J_{p^k}$. It was proved in \Cref{lemma: CM type of X_m} that $X_{p^k}$ is a CM abelian variety with CM type $(\Q(\zeta_{p^k}), \, {\Phi_{p^k}})$. By Lemma \ref{lemma:charactersum2pk} we have that if $\chi(\rho)=-1$, where $\rho$ denotes complex conjugation in $\Q(\zeta_{p^k})$, then
    $$\sum_{\sigma_j} \chi(\sigma_j)\not=0.$$
    Thus, by Lemma 2 of \cite{Kubota1965}, the defect 
    of the CM type is 0, which implies that the simple abelian variety $X_{p^k}$ is nondegenerate by \cite[Theorem 2.7]{Hazama2} or
    \cite[Theorem 1.2]{Hazama89}. 
\end{proof}

We now show that, in the prime-power case $m=p^k$, the problem of determining the Mumford-Tate group of the Fermat Jacobian $J_m$ reduces to the problem of computing the Mumford-Tate group of $X_m$. This is much easier since $X_m$ is of CM-type and non-degenerate: we recall the description of the Mumford-Tate group in this case (see \cite[comments after Proposition 3.3]{MR0608640} and \cite[Remark 2.12, Section 6]{MR3766118}). If $A$ is a simple, non-degenerate CM abelian variety with CM by the field $E$, then the Mumford-Tate group of $A$ is given as a functor on $\Q$-algebras $R$ by the formula
\[
\MT(A)(R) = \{ x \in (E \otimes_{\Q} R)^\times : x\overline{x} \in R^\times \},
\]
where $x \mapsto \overline{x}$ is induced by the complex conjugation of $E$.

\MTprojection
\begin{proof}
{\Cref{th:Jm-factorization} gives the isogeny decomposition (over $\Q$)
\[
J_{p^k} \sim \prod_{1 \leq i \leq k} X_{p^i} \sim J_{p^{k-1}} \times X_{p^k},
\]
where $X_{p^i}$ is geometrically simple with endomorphism algebra $\Q(\zeta_{p^i})$. We prove the claim by induction, the case $k=1$ being trivial. The induction hypothesis shows that $J_{p^{k-1}} \to X_{p^{k-1}}$ induces an isomorphism $\MT(J_{p^{k-1}}) \xrightarrow{\sim} \MT(X_{p^{k-1}})$, so it suffices to prove that the projection $X_{p^k} \times X_{p^{k-1}} \to X_{p^k}$ induces an isomorphism
$\operatorname{MT}(X_{p^k} \times X_{p^{k-1}}) \xrightarrow{\sim} \operatorname{MT}(X_{p^k})$. This is what we show in the rest of the proof.
}

{For the purposes of this argument, we denote by $[[a,b]]$ the set of integers in the interval $[a, b]$.}
The set $\left[\left[1,\frac{p^k-1}{2}\right]\right]$ can be written as the union 
\begin{equation}\label{eq:pkinterval}
    \left[\left[1,p^{k-1}\frac{p-1}{2}\right]\right]\cup\left[\left[p^{k-1}\frac{p-1}{2}+1,p^{k-1}\frac{p-1}{2} + \frac{p^{k-1}-1}{2}\right]\right].
\end{equation}
We can express the set on the left as
$$\left[\left[1,p^{k-1}\right]\right]\cup\left[\left[p^{k-1}+1,2p^{k-1}\right]\right]\cup\left[\left[2p^{k-1}+1,3p^{k-1}\right]\right]\cup\cdots\cup\left[\left[\frac{p-3}{2}p^{k-1}+1,\frac{p-1}{2}p^{k-1}\right]\right].$$
If we now take the reduction modulo $p^{k-1}$ of the values in these intervals, we obtain $({p-1})/{2}$ copies of the interval $\left[\left[1,p^{k-1}\right]\right]$. The elements of this interval that are relatively prime to $p$ are in bijection with $\left(\mathbb Z/p^{k-1}\mathbb Z\right)^\times$. 

On the other hand, if we take the reduction modulo $p^{k-1}$ of the values in the rightmost interval in Equation \eqref{eq:pkinterval}, we obtain $\left[\left[1,({p^{k-1}-1})/{2}\right]\right]$. The elements of this interval that are relatively prime to $p$ give us the CM type of $X_{p^{k-1}}$: 
$$\Phi_{p^{k-1}}=\left\{\sigma_i\;|\; \gcd(i,p)=1 \text{ and } 1\leq i\leq \frac{p^{k-1}-1}{2} \right\},$$
{where $\sigma_j$ is the embedding of $\Q(\zeta_{p^{k-1}})$ in $\C$ that sends $\zeta_{p^{k-1}}$ to $\exp\left( 2\pi i j / p^{k-1}\right)$.}

Following \cite[\S 4]{MR4557876}, we denote by $E_1=\Q(\zeta_{p^k}), E_2=\Q(\zeta_{p^{k-1}})$ the CM fields of $X_{p^k},\, X_{p^{k-1}}$ respectively and set $L=E_1$. {We also use the notation of \Cref{section:MT-1}, so that in particular $N_i : T_{L} \to T_{E_i}$ is the norm map and $\phi_i^\ast : T_{E_i} \to T_{E_i}$ is the reflex norm.}
By \cite[Lemma 4.2]{MR4557876}, the projection $\operatorname{MT}(X_{p^k} \times X_{p^{k-1}}) \to \operatorname{MT}(X_{p^k})$ is an isomorphism if and only if
\[
\widehat{T}_{E_1} \times \{0\} + \ker\left(N_1^* \phi_1^* + N_2^*\phi_2^* \right) = \widehat{T}_{E_1} \times \widehat{T}_{E_2}.
\]
Note that $N_1$ is the identity in our case. The above equality holds if and only if, for every $v \in \widehat{T}_{E_2}$, there exists $u \in \widehat{T}_{E_1}$ such that $(u, v) \in \ker\left(N_1^* \phi_1^* + N_2^*\phi_2^* \right) = \ker\left(\phi_1^* + N_2^*\phi_2^* \right)$. Equivalently, for every $v \in \widehat{T}_{E_2}$ there should exist $u \in \widehat{T}_{E_1}$ such that $\phi_1^*(u) = -N_2^*\phi_2^*(v)$. Replacing $u$ with $-u$, this requirement is equivalent to the fact that every vector in the image of $N_2^*\phi_2^*$ is also in the image of $\phi_1^*$. This will follow easily once we prove \Cref{lemma: prime power decomposition computation}.

\begin{lemma}{\label{lemma: prime power decomposition computation}}
For every $\tau$ in $\operatorname{Hom}(\Q(\zeta_{p^{k-1}}), \overline{\Q})$ we  have
\begin{equation}{\label{eq: prime power decomposition computation}}
\sum_{\substack{\sigma \in \operatorname{Hom}(\Q(\zeta_{p^k}), \overline{\Q}) \\ \sigma|_{\Q\left(\zeta_{p^{k-1}}\right)}= \tau}} \phi_1^* [\sigma] = \frac{p-1}{2} \left( \sum_{\sigma \in \operatorname{Hom}(\Q(\zeta_{p^k}), \overline{\Q})} [\sigma] \right) + N_2^*\phi_2^* [\tau].
\end{equation}
\end{lemma}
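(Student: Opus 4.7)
The plan is to verify the identity by expanding both sides as $\Z$-linear combinations in the basis $([v])_{v \in (\Z/p^k\Z)^\times}$ of $\widehat{T}_{E_1}$ and matching coefficients.

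First I would fix identifications. Under $\sigma_j : \zeta_{p^i} \mapsto \zeta_{p^i}^j$ we have $\operatorname{Hom}(\Q(\zeta_{p^i}), \overline{\Q}) \simeq (\Z/p^i\Z)^\times$, and composition of embeddings corresponds to multiplication. The CM type $\Phi_{p^i}$ from \Cref{lemma: CM type of X_m} becomes $\{j : 1 \leq j \leq (p^i-1)/2,\ \gcd(j,p)=1\}$ and, by \Cref{lemma:simpleCM}, has trivial stabilizer; hence $\Phi_{p^i}$ is primitive, so $E_i^\ast = E_i$ and $\Phi_i^\ast = \{g^{-1} : g \in \Phi_i\}$. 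The formulas of \Cref{section:MT-1} then become
\[
\phi_i^\ast[s] = \sum_{u \in \Phi_i^\ast}[us] \ \text{in}\ \widehat{T}_{E_i}, \qquad N_2^\ast[i] = \sum_{s \equiv i \bmod p^{k-1}}[s] \ \text{in}\ \widehat{T}_{E_1}.
\]

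Next I would compare coefficients. Since $\sigma|_{E_2}=\tau$ is equivalent to $\sigma \equiv \tau \pmod{p^{k-1}}$, the coefficient of $[v]$ on the left-hand side equals $\#\{u \in \Phi_1^\ast : u \equiv c \bmod p^{k-1}\}$, where $c \colonequals v\tau^{-1} \bmod p^{k-1}$. On the right-hand side, the same coefficient equals $\frac{p-1}{2} + \epsilon$, where $\epsilon = 1$ if $c \in \Phi_2^\ast$ and $\epsilon = 0$ otherwise (for each residue in $(\Z/p^{k-1}\Z)^\times$, exactly one of $\pm c$ lies in $\Phi_2^\ast$, so $\epsilon$ is well defined). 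Writing $u = j^{-1}$ with $j \in \Phi_1$ and setting $d = c^{-1} \bmod p^{k-1}$ (so that $c \in \Phi_2^\ast$ iff $d \in \Phi_2$), the lemma reduces to the claim that exactly $\frac{p-1}{2} + \epsilon$ of the $p$ integer lifts $d,\, d+p^{k-1},\, \ldots,\, d+(p-1)p^{k-1}$ of $d$ lie in the interval $[1,(p^k-1)/2]$.

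This final count is a short case analysis based on the decomposition $(p^k-1)/2 = p^{k-1}(p-1)/2 + (p^{k-1}-1)/2$: if $d \leq (p^{k-1}-1)/2$ (equivalently, $d \in \Phi_2$), then the admissible lifts are $d+rp^{k-1}$ for $r = 0, 1, \ldots, (p-1)/2$, giving $(p+1)/2$; if $d \geq (p^{k-1}+1)/2$, only $r = 0, 1, \ldots, (p-3)/2$ work, giving $(p-1)/2$. The main obstacle is purely notational: making sense of $\phi_i^\ast$ and $N_2^\ast$ as maps on character groups (so as to pin down the formula $\sigma \mapsto us$), and keeping track of the identifications of embeddings of $\Q(\zeta_{p^i})$ with elements of $(\Z/p^i\Z)^\times$, of $\Phi_i$ with $\Phi_i^\ast$ via inversion, and of the restriction map with reduction modulo $p^{k-1}$. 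Once these are set up correctly, what remains is an elementary counting argument.
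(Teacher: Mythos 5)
Your proof is correct and takes essentially the same approach as the paper: both rest on the decomposition $(p^k-1)/2 = p^{k-1}(p-1)/2 + (p^{k-1}-1)/2$ of the CM type $\Phi_{p^k}$ into $(p-1)/2$ full blocks of length $p^{k-1}$ plus a half-block matching $\Phi_{p^{k-1}}$. You organize the argument as a coefficient-by-coefficient comparison (counting lifts of a fixed residue $d$ into the interval $[1,(p^k-1)/2]$), whereas the paper directly manipulates the double sum and observes that the full blocks each saturate $\widehat{T}_{E_1}$ while the half-block yields $N_2^\ast\phi_2^\ast[\tau]$ -- dual bookkeeping for the same combinatorial fact.
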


\begin{proof}
We identify the Galois group of $\Q(\zeta_{p^k})$ with $\left(\Z/p^k\Z\right)^\times$.
Expanding the expression, the left-hand side of \eqref{eq: prime power decomposition computation} yields
\[
\sum_{i = 0}^{p-1} \sum_{a \in \Phi_{p^k}} [a^{-1}\cdot (\tau + i\cdot p^{k-1}). ]
\]
Notice that, since $E_i / \Q$ is Galois, the reflex type of $\Phi_{p^k}$ is the subset of $\Gal(\Q(\zeta_{p^k})/\Q)$ given by $\{a^{-1} : a \in \Phi_{p^k}\}$.
We now use \eqref{eq:pkinterval} to decompose the above sum into simpler ones, namely we write it as
\[
\sum_{i = 0}^{p-1} \left[ \sum_{a =1}^{p^{k-1}} + \sum_{a=p^{k-1}+1}^{2p^{k-1}} + \dots + \sum_{a=p^{k-1}\cdot \frac{p-1}{2}+1}^{p^{k-1}\cdot \frac{p-1}{2}+\frac{p^{k-1}-1}{2}} \right] [a^{-1}\cdot (\tau + i\cdot p^{k-1}) ] , 
\] 
where we omit from the notation the condition of coprimality $(a,p)=1$. The first (double) sum gives all residues modulo $p^k$ exactly once. The same argument applies to the first $(p-1)/2$ (double) sums. The last double sum is taken over all residues modulo $p^k$ that reduce modulo $p^{k-1}$ to a residue in $\phi_2^\ast[\tau]$, so the claim follows.
\end{proof}

We can now finish the proof of the proposition. We had reduced to showing that the image of $N_2^*\phi_2^*$ is contained in the image of $\phi_1^*$. 
The first sum on the right-hand side of \Cref{eq: prime power decomposition computation} is in the image of $\phi_1^*$, because it can be written as $\phi_1^*[\sigma_1]+ \phi_1^{*}[\sigma_{-1}]$. Thus, for every $\tau$ in $\operatorname{Hom}(\Q(\zeta_{p^{k-1}}), \overline{\Q})$, we have written $N_2^*\phi_2^*[\tau]$ as a linear combination of elements in the image of $\phi_1^*$. This implies the desired inclusion $\operatorname{Im} (N_2^*\phi_2^*) \subseteq \operatorname{Im} \phi_1^*$ and concludes the argument.

\end{proof}

\Cref{th: MT projection prime power case} allows us to give an explicit description of the Mumford-Tate group of $J_{p^k}$. Let $x_1, \ldots, x_{2g}$ be the coordinates on the diagonal torus $T$ of $\GL_{2g, \Q(\zeta_{p^k})}$, as in \Cref{subsec:MTcomp}.
The differential forms $x^{i-1}\, dx/y$ with $(i, p)=1$ give a natural basis for the complex uniformization of $X_{p^k}$, see the proof of \Cref{lemma: CM type of X_m}. These forms generate the eigenspaces for the action of $\alpha_{p^k}^*$ on the cohomology of $X_{p^k}$. Thus, the group $\MT(X_{p^k})_{\Q(\zeta_{p^k})}$ is contained in a natural way in the subtorus of $T$ with character group $\bigoplus_{(i,p)=1} \mathbb{Z} x_i$ (notation as in Equation \eqref{eq: character group of T}), and the projection $\pi : \MT(J_{p^k}) \to \MT(X_{p^k})$ is given on $\Qbar$-points by
\[
\begin{array}{cccc}
\pi : & \MT(J_{p^k})(\Qbar) & \to & \MT(X_{p^k})(\Qbar) \\
& \operatorname{diag}(x_1, \ldots, x_{2g}) & \mapsto & \operatorname{diag}(x_i)_{(i,p)=1}.
\end{array}
\]
By \Cref{th: MT projection prime power case}, this map is an isomorphism. Denote by $\psi$ its inverse. For every $a=1,\ldots,2g$, the composition $\psi_a := x_a \circ \psi$ is a character of $\MT(X_{p^k})$, hence a Laurent monomial in the variables $x_i$ with $(i,p)=1$, and the above discussion shows that the $x_a$-coordinate of an element $\operatorname{diag}(x_1, \ldots, x_{2g}) \in \MT(J_{p^k})(\Qbar)$ is the evaluation of $\psi_a$ at $(x_i)_{(i,p)=1}$. In other words, the coordinates $x_i$ with $p \mid i$ can be expressed as rational functions in the others.
We now try to characterize the functions $\psi_a$ more precisely.

Recall that $2g = p^k-1$. We identify the indices of the diagonal coordinates $1 \leq i \leq p^k-1$ with the non-zero elements of $\Z/p^k\Z$, which in turn we identify in a natural way with elements of the various Galois groups:
\[
    \Gal(\Q(\zeta_{p^n})/\Q) \simeq \left(\Z/p^n\Z\right)^\times \hookrightarrow p^{k-n}\Z/p^k\Z \subseteq \Z/p^k\Z \qquad \text{for } n = 0,\,\dots,\, k. 
\]
With this notation, for all $n=1, \ldots, k$ we have
\[
\widehat{T}_{\Q(\zeta_{p^n})} = \bigoplus_{\substack{a \equiv 0 \pmod{p^{k-n}} \\ a \not \equiv 0 \pmod{p^{k-n+1}}}} \Z x_a.
\]
Consider the various one-step norm maps
$N\colon \Q(\zeta_{p^{n}}) \to \Q(\zeta_{p^{n-1}})$
and the corresponding maps of tori. Write in particular
$N^\ast \colon \widehat{T}_{\Q(\zeta_{p^{n-1}})} \to \widehat{T}_{\Q(\zeta_{p^{n}})}$
for the induced map on characters. For $n \geq 2$, a basis of $\widehat{T}_{\Q(\zeta_{p^{n-1}})}$ is given by the $x_a$ whose index $a$ has $p$-adic valuation $v_p(a)=k-(n-1)$. Write such an $a$ as $a=pa'$, where $v_p(a')=k-n$. %
The map ${N}^\ast$ is given by
\begin{equation}{\label{eq: one-step norm map}}
    N^\ast(x_a) = \sum_{j=0}^{p-1} x_{a'+j \cdot p^{k-1}}.
\end{equation}
Note that the indices $a' + j \cdot p^{k-1}$ are precisely the classes $t$ in $\Z/p^k\Z$ that satisfy $t\cdot p \equiv a \pmod{p^k}$. Each of these indices has valuation $k-n$ since $k-1>k-n$, hence $\sum_{j=0}^{p-1} x_{a'+j \cdot p^{k-1}}$ is a well-defined element of $\widehat{T}_{\Q(\zeta_{p^{n}})}$.
For fixed $a$ and $a'$ as above, fix now an arbitrary partition of the set of indices $\{a'+j \cdot p^{k-1}\}$ into two subsets $B, \, C$ with $|B|=|C|-1=(p-1)/2$ and consider the expression
\begin{equation}{\label{eq: additive MT equation for last pk proof}}
    x_a + \sum_{b \in B} x_{p^k-b} - \sum_{c \in C} x_c \; \in \bigoplus_i \Z x_i.
\end{equation}
Any such expression has total degree 0 and we claim that the corresponding function%
\begin{equation}{\label{eq: additional fa equation form}}
    f_a \colon \qquad \frac{x_a \cdot \prod_{b \in B} x_{p^k-b} }{ \prod_{c \in C} x_c },
\end{equation}
is an equation for $\MT(J_{p^k})$, in the sense of \Cref{def: equation for MT}. In particular, we have $x_a = \frac{ \prod_{c \in C} x_c }{\prod_{b \in B} x_{p^k-b} }$ as functions on $\MT(J_{p^k})$. Applying inductively relations of this form, one can express any variable $x_a$ with $p \mid a$ in terms of the variables $x_i$ with $p \nmid i$ only; we will not need the precise form of the resulting equations.

\begin{remark}\label{rmk: different partitions B C}
    Consider two different partitions $B, C$ and $B', C'$ that differ by swapping an element $b \in B$ with an element $c \in C$. The corresponding functions $f_a, f_a'$ then satisfy
    \[
    \frac{f_a'}{f_a} = \frac{x_{p^k-c}x_c}{x_{p^k-b}x_b};
    \]
    this function is identically equal to $1$ on $\MT(J_m)$, as follows from the inclusion of $\MT(J_m)$ in the general symplectic group (see \cite[Remark 2.4.3]{part2} for more details).
\end{remark}

\begin{theorem}{\label{thm: prime power case Kconn computation}}
    Let $m=p^k$ be a perfect power of an odd prime number $p$, with $k > 1$. Let $J_m/\mathbb{Q}$ be the Jacobian of the smooth projective curve with affine equation $y^2=x^m+1$. Denote by $X_m$ the largest simple factor of $J_m$ as in \Cref{th:Jm-factorization}.
    \begin{enumerate}
        \item For every integer $ 1 \leq a \leq m-1$ divisible by $p$ and any choice of sets $B, C$ as above, the expression $f_a$ (defined in \Cref{eq: additional fa equation form}) is an equation for $\MT(J_m)$.
        \item A set of equations defining $\MT(J_m)$ is given by the disjoint union of a set of defining equations for $\MT(X_m)$ and a single equation $f_a$ of the form \Cref{eq: additional fa equation form}, for every integer $ 1 \leq a \leq m-1$ divisible by $p$.
        \item The connected monodromy field $\Q(\varepsilon_{J_m})$ is the $m$-th cyclotomic field $\Q(\zeta_m)$.
    \end{enumerate}
\end{theorem}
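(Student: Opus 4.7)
The argument splits into the three parts as stated, and I would treat them in order, with part (1) doing the main work.

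For part (1), I would exhibit each equation $f_a$ by combining the norm-map identity \eqref{eq: one-step norm map} with the polarization relations recorded in \Cref{rmk: equations coming from the polarisation of Fermat Jacobians}. The isomorphism $\pi:\MT(J_m)\xrightarrow{\sim}\MT(X_m)$ provided by \Cref{th: MT projection prime power case} forces every coordinate $x_a$ (for $p\mid a$) to restrict on $\MT(J_m)$ to a Laurent monomial in the variables $x_i$ with $p\nmid i$; the content of (1) is to pin down this monomial. The plan is to establish first the additive identity
\[
x_a \;=\; \sum_{j=0}^{p-1} x_{a'+jp^{k-1}} \;-\; \frac{p-1}{2}\det
\]
in the character lattice of $\MT(J_m)$ (where $a=pa'$), and then to exponentiate. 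The sum on the right-hand side arises from the one-step norm identity $N^\ast(x_a)=\sum_{j=0}^{p-1}x_{a'+jp^{k-1}}$ between $\widehat{T}_{\Q(\zeta_{p^{n-1}})}$ and $\widehat{T}_{\Q(\zeta_{p^n})}$ combined with the presentation of $\MT(J_m)$ as the image of $T_K\to\prod_d T_{E_d}\hookrightarrow\GL_V$ furnished by \Cref{prop:computeMT}; the $\det$-correction appears once I invoke the polarization identity $x_i+x_{m-i}=\det$ to rewrite, for each $b\in B$, the character $x_b$ as $\det-x_{m-b}$. Exponentiating the additive identity returns $f_a=1$.

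For part (2), I would argue by dimension count. Combining \Cref{th: MT projection prime power case} and the nondegeneracy of $X_m$ proven in \Cref{th:nondegenarcy} gives $\dim\MT(J_m)=\dim\MT(X_m)=\phi(m)/2+1$, so $\MT(J_m)$ is cut out in the diagonal torus $T$ of dimension $m-1$ by relations of total rank $(\phi(m)/2-1)+(p^{k-1}-1)$. Nondegeneracy of $X_m$ further means that $\MT(X_m)$ is defined inside its own diagonal torus by the $\phi(m)/2-1$ polarization equations $x_ix_{m-i}=x_jx_{m-j}$ with $i,j$ coprime to $p$, all involving only variables $x_i$ with $p\nmid i$. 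To this collection I would append one equation $f_a$ for each of the $p^{k-1}-1$ indices $a$ with $p\mid a$ and $1\le a\le m-1$; since the variable $x_a$ appears in $f_a$ but in no other equation in the list, the full collection is linearly independent and matches the required codimension exactly. \Cref{rmk: different partitions B C} shows that any choice of partition $(B,C)$ yields an equivalent system modulo the polarization relations.

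For part (3), I would apply \Cref{cor: connected monodromy field for odd m in terms of equations} to the explicit defining equations from (2), reducing the claim $\Q(\varepsilon_{J_m})=\Q(\zeta_m)$ to showing that $\Gamma(f)\in\Q(\zeta_m)$ for each defining equation $f$. For the polarization equations arising from $\MT(X_m)$, this is the content of \Cref{rmrk:expected}. For each $f_a$, I would carry out a direct computation: the indices appearing in $f_a$ form the arithmetic progression $\{a'+jp^{k-1}:0\le j\le p-1\}$ of common difference $p^{k-1}$, making the Gauss multiplication formula applicable, once to $\prod_e\Gamma(e/m)$ with $s=a'/m$ and once to $\prod_e\Gamma([2e]/m)$ with $s=2a'/m$ (using that multiplication by $2$ permutes $\Z/p\Z$, with residue discrepancies modulo $m$ absorbed by the translation formula). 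Combining these expansions with the reflection identity for the factors $\Gamma((m-b)/m)$, $\Gamma([-2a]/m)$, and $\Gamma([-2c]/m)^{-1}$, one finds that all factors of $\pi$ cancel, the surviving $\Gamma(a/m)^{\pm 2}$ and $\Gamma(2a/m)^{\pm 1}$ terms cancel pairwise, and the product reduces to a rational number times a power of $\sqrt{\pm p}$ (the sign being governed by $p\bmod 4$ and by the parity of $(p-1)/2$, which controls the residual powers of $i$ coming from odd-parity $\sin$-products). The quadratic Gauss-sum identity $\sqrt{p^\ast}\in\Q(\zeta_p)\subseteq\Q(\zeta_m)$, where $p^\ast=(-1)^{(p-1)/2}p$, then places $\Gamma(f_a)$ in $\Q(\zeta_m)$.

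The step I expect to be most subtle is the verification of the additive identity for $x_a$ in part (1): although morally dual to the computation in \Cref{lemma: prime power decomposition computation} underpinning \Cref{th: MT projection prime power case}, it demands tracking the recursive CM-type data of the cyclotomic tower $\Q\subset\Q(\zeta_p)\subset\cdots\subset\Q(\zeta_{p^k})$ and combining reflex-norm maps at all levels from the perspective of individual characters $x_a$ rather than collective projections of tori. The $\Gamma$-function bookkeeping in part (3), while lengthy, is essentially mechanical once the applicability of the multiplication formula has been set up.
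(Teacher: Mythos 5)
Your overall strategy closely tracks the paper's: exhibit the monomials $f_a$ as Mumford-Tate equations using the one-step norm identity and the polarization relations (part 1), show that these together with the equations for $\MT(X_m)$ suffice (part 2), and then a $\Gamma$-function computation for part (3). Part (3) is essentially identical to the paper's and the $\Gamma$-bookkeeping you sketch is correct: after translation, reflection, and two applications of the multiplication formula, everything reduces to $\sqrt{(-1)^{(p-1)/2}p} \in \Q(\zeta_p)$.

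There is, however, a genuine error in part (1). The identity you propose,
\[
x_a \;=\; \sum_{j=0}^{p-1} x_{a'+jp^{k-1}} \;-\; \frac{p-1}{2}\det,
\]
and the claimed polarization relation $x_i + x_{m-i} = \det$ (both written additively in the character lattice of $\MT(J_m)$), are both false for $k>1$. The correct polarization relation is $x_i+x_{m-i} = \mathrm{mult}$, where $\mathrm{mult}$ is the similitude character from the inclusion $\MT(J_m)\subseteq\operatorname{GSp}_V$; one has $\det = g\cdot\mathrm{mult}$ with $g = (m-1)/2 > 1$ whenever $k>1$, so $\det$ and $\mathrm{mult}$ differ. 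If you replace $\det$ by $\mathrm{mult}$ in your additive identity, then the rest of your algebra goes through: $\sum_j x_{a'+jp^{k-1}} = \sum_{b\in B} x_b + \sum_{c\in C} x_c$, and subtracting $\sum_{b\in B}(x_b+x_{m-b}) = |B|\cdot\mathrm{mult} = \frac{p-1}{2}\mathrm{mult}$ yields exactly $x_a = \sum_{c\in C}x_c - \sum_{b\in B}x_{m-b}$, which is $\log f_a = 0$. You also should make precise where the corrected additive identity comes from: the paper proves it (in the form \Cref{lemma: prime power decomposition computation}) as an identity of characters of the torus $T_{\Q(\zeta_{p^k})}$ involving the reflex norms $\phi_1^*,\phi_2^*$ and the one-step norm $N^*$, then feeds it into the map $\mathfrak{E}$ from \eqref{eq: MT as kernel, case pk}. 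Your phrasing ``the one-step norm identity combined with the presentation of $\MT(J_m)$'' gestures at the right ingredients but leaves the commutation of reflex norms with the norm map unproved; this is exactly the content of \Cref{lemma: prime power decomposition computation} and cannot be elided.

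Your part (2) is correct and takes a different route from the paper's. The paper shows that the group $G_B$ cut out by the proposed equations has the property that forgetting the variables $x_a$ with $p\mid a$ gives an isomorphism $G_B\to\MT(X_m)$, and then combines this with \Cref{th: MT projection prime power case}. You instead do a dimension count: you have $(\phi(m)/2-1)+(p^{k-1}-1)$ visibly independent relations (independence because each $x_a$ with $p\mid a$ occurs in exactly one $f_a$ and in no polarization relation), and this equals the codimension of $\MT(J_m)$ in the diagonal torus, which you compute as $m-2-\phi(m)/2$ from $\dim\MT(J_m)=\dim\MT(X_m)=\phi(m)/2+1$ (using \Cref{th: MT projection prime power case} and \Cref{th:nondegenarcy}). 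The count checks out. Your argument is shorter; the paper's is more explicit about which projection realizes the isomorphism, which it then reuses. Both are valid, and yours could reasonably be preferred as cleaner, so long as you verify the dimension of $\MT(X_m)$ (which requires the nondegeneracy statement) rather than quoting it.
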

\begin{proof}
    Let $a$ be as in the statement of $(1)$ and let $n \geq 2$ be defined by the equality $v_p(a) = k-(n-1)$.
    We set
    \[
    N_j = N_{\Q(\zeta_{p^k}) / \Q(\zeta_{p^j})} \quad \text{ for } j = 0, \ldots, k, \quad N = N_{\Q(\zeta_{p^n}) / \Q(\zeta_{p^{n-1}})}
    \]
    and denote by the same names the corresponding maps of algebraic tori.
    To prove (1), we check that all characters of the form \Cref{eq: additive MT equation for last pk proof} are in the kernel of %
    \begin{equation}\label{eq: MT as kernel, case pk}
    \begin{tikzcd}
    \mathfrak{E} \colon \bigoplus_{i=1}^{2g}\Z x_i \rar["\cong"] &
    \prod_{n=1}^k \widehat{T}_{\Q(\zeta_{p^n})} \rar["\prod \phi_n^\ast"] &
    \prod_{n=1}^k \widehat{T}_{\Q(\zeta_{p^n})} \rar["\sum N_n^\ast"] &
    \widehat{T}_{\Q(\zeta_{p^k})}.
    \end{tikzcd}
    \end{equation}
    As explained in \Cref{subsec:MTcomp}, elements in the kernel of this map correspond to equations for the Mumford-Tate group, see in particular \eqref{eq: MT as the kernel of a map}.
    By \Cref{lemma: prime power decomposition computation} we can write
    \begin{equation}{\label{eq: applying commutation lemma on fa I}}
        N^\ast \phi_2^\ast(x_a) =
        \left( \sum_{j \text{ s.t. } p \cdot j = a} \phi_1^\ast(x_j) \right)
        - \frac{p-1}{2} \cdot \left( \sum_{(j,p)=1} x_j \right),
    \end{equation}
    where $N$ is the one-step norm map of \Cref{eq: one-step norm map}, while $\phi_1$ and $\phi_2$ are respectively the reflex norms given by the CM types on $\Q(\zeta_{p^{n}})$ and $\Q(\zeta_{p^{n-1}})$ induced by $X_{p^n}$ and $X_{p^{n-1}}$. Notice that the indices in the first sum are exactly the indices in $B \sqcup C$ (in the notation of \Cref{eq: additive MT equation for last pk proof}), while the last sum can be written as $\phi_1^\ast(x_j)+\phi_1^\ast(x_{p^k-j})$ for any index $j$ coprime to $p$ (as in the proof of \Cref{lemma: prime power decomposition computation}). Using the fact that the cardinality of $B$ is exactly $(p-1)/2$, we can rearrange \Cref{eq: applying commutation lemma on fa I} into
    \begin{equation}{\label{eq: applying commutation lemma on fa II}}
    \begin{aligned}
        N^\ast \phi_2^\ast(x_a) & =
        \left( \sum_{b \in B} \phi_1^\ast(x_b) + \sum_{c \in C} \phi_1^\ast(x_c) \right)
        - \left(  \sum_{b \in B} (\phi_1^\ast(x_b) + \phi_1^\ast(x_{p^k-b})) \right) \\
        & =
        \sum_{c \in C} \phi_1^\ast(x_c)
        - \sum_{b \in B} \phi_1^\ast(x_{p^k-b}).
        \end{aligned}
    \end{equation}
    Now notice that any character $\chi$ of the form \Cref{eq: additive MT equation for last pk proof} only involves variables with indices of valuation $v_p(a)$ or $v_p(a)-1$, and as such can be seen as an element of $\widehat{T}_{\Q(\zeta_{p^{n-1}})} \times \widehat{T}_{\Q(\zeta_{p^{n}})}$. To prove $\mathfrak{E}(\chi)=0$ we therefore need to show
    \[
    N_{n-1}^* \phi_2^*(x_a) + N_n^*  \left( \sum_{b \in B} \phi_1^*(x_{p^k-b}) - \sum_{c \in C} \phi_1^*(x_c) \right) = 0.
    \]
    Using the obvious equality
    \[
    N_{n-1} = N \circ N_n \Rightarrow N_{n-1}^* = N_n^* \circ N^*,
    \]
    it suffices to show
    \[
    N^* \phi_2^*(x_a) + \left( \sum_{b \in B} \phi_1^*(x_{p^k-b}) - \sum_{c \in C} \phi_1^*(x_c) \right) = 0,
    \]
    which follows immediately from \eqref{eq: applying commutation lemma on fa II}.
        
    We now prove (2). Denote by $A$ the kernel of \eqref{eq: MT as kernel, case pk}: it is a free $\Z$-submodule of %
    $\bigoplus_i \Z x_i$.
    Let $B$ be the free $\Z$-submodule of $\bigoplus_i \Z x_i$ generated by the equations in the statement.
    Part (1) gives us the inclusion $B \subseteq A$ and therefore an inclusion between the corresponding groups of multiplicative type $ \MT(J_m) = G_A \subseteq G_B$.
    Using the additional equations $f_a$, we can express every variable $x_a$, with $a$ divisible by $p$, in terms of the variables $x_i$, with $i$ coprime to $p$. Therefore, the map from $G_B$ to $\MT(X_m)$ that forgets the variables $x_a$ with $p \mid a$ is an isomorphism. It follows from \Cref{th: MT projection prime power case} that the containment $G_A\subseteq G_B$ is an equality.

    Finally, we prove (3).
    By \Cref{th:nondegenarcy}, the largest factor $X_m$ is nondegenerate, and therefore its connected monodromy field coincides with its endomorphism field, which is $\Q(\zeta_m)$ by \Cref{prop: endomorphism field easy containment}.
    We are thus reduced to computing the connected monodromy field $K(\varepsilon_{J_m})$ over the cyclotomic field $K = \Q(\zeta_m)$.
    We accomplish this by directly computing the Gamma-values $\Gamma(f_a)$ up to elements of $K$, see \Cref{def: gamma value associated to equation for MT} and \Cref{cor: connected monodromy field for odd m in terms of equations}. Note that for every equation $f$ defining $\MT(X_m)$ we have $\Gamma(f) \in \Q(\zeta_m)$, because $\Q(\zeta_m)$ is the connected monodromy field of $X_m$.  
    Fix an index $a = a' \cdot p $ such that $v_p(a)=k-(n-1) > 0$. We need to show that $\Gamma(f_a)$ lies in $K^\times$. 
    We can then work in $\C^\times / K^\times$, and in fact, since we know \textit{a priori} that $\Gamma(f_a)$ is algebraic, we can also work in the quotient $\C^\times / K^\times \langle \pi \rangle$. We will show that $\Gamma(f_a)$ is trivial in this quotient.

    Explicitly, from \Cref{def: gamma value associated to equation for MT} we have 
    \[
    \Gamma(f_a) = \Gamma(a/p^k)^2 \cdot \Gamma([-2a]/p^k) \cdot \frac{\prod_{b \in B} \Gamma([-b]/p^k)^2 \cdot \Gamma([2b]/p^k)}{\prod_{c \in C} \Gamma(c/p^k)^2 \cdot \Gamma([-2c]/p^k)}.
    \]
    We begin by remarking that the functional equation $\Gamma(z+1)=z\Gamma(z)$ implies that $\Gamma(r/p^k) \bmod K^\times$ only depends on $r$ via its residue class modulo $p^k$, provided that this class is non-zero. To compute $\Gamma(f_a) \bmod K^\times \langle \pi \rangle$, we can therefore replace the representatives $[-2a], [-b]$, etc., with $-2a, b$, etc.
        The reflection formula, together with the fact that $\sin(\pi b/p^k) = \frac{\zeta_{2m}^b-\zeta_{2m}^{-b}}{2i} = \frac{(-\zeta_{m})^b-(-\zeta_{m})^{-b}}{2i}  = \frac{1}{i} \bmod K^\times$, gives
    \[
    \Gamma\left( -b/p^k \right) = \frac{i}{\Gamma(b/p^k)} \bmod K^\times \langle \pi \rangle \quad\text{ and }\quad \Gamma\left( 2b/p^k \right) = \frac{i}{\Gamma(-2b/p^k)} \bmod K^\times \langle \pi \rangle.
    \]
    Using this, we can replace each factor $\Gamma(-b/p^k),\, \Gamma(2b/p^k)$ in the numerator of $\Gamma(f_a)$ with a corresponding factor $\Gamma(b/p^k),\, \Gamma(-2b/p^k)$ in the denominator, up to a factor of $i$. Since the total number of factors that we replace in this way is $3|B|= 3\frac{p-1}{2}$, and using the fact that $C \sqcup \{p^k-b : b \in B\}= \{a/p + j \cdot p^{k-1}\}$, this gives
    \[
    \Gamma(f_a) = i^{3(p-1)/2} \cdot \theta_a^2 \cdot \theta_{-2a} \quad\bmod K^\times \langle \pi \rangle,
    \]
    where
    \[
    \theta_a = \frac{\Gamma(\frac{a}{p^k})}{\prod_{j=0}^{p-1} \Gamma\left( \frac{a/p+j \cdot p^{k-1} }{p^k} \right)}  \quad \text{ and } \quad \theta_{-2a} = \frac{\Gamma(\frac{-2a}{p^k})}{\prod_{j=0}^{p-1} \Gamma\left( \frac{(-2a)/p+j \cdot p^{k-1} }{p^k} \right)}.
    \]
    Note that $\theta_{-2a}$ is obtained from $\theta_a$ by replacing $a$ with $-2a$.
    We then use the multiplication formula to rewrite the denominator of $\theta_a$ as
    \[
    \prod_{j=0}^{p-1} \Gamma\left( \frac{a'+j \cdot p^{k-1} }{p^k} \right) =
    \prod_{j=0}^{p-1} \Gamma\left( \frac{a'}{p^{k}}+\frac{j}{p} \right) = p^{\frac{1}{2}-\frac{a'}{p^k}\cdot p} \cdot \Gamma\left(\frac{a}{p^k}\right) \quad\bmod K^\times \langle \pi \rangle,
    \]
    which gives $\theta_a = p^{-\frac{1}{2} + \frac{a}{p^k} }$ and similarly $\theta_{-2a} = p^{-\frac{1}{2} - \frac{2a}{p^k} }$ (both equalities modulo $K^\times \langle \pi \rangle$). Thus,
    \[
    \Gamma(f_a) = i^{3\frac{p-1}{2}} \cdot \theta_a^2 \cdot \theta_{-2a} = i^{\frac{p-1}{2}} p^{-1 + \frac{2a}{p^k} } p^{-\frac{1}{2} - \frac{2a}{p^k} } = \sqrt{(-1)^{\tfrac{p-1}{2}} p} \quad\bmod K^\times \langle \pi \rangle.
    \]
    It is an elementary fact in number theory that the prime cyclotomic field $\Q(\zeta_p) \subseteq K$ contains $\sqrt{(-1)^{(p-1)/2} p}$, which concludes the proof.

\end{proof}

\bibliographystyle{alpha}
\bibliography{biblio}

\end{document}